\documentclass[a4paper]{amsart}

\usepackage{cmbright}
\usepackage{mathrsfs}
\usepackage[utf8]{inputenc}
\usepackage[T1]{fontenc}
\usepackage[english]{babel}
\usepackage{amsfonts}
\usepackage{amsthm}
\usepackage{amssymb}
\usepackage{mathcomp}
\usepackage{mathrsfs}
\usepackage[bb=boondox]{mathalfa}
\usepackage{graphicx}
\usepackage{epstopdf}
\usepackage[centertags]{amsmath}
\usepackage[pdftex]{hyperref}
\usepackage{vmargin}
\usepackage{tikz}
\usepackage{tikz-cd}
\usepackage{dsfont}
\usepackage{cases}
\usepackage{mathtools}
\usepackage{shuffle}
\usepackage{adjustbox}

\theoremstyle{plain}
\newtheorem{theorem}{Theorem}
\newtheorem*{theorem*}{Theorem}
\newtheorem{lemma}{Lemma}
\newtheorem{proposition}{Proposition}
\newtheorem{corollary}{Corollary}

\theoremstyle{definition}
\newtheorem{definition}{Definition}

\theoremstyle{remark}
\newtheorem{remark}{\sc Remark}
\newtheorem{example}{\sc Example}
\newtheorem*{notation}{\sc Notation}


\newcommand{\Cate}{\categ{Cat}_{\categ E}}

\newcommand{\Operade}{\Operad_{\categ E}}

\newcommand{\Operadesmall}{\Operad_{\categ E, \mathrm{small}}}
\newcommand{\eII}{\mathcal{I}}
\newcommand{\ecateg}[1]{\mathcal{#1}}
\newcommand{\cmonlax}{\categ{CMon}_\lax}
\newcommand{\cmonoplax}{\categ{CMon}_\oplax}
\newcommand{\cmonstrong}{\categ{CMon}_\strong}
\newcommand{\cmonstrict}{\categ{CMon}_\strict}
\newcommand{\cat}{\mathrm{cat}}
\newcommand{\lax}{\mathrm{lax}}
\newcommand{\oplax}{\mathrm{oplax}}
\newcommand{\strong}{\mathrm{strong}}
\newcommand{\strict}{\mathrm{strict}}
\newcommand{\smalll}{\mathrm{small}}

\newcommand{\Cats}{\mathsf{Cats}}

\newcommand{\sk}{\mathrm{sk}}

\newcommand{\Ob}{\mathrm{Ob}}

\newcommand{\categ}[1]{\mathsf{#1}}
\newcommand{\set}[1]{\mathrm{#1}}
\newcommand{\catoperad}[1]{\mathsf{#1}}
\newcommand{\operad}[1]{\mathcal{#1}}
\newcommand{\algebra}[1]{\mathrm{#1}}
\newcommand{\coalgebra}[1]{\mathrm{#1}}

\newcommand{\catofmod}[1]{{#1}\mathrm{-}\mathsf{mod}}

\newcommand{\tcatalg}[2]{\mathsf{ALG}_{\categ{#1}}\left(#2\right)}
\newcommand{\tcatcog}[2]{\mathsf{COG}_{\categ{#1}}\left(#2\right)}
\newcommand{\catcog}[2]{\Cog_{\categ{#1}}\left(#2\right)}
\newcommand{\catalg}[2]{\Alg_{\categ{#1}}\left(#2\right)}

\newcommand{\mbs}{\mathsf{S}}

\newcommand{\III}{\operad{I}}
\newcommand{\treeoperad}{\mathbb{T}}

\newcommand{\forget}{\mathrm{U}}

\newcommand{\Operad}{\mathsf{Operad}}
\newcommand{\coOperad}{\mathsf{coOperad}}

\newcommand{\verte}[1]{\mathrm{vert}(#1)}
\newcommand{\edge}[1]{\mathrm{edge}(#1)}
\newcommand{\leaves}[1]{\mathrm{leaves}(#1)}
\newcommand{\inner}[1]{\mathrm{inner}(#1)}

\newcommand{\field}{\mathbb{K}}

\newcommand{\mbn}{\mathbb{N}}

\newcommand{\id}{\mathrm{Id}}

\newcommand{\Dend}{\Omega}
\newcommand{\aDend}{\Omega^{\mathsf{act}}}
\newcommand{\cDend}{\Omega^{\mathsf{core}}}

\newcommand{\Hom}[3]{\mathrm{hom}_{#1}\left(#2 , #3 \right)}

\newcommand{\otimeshadamard}{\otimes_{\mathbb{H}}}

\newcommand{\II}{\mathbb{1}}

\newcommand{\Cog}{\mathsf{Cog}}

\newcommand{\Alg}{\mathsf{Alg}}

\newcommand{\Set}{\mathsf{Set}}

\newcommand{\op}{\mathrm{op}}

\newcommand{\End}{\mathcal{E}\mathrm{nd}}
\newcommand{\catEnd}{\mathsf{End}}

\newcommand{\As}{\mathcal{A}\mathrm{s}}
\newcommand{\uAs}{\mathrm{u}\As}

\newcommand{\Com}{\mathcal{C}\mathrm{om}}
\newcommand{\uCom}{\mathrm{u}\Com}
\newcommand{\Perm}{\catoperad{Perm}}

\newcommand{\itemt}{\item[$\triangleright$]}

\newcommand{\poubelle}[1]{}


\title{Mapping coalgebras II\\Operads}
\author{Brice Le Grignou}
\email{bricelegrignou@gmail.com}

\date{\today}




\begin{document}

\maketitle

\begin{abstract}
In this article, we describe how coalgebraic structures on operads
induce algebraic structures on their categories of algebras and coalgebras.
\end{abstract}

\setcounter{tocdepth}{1}
\tableofcontents

\section*{Introduction}

This is the second article of a series about enrichment of categories
of algebras or coalgebras by categories of coalgebras.
The first one dealt with algebras over a monad and coalgebras over a comonad;
this one deals with algebras and coalgebras over an operad.

Let us fix a ground symmetric monoidal category $\categ E$. The 2-category of
$\categ E$-enriched coloured operads has the canonical structure
of a symmetric monoidal 2-category (monoidal context) given by the Hadamard
tensor product $\operad P, \operad Q \mapsto \operad P \otimeshadamard \operad Q$
defined as
\begin{align*}
	\Ob(\operad P \otimeshadamard \operad Q) &=
	\Ob(\operad P)  \times \Ob(\operad Q)
	\\
	(\operad P \otimeshadamard \operad Q)((o_1, o'_1), \ldots, (o_n, o'_n); (o, o'))
	&= \operad P(o_1, \ldots, o_n; o) \otimes \operad Q(o'_1, \ldots, o'_n; o') .
\end{align*}
The comonoids for this tensor product are called Hopf operads.

Besides, for any symmetric monoidal enriched category $\ecateg C$,
one can produce an enriched operad $\End(\ecateg C)$, whose colours 
are the objects of $\ecateg C$ and so that for any such objects
$x_1, \ldots, x_n,x$, then
$$
\End(\ecateg C)(x_1, \ldots, x_n;x) = \ecateg C (x_1 \otimes \cdots \otimes x_n,x)
$$
Such a construction is part of a monoidal strict 2-functor (strong
context functor) from the 2-category of symmetric monoidal enriched categories
and lax functors to the 2-category of enriched operads.

Actually, the enriched operad $\End(\ecateg C)$ has the canonical structure
of a $\catoperad{E}_\infty$-monoid for the Hadamard tensor product.
Using the fact that the Yoneda 2-functor $\categ D^\op \times \categ D \to \Cats$
for a monoidal 2-category $\categ D$ is lax monoidal,
one gets the fact that
for any cocommutative Hopf enriched operad $\operad Q$,
the category of $\operad Q$-algebras in $\ecateg C$ and the category
of the category $\operad Q$-coalgebras in $\ecateg C$
\begin{align*}
	\catalg{\ecateg C}{\operad Q} &= \Operade (\operad Q, \End(\ecateg C))
	\\
	\catcog{\ecateg C}{\operad Q} &= \Operade (\operad Q, \End(\ecateg C^\op))^\op
\end{align*}
are symmetric monoidal categories. More generally, one has the following
result.

\begin{theorem*}[\ref{thm:main}]
	Let $\catoperad B, \catoperad B'$ be two categorical operads.
	Then, the lax context functor
   $$
	   \catalg{-}{-}: \Operadesmall^\op  \times \cmonlax(\Cate) \to \Cats_{\mathcal{U}}
	   $$
   induces a lax context functor that is natural with respect to $B,B'$:
	$$
	\tcatalg{\Operadesmall^\op}{\catoperad B}_{\lax} \times
	\tcatalg{\categ{CMon}_{\lax}(\Cate)}{\catoperad B'}_{\lax}
	 \to
	 \tcatalg{\Cats_{\mathcal U}}{\catoperad B \times \catoperad{B}'}_{\lax} .
	$$
\end{theorem*}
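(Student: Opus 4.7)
The plan is to reduce this to the functoriality of the categorical-operad-algebras construction under lax context functors — a principle established earlier in the paper for a single factor — combined with a product-decomposition lemma for algebras over a product of categorical operads in a product of monoidal 2-categories.

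First I would establish the following product-decomposition: for any monoidal 2-categories $\mathcal D_1, \mathcal D_2$ and categorical operads $\catoperad B_1, \catoperad B_2$, there is a canonical isomorphism
\[
\tcatalg{\mathcal D_1 \times \mathcal D_2}{\catoperad B_1 \times \catoperad B_2}_{\lax}
\cong
\tcatalg{\mathcal D_1}{\catoperad B_1}_{\lax} \times \tcatalg{\mathcal D_2}{\catoperad B_2}_{\lax},
\]
contravariantly natural in $\catoperad B_1, \catoperad B_2$ and covariantly functorial in $\mathcal D_1, \mathcal D_2$ with respect to lax context functors. This follows by unpacking: a lax monoidal 2-functor from $\catoperad B_1 \times \catoperad B_2$ to a product $\mathcal D_1 \times \mathcal D_2$ is determined by its two components, each of which factors through the corresponding projection to yield a lax monoidal 2-functor $\catoperad B_i \to \mathcal D_i$, and conversely any such pair assembles back into a lax monoidal 2-functor on the product.

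Next I would apply the general functoriality principle to the lax context functor $\catalg{-}{-}: \Operadesmall^\op \times \cmonlax(\Cate) \to \Cats_{\mathcal U}$ with the single categorical operad $\catoperad B \times \catoperad B'$: this yields a lax context functor
\[
\tcatalg{\Operadesmall^\op \times \cmonlax(\Cate)}{\catoperad B \times \catoperad B'}_{\lax}
\to
\tcatalg{\Cats_{\mathcal U}}{\catoperad B \times \catoperad B'}_{\lax},
\]
contravariantly natural in $\catoperad B \times \catoperad B'$. Precomposing with the product-decomposition isomorphism applied to $\mathcal D_1 = \Operadesmall^\op$ and $\mathcal D_2 = \cmonlax(\Cate)$ produces the desired functor, and its naturality in the pair $(\catoperad B, \catoperad B')$ is inherited from the naturality of each ingredient.

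The main obstacle will be careful bookkeeping of the various laxness levels and compatibilities. One must verify that the product-decomposition is itself an equivalence of lax context functors, compatible with the componentwise monoidal 2-category structure on the product, and that the product $\catoperad B \times \catoperad B'$ appearing on the right-hand side is precisely the Cartesian product in the 2-category of categorical operads that is used in the invocation of the general functoriality principle. A subsidiary check is that the projections $\catoperad B \times \catoperad B' \to \catoperad B$ and $\catoperad B \times \catoperad B' \to \catoperad B'$ induce the expected restriction functors on algebras, so that the final composite unpacks to the intended pairing.
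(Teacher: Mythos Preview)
Your overall two-step structure matches the paper exactly: first pass from the product
\[
\tcatalg{\Operadesmall^\op}{\catoperad B}_{\lax}\times\tcatalg{\cmonlax(\Cate)}{\catoperad B'}_{\lax}
\]
to $\tcatalg{\Operadesmall^\op\times\cmonlax(\Cate)}{\catoperad B\times\catoperad B'}_{\lax}$, then push forward along $\catalg{-}{-}$ using functoriality of $\tcatalg{-}{\catoperad B\times\catoperad B'}_{\lax}$ in the monoidal-context variable.

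However, your justification for the first step is wrong. You assert a canonical \emph{isomorphism}
\[
\tcatalg{\mathcal D_1\times\mathcal D_2}{\catoperad B_1\times\catoperad B_2}_{\lax}\;\cong\;\tcatalg{\mathcal D_1}{\catoperad B_1}_{\lax}\times\tcatalg{\mathcal D_2}{\catoperad B_2}_{\lax},
\]
arguing that the two components of a $\catoperad B_1\times\catoperad B_2$-algebra in $\mathcal D_1\times\mathcal D_2$ each factor through the projection $\catoperad B_1\times\catoperad B_2\to\catoperad B_i$. That factorisation fails already at the level of underlying objects: a $\catoperad B_1\times\catoperad B_2$-algebra assigns to each colour $(o_1,o_2)$ a pair $(X_{o_1,o_2},Y_{o_1,o_2})\in\mathcal D_1\times\mathcal D_2$, and there is no reason $X_{o_1,o_2}$ should depend only on $o_1$. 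For a concrete counterexample, take $\catoperad B_1$ to be the operad with two colours and only identities, $\catoperad B_2$ the operad with one colour and only the identity; then the left side parametrises four objects while the right side parametrises three.

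What actually exists, and what the paper invokes, is only the \emph{forward} comparison 2-functor
\[
\tcatalg{\mathcal D_1}{\catoperad B_1}_{\lax}\times\tcatalg{\mathcal D_2}{\catoperad B_2}_{\lax}\longrightarrow\tcatalg{\mathcal D_1\times\mathcal D_2}{\catoperad B_1\times\catoperad B_2}_{\lax},
\]
sending $(A,A')$ to $\catoperad B_1\times\catoperad B_2\xrightarrow{A\times A'}\catEnd(\mathcal D_1)\times\catEnd(\mathcal D_2)\simeq\catEnd(\mathcal D_1\times\mathcal D_2)$. The paper obtains this map not from a product decomposition but as the lax monoidal structure on the functor $\tcatalg{-}{-}_{\lax}:\Operad_\cat^\op\times\categ{Context}_\lax\to\categ{Context}_\lax$, quoted from \cite[Proposition~12]{LeGrignou22}. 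Since the theorem only requires this one direction, your argument can be repaired simply by dropping the isomorphism claim and using this forward map instead; but you should acknowledge that its existence and lax-context-functor status come from that cited lax monoidal structure rather than from the (false) splitting you proposed.
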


This result has further consequences. For instance, if $\operad P$ is
an enriched operad that has the structure of a left comodule over 
a Hopf enriched operad $\operad Q$ then the category $\catalg{\ecateg C}{\operad P}$
of $\operad P$-algebras in $\ecateg C$ is tensored over that 
of $\operad Q$-algebras. Similarly the category $\catcog{\ecateg C}{\operad P}$
of $\operad P$-coalgebras in $\ecateg C$ is tensored over that 
of $\operad Q$-coalgebras.

Furthermore, one can consider the case where a symmetric monoidal
enriched category $\ecateg C$ is cotensored in some sense over another
symmetric monoidal $\ecateg D^\op$. An example of this situation is provided
by the ground symmetric monoidal category $\categ E$ if it is closed. Then
the $\categ E$-enriched category $\ecateg E$ associated to $\categ E$
is cotensored over $\ecateg E^\op$ through the internal hom
\begin{align*}
	\categ E \times \categ E^\op &\to \categ E
	\\
	y,x &\mapsto [x,y].
\end{align*}
In such a context, for any Hopf enriched operad $\operad Q$ and any
enriched operad $\operad P$ equipped with the structure of a
right $\operad Q$-comodule, then
the category $\catalg{\ecateg C}{\operad P}$ of $\operad P$-algebras in $\ecateg C$
is cotensored
over the category $\catcog{\ecateg D}{\operad Q}$ of $\operad Q$-coalgebras
in $\ecateg D$.

Finally, in many contexts, the cotensorisation functor
$$
	\catalg{\ecateg C}{\operad P} \times \catcog{\ecateg D}{\operad Q}^\op
	 \to \catalg{\ecateg C}{\operad P}
$$
has a left adjoint with respects to both variables. Then 
the category $\catalg{\ecateg C}{\operad P}$ is not only
cotensored over $\catcog{\ecateg D}{\operad Q}$ but also
tensored and enriched over this monoidal category. One has actually
the same phenomenon
mutatis mutandis for the tensorisation functor mentioned above
$$
	\catcog{\ecateg C}{\operad Q} \times \catcog{\ecateg C}{\operad P}
	 \to \catcog{\ecateg C}{\operad P}.
$$

\subsection*{Universes}

As in \cite{LeGrignou22}, we consider three
universes $\mathcal U \in \mathcal{V} \in \mathcal W$.
A set is called $\mathcal U$-small if it is an element of $\mathcal U$ and
it is called $\mathcal U$-large if it is a subset of $\mathcal U$. The notion
of smallness and largeness are defined similarly for the other universes.
We thus have a hierarchy of sizes of sets
$$
\mathcal{U}\text{ small sets}
\subset \mathcal{U}\text{ large sets}
\subset \mathcal{V}\text{ small sets}
\subset \mathcal{V}\text{ large sets}
\subset \mathcal{W}\text{ small sets}
\subset \mathcal{W}\text{ large sets}.
$$
Besides, a $\mathcal U$-category is a category whose set of objects
is $\mathcal{U}$-large and whose hom sets are all $\mathcal{U}$-small.
Such a $\mathcal U$-category is called $\mathcal U$-small if its set
of object is $\mathcal U$-small. We have similar notions for
$\mathcal V$ and $\mathcal W$.

Finally, we will use the following aliases:
\begin{itemize}
    \itemt a set, also called small set will be a
    $\mathcal{W}$-small set;
    \itemt a large set will be a
    $\mathcal{W}$-large set;
    \itemt a category will be a $\mathcal W$-category;
    \itemt a small category will be a $\mathcal W$-small category.
\end{itemize}

\subsection*{Some notations}

\begin{itemize}
    \itemt For any natural integer $n$, the set of permutations of the set
    $$
\underline n = \{1, \ldots, n\}
    $$
    will be denoted $\categ S_n$.
    	\itemt   Let $\mbs$ be the groupoid of permutations whose objects are integers $n \in \mbn$ and whose morphisms are:
	$$
   \begin{cases}
	\hom_\mbs(n,m)=  \emptyset \text{ if }n \neq m\\
	 \hom_\mbs(n,n)= \mbs_n \text{ otherwise .}
   \end{cases}
   $$
\itemt For any natural integer $n$, we denote $[n]$ the poset $0 <1 < \cdots <n$.
    \itemt For any natural integer $n$, any permutation $\sigma \in \categ S_n$
    and any category $\categ C$,
    we denote $\sigma^\ast$ the following functor
    $$
    \categ C^n = \categ{Fun}(\underline{n}, \categ C)
    \xrightarrow{\sigma \circ -}
    \categ{Fun}(\underline{n}, \categ C)
    = \categ C^n .
    $$
    \itemt For a monad $M$ on a category $\categ E$, the induced monadic adjunction relating $M$-algebras
    to $\categ E$ will be denoted $T_M \dashv \forget^M$. Similarly, for a comonad
    $Q$, the induced comonadic adjunction relating $Q$-coalgebras
    to $\categ E$ will be denoted $\forget_Q \dashv L^M$.
\itemt Let $X$ be an object of a monoidal category $\categ E$. Then we will call an 
       element of $X$ a morphism from the monoidal unit to $X$. Then, the notation $x \in X$
       is a substitute for $x \in \Hom{\categ E}{\II}{X}$.
   \itemt Let $X$ be a finite set and, for any object $x \in X$, let $V_x$ be an object of a
	symmetric monoidal category $(\categ{E},\otimes, \II)$. Then we will denote
   \[
	   \bigotimes_{x \in X} V_x \coloneqq \left( \coprod_{\phi : \{1, \ldots, n\} \to X}
	   V_{\phi(1)} \otimes \cdots \otimes V_{\phi(n)} \right)_{\mbs_n}.
   \]
   \end{itemize}


\section{The monoidal context of Pseudo-commutative monoids}

Let $\categ C$ be a $\mathcal W$-small monoidal context.

\subsection{Pseudo commutative monoids within pseudo commutative monoids}

Let $\catoperad{Q}$ be a categorical monochromatic operad
so that for any natural integer $n$,
$\catoperad Q(n)$ is a contractible groupoid.

Recall that the 2-category $\tcatalg{\categ C}{\catoperad Q}_{lax}$
of $\catoperad{Q}$-algebras and lax morphisms has the structure of
a monoidal context. The tensor product of two algebras $A,A'$
is the object $A \otimes A' \in \categ C$ equipped with 
the morphism of categorical operads
$$
\catoperad Q \to \catoperad Q \times \catoperad Q 
\xrightarrow{A \times A'} \catoperad{End}(\categ C) \times
\catoperad{End}(\categ C) \to \catoperad{End}(\categ C)
$$
where the last map is induced by the tensor product of $\categ C$.
In particular, the map $(A \otimes A')(q)$ is the composition
$$
(A \otimes A')^{\otimes n} \simeq A^{\otimes n} \otimes A'^{\otimes n}
\xrightarrow{A(q) \otimes A'(q)} A \otimes A'
$$
for any $q \in \catoperad Q(n)$.

\begin{lemma}\label{lemmapseudocommobject}
	Let $A$ be an $\catoperad{Q}$-algebra and let $q \in \catoperad{Q}(n)$
	be an operation. Then the map
	$$
	A(q): A^{\otimes n} \to A
	$$
	has the canonical structure
	of a strong morphism of $\catoperad{Q}$-algebras.
	Then,
	\begin{itemize}
		\itemt for any morphism
		$\phi: q \to q'$ in 
		$\catoperad{Q}(n)$, the map
		$$
		A(\phi): A(q) \to A(q')
		$$
		is an invertible 2-morphism
		in the 2-category of $\catoperad{Q}$-algebras
		and strong morphisms;
		\itemt for any $q'' \in \catoperad{Q}(m)$,
		the following diagram of
		$\catoperad Q$-algebras and strong morphisms
		$$
		\begin{tikzcd}
			A^{\otimes n+m-1} 
			\ar[r, "{A(q \triangleleft_i  q'')}"]
			\ar[d, "\id^{\otimes i} \otimes A(q'') \otimes
			\id^{\otimes n-i-1}"']
			& A
			\\
			A^{\otimes n}
			\ar[ru, "A(q)"']
		\end{tikzcd}
		$$
		is strictly commutative;
		\itemt for any permutation $\sigma \in \categ{S}_n$, the
		following diagram of $\catoperad{Q}$-algebras and strong morphisms
		$$
		\begin{tikzcd}
			A^{\otimes n}
			\ar[r, "\sigma^\ast"]
			\ar[rd, "A(q^\sigma)"]
			& A^{\otimes n}
			\ar[d, "A(q)"]
			\\
			& A
		\end{tikzcd}
		$$
		is strictly commutative;
		\itemt the strong morphism of $\catoperad Q$-algebras
		$A(1): A \to A$ is the identity of $A$
		(and is thus a strict morphism).
	\end{itemize}
\end{lemma}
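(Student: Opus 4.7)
The plan is as follows. For each $q \in \catoperad Q(n)$, the underlying map $A(q): A^{\otimes n} \to A$ already exists; the task is to promote it to a strong morphism of $\catoperad Q$-algebras and to verify the four compatibility assertions. The unifying principle throughout is that contractibility of each groupoid $\catoperad Q(k)$ forces any two parallel 2-cells obtained by applying $A$ to isomorphisms in $\catoperad Q$ to coincide, so that coherence reduces to equality.

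First I would build the strong structure on $A(q)$. The $\catoperad Q$-algebra structure on $A^{\otimes n}$ comes, as recalled, from the diagonal $\catoperad Q \to \catoperad Q^{\times n} \to \catoperad{End}(\categ C)$. A strong structure on $A(q)$ amounts, for each $q' \in \catoperad Q(m)$, to an invertible 2-cell comparing $A(q) \circ (A^{\otimes n})(q')$ with $A(q') \circ A(q)^{\otimes m}$, subject to the usual coherence with respect to operadic composition, equivariance and units. By the operad algebra axioms, both composites identify, up to the block shuffle permutation $\tau_{n,m}$ between $(A^{\otimes n})^{\otimes m}$ and $A^{\otimes nm}$, with $A$ evaluated at an operation in $\catoperad Q(nm)$ built from $q$ and $q'$ by iterated operadic composition; one side yields $q \circ (q', \ldots, q')$, the other $(q' \circ (q, \ldots, q)) \cdot \tau_{n,m}$. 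Since $\catoperad Q(nm)$ is a contractible groupoid, there is a unique isomorphism between these two operations, and its image under $A$ is the required structure 2-cell. The coherence axioms of a strong morphism are then equations of invertible 2-cells in $\catoperad{End}(\categ C)$ whose two sides both arise as images under $A$ of isomorphisms in contractible components of $\catoperad Q$, hence hold automatically.

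The remaining four bullets follow the same pattern. For the first, $A(\phi): A(q) \to A(q')$ is invertible because $\phi$ is; compatibility with the strong structure compares two parallel invertible 2-cells in the image of $A$ over the contractible component $\catoperad Q(nm)$, so it is automatic. For the partial composition and equivariance bullets, the underlying equalities $A(q) \circ (\id^{\otimes i} \otimes A(q'') \otimes \id^{\otimes n-i-1}) = A(q \triangleleft_i q'')$ and $A(q) \circ \sigma^\ast = A(q^\sigma)$ hold by the operad algebra axioms; the induced strong morphism structures on the two sides are again specified by isomorphisms in contractible components of $\catoperad Q$ and hence agree, so the diagrams commute strictly as diagrams of strong morphisms. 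Finally, $A(1) = \id_A$ is the unit axiom of the $\catoperad Q$-algebra, and the induced strong structure is the identity 2-cell, so $A(1)$ is a strict morphism.

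The main obstacle is purely bookkeeping: keeping track of the identifications between $(A^{\otimes n})^{\otimes m}$ and $A^{\otimes nm}$ through the appropriate block symmetries, and pinning down which specific operation in $\catoperad Q(nm)$ sits on either side of each structure 2-cell. The mathematical content is concentrated in the contractibility of $\catoperad Q(k)$, which trivialises every coherence that can arise.
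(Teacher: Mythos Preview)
Your proof is correct and follows essentially the same route as the paper: you identify both composites $A(q') \circ A(q)^{\otimes m}$ and $A(q) \circ (A^{\otimes n})(q')$ as $A$ evaluated at two operations in $\catoperad Q(nm)$ related by a block permutation, take the unique isomorphism between them supplied by contractibility as the structure 2-cell, and then invoke contractibility again for every coherence and compatibility statement. The paper does exactly this, making the block permutation $\sigma$ explicit and naming the two operations $p \triangleleft q^{\times m}$ and $(q \triangleleft p^{\times n})^\sigma$.
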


\begin{proof}
	Let us consider an element $p \in \catoperad Q(m)$.
	The action of $p$ on the $\catoperad{Q}$-algebra
	$A^{\otimes n}$ is the map
	$$
	A^{\otimes n}(p) :
	(A^{\otimes n})^{\otimes m}
	\xrightarrow{\sigma^\ast}
	(A^{\otimes m})^{\otimes n}
	\xrightarrow{A(p)^{\otimes n}}
	A^{\otimes n}.
	$$
	where $\sigma$ is the permutation
	$$
\sigma(m * i + j + 1) = n * j +i + 1, \quad 0 \leq i <n , \  0 \leq j <m.
	$$
	One has a unique isomorphism
	$$
	\phi : p \triangleleft q^{\times m}
	\simeq (q \triangleleft p^{\times n} )^\sigma
	$$
	in $\catoperad{Q}(n * m)$.
	We thus get a 2-isomorphism
	$$
		A(p) \circ A(q)^{\otimes n} = A(p \triangleleft q^{\times m})
		\xrightarrow{A(\phi)}
		A((q \triangleleft p^{\times n} )^\sigma)
		= A(q) \circ A(p)^{\otimes n} \circ \sigma^\ast
		= A(q) \circ A^{\otimes n}(p) .
	$$
	The data of this 2-isomorphism $A(\phi)$
	for any operation
	$p$ makes $A(q)$ a strong morphism of $\catoperad{Q}$-algebras.
	Indeed, the diagram commutations required by the definition of
	a strong morphism of $\catoperad{Q}$-algebras 
	just follow from the fact that
	the category $\catoperad{Q}(k)$ is a contractible groupoid
	for any
	natural integer $k$.

	The proofs of the remaining statements follow from the
	same fact that the category $\catoperad{Q}(k)$ is 
	contractible groupoid
	for any natural integer $k$.
\end{proof}

Let $f: A \to A'$ be a lax morphism of $\catoperad{Q}$-algebras in
the monoidal context $\categ C$.
The lax structure
of $f$ is the data of a 2-morphism in $\categ C$
	$$
 A'(q) \circ f^{\otimes n} \to f \circ A(q)
	$$
for any $s \in \catoperad{Q}(n)$. At the same time, both maps
$ A'(q) \circ f^{\otimes n}$ and $f \circ A(q)$
have structures of a lax morphism of $\catoperad Q$-algebras
by the previous Lemma \ref{lemmapseudocommobject}.

\begin{lemma}\label{lemmaeifntylaxstruct}
	For any lax morphism $f: A \to A'$ of $\catoperad{Q}$-algebras and
	any operation $q \in \catoperad{Q}(n)$, the map
	$$
 A'(q) \circ f^{\otimes n} \to f \circ A(q)
	$$
	is a 2-morphism in the 2-category
	$\tcatalg{\categ C}{\catoperad{Q}}_{\lax}$.
\end{lemma}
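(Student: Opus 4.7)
The plan is to verify that, for every operation $p \in \catoperad{Q}(m)$, a certain square in $\categ C$ commutes: its vertical arrows are the lax structures at $p$ of the two composite lax morphisms $A'(q) \circ f^{\otimes n}$ and $f \circ A(q)$; its top horizontal arrow is the $m$-fold tensor product $\phi_q^{\otimes m}$ of the lax structure $\phi_q$ of $f$ at $q$, whiskered on the left with $A'(p)$; and its bottom horizontal arrow is $\phi_q$, whiskered on the right with $A^{\otimes n}(p)$. The strategy is to identify both composites around the square with the lax structure of $f$ at canonically isomorphic operations in $\catoperad{Q}(nm)$, and then to conclude by naturality of the lax structure of $f$ in the operation.

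For the composite going through the right-hand vertical arrow, I would unfold the lax structure of $f \circ A(q)$ at $p$ via the composition law for lax morphisms: it factors as the lax structure of $f$ at $p$ (whiskered on the right with $A(q)^{\otimes m}$), followed by the strong-morphism structure of $A(q)$ at $p$ (whiskered on the left with $f$), which by Lemma \ref{lemmapseudocommobject} equals the 2-isomorphism $A(\phi)$ associated to the unique morphism $\phi : p \triangleleft q^{\otimes m} \simeq (q \triangleleft p^{\otimes n})^\sigma$ in the contractible groupoid $\catoperad{Q}(nm)$, where $\sigma$ is the shuffle permutation introduced in the proof of that lemma. Pre-composing with the top arrow of the square and invoking the operadic-composition axiom for the lax structure of $f$ applied to $p \triangleleft q^{\otimes m}$ identifies the whole right-hand composite with the lax structure of $f$ at $p \triangleleft q^{\otimes m}$, whiskered on the left with $f \circ A(\phi)$.

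The composite going through the left-hand vertical arrow is analysed symmetrically. The lax structure of $A'(q) \circ f^{\otimes n}$ at $p$ factors through the strong-morphism structure of $A'(q)$ at $p$---equal to the 2-isomorphism $A'(\phi)$---followed by the whiskering, by $A'(q)$, of the lax structure of $f^{\otimes n}$ at $p$, which is itself the $n$-fold tensor product of the lax structure of $f$ at $p$, twisted by $\sigma^\ast$. Post-composing with the bottom arrow of the square, the operadic-composition and equivariance axioms for the lax structure of $f$ identify the whole left-hand composite with the lax structure of $f$ at $(q \triangleleft p^{\otimes n})^\sigma$, whiskered on the right with $A'(\phi) \circ f^{\otimes nm}$.

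Naturality of the lax structure of $f$ in the operation, applied to the morphism $\phi$, then equates the two composites. The hardest part is the bookkeeping of the pasting diagrams and of the permutation $\sigma$; no input is needed beyond the defining coherence axioms for a lax morphism of $\catoperad{Q}$-algebras together with the contractibility of $\catoperad{Q}(nm)$, which is what makes both $\phi$ and the 2-isomorphisms $A(\phi), A'(\phi)$ canonical.
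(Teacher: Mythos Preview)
Your proposal is correct and follows essentially the same route as the paper: the paper's proof consists precisely of the large commutative diagram that encodes your verbal analysis, passing through the lax structure of $f$ at $p\triangleleft q^{\times m}$ and at $(q\triangleleft p^{\times n})^\sigma$ and closing the square via naturality of the lax structure of $f$ with respect to the unique isomorphism $\phi$ in $\catoperad{Q}(nm)$. The only difference is presentational---you narrate the two pasting composites, while the paper draws them.
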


\begin{proof}
This is a consequence of the definition
of a lax morphism. Precisely, for any
operation $p \in \catoperad{Q}(m)$, the following diagram
commutes
$$
\begin{tikzcd}
	A'(p) \circ (A'(q) \circ f^{\otimes n})^{\otimes m}
	\ar[d, equal] \ar[r]
	&
	A'(p) \circ (f \circ A'(q))^{\otimes m}
	\ar[d, equal]
	\\
	A'(p) \circ A'(q)^{\otimes m} \circ f^{\otimes n * m}
	\ar[d, equal] \ar[r]
	& A'(p) \circ f^{\otimes m} \circ A(q)^{\otimes m}
	\ar[r]
	& f \circ A(p) \circ A(q)^{\otimes m}
	\ar[d, equal]
	\\
	A'(p \triangleleft q^{\times m}) \circ f^{\otimes n * m}
	\ar[rr]
	\ar[d, "\simeq"']
	&& f \circ A'(p \triangleleft q^{\times m})
	\ar[d, "\simeq"]
	\\
	A'((q \triangleleft p^{\times m})^\sigma)) \circ f^{\otimes n * m}
	\ar[rr]
	\ar[d, equal]
	&& f \circ A((q \triangleleft p^{\times m})^\sigma)
	\ar[d, equal]
	\\
	A'(q) \circ A'(p)^{\otimes n} 
	 \circ  f^{\otimes n * m} \circ \sigma^\ast
	 \ar[r]
	 & A'(q) \circ f^{\otimes n} \circ A(p)^{\otimes n }
	 \circ \sigma^\ast 
	 \ar[r]\ar[d, equal]
	 & f \circ A(q) \circ A(p)^{\otimes n }
	 \circ \sigma^\ast 
	 \ar[d, equal]
	\\
	& (A'(q) \circ f) \circ A^{\otimes n}(p)
	\ar[r]
	& (f \circ A(q)) \circ A^{\otimes n}(p)
\end{tikzcd}
$$
where $\sigma \in \categ S_{n*m}$ is the permutation defined in the proof
of Lemma \ref{lemmapseudocommobject}.
\end{proof}

For any monoidal context $\categ D$, one has a canonical
strict context functor
$$
\tcatalg{\categ D}{\catoperad{Q}}_{\lax} \to \categ D.
$$
Taking $\categ D = \tcatalg{\categ C}{\catoperad{Q}}_{lax}$,
this gives a strict context functor
$$
\tcatalg{\tcatalg{\categ C}{\catoperad{Q}}_{\lax}}{\catoperad{Q}}_{\lax}
\to \tcatalg{\categ C}{\catoperad{Q}}_{\lax}.
$$
Moreover, for any strict context functor $\categ D \to \categ D'$,
we get another strict context functor
$\tcatalg{\categ D}{\catoperad{Q}}_{lax} \to \tcatalg{\categ D'}{\catoperad{Q}}_{lax}$.
Taking the strict context functor to be the forgetful 2-functor
$$
\tcatalg{\categ C}{\catoperad{Q}}_{lax} \to \categ C,
$$
we get another strict context functor
$$
\tcatalg{\tcatalg{\categ C}{\catoperad{Q}}_{lax}}{\catoperad{Q}}_{lax}
\to \tcatalg{\categ C}{\catoperad{Q}}_{lax}.
$$

\begin{proposition}\label{corollarycmon}
	The two forgetful strict context functors described just above
	$$
\tcatalg{\tcatalg{\categ C}{\catoperad{Q}}_{lax}}{\catoperad{Q}}_{lax}
\rightrightarrows \tcatalg{\categ C}{\catoperad{Q}}_{lax}.
$$
have a canonical common section in the category of monoidal
contexts and strict context functors.
\end{proposition}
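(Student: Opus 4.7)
The plan is to construct the section explicitly as a kind of ``diagonalisation'' functor $S$ sending a $\catoperad{Q}$-algebra $A$ in $\categ{C}$ to the object $A$ of $\tcatalg{\categ C}{\catoperad{Q}}_{\lax}$, equipped with the $\catoperad{Q}$-algebra structure whose action of $q \in \catoperad{Q}(n)$ is exactly the map $A(q): A^{\otimes n} \to A$, regarded as a morphism --- in fact a strong morphism --- in $\tcatalg{\categ C}{\catoperad{Q}}_{\lax}$ via Lemma \ref{lemmapseudocommobject}. The three bulleted items of that lemma provide, respectively, the compatibility with operadic composition, the equivariance under $\categ{S}_n$, and the unit axiom, and they all hold strictly rather than merely up to invertible 2-cell. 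Thus $S(A)$ is in fact a strict $\catoperad{Q}$-algebra in the monoidal context $\tcatalg{\categ C}{\catoperad{Q}}_{\lax}$.

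On 1-morphisms, for a lax morphism $f: A \to A'$ of $\catoperad{Q}$-algebras, I set $S(f) := f$, with lax-morphism 2-cells $A'(q) \circ f^{\otimes n} \to f \circ A(q)$ given by the original lax structure of $f$; these are 2-morphisms in $\tcatalg{\categ C}{\catoperad{Q}}_{\lax}$ by Lemma \ref{lemmaeifntylaxstruct}. The lax-morphism coherence axioms for $S(f)$, once unfolded, reduce to the lax-morphism axioms for $f$ itself, so they hold for free. On 2-morphisms, I send a natural 2-cell $\alpha: f \Rightarrow g$ to itself: a 2-cell in $\tcatalg{\categ C}{\catoperad{Q}}_{\lax}$ compatible with the lax structures of $f$ and $g$ is, by inspection, also compatible with the $\catoperad{Q}$-algebra structures produced by $S$.

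Next, I verify that $S$ is a strict context functor and a common section of the two forgetful 2-functors. For the tensor product, both $S(A \otimes A')$ and $S(A) \otimes S(A')$ have underlying object $A \otimes A'$ of $\tcatalg{\categ C}{\catoperad{Q}}_{\lax}$, and their outer $\catoperad{Q}$-actions coincide strictly: by the formula recalled just before Lemma \ref{lemmapseudocommobject}, both are the composite of the standard shuffle with $A(q) \otimes A'(q)$. The monoidal unit is handled identically. For the section property, applying the forgetful 2-functor that drops the outer $\catoperad{Q}$-action to $S(A)$ returns $A$ with its original structure in $\tcatalg{\categ C}{\catoperad{Q}}_{\lax}$; applying the other forgetful 2-functor --- first forgetting to $\categ{C}$, then re-reading the induced $\catoperad{Q}$-action there --- returns the object underlying $A$ in $\categ{C}$ together with the maps $A(q)$, which is once more the original $\catoperad{Q}$-algebra $A$.

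The main obstacle is not any single computation but careful bookkeeping: at every step one must track which 2-cells live in $\categ{C}$ and which live in $\tcatalg{\categ C}{\catoperad{Q}}_{\lax}$, and check that Lemmas \ref{lemmapseudocommobject} and \ref{lemmaeifntylaxstruct} supply exactly the data needed, at each arity, to lift every piece of structure on $A$ and $f$ without introducing new coherence. The strictness of the axioms in Lemma \ref{lemmapseudocommobject} is precisely what makes the lift canonical and ensures that $S$ is \emph{strict} rather than only strong monoidal.
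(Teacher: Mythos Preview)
Your proposal is correct and follows essentially the same construction as the paper: send $A$ to itself with outer $\catoperad{Q}$-action given by the maps $A(q)$ viewed as strong morphisms via Lemma~\ref{lemmapseudocommobject}, send a lax morphism $f$ to itself with lax structure lifted via Lemma~\ref{lemmaeifntylaxstruct}, and send 2-morphisms to themselves; then check strictness of the context functor and the section property directly.

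One small omission worth flagging: Lemma~\ref{lemmapseudocommobject} has four bulleted items, not three. Since $\catoperad{Q}$ is a \emph{categorical} operad (each $\catoperad{Q}(n)$ is a groupoid, not a set), the outer $\catoperad{Q}$-algebra structure on $S(A)$ must also assign a 2-morphism $A(\phi): A(q) \to A(q')$ to every morphism $\phi: q \to q'$ in $\catoperad{Q}(n)$. The first bullet of the lemma is precisely what ensures these are 2-morphisms in $\tcatalg{\categ C}{\catoperad{Q}}_{\lax}$, and the paper's proof makes this data explicit. Without it, $S(A)$ is not yet a $\catoperad{Q}$-algebra in the target 2-category.
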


\begin{proof}
	Such a section sends
	\begin{itemize}
		\itemt a $\catoperad{Q}$-algebra $A$ in $\categ C$ to the $\catoperad Q$-algebra
		in $\tcatalg{\categ C}{\catoperad Q}_\strong \subset \tcatalg{\categ C}{\catoperad Q}_\lax$
		whose structural morphisms and 2-morphisms 
		$$
		\begin{cases}
			A(q): A^{\otimes n} \to A,  \quad q \in \catoperad{Q}(n)
			\\	
			A(\phi) : A(q) \to A(q'),  \quad \phi \in \catoperad{Q}(n)(q,q')
		\end{cases}
		$$
		are described in Lemma \ref{lemmapseudocommobject}; this same Lemma
		\ref{lemmapseudocommobject} ensures us that such maps do describe
		the structure of a $\catoperad{Q}$-algebra;
		\itemt a lax morphism of $\catoperad{Q}$-algebras $f : A \to A'$
		to the lax morphism of $\catoperad{Q}$-algebras
		in $\catoperad{Q}$-algebras whose underlying morphism of $\catoperad{Q}$-algebras
		is $f$ itself and whose lax structure is given by the lax structure
		of $f$ as in Lemma \ref{lemmaeifntylaxstruct}.
		\itemt finally any 2-morphism $a: f \to g$ between lax morphisms
		of $\catoperad Q$-algebras in
		$\categ C$ induces canonically a 2-morphism between their images
		in $\catoperad Q$-algebras in $\catoperad{Q}$-algebras.
	\end{itemize}
	One can check in a straightforward way that these constructions
	do define a strict context functor.
\end{proof}

\begin{corollary}\label{corollarycmonstrongstrict}
	The restrictions of the section
	$$
	\tcatalg{\categ C}{\catoperad{Q}}_\lax
	\to \tcatalg{\tcatalg{\categ C}{\catoperad{Q}}_\lax}{\catoperad{Q}}_\lax
	$$
	from Proposition \ref{corollarycmon}
	to $\tcatalg{\categ C}{\catoperad{Q}}_\strong$
	factorises through
	$$
	\tcatalg{\tcatalg{\categ C}{\catoperad{Q}}_\strong}{\catoperad{Q}}_\strong .
	$$
	Moreover, its restriction to $\tcatalg{\categ C}{\catoperad{Q}}_\strict$
	factorises through
	$$
	\tcatalg{\tcatalg{\categ C}{\catoperad{Q}}_\strong}{\catoperad{Q}}_\strict .
	$$
	
\end{corollary}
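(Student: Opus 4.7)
The plan is to observe that both factorisation claims follow directly from the explicit description of the section given in the proof of Proposition \ref{corollarycmon}: the outer lax structure of the image of a lax morphism $f: A \to A'$ is, by construction, the lax structure of $f$ itself reinterpreted via Lemma \ref{lemmaeifntylaxstruct}. Its underlying 2-morphism in $\categ C$ is therefore literally the 2-morphism $\alpha_q : A'(q) \circ f^{\otimes n} \Rightarrow f \circ A(q)$ describing the lax structure of $f$ in $\categ C$, so invertibility or equality-to-identity transports verbatim from $f$ to its image.

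For the first factorisation, I first check that the image of an object $A$ already lies in $\tcatalg{\tcatalg{\categ C}{\catoperad{Q}}_\strong}{\catoperad{Q}}_\lax$: this is immediate from Lemma \ref{lemmapseudocommobject}, which states that the inner structure maps $A(q)$ are strong morphisms of $\catoperad{Q}$-algebras and that the 2-morphisms $A(\phi)$ are invertible. Assume now that $f$ is strong; then each $\alpha_q$ is an invertible 2-morphism in $\categ C$, and its inverse is automatically a 2-morphism in $\tcatalg{\categ C}{\catoperad{Q}}_\strong$ (any inverse of an invertible 2-cell in a 2-category lives in that 2-category). Hence $\alpha_q$ is invertible as a 2-morphism in $\tcatalg{\categ C}{\catoperad{Q}}_\strong$ as well, which makes the image of $f$ an outer-strong morphism in $\tcatalg{\tcatalg{\categ C}{\catoperad{Q}}_\strong}{\catoperad{Q}}_\strong$.

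For the second factorisation, if $f$ is strict, then each $\alpha_q$ equals the identity 2-cell, so the outer lax structure of the image is the identity and the image is outer-strict. Note that the inner layer remains only strong and generally not strict: Lemma \ref{lemmapseudocommobject} produces the structure maps $A(q)$ as strong morphisms, but there is no reason for them to be strict, which is why the target of the corollary is $\tcatalg{\tcatalg{\categ C}{\catoperad{Q}}_\strong}{\catoperad{Q}}_\strict$ and not $\tcatalg{\tcatalg{\categ C}{\catoperad{Q}}_\strict}{\catoperad{Q}}_\strict$. There is no real obstacle here: the whole argument amounts to reading off the strong/strict properties of the outer lax structure from the explicit formula supplied by the section of Proposition \ref{corollarycmon}.
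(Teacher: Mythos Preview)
Your proof is correct and follows the same approach as the paper, which simply says ``Straightforward with the definitions.'' You have spelled out precisely what that means: the outer lax structure of the image of $f$ is literally the lax structure of $f$, so strong (resp.\ strict) in, strong (resp.\ strict) out, while the inner layer is always only strong by Lemma~\ref{lemmapseudocommobject}.
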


\begin{proof}
	Straightforward with the definitions.
\end{proof}

\begin{remark}
	The restriction of this 2-functor
	to $\tcatalg{\categ C}{\catoperad{Q}}_\strict$
	does not factorises in general through
	$$
	\tcatalg{\tcatalg{\categ C}{\catoperad{Q}}_\strict}{\catoperad{Q}}_\strict .
	$$
\end{remark}

\subsection{A model of pseudo-commutative monoids}

Remember that a pseudo-commutative monoid in the monoidal context
 $\categ C$ is an algebra over the categorical operad
 $\catoperad E_{\infty,cat}$ with one colour and so that $\catoperad E_{\infty,cat}(n)$
 is the contractible groupoid whose objects are pairs $(t, \sigma)$
 of a (equivalence
 class of) planar tree $t$ whose node have arity 0 or 2
 and a permutation $\sigma \in \categ S_n$.

Mac Lane's coherence tells us that a pseudo-commutative monoid is concretely the data of an
object $A$ equipped with morphisms
$\gamma : A \otimes A \to A$ and $\eta :\II \to A$ and natural transformations
$$
\gamma \circ (\gamma \otimes \id) \simeq \gamma \circ (\id \otimes \gamma)
, \quad \gamma \circ \kappa(C,C) \simeq \gamma,
\quad \gamma \circ (\eta \otimes \id) \simeq \id_{C} \simeq \gamma \circ (\id \otimes \eta)
$$
called the associator, the commutator, the left unitor
and the right unitor of $A$, and that satisfies coherence conditions (see \cite{LeGrignou14}).
A lax morphism
 between two pseudo-commutative monoids $A,B$
 is the data of a morphism $f : A \to B$ and 2-morphisms 
 $$
 \gamma_B \circ (f \otimes f) \to f \circ \gamma_A \quad \eta_B \to f \circ \eta_A
 $$
that satisfies coherence conditions.
A strong morphism of pseudo-commutative monoids is a lax
morphism whose structural 2-morphisms are 2-isomorphisms.
A strict morphism of pseudo-commutative monoids
is a lax morphism whose structural 2-morphisms are identities.

\begin{definition}
	Let us denote 
 $$
 \categ{CMon}_{\lax}(\categ{C}) = \tcatalg{\categ C}{\catoperad E_{\infty,cat}}_\lax
 $$
 the monoidal context of 
 pseudo-commutative monoids whose objects are $\catoperad E_{\infty,cat}$-algebras,
whose morphism are lax $\catoperad E_{\infty,cat}$-morphisms and whose 2-morphisms
are $\catoperad E_{\infty,cat}$-2-morphisms.
 Similarly, we denote
 \begin{align*}
	\categ{CMon}_{\oplax}(\categ{C}) &= \tcatalg{\categ C}{\catoperad E_{\infty,cat}}_\oplax;
	\\
	\categ{CMon}_{\strong}(\categ{C}) &= \tcatalg{\categ C}{\catoperad E_{\infty,cat}}_\strong;
	\\
	\categ{CMon}_{\strict}(\categ{C}) &= \tcatalg{\categ C}{\catoperad E_{\infty,cat}}_\strict.
 \end{align*}
\end{definition}

The structure of a monoidal context
on the 2-category of pseudo-commutative monoids and lax morphisms
$\categ{CMon}_{\lax}(\categ{C})$ is given by underlying
monoidal structure on $\categ C$; indeed, given two
pseudo-commutative monoids $A,A'$ their tensor product
$A \otimes A' \in \categ C$ has the structure of
a pseudo-commutative monoid given by the product
$$
(A \otimes A') \otimes (A \otimes A') \simeq
(A \otimes A) \otimes (A' \otimes A') \to
A \otimes A'
$$
and the unit
$$
\II_{\categ C} \simeq \II_{\categ C} \otimes \II_{\categ C}
\to A \otimes A'. 
$$



\section{The Hadamard tensor product of enriched coloured operads}

Let $(\categ{E}, \otimes)$ be a $\mathcal U$-cocomplete symmetric monoidal $\mathcal U$-category.
We assume that it is bilinear in the sense that the tensor product commutes with
colimits in each variable.

We describe here the
2-category of coloured operads enriched in $\categ E$ and its Hadamard tensor product.
We also describe monoidal properties of the Boardman--Vogt construction.
The reader may refer to \cite{LeGrignouLejay19}, \cite{LodayVallette12} or the articles \cite{BergerMoerdijk06} and \cite{BergerMoerdijk03} for definitions of operads.

From now on, a small set or just a set is a $\mathcal U$-small set and a large set is 
a $\mathcal U$-large set.

\subsection{Categories of trees}

\begin{definition}
Let $\set C$ be a (possibly large) set called the set of colours.
The inclusion $n \mapsto \{1, \ldots, n\}$ of the groupoid of
permutations $\categ S$ into the category of sets and functions
gives us the following comma category
\[
	\categ{S}_{\set C} \coloneqq \categ S \downarrow C
\]
whose objects $\underline c =(n, \phi)$ are the data
of a natural integer $n$
 and a function $\phi : \{1, \ldots, n\}\to \set C$ (or equivalently a tuple of colours $(c_1, \ldots, c_n)$)
 and whose morphisms from $\underline c=(n, \phi)$
 to $\underline c'=(n, \psi)$
 are permutations $\sigma \in \categ S_n$ so that $\phi = \psi \circ \sigma$.
\end{definition}

\begin{notation}
For any object
$\underline c$ of $\categ S_{\set C}$,  $|\underline c|$ will denote the length
of $\underline c$ while $\underline c [i]$ will denote the $i^{\mathrm{th}}$
colour of $\underline c$. For instance if, 
$\underline c=(c_1, \ldots, c_n)$, then
 $|\underline c|= n$ and $\underline c [i] = c_i$. Moreover for
any permutation
 $\sigma \in \categ S_n$, $\underline c^\sigma$ will denote the $n$-tuple 
 \[
 	\underline c^\sigma = (c_{\sigma(1)} ,\ldots, c_{\sigma(n)}) .
 \]
For any $\underline c' =(c'_1 , \ldots, c'_m) \in C^m$,
let us define
\begin{align*}
	\underline c \triangleleft_i \underline c'
	&\coloneqq(c_1 , \ldots, c_{i-1}, c'_1 , \ldots, c'_m, c_{i+1}, \ldots, c_n);
	\\
	\underline c \sqcup \underline c' &\coloneqq (c_1, \ldots, c_n , c'_1, \ldots, c'_m).
\end{align*}
\end{notation}

\begin{definition}\cite{MoerdijkWeiss09}
 We denote $\Dend$ the dendroidal category:
\begin{itemize}
 \item[$\triangleright$] its objects are trees ;
  \item[$\triangleright$] any tree induces a coloured operad in sets whose set of colours is the set of edges
  and whose operations are generated by vertices. Then a morphism of trees is a morphism of the induced
  coloured operads in sets. Any morphism $f$ induces in particular a function $\edge f$ between the sets of edges.
\end{itemize}
Moreover, let $\cDend$ the maximal subgroupoid of $\Dend$ and let $\aDend$ the subcategory of active morphisms, that
is the subcategory of $\Dend$ that contains all objects and all morphisms that sends leaves to leaves and the root to the root.
\end{definition}

\begin{remark}
 The morphisms of $\aDend$ are actually generated by isomorphisms,
 inner cofaces and codegeneracies.
\end{remark}

\begin{definition}
Let $\set C$ be a set. The mapping $Y \mapsto \edge Y$ induces a functor from $\Dend$ to $\categ{Sets}$. This gives us the comma category
\[
	\Dend_{\set C} = \Dend \downarrow \set C .
\]
that we call the $\set C$-coloured dendroidal category. The categories $\cDend_{\set C}$ and $
\aDend_{\set C}$ are defined similarly.
\end{definition}

\subsection{Coloured symmetric sequences}

\begin{definition}
A coloured symmetric sequence $M$ is the data of a (possibly large)
set of colours $\Ob(M)$ and a left $\categ S_{\Ob(M)}^\op\times \Ob(M)$-module in the category $\categ E$, that is
 the data of objects $M(\underline c ;c )$ for any tuple of colours $\underline c$ and any colour $c$, together with maps
 \[
 	\sigma^\ast : M(\underline c ;c ) \to M(\underline c^\sigma ;c )
 \]
for any permutations $\sigma \in \mbs_n$ (where $|\underline c|= n$), so that
\[
	\sigma^\ast \circ \mu^\ast = (\mu \circ \sigma)^\ast \quad , \quad 1^\ast = \id{} .
\]
\end{definition}

\begin{definition}
A morphism of coloured symmetric sequences from $M$ to $N$ is the data of a function
$$
f(-) : \Ob(M) \to \Ob(N)
$$
and morphisms in $\categ E$
$$
f(\underline c;c ) : M(\underline c;c ) \to N(\underline c;c )
$$
that commute with the action of the symmetric groups, that is
$$
 \sigma^\ast \circ f(\underline c;c)
 = f(\underline c^{\sigma};c ) \circ \sigma^\ast .
$$
\end{definition}

\begin{definition}
The data of coloured symmetric sequences and
their morphisms form a $\mathcal V$-small category that we denote $\catofmod{\mbs}$.
\end{definition}

\subsection{Enriched operads}

\begin{definition}
An enriched operad $\operad P$ is
the data of a coloured symmetric sequence $\operad P$ together with maps
\begin{align*}
 	\gamma_i :& \operad{P} (\underline c ; c ) \otimes 
	\operad{P} (\underline c' ; \underline c [i] ) \to
	\operad{P} (\underline c \triangleleft_i \underline c' ; c )
	\\
 	\eta_c :& \II \to \operad{P}(c;c);
\end{align*}
for any $c \in \Ob(\operad P), \underline{c}, \underline c' \in \categ S_{\Ob(\operad P)}$
and $1 \leq i \leq |\underline c|$,
that satisfy associativity and unitality conditions
and that are coherent with respect to the action of the
symmetric groups (see for instance \cite{LodayVallette12}, \cite{LeGrignouLejay21}).
If the set of colours $\Ob(\operad P)$
is large (resp. small), we often say that $\operad P$ is a large
(resp. small) enriched operad.
\end{definition}

\begin{definition}
Given two enriched operads $\operad P$ and $\operad Q$,
a morphism of enriched operads between them is a morphism
of coloured symmetric sequences that commutes with the
composition and the units.
\end{definition}

\begin{definition}
Given two morphisms of enriched
operads $f,g : \operad P \to \operad Q$,
a 2-morphism $A$ between them is the data of elements
$$
 A(c) \in \operad Q(f(c);g(c)),\ \forall c \in \Ob(\operad P);
$$
so that the following diagram commutes
$$
\begin{tikzcd}
     \operad P(\underline c ; c) 
     \ar[r,"f"] \ar[d,"g"']
     & \operad Q(f(\underline c); f(c))
     \ar[d]
     \\
     \operad Q(g(\underline c); g(c))
     \ar[r]
     & \operad Q(f(\underline c); g(c))
\end{tikzcd}
$$
for any $\underline c, c \in \set \Ob(M)^n\times \Ob(M)$.
\end{definition}

\begin{definition}
Let us denote $\Operade$
the $\mathcal V$-small
strict 2-category
of enriched operads and let us denote
$\Operadesmall$ its full sub 2-category 
spanned by small enriched operads. Actually
$\Operadesmall$ is a $\mathcal U$-2-category
in the sense that its set of objects is $\mathcal{U}$-large
and any of its mapping categories is $\mathcal U$-small.
\end{definition}

\subsection{The Hadamard tensor product}

\begin{definition}
For any two coloured-symmetric sequences $M$ and $N$, the Hadamard tensor product
$M \otimeshadamard N$
is the symmetric sequence whose objects are
$$
\Ob(M \otimeshadamard N)  = \Ob(M) \times \Ob(N)
$$
and so that
 \[
 	\left( M \otimeshadamard N \right)((c_1, c'_1), \ldots, (c_n, c'_n) ; (c,c') = M (c_1, \ldots, c_n ; c) \otimes N(c'_1, \ldots, c'_n ; c) .
 \]
The right action of $\sigma \in \categ S_n$ is diagonal
\[
\begin{tikzcd}
     \left( M \otimeshadamard N \right)((c_1,d_1), \ldots,(c_n,d_n) ; (c,d))
     \ar[d, equal]
     \\ M (\underline c ; c) \otimes N(\underline d ; d)\ar[d,"{\sigma^\ast \otimes \sigma^\ast}"]
    \\
     M (\underline c^\sigma ; c) \otimes N(\underline d^\sigma ; d)
    \ar[d, equal]
    \\ \left( M \otimeshadamard N \right)\left(((c_1,d_1), \ldots,(c_n,d_n))^\sigma ; (c,d)\right)
\end{tikzcd}
\]
This defines the structure of a monoidal category on the category of coloured-symmetric sequences
whose unit is the underlying symmetric sequence of the operad $\uCom$, whose associator, unitors and commutator are given by those of $\categ E$.
\end{definition}

\begin{definition}
 Let $\operad P$ and $\operad Q$ be two enriched operads.
 Then, their Hadamard tensor product
 $\operad P \otimeshadamard \operad Q$ is the operad whose underlying symmetric sequence is the
 Hadamard tensor product of the underlying symmetric sequences of $\operad P$ and $\operad Q$. The operad
 structure is given by the following maps
 $$
 \begin{tikzcd}
      \left(\operad P \otimeshadamard \operad Q \right) (\underline{(c,d)} ; (c,d)) 
      \otimes \left(\operad P \otimeshadamard \operad Q \right) (\underline{(c',d')} ; \underline (c[i],d[i]))
     \ar[d, "\simeq"]
      & \II \ar[d, "\simeq"]
      \\
      \operad P (\underline c ; c)  \otimes \operad P (\underline c' ; \underline c[i])
 \otimes  \operad Q (\underline d ; d)  \otimes \operad Q (\underline d' ; \underline d[i])
 \ar[d,"\gamma_i \otimes \gamma_i"]
 & \II \otimes \II \ar[d,"\eta_c \otimes \eta_c"]
    \\
    \operad P (\underline c \triangleleft_i \underline c' ; c) 
 \otimes  \operad Q (\underline d \triangleleft_i \underline d'; d)
 \ar[d, equal]
 & \operad P (c ; c) \otimes
 \operad Q (d ; d)  \ar[d, equal]
 \\
 \left(\operad P \otimeshadamard \operad Q \right) (\underline{(c \triangleleft_i \underline c',d \triangleleft_i \underline d')} ; (c,d)) 
 & \left(\operad P \otimeshadamard \operad Q \right)((c,d);(c,d)) .
 \end{tikzcd}
 $$
\end{definition}

\begin{proposition}
The Hadamard tensor product defines the structure of a monoidal
context on the 2-category of enriched operads that
lifts that of coloured symmetric sequences,
in the sense that the forgetful functor
$$
\mathrm{sk}(\Operade) \to \catofmod{\mbs}
$$
is a strict monoidal functor.
\end{proposition}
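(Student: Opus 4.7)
The proof is primarily a matter of verifying that the structure maps defined for $\operad{P} \otimeshadamard \operad{Q}$ do make it into an enriched operad, that the monoidal coherence isomorphisms of $(\categ{E}, \otimes)$ lift to invertible 2-morphisms of enriched operads, and that the resulting structure is compatible with 1- and 2-morphisms. The strategy is to reduce every axiom to a commutative diagram in $\categ{E}$ involving only symmetric monoidal reshuffling, and then to invoke the corresponding axiom for $\operad P$, the corresponding axiom for $\operad Q$, and the bilinearity of $\otimes$.

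Concretely, I would first check that the composition maps $\gamma_i$ and units $\eta_{(c,d)}$ of $\operad{P} \otimeshadamard \operad{Q}$ described above satisfy the associativity, unitality and equivariance axioms of an enriched operad. Each such axiom is a commutative diagram whose vertices are fourfold tensor products of hom-objects of $\operad{P}$ and $\operad{Q}$; using the symmetry of $\otimes$ in $\categ{E}$ to regroup these into a ``$\operad{P}$-part'' and a ``$\operad{Q}$-part'', the diagram factors as the tensor product of the corresponding diagrams for $\operad{P}$ and for $\operad{Q}$, both of which commute by hypothesis. The compatibility with the diagonal action of $\mbs_n$ is where the symmetric structure enters explicitly: the symmetry isomorphism of $\categ E$ allows one to interleave the actions on the two factors so that the diagonal action on $\operad{P} \otimeshadamard \operad{Q}$ matches the componentwise action already used at the level of $\catofmod{\mbs}$.

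Next, I would define the associator, unitors and symmetry of $\otimeshadamard$ on enriched operads as the underlying isomorphisms of coloured symmetric sequences produced in the previous definition. To check that each of these is a morphism of enriched operads (not merely of $\mbs$-modules), one expands the compatibility with the composition and unit maps on both sides; after reshuffling tensor factors, both sides reduce to the same composite, again by naturality and symmetry of $\otimes$ in $\categ{E}$. The pentagon, triangle and hexagon identities then follow automatically because they already hold in $\catofmod{\mbs}$ and the forgetful functor $\mathrm{sk}(\Operade) \to \catofmod{\mbs}$ is faithful and identity on the underlying data, which simultaneously establishes the strict monoidality of this forgetful functor.

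Finally, to upgrade the structure to a monoidal context, I would define the horizontal Hadamard product of two 2-morphisms $A: f \Rightarrow g$ in $\Operade(\operad P, \operad Q)$ and $B: f' \Rightarrow g'$ in $\Operade(\operad P', \operad Q')$ componentwise, by taking $A(c) \otimes B(c')$ as an element of $\operad Q(f(c);g(c)) \otimes \operad Q'(f'(c');g'(c'))$. The square defining a 2-morphism of enriched operads, applied to a general operation of $\operad P \otimeshadamard \operad P'$, then splits via bilinearity into the tensor of the squares witnessing that $A$ and $B$ are 2-morphisms. The main obstacle throughout is bookkeeping: one must align the diagonal $\mbs_n$-actions on the Hadamard factors with the interleavings coming from the operadic composition $\triangleleft_i$, but no conceptual input beyond the symmetric monoidality and bilinearity of $\categ{E}$ is required.
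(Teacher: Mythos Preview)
Your proposal is correct and follows essentially the same approach as the paper's proof, which simply declares the 1-categorical monoidal structure on $\mathrm{sk}(\Operade)$ to be straightforward and then spells out the tensor of 2-morphisms componentwise as $A(c)\otimes B(d)$. You have merely unpacked in detail what the paper leaves as ``straightforward'', and the one explicit construction you give (the Hadamard product of 2-morphisms) matches the paper verbatim.
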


\begin{proof}
It is straightforward to show that the Hadamard tensor product
induces a monoidal structure on the category $\mathrm{sk}(\Operade)$.
It is then straightforward to see that it extends canonically
to the 2-category $\Operade$. Indeed,
given morphisms $f, f' : \operad P_1 \to \operad P_2$ and
$g, g' : \operad Q_1 \to \operad Q_2$
and 2-morphisms $A : f \to f'$ and $B : g \to g'$ we obtain
a 2-morphism $A \otimes B : f \otimes g \to f' \otimes g'$
given by the elements
$$
A(c) \otimes B(d) \in (\operad P_2 \otimeshadamard \operad Q_2)(f\otimeshadamard g(c,d);f' \otimeshadamard g'(c,d)) = \operad P_2 (f(c); f'(c)) \otimes \operad Q_2 (g(d); g'(d)) .
$$
for any $(c,d) \in \Ob(\operad P_1) \times \Ob(\operad Q_1)$. 
\end{proof}

\begin{definition}
A Hopf operad is a comonoid in the category $\mathrm{sk}(\Operad)$
with respect to the Hadamard tensor product. A comodule of such a comonoid
is called a Hopf comodule.
\end{definition}

\begin{remark}
Usually in the literature,
a Hopf operad is a \textit{cocommutative}
comonoid in the category of enriched operads
with respect to the Hadamard tensor product.
The reason why we change a standard name is that
many operads that we deal with are non cocommutative
comonoids: for instance, if the ground category
$\categ E$ is that of chain complexes,
then the Boardman--Vogt constructions
of a cocommutative Hopf operad is 
in general a non-cocommutative Hopf operad.
\end{remark}

\begin{remark}
 Hopf operads may be seen as operads enriched
 in coalgebras.
\end{remark}

\subsection{Operads with a fixed set of colours}

Let us consider the following commutative diagram of monoidal
categories and strict monoidal functors
$$
\begin{tikzcd}
     \mathrm{sk}(\Operade)
     \ar[r]\ar[rd,"\Ob(-)"']
     & 
     \catofmod{\categ S} \ar[d, "\Ob(-)"]
     \\
     &\Set .
\end{tikzcd}
$$

\begin{proposition}
 The two $\Ob(-)$ functors are
 symmetric monoidal fibrations in the sense that
 \begin{itemize}
     \itemt they are strict symmetric monoidal functors;
     \itemt they are cartesian fibrations;
     \itemt the tensor product preserves cartesian liftings.
 \end{itemize}
 Moreover, the forgetful functor from enriched operads
 to coloured symmetric sequences is a strict functor
 of symmetric monoidal fibrations in the sense that \begin{itemize}
     \itemt it is a strict symmetric monoidal functor;
     \itemt it sends cartesian morphisms to cartesian morphisms.
 \end{itemize} 
\end{proposition}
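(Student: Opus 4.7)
The plan is to treat the three conditions for $\Ob(-)$ one by one, then deduce the analogous facts for the forgetful functor as a direct consequence of the formulas involved.

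The strict symmetric monoidal property is built into the definitions: one has $\Ob(\operad P \otimeshadamard \operad Q) = \Ob(\operad P) \times \Ob(\operad Q)$ by definition of the Hadamard tensor product, and the unit $\uCom$ has a single colour which is the unit of $\Set$ for the cartesian product. The coherence isomorphisms of $\mathrm{sk}(\Operade)$ and $\catofmod{\mbs}$ under $\otimeshadamard$ are built from those of $\categ E$, and they send a tuple of colours to the corresponding reordered tuple of colours, so $\Ob(-)$ strictly sends them to the coherences of $(\Set, \times)$. The same verification works at the level of coloured symmetric sequences.

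For the cartesian fibration property, given a function $f: \set C \to \Ob(\operad Q)$, I would define a pullback operad $f^\ast \operad Q$ with $\Ob(f^\ast \operad Q) = \set C$ and
\[
	f^\ast \operad Q (c_1, \ldots, c_n; c) \coloneqq \operad Q (f(c_1), \ldots, f(c_n); f(c)),
\]
endowed with the obvious symmetric group action, composition maps and units inherited from $\operad Q$; the induced map $f^\ast \operad Q \to \operad Q$ sits over $f$ and one checks by direct inspection that any morphism of operads $\operad P \to \operad Q$ whose action on colours factorises through $f$ lifts uniquely to a morphism $\operad P \to f^\ast \operad Q$ over the identity of $\set C$. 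The same construction works verbatim for coloured symmetric sequences. The stability of cartesian liftings under tensor product then reduces to observing that
\[
	(f^\ast \operad Q \otimeshadamard (f')^\ast \operad Q')(\underline{(c,c')}; (c,c'))
	= \operad Q (f(\underline c); f(c)) \otimes \operad Q'(f'(\underline{c'}); f'(c'))
	= (f \times f')^\ast (\operad Q \otimeshadamard \operad Q')(\underline{(c,c')}; (c,c')),
\]
and that all structure maps coincide on the nose; no calculation beyond unfolding the definition of $\otimeshadamard$ is needed.

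For the second part of the statement, the forgetful functor $\mathrm{sk}(\Operade) \to \catofmod{\mbs}$ is strict symmetric monoidal by the previous proposition. That it preserves cartesian morphisms is immediate from the formula for $f^\ast \operad Q$ above: the cartesian lifting in $\mathrm{sk}(\Operade)$ and the cartesian lifting in $\catofmod{\mbs}$ are defined by exactly the same pointwise formula, so the forgetful functor sends one to the other.

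The main verification — and the only place where there is any content to check — is the universal property of $f^\ast \operad Q$ as a cartesian lifting; everything else is formal unwinding of definitions. There is no real obstacle, only the mildly tedious bookkeeping of the symmetric group actions and the compatibility of the composition maps of $f^\ast \operad Q$ under the identification of its operations with those of $\operad Q$ along $f$.
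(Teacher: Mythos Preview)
Your proof is correct and follows essentially the same approach as the paper: both construct the cartesian lift $f^\ast\operad Q$ by the pointwise formula $f^\ast\operad Q(\underline c;c)=\operad Q(f(\underline c);f(c))$, then verify the identity $f^\ast\operad Q\otimeshadamard (f')^\ast\operad Q'=(f\times f')^\ast(\operad Q\otimeshadamard\operad Q')$ to conclude that tensor products preserve cartesian morphisms. You give slightly more detail on the strict monoidality of $\Ob(-)$ and on the universal property of the lift, but the structure of the argument is the same.
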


\begin{proof}
Let us consider a function $\phi : \set C \to \set C'$ and an operad $\operad P$ whose set of colours is $\set C'$. Then let $\phi^\ast(\operad P)$ be the $\set C$-coloured operad so that
$$
\phi^\ast(\operad P)(\underline c ; c) =  \operad P(\phi(\underline c); \phi(c)) .
$$
Then, the canonical morphism 
of enriched operads $\phi^\ast(\operad P) \to \operad P$
above the function $\phi$ is a cartesian lifting of $\phi$.

Moreover, we have a canonical isomorphism
$$
\phi^\ast (\operad P) \otimeshadamard \psi^\ast( \operad Q) = (\phi \times \psi)^\ast(\operad P \otimeshadamard \operad Q)
$$
for any operads $\operad P, \operad Q$
and any functions $\phi, \psi$
towards respectively $\Ob(\operad P)$ and $\Ob(\operad Q)$. Hence, 
the tensor product of two cartesian maps is cartesian.

The same constructions and arguments apply mutatis mutandis to the
case of coloured symmetric sequences. In particular, the
strict symmetric monoidal forgetful functor
from enriched operads to coloured symmetric sequences
sends cartesian maps to cartesian maps.
\end{proof}

\begin{definition}
Let $\set C$ be a set. Let us denote
\begin{itemize}
    \itemt $\catofmod{\categ S_C}$ the category of $\set C$-coloured symmetric sequences that is the fiber of the colours functor $\catofmod{\categ S} \to \Set$ over the set $\set C$;
    \itemt $\Operad_{\categ E,\set C}$ the category of $\set C$-coloured operads that is the fiber of the functor $\mathrm{sk}(\Operade) \to \Set$ over the set $\set C$.
\end{itemize}
\end{definition}

\begin{proposition}\cite[Theorem 12.7]{Shulman08}
For any set $\set C$, the category $\Operad_{\categ E,\set C}$ inherit from $\Operade$
the structure of a symmetric monoidal category.
Moreover, for any function $\phi : \set C \to \set D$, the functor
$$
\phi^\ast : \Operad_{\categ E,\set D} \to \Operad_{\categ E,\set C}
$$
has the canonical structure of a strong symmetric monoidal functor.
Finally, for any two composable functions $\phi, \psi$,
the natural îsomorphism
$(\phi \circ \psi)^\ast \simeq \psi^\ast \circ \phi^\ast$
is a monoidal natural transformation.
The same phenomenon holds if we replace enriched operads by
coloured symmetric sequences.
\end{proposition}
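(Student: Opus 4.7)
The plan is to reduce this statement to a direct application of the general theory of symmetric monoidal fibrations, which is precisely what Shulman's cited theorem packages. The previous proposition has done the real work: it exhibits $\Ob(-) : \mathrm{sk}(\Operade) \to \Set$ (and similarly for coloured symmetric sequences) as a symmetric monoidal fibration in the sense that it is a strict symmetric monoidal cartesian fibration whose tensor product preserves cartesian liftings. From such data, Shulman produces automatically the symmetric monoidal structure on the fibers, the strong monoidality of the reindexing functors, and the monoidality of the pseudo-functoriality isomorphisms.

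Concretely, I would proceed as follows. First, define on the fiber $\Operad_{\categ E, \set C}$ the tensor product
\[
	\operad P \otimes_{\set C} \operad Q := \Delta_{\set C}^\ast (\operad P \otimeshadamard \operad Q),
\]
where $\Delta_{\set C} : \set C \to \set C \times \set C$ is the diagonal, and the unit $u_{\set C}^\ast \uCom$ where $u_{\set C} : \set C \to \{\ast\}$ is the terminal map. The associator, left and right unitors and braiding are obtained by transporting those of $(\mathrm{sk}(\Operade), \otimeshadamard)$ along the universal property of cartesian liftings. Their coherence axioms in $\Operad_{\categ E, \set C}$ reduce to the corresponding coherence axioms in $\mathrm{sk}(\Operade)$ combined with the uniqueness part of cartesian liftings.

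Next, for $\phi : \set C \to \set D$, the strong symmetric monoidal structure on $\phi^\ast$ comes from the canonical isomorphism
\[
	\phi^\ast (\operad P) \otimeshadamard \phi^\ast(\operad Q)
	\simeq (\phi \times \phi)^\ast(\operad P \otimeshadamard \operad Q),
\]
established in the previous proposition (tensor product preserves cartesian liftings), together with pseudo-functoriality of $(-)^\ast$. Explicitly, using $(\phi \times \phi) \circ \Delta_{\set C} = \Delta_{\set D} \circ \phi$:
\[
	\phi^\ast(\operad P) \otimes_{\set C} \phi^\ast(\operad Q)
	= \Delta_{\set C}^\ast\bigl(\phi^\ast\operad P \otimeshadamard \phi^\ast\operad Q\bigr)
	\simeq \Delta_{\set C}^\ast (\phi \times \phi)^\ast(\operad P \otimeshadamard \operad Q)
	\simeq \phi^\ast \Delta_{\set D}^\ast (\operad P \otimeshadamard \operad Q)
	= \phi^\ast(\operad P \otimes_{\set D} \operad Q),
\]
and similarly for the units via $u_{\set C} = u_{\set D} \circ \phi$. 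The strong monoidal coherence diagrams for $\phi^\ast$ and the fact that $(\phi \circ \psi)^\ast \simeq \psi^\ast \circ \phi^\ast$ is a monoidal natural isomorphism follow from the universal property of cartesian lifts: any two parallel morphisms in the fiber whose images under the strict monoidal forgetful functor agree are themselves equal, so all coherence squares collapse.

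The main potential obstacle is not finding the proof but rather being convinced that our setup genuinely satisfies Shulman's hypotheses. This is exactly the content of the previous proposition, and the same argument applies verbatim with $\mathrm{sk}(\Operade)$ replaced by $\catofmod{\mbs}$, yielding the final sentence of the statement. The forgetful functor from enriched operads to coloured symmetric sequences, being a strict morphism of symmetric monoidal fibrations, automatically induces strict symmetric monoidal functors between the corresponding fibers compatibly with reindexing.
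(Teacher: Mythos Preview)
Your proposal is correct and matches the paper's approach exactly: the paper gives no separate proof but simply cites Shulman's theorem, relying on the preceding proposition to verify the hypotheses of a symmetric monoidal fibration, and then describes the resulting fiber tensor product as $\mathrm{diag}_{\set C}^\ast(\operad P \otimeshadamard \operad Q)$ with unit $\pi_{\set C}^\ast(\uCom)$ --- precisely the formulas you derive. Your unpacking of the strong monoidality of $\phi^\ast$ via $(\phi\times\phi)\circ\Delta_{\set C} = \Delta_{\set D}\circ\phi$ is the standard Shulman argument and is a welcome elaboration of what the paper leaves implicit.
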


Let $\set C$ be a set. Let us describe the structure
of a symmetric monoidal category on $\Operad_{\categ E, \set C}$.
The tensor product is
$$
\operad P \otimes_{\set C} \operad Q
= \mathrm{diag}_{\set C}^\ast( \operad P \otimeshadamard \operad Q)
$$
where $\mathrm{diag}_{\set C}$ is the diagonal map of $\set C$.
The tensor unit is $\pi_{\set C}^\ast(\uCom)$  where $\pi_{\set C}$ is the unique function from $\set C$ to $\ast$.

\begin{definition}
 Let $M$ be a coloured symmetric sequence whose set of colours is $\set C$. Then, for any $\set C$-coloured tree $Y$ we define
 $$
 \bigotimes_Y M = \bigotimes_{v \in \verte(T)} M(v)
 $$
 where
 $$
 M(v) = \varinjlim_{(\underline c; c) \simeq v} M(\underline c;c) .
 $$
 This defines a bifunctor
 $$
 \catofmod{\categ S_C} \times (\cDend_{\set C})^\op \to E .
 $$
\end{definition}

\begin{proposition}
The construction $\operad P, Y \mapsto \bigotimes_Y \operad P$ defines a bifunctor
$$
\Operad_{\categ E,\set C} \times (\aDend_{\set C})^\op \to \categ E .
$$
\end{proposition}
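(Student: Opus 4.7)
The plan is to use the fact that the construction $\operad P, Y \mapsto \bigotimes_Y \operad P$ is already known to define a bifunctor on $\catofmod{\categ S_{\set C}} \times (\cDend_{\set C})^\op$ (by the preceding definition on coloured symmetric sequences, once one restricts the tree variable to the core groupoid). What remains, then, is to extend the functoriality in the tree variable from isomorphisms to all of $\aDend_{\set C}$, and to verify compatibility with morphisms of operads. The key input that is newly available is the operad structure on $\operad P$, namely the partial composition maps $\gamma_i$ and the units $\eta_c$.

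By the remark following the definition of $\aDend$, the morphisms of $\aDend_{\set C}$ are generated by isomorphisms, inner cofaces, and codegeneracies. My approach is to define the extension on the latter two classes of generators and then check the relations. For an inner coface $f: Y \to Y'$ corresponding to an inner edge $e$ of $Y'$ of colour $c$ whose contraction produces $Y$ by fusing two vertices $v_1, v_2 \in \verte(Y')$ into one vertex $v \in \verte(Y)$, I define the induced map $\bigotimes_{Y'} \operad P \to \bigotimes_Y \operad P$ as the partial composition
$$
\gamma_i : \operad P(v_1) \otimes \operad P(v_2) \to \operad P(v),
$$
(with $i$ the input of $v_1$ corresponding to $e$) on the two relevant factors, tensored with the identity on the remaining vertex-indexed factors. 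For a codegeneracy $f : Y \to Y'$ that removes a unary vertex $v$ on an edge of colour $c$, I define the induced map $\bigotimes_{Y'} \operad P \to \bigotimes_Y \operad P$ by inserting the unit $\eta_c : \II \to \operad P(c;c)$ into the factor indexed by $v$ and using the identity elsewhere.

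To establish that these definitions assemble into a functor $(\aDend_{\set C})^\op \to \categ E$, I will verify the standard relations generating the category $\aDend_{\set C}$. Commutation of two disjoint inner cofaces follows from the bifunctoriality of $\otimes$; successive partial compositions along two composable inner edges associate by the operadic associativity of $\gamma$; the interactions between inner cofaces and codegeneracies correspond exactly to the unit axioms relating $\gamma$ and $\eta$; and equivariance with respect to isomorphisms of trees is the equivariance of $\gamma$ and $\eta$ under the symmetric group actions. Bifunctoriality with respect to the first variable is then automatic, since a morphism of operads commutes with $\gamma$ and $\eta$ by definition and hence with all of the structural maps just introduced.

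The main obstacle, though conceptually straightforward, is the bookkeeping: the definition of $\bigotimes_Y \operad P$ involves a colimit over identifications $(\underline c; c) \simeq v$ for each vertex $v$, and one must fix coherent conventions so that the $\gamma$-maps and $\eta$-maps used for inner cofaces and codegeneracies are independent of the chosen representatives, equivariant with respect to the symmetric group action on inputs, and compatible with the indexing by $\aDend_{\set C}$-morphisms of the target tree. Once these conventions are set up, the verifications reduce to direct consequences of the operad axioms.
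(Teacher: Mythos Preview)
Your proposal is correct and takes essentially the same approach as the paper: the paper's proof is the single sentence ``The inner face maps correspond to composition within the operad, while the degeneracies correspond to the units,'' which is exactly what you have spelled out in detail, with the bookkeeping about generators, relations, and equivariance made explicit.
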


\begin{proof}
The inner face maps correspond to composition within the operad, while the degeneracies correspond to the units.
\end{proof}

One can also describe $\set C$-coloured operads as monoids for a
monoidal structure on $\set C$-coloured symmetric sequences.
Let $M$ and $N$ be two $\set C$-coloured symmetric sequences.
Their \textit{composite product} $M \triangleleft N$ is
the symmetric sequence given by the following formula 
\[
 	M \triangleleft N (\underline c;c) = \int_{\underline c'\in \categ S_{\set C}} M(\underline c'; c) \otimes \int_{(\underline c'_i)_{i = 1}^{|\underline{c}'|}} \left(\bigotimes_{i=1}^{|\underline c'|}
	N(\underline c'_i ; \underline c'[i])\right) \otimes \Hom{\categ S_{\set C}}{\underline c}{\sqcup_i \underline c'_i} .
\]

\begin{proposition}\cite{LodayVallette12}\cite{LeGrignouLejay21}
 The bifunctor $-\triangleleft -$ is the tensor product
 of a monoidal structure on symmetric sequences
 whose unit $\III_{\set C}$ is
 \[
 	\III_{\set C}(\underline c ; c)=	
\begin{cases}
 \II \text{ if }\underline c =c ;\\
 \emptyset \text{ otherwise.}
\end{cases}
 \]
 Moreover, the category of $\set C$-coloured
 enriched
 operads is canonically isomorphic to the category
 of $\triangleleft$-monoids.
\end{proposition}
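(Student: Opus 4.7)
The plan is to proceed in two stages: first establish that $-\triangleleft-$ genuinely endows $\catofmod{\categ S_{\set C}}$ with a monoidal structure with unit $\III_{\set C}$, and then identify $\triangleleft$-monoids with $\set C$-coloured enriched operads as defined earlier.

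For the unit axioms, I would unwind $M \triangleleft \III_{\set C}$ and $\III_{\set C} \triangleleft M$ directly. In $M \triangleleft \III_{\set C}(\underline c; c)$, the inner tensor $\bigotimes_i \III_{\set C}(\underline c'_i; \underline c'[i])$ is empty unless each $\underline c'_i$ is the single colour $\underline c'[i]$, in which case it is the monoidal unit; the remaining coend collapses to $M(\underline c; c)$ via the $\Hom{\categ S_{\set C}}{\underline c}{\sqcup_i \underline c'_i}$ factor, yielding $M$. The other unitor is similar. For associativity, which is the key technical step, I would rewrite both $(M \triangleleft N) \triangleleft P$ and $M \triangleleft (N \triangleleft P)$ as coends indexed by (isomorphism classes of) $\set C$-coloured two-level trees: one level corresponds to the outer composition, the other to the inner. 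Using bilinearity of $\otimes$ in $\categ E$ to commute the coends with tensor products, and using Fubini for coends, both expressions reduce to the same coend over two-level coloured trees $Y$ of the form $\bigotimes_{v \in \verte(Y)} M_v$ with $M_v \in \{M,N,P\}$ according to the level. The pentagon and triangle identities then follow from the corresponding identities in $(\categ E, \otimes, \II)$.

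For the second stage, I would unpack what a $\triangleleft$-monoid structure $\mu : M \triangleleft M \to M$ with unit $\eta : \III_{\set C} \to M$ amounts to. By the universal property of the coend defining $M \triangleleft M$, giving $\mu$ is equivalent to giving, for every tuple $\underline c'$ and every family $(\underline c'_i)$, equivariant maps
\[
M(\underline c'; c) \otimes \bigotimes_{i=1}^{|\underline c'|} M(\underline c'_i; \underline c'[i]) \to M(\sqcup_i \underline c'_i; c),
\]
that is, total composition maps of an operad. The unit $\eta$ supplies precisely the maps $\II \to M(c;c)$ for each colour $c$. The associativity of $\mu$ corresponds, under the tree-indexed description from the first stage, to the axiom that total composition along a two-level tree agrees with the two possible iterated compositions, which is equivalent to the usual operadic associativity (expressible either via partial compositions $\gamma_i$ or via total composition). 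The unit axioms for $\mu$ translate directly to the left and right unitality of $\eta$ in the operad. This establishes a bijection on objects; naturality of the construction, together with the fact that monoid morphisms correspond exactly to morphisms of symmetric sequences commuting with these structure maps, yields an isomorphism of categories.

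The main obstacle is the bookkeeping inherent in the associativity isomorphism for $\triangleleft$: one must carefully track the $\categ S_{\set C}$-coend, the permutation factors $\Hom{\categ S_{\set C}}{-}{-}$, and the symmetric group actions on the inputs when interchanging the two levels of composition. The cleanest route is the tree-indexed reformulation, which makes the symmetry between the two parenthesisations manifest; alternatively, one may invoke the references \cite{LodayVallette12} and \cite{LeGrignouLejay21} where the argument is carried out in detail.
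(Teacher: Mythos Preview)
The paper does not supply its own proof of this proposition: it is stated with citations to \cite{LodayVallette12} and \cite{LeGrignouLejay21} and no proof environment follows. Your sketch is the standard argument found in those references --- reducing associativity of $\triangleleft$ to a tree-indexed coend and then unpacking a $\triangleleft$-monoid as total compositions plus units --- so it is correct and aligned with what the cited sources do; there is nothing further to compare.
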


\begin{remark}
The fact that the Hadamard tensor product on $\set C$-coloured symmetric sequences extends
to $\set C$-coloured enriched operads is linked to the
following facts.
\begin{itemize}
 \itemt The monad that encodes $\set C$-coloured enriched operads
 within $\set C$-coloured symmetric sequences
 is a Hopf monad with respect to the Hadamard tensor product
 on symmetric sequences;
 see \cite{Moerdijk02} and \cite{LeGrignou22}
 for the definition of a Hopf monad.
 \itemt The composition tensor product of
 $\set C$-coloured symmetric sequences $\triangleleft$
 distributes the Hadamard tensor product.
\end{itemize}
\end{remark}

\subsection{Segments and intervals}

\begin{definition}[Segments and intervals]\cite{BergerMoerdijk06}
A segment of $\categ{E}$ is an object $H$ of $\categ{E}$ together with maps
 \[
 \II \sqcup \II \xrightarrow{(\delta^0_H ,\delta^1_H)} H \xrightarrow{\sigma_H} \II
 \]
 that factorise the morphism
 $\II \sqcup \II \to \II$ together with a map
 $\gamma_H: H \otimes H \to H$ such that 
\begin{itemize}
 \item[$\triangleright$] the product $\gamma_H$ is associative, that is $\gamma_H (Id_H \otimes \gamma_H)=\gamma_H (\gamma_H\otimes Id_H)$ and $\gamma_H \circ \tau = \gamma_H$,
 \item[$\triangleright$] the product has a unit given by $\delta^0_H: \II \to H$,
 \item[$\triangleright$] the morphism $\sigma_H$ is a morphism of monoids (that is an augmentation),
 \item[$\triangleright$] the morphism $\delta^1_H: \II \to H$ is absorbing, that is the following diagram commutes
  \[
	\begin{tikzcd}[ampersand replacement=\&]
		H \simeq \II \otimes H \arrow[r,"\delta^1_H \otimes Id" ] \arrow[d, "\sigma_H"']
		\& H \otimes H \arrow[d, "\gamma_H"]
		\& H \simeq H \otimes \II \arrow[l,"Id \otimes \delta^1_H"'] \arrow[d,"\sigma_H"] \\
		\II \arrow[r,"\delta^1_H"'] 
		\& H
		\& \II \arrow[l,"\delta^1_H"]\ .
	\end{tikzcd}
\]

\end{itemize}
A morphism of segments from $(H,\delta^0_H,\delta^1_H,\sigma_H,\gamma_H)$
to $(H',\delta^0_{H'},\delta^1_{H'},\sigma_{H'},\gamma_{H'})$ is a
morphism of $f: H \to H'$ that commutes with all the structures.
We denote $\categ{Seg}$ the category of segments.
\end{definition}

\begin{definition}
 A segment $(H,\delta^0_H,\delta^1_H,\sigma_H,\gamma_H)$ is said to
 be commutative if the product $\gamma_H$ is commutative.
\end{definition}

\begin{definition}
If $\categ E$ is a monoidal model category, an interval is a segment such that the map $(\delta^0_H ,\delta^1_H)$ is a cofibration and the map $\sigma_H$ is a weak equivalence.
\end{definition}

\begin{remark}
 Note that these are exactly the notions of segments and intervals introduced in \cite{BergerMoerdijk06} but they differ from  the notions of segments and
 intervals that I used in \cite{LeGrignou18}.
\end{remark}

The tensor product of $\categ{E}$ induces a monoidal structure on
the category of segments as well as on the full subcategory
of commutative segments. For instance, the tensor product of a segment
$(H,\delta^0_H,\delta^1_H,\sigma_H,\gamma_H)$ with a segment
$(H',\delta^0_{H'},\delta^1_{H'},\sigma_{H'},\gamma_{H'})$ is the segment
\[
	\mathbb 1 \sqcup  \mathbb 1 \simeq  \mathbb 1\otimes \mathbb 1 \sqcup \mathbb 1 \otimes \mathbb 1
	 \xrightarrow{(\delta^0_H\otimes \delta^0_{H'}, \delta^1_H\otimes \delta^1_{H'})} H \otimes H' \xrightarrow{\sigma_{H} \otimes \sigma_{H'}} 
	 \mathbb 1  \otimes \mathbb 1 \simeq \mathbb 1\ ;
\]

\begin{definition}
A Hopf segment in $\categ{E}$ is a comonoid in the monoidal category of segments. In a monoidal model category, a Hopf interval is a Hopf segment whose
underlying segment is an interval.
\end{definition}

\begin{remark}[\cite{Maltsiniotis09}\cite{LeGrignou18}]
Since the monoidal category $\categ E$ is a bilinear, the category of
segments is canonically equivalent to the category of
left adjoint functors
from cubical sets with connections to $\categ E$.
We refer the reader to \cite{Maltsiniotis09} and \cite{Cisinski14}
for a definition of the category of cubical sets with connections.
\end{remark}

\subsection{The Boardman--Vogt construction}

\begin{definition}
Let $H$ be a segment in $\categ{E}$ and let $Y$ be a coloured tree. Then
let
 \[
 	\bigotimes^Y H := \bigotimes_{\inner{Y}}H .
 \]
where $\inner Y$ is the set of inner edges
of $Y$. In particular, if $Y$ has no internal edge,
then $\bigotimes^Y H := \II$.
\end{definition} 
 
\begin{proposition}
Let $\set C$ be a set. The formula $\bigotimes^Y H$ induces a bifunctor
 \[
 	\bigotimes^{-} - : \aDend_{\set C} \times \categ{Seg} \to \categ{E}\ . 
 \]
\end{proposition}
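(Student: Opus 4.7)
The plan is to define the bifunctor on generators of $\aDend_{\set C}$ and on segment morphisms, then verify the defining relations. Functoriality in the segment variable is immediate: a morphism of segments $f : H \to H'$ commutes with all the structure maps, so the induced map $f^{\otimes |\inner(Y)|} : \bigotimes^Y H \to \bigotimes^Y H'$ depends functorially on $f$. The interesting direction is the tree variable, for which I would appeal to the remark that the morphisms of $\aDend_{\set C}$ are generated by isomorphisms, inner cofaces, and codegeneracies.

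I would then describe the action on each type of generator. An isomorphism $\phi : Y \to Y'$ restricts to a bijection on inner edges and thus induces an isomorphism $\bigotimes^Y H \simeq \bigotimes^{Y'} H$ via the symmetry of $\otimes$. An inner coface $\partial_e : T/e \to T$ satisfies $\inner(T) = \inner(T/e) \sqcup \{e\}$, and I define the corresponding map by inserting the unit $\delta^0_H : \II \to H$ at the new edge:
$$
\bigotimes^{T/e} H \xrightarrow{\simeq} \bigotimes^{T/e} H \otimes \II \xrightarrow{\id \otimes \delta^0_H} \bigotimes^T H.
$$
A codegeneracy $\sigma_v : Y \to Y'$ removes a unary vertex $v$ of $Y$, whose incoming edge $e_1$ and outgoing edge $e_2$ collapse to a single edge $e$ of $Y'$. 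Depending on which of $e_1, e_2$ are inner, the induced map either uses the multiplication $\gamma_H : H \otimes H \to H$ to merge the two adjacent factors (when both $e_1$ and $e_2$ are inner in $Y$), the augmentation $\sigma_H : H \to \II$ to delete one factor (when exactly one is inner), or the identity (when neither is inner).

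The final step is to verify that all the defining relations in $\aDend_{\set C}$ are preserved. The relations between generators acting on disjoint parts of the tree follow from the symmetry and functoriality of $\otimes$. The relations that interact at a shared edge follow directly from the segment axioms: the unit and associativity axioms of $\gamma_H$, the compatibility of $\sigma_H$ with $\gamma_H$ expressing that $\sigma_H$ is a morphism of monoids, and the factorisation $\II \sqcup \II \to H \to \II$.

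The main obstacle is combinatorial rather than conceptual: one must enumerate the cases corresponding to each type of unary vertex being contracted, and match each defining relation in $\aDend_{\set C}$ to the appropriate segment axiom. The segment axioms are set up precisely so that each case corresponds to an explicit axiom, making the verification routine but requiring some care about the relative positions of contracted vertices and inserted edges.
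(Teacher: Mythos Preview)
Your proposal is correct and follows essentially the same approach as the paper: define the functor on the generating morphisms of $\aDend_{\set C}$ (isomorphisms, inner cofaces via $\delta^0_H$, codegeneracies via $\gamma_H$, $\sigma_H$, or the identity according to which of the two adjacent edges are inner), and then appeal to the segment axioms for compatibility. The paper's proof is in fact less detailed than yours, as it only records the definitions on generators and leaves the verification of relations implicit.
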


\begin{proof}
 It is clear that it induces a bifunctor $\cDend_{\set C} \times \categ{Seg} \to \categ{E}$.
 Besides, let $\delta : Y \to Y'$ be an inner coface. The map
 \[
 	\bigotimes^Y H \simeq \II \otimes \bigotimes^Y H
	\xrightarrow{\delta_0 \otimes \id} 
	H \otimes \bigotimes^Y H \simeq \bigotimes^{Y'} H\ ,
 \]
 gives us the functoriality of the formula $\bigotimes^{-} H$ with respect to this inner coface.
Let $\sigma : Y \to Y'$
 be a codegeneracy. If the two edges $e$ and $e'$ (with $e$ below $e'$) of $Y$ that will merge in $Y'$ are internal, then 
 the functor $\bigotimes^- H$ sends $\sigma$ to the map
 \[
 	 \bigotimes^Y H= \bigotimes_{\inner Y} H \simeq \bigotimes_{e,e'}H \otimes \bigotimes_{\inner Y - \{e,e'\}} H
	\xrightarrow{m \otimes \id^{n-2}} H \otimes \bigotimes_{\inner Y - \{e,e'\}} H
	\simeq \bigotimes_{\inner{Y'}}  H = \bigotimes^{Y'} H\ .
 \]
If one of these two edges is external and the other one is internal (for instance $e$ is internal),
 the functor $\bigotimes^- H$ sends $\sigma$ to the map
 \[
 	 \bigotimes^Y H= \bigotimes_{\inner Y} \simeq H \otimes \bigotimes_{\inner Y - \{e\}} H
	\xrightarrow{\sigma_H \otimes \id^{n-1}} \bigotimes_{\inner Y - \{e\}} H \simeq  \bigotimes_{\inner{Y'}} H
	= \bigotimes^{Y'} H,
\]
Finally, if these two edges are external, which is only possible if $Y$ is the corolla with one leaf,
then the functor $\bigotimes^- H$ sends $\sigma$ to the map $\II \simeq \II$.
\end{proof}

\begin{remark}
For codegeneracies that will merge two inner edges $e$ below $e'$ to a new inner edge $e''$,
the labelling of $e''$ is obtained from that of $e$ and $e''$ using the product $m$.
The first input of $m$ refers to the labelling of $e$ while the second input of $m$ refers
to the labelling of $e'$. This is a convention and we could have make the other choice.
\end{remark}

\begin{definition}[\cite{BergerMoerdijk06}]
For any enriched operad $\operad{P}$ and any segment
$H$, the Boardman--Vogt construction of $\operad{P}$ with respect to
$H$ is an enriched operad with the same set of colours and whose underlying
coloured symmetric sequence is given by the following formula
 \[
 	W_H \operad{P} (\underline c ; c):= \int_{Y \in \aDend_{\set C}\op} \left(\bigotimes^{Y} H \otimes \bigotimes_Y \operad{P} \right)
	\otimes \mathrm{Iso}_{\categ{Set}/\set C}(\underline c , \leaves{Y}) \ .
 \]
 The unit maps are
 \[
 	\eta_c : \II \xrightarrow{\simeq} \bigotimes^{|_c}H \otimes \bigotimes_{|_c} \operad P \to W_H \operad{P} (c ; c) ,
	\quad c \in \Ob(\operad P).
 \]
 The composition maps $\gamma_i$ are given on generators by the following composition
 \[
\begin{tikzcd}
	\bigotimes^Y H \otimes \bigotimes_Y \operad P \otimes \{\phi\}
	\otimes \bigotimes^{Y'} H \otimes \bigotimes_{Y'}\operad P \otimes \{ \psi\}
	\arrow[d, "\simeq"]
	\\
	\bigotimes^Y H \otimes \II \otimes \bigotimes^{Y'} H
	\otimes \bigotimes_Y \operad P \otimes \bigotimes_{Y'}\operad P
	\otimes \{\phi \triangleleft_i \psi\}
	\arrow[d,"\id \otimes \delta_1 \otimes \id"]
	\\
	\bigotimes^Y H \otimes H \otimes \bigotimes^{Y'} H
	\otimes \bigotimes_Y \operad P \otimes \bigotimes_{Y'}\operad P
	\otimes \{\phi \triangleleft_i \psi\}
	\arrow[d, equal]
	\\
	\bigotimes^{Y \sqcup_{\phi(i)} Y' } H
	\otimes \bigotimes_{Y \sqcup_{\phi(i)} Y'} \operad P 
	\otimes \{\phi \triangleleft_i \psi\}
\end{tikzcd}
 \]
 where the tree $Y \sqcup_{\phi(i)} Y'$ is the tree obtained
 from $Y$ and $Y'$ by gluing the root edge of $Y'$ with the leaf $\phi(i)$
 of $Y$. Moreover, the map
 $$
 \phi \triangleleft_i \psi : \underline c \triangleleft_i \underline c'
 \to \leaves{Y \sqcup_{\phi(i)} Y'}
 $$
 sends an element $\underline{c}[k]$ to its image through $\phi$
 for $k \neq i$
 and an element $\underline{c}[j]$ to its image through $\psi$.
\end{definition}

\begin{proposition}\cite{BergerMoerdijk06}
The construction $H, \operad P \mapsto W_H \operad P$ defines a functor
 \[
 	W_- - : \categ{Seg} \times \mathrm{sk}(\Operade) \to \mathrm{sk}(\Operade).
 \]
\end{proposition}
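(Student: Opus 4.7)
The plan is to define, for any morphism of segments $f: H \to H'$ and any morphism of enriched operads $\alpha: \operad P \to \operad P'$ (inducing a colour function $\alpha: \set C \to \set C'$), a morphism $W_f \alpha: W_H \operad P \to W_{H'} \operad P'$ in $\mathrm{sk}(\Operade)$, and then to verify the identity and composition axioms. The first step is to observe that post-composition with $\alpha$ on leaf/edge labellings defines a functor $\alpha_*: \aDend_{\set C} \to \aDend_{\set C'}$ that leaves the underlying tree unchanged and relabels only the edges; in particular, $\inner(Y)$, $\verte(Y)$ and $\leaves(Y)$ are canonically identified with their counterparts for $\alpha_* Y$.

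Given this, I would describe $W_f \alpha$ component-by-component. For each $Y \in \aDend_{\set C}$, the morphism of segments $f$ yields $\bigotimes^Y H \to \bigotimes^{\alpha_* Y} H'$ tensor-factor by tensor-factor on inner edges; the morphism of operads $\alpha$ yields $\bigotimes_Y \operad P \to \bigotimes_{\alpha_* Y} \operad P'$ vertex by vertex, using the maps $\operad P(\underline c; c) \to \operad P'(\alpha(\underline c); \alpha(c))$; and post-composition with $\alpha$ gives $\mathrm{Iso}_{\categ{Set}/\set C}(\underline c, \leaves(Y)) \to \mathrm{Iso}_{\categ{Set}/\set C'}(\alpha(\underline c), \leaves(\alpha_* Y))$. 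These three components are natural in $Y$ with respect to $\alpha_*$ because $f$ commutes with $\delta^0_H, \delta^1_H, \sigma_H, \gamma_H$ and $\alpha$ commutes with the operadic units, compositions and symmetric actions. Therefore they assemble into a well-defined morphism of coends $W_H \operad P(\underline c; c) \to W_{H'} \operad P'(\alpha(\underline c); \alpha(c))$.

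Next I would check that this morphism of underlying coloured symmetric sequences respects the operadic structure of $W_H \operad P$. The units and compositions in the Boardman--Vogt construction are built, at the level of generators, from the absorbing element $\delta^1_H$, the tree gluings $Y \sqcup_{\phi(i)} Y'$, and the structural maps of $\bigotimes^Y H$ and $\bigotimes_Y \operad P$. Since $f$ intertwines $\delta^1$ and the rest of the segment structure and $\alpha$ intertwines the operadic units and compositions, the corresponding squares commute. The functoriality axioms $W_{\mathrm{id}} \mathrm{id} = \mathrm{id}$ and $W_{f' \circ f}(\alpha' \circ \alpha) = W_{f'} \alpha' \circ W_f \alpha$ then follow from the strict functoriality of $\bigotimes^Y -$, $\bigotimes_Y -$, $\alpha \mapsto \alpha_*$, and of post-composition on $\mathrm{Iso}$.

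The main obstacle is purely bookkeeping: tracking how the coend over $\aDend_{\set C}$ interacts with the change-of-colours functor $\alpha_*$, and verifying that the three components of $W_f \alpha$ descend coherently to the coend under the morphisms generated by inner cofaces and codegeneracies. Once the definitions are unpacked, however, each required commutation reduces directly to $f$ being a morphism of segments and $\alpha$ being a morphism of operads, so no new ideas are needed beyond the naturality of the constructions already introduced.
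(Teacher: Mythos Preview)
Your proposal is correct and follows essentially the same approach as the paper: define the map on generators by applying the segment morphism on the inner-edge tensor factors and the operad morphism on the vertex tensor factors, relabelling the tree via the colour function, and then check that this descends to the coend and respects the operadic structure. The paper's own proof is considerably terser—it simply records the formula $\{\phi\} \otimes \bigotimes^Y H \otimes \bigotimes_Y \operad P \to \{\phi\} \otimes \bigotimes^{f(Y)} H' \otimes \bigotimes_{f(Y)} \operad P'$ on generators and leaves all the compatibility checks you spell out as implicit.
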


\begin{proof}
Such a functor sends a pairs of morphisms
$(u,f) : (H, \operad P) \to (H', \operad P')$
to the morphism of enriched operads induced by the maps
$$
	\{\phi\} \otimes \bigotimes^Y H \otimes \bigotimes_Y \operad P
	\xrightarrow{\id \otimes \bigotimes^Y u \otimes \bigotimes_Y f } \{\phi\} \otimes \bigotimes^{f(Y)} H' \otimes \bigotimes_{f(Y)} \operad Q
$$
where $f(Y)$ is the $\Ob(\operad Q)$-coloured tree whose underlying tree is the same as
$Y$ and whose colouring is the map
$$
\edge Y \to \Ob(\operad P) \xrightarrow{f} \Ob(\operad Q) .
$$
\end{proof}

\subsection{Monoidal structure of the Boardman--Vogt construction}

\begin{proposition}
 The functor $W_- - : \categ{Seg} \times \mathrm{sk}(\Operade)
 \to \mathrm{sk}(\Operade)$
 is oplax symmetric monoidal for
 the symmetric monoidal structure $\otimes \times \otimeshadamard$ on
 the product category $\categ{Seg} \times \mathrm{sk}(\Operade)$.
\end{proposition}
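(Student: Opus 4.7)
The plan is to build the oplax structure maps directly from the coend formula, exploiting the ``diagonal'' nature of the Hadamard tensor product of operads and of the tensor product of segments. Given segments $H, H'$ and enriched operads $\operad P, \operad P'$, any $\Ob(\operad P) \times \Ob(\operad P')$-coloured tree $Y$ projects through the two coordinates to a pair of trees $\pi_1 Y$ and $\pi_2 Y$ having the same underlying shape but coloured respectively by $\Ob(\operad P)$ and by $\Ob(\operad P')$. Vertex by vertex we get a natural isomorphism
\[
\bigotimes_Y (\operad P \otimeshadamard \operad P') \simeq \bigotimes_{\pi_1 Y} \operad P \otimes \bigotimes_{\pi_2 Y} \operad P',
\]
while the symmetry of $\categ E$ provides an isomorphism
\[
\bigotimes^Y (H \otimes H') \simeq \bigotimes^{\pi_1 Y} H \otimes \bigotimes^{\pi_2 Y} H'.
\]
Moreover, an isomorphism $\underline{(c,c')} \to \leaves Y$ over $\Ob(\operad P) \times \Ob(\operad P')$ has an underlying permutation that, together with the two coordinate projections, yields isomorphisms $\underline c \to \leaves{\pi_1 Y}$ and $\underline{c'} \to \leaves{\pi_2 Y}$ over $\Ob(\operad P)$ and $\Ob(\operad P')$. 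Combining these three pieces and composing with the coend structure maps on the right produces the oplax structure morphism
\[
W_{H \otimes H'}(\operad P \otimeshadamard \operad P')(\underline{(c,c')}; (c,c')) \to W_H \operad P (\underline c; c) \otimes W_{H'} \operad P'(\underline{c'}; c').
\]

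For the counit of the oplax structure, the unit of the product monoidal category $\categ{Seg} \times \mathrm{sk}(\Operade)$ is the pair $(\II, \uCom)$ consisting of the trivial segment and of the commutative unital operad. Since $\bigotimes^Y \II \simeq \II$ and $\bigotimes_Y \uCom \simeq \II$ for every tree $Y$, the coend defining $W_\II \uCom$ reduces to a colimit over $\aDend_{\set C}$ of the leaf-labelling objects, which collapses canonically onto $\uCom$.

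What remains is to verify that these structure maps define morphisms of operads and that they satisfy the associativity, unit and symmetry coherences of an oplax symmetric monoidal functor. Compatibility with the operadic composition $\gamma_i$ of the Boardman--Vogt construction reduces to two observations: the grafting $Y \sqcup_{\phi(i)} Y'$ commutes with the tree projections $\pi_1, \pi_2$, and the equality $\delta^1_{H \otimes H'} = \delta^1_H \otimes \delta^1_{H'}$ built into the tensor product of segments ensures that the ``insertion of $\delta^1$'' step matches on both sides of the oplax map. The symmetry and associativity coherences reduce to the corresponding coherences for the symmetry of $\categ E$ applied to the decompositions of $(H \otimes H' \otimes H'')^{\otimes n}$ and of $\bigotimes_Y$ of triple Hadamard products. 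The main technical obstacle is the bookkeeping of the shuffle permutations that rearrange $(H \otimes H')^{\otimes |\inner Y|}$ into $H^{\otimes |\inner Y|} \otimes H'^{\otimes |\inner Y|}$ and the analogous rearrangement of the vertex tensors $\bigotimes_Y$; one must confirm that these shuffles are coherent with the action of the symmetric groups present in the leaf-labelling components of the coend. Each such shuffle follows exactly the Hadamard convention that defines $\otimeshadamard$ itself, so the required compatibilities hold by construction, but writing them out explicitly accounts for most of the length of the proof.
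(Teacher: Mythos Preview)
Your proposal is correct and follows essentially the same approach as the paper: construct the oplax map on each tree $Y$ by splitting the Hadamard/segment tensors along the two coordinate projections, pass to the coend, identify $W_{\II}\uCom \simeq \uCom$ for the counit, and leave the coherences to a lengthy but routine verification. The paper in fact gives less detail than you do about the grafting and shuffle compatibilities, simply stating that ``a long but straightforward checking'' establishes the oplax symmetric monoidal structure.
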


\begin{proof}
For any two segments $H,H'$, any two enriched operads
$\operad{P},\operad{P}'$ and any
$\Ob(\operad P) \times \Ob(\operad P)$-coloured
tree
$Y$, let us consider the following map
\small
\[
\begin{tikzcd}
 	\left(\bigotimes^{Y} (H\otimes H') \otimes \bigotimes_Y 
	(\operad{P}\otimeshadamard \operad{P}') \right)
	\otimes \mathrm{Iso}_{\categ{Set}/(\Ob(\operad P) \times \Ob(\operad P'))}
	(((c_1, d_1), \ldots, (c_n, d_n)) , \leaves{Y}) 
	\arrow[d]\\
	\left(\bigotimes^{Y} H \otimes \bigotimes_Y (\operad{P})
	\otimes \mathrm{Iso}_{\categ{Set}/\Ob(\operad P)}((c_1, \ldots, c_n), \leaves{Y}) 
	\right)
	\otimes \left( \bigotimes^{Y} H' \otimes \bigotimes_Y (\operad{P}')
	\otimes \mathrm{Iso}_{\categ{Set}/\Ob(\operad P')}((d_1, \ldots, d_n), \leaves{Y}) 
	\right)
	\arrow[d]\\
	W_H \operad{P} (c_1, \ldots, c_n;c) \otimes W_{H'} \operad{P}' (d_1, \ldots, d_n;d)
	\ar[d, equal]
	\\
	(W_H \operad{P} \otimeshadamard W_{H'} \operad{P}')((c_1, d_1), \ldots,(c_n,d_n) ; (c,d))\ .
\end{tikzcd}
 \]
 \par
Taking the coend over varying trees, we get a moprhism of
$\Ob(\operad P) \times \Ob(\operad P')$-coloured 
symmetric sequences
\[
	W_{H\otimes H'} (\operad{P} \otimeshadamard \operad{P}')
	\to W_H \operad{P} \otimeshadamard W_{H'} \operad{P}'\ .
\]
This is actually of morphism of $\Ob(\operad P) \times \Ob(\operad P')$-coloured 
enriched operads. Besides, we have a canonical isomorphism
\[
	W_{\mathbb 1} \uCom \simeq \uCom\ .
\]
 A long but straightforward checking shows that this defines
 the structure of an
 oplax symmetric monoidal functor on $W_- - $.
\end{proof}

\begin{remark}
If one works with planar operads,
the Boardman--Vogt construction becomes a strong symmetric monoidal
functor.
\end{remark}

\begin{corollary}
Let $H$ be a Hopf segment. Then the endofunctor $W_H -$
of $\sk(\Operade)$ is oplax monoidal. 
\end{corollary}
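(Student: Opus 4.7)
The strategy is to exhibit $W_H -$ as a composite of two oplax monoidal functors, since composites of oplax monoidal functors are oplax monoidal. The second factor will be the bifunctor $W_- -$ itself, already shown to be oplax symmetric monoidal in the previous proposition. The first factor will be the pairing functor
$$
F : \sk(\Operade) \to \categ{Seg} \times \sk(\Operade), \qquad \operad P \mapsto (H, \operad P),
$$
and the main task is to promote $F$ to an oplax monoidal functor using the Hopf structure of $H$.

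To do so, I would produce the comparison morphisms in $\categ{Seg} \times \sk(\Operade)$ (equipped with $\otimes \times \otimeshadamard$) componentwise. For any two enriched operads $\operad P, \operad Q$, set
$$
F(\operad P \otimeshadamard \operad Q) = (H, \operad P \otimeshadamard \operad Q)
\xrightarrow{(\Delta_H, \, \id)}
(H \otimes H, \operad P \otimeshadamard \operad Q)
= F(\operad P) \otimes F(\operad Q),
$$
where $\Delta_H : H \to H \otimes H$ is the comultiplication of the Hopf segment $H$; and similarly use the counit $\epsilon_H : H \to \unit_{\categ{Seg}}$ to define $F(\uCom) = (H, \uCom) \to (\unit_{\categ{Seg}}, \uCom)$, the latter being the monoidal unit of $\categ{Seg} \times \sk(\Operade)$. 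The coherence axioms (coassociativity, cocommutativity when relevant, and counitality) for these comparison maps reduce on the first coordinate to the comonoid axioms of $H$ in $\categ{Seg}$ and on the second coordinate to identities, so $F$ is indeed oplax symmetric monoidal. Equivalently, one can observe that $F$ is the product of the constant functor at $H$ (oplax monoidal because $H$ is a comonoid in $\categ{Seg}$) with the identity on $\sk(\Operade)$ (strong monoidal).

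Composing $F$ with the oplax symmetric monoidal functor $W_- - : \categ{Seg} \times \sk(\Operade) \to \sk(\Operade)$ yields an oplax monoidal endofunctor of $\sk(\Operade)$ whose value on $\operad P$ is $W_H \operad P$, proving the corollary. Explicitly, the oplax structure map $W_H(\operad P \otimeshadamard \operad Q) \to W_H \operad P \otimeshadamard W_H \operad Q$ is the composite
$$
W_H (\operad P \otimeshadamard \operad Q)
\xrightarrow{W_{\Delta_H}(\id)}
W_{H \otimes H}(\operad P \otimeshadamard \operad Q)
\to W_H \operad P \otimeshadamard W_H \operad Q,
$$
using the oplax structure of $W_- -$ in the second step, and similarly $W_H \uCom \to \uCom$ factors through $W_{\unit} \uCom \simeq \uCom$ via the counit $\epsilon_H$. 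The only potential obstacle is verifying that the comparison maps for $F$ satisfy the oplax coherence diagrams, but this is immediate from the comonoid axioms of $H$ and there is no interaction to check with the operad structure, so the proof is essentially a formal composition argument.
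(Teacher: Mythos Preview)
Your proof is correct and takes essentially the same approach as the paper: the paper's proof simply records the composite
\[
W_H (\operad{P} \otimeshadamard \operad{P}') \to W_{H \otimes H} (\operad{P} \otimeshadamard \operad{P}') \to W_H \operad{P} \otimeshadamard W_H \operad{P}',
\]
which is exactly the oplax structure map you derive by factoring $W_H-$ as $W_{-}- \circ F$. Your version is more thorough in spelling out why the coherence axioms hold (via the comonoid axioms for $H$ and the formal fact that composites of oplax monoidal functors are oplax monoidal), but the underlying argument is identical.
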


\begin{proof}
 The oplax monoidal structure is given by the following natural map
 \[
 	W_H (\operad{P} \otimeshadamard \operad{P}') \to W_{H \otimes H} (\operad{P} \otimeshadamard \operad{P}')
	\to W_H \operad{P} \otimeshadamard W_H \operad{P}' \ .
 \]
\end{proof}

\begin{corollary}\label{corollaryhopf}
Let $\operad{Q}$ be a Hopf operad and let $H$ be a Hopf segment.
Then $W_H \operad{Q}$ inherits the canonical structure of a Hopf operad.
Moreover, for any $\operad{Q}$-left comodule
$\operad{P}$, $W_H \operad{P}$ inherits the structure of a
$W_H \operad{Q}$-comodule. The same holds for
right comodules and bi-comodules.
\end{corollary}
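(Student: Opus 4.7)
The plan is to invoke the preceding corollary---that $W_H(-)$ is an oplax (symmetric) monoidal endofunctor of $(\sk(\Operade), \otimeshadamard)$---together with the standard categorical principle that any oplax monoidal functor transports comonoids to comonoids and comodules over them to comodules over their images. Since a Hopf operad is by definition a comonoid for $\otimeshadamard$ and a Hopf comodule is a comodule in this monoidal category, both claims will follow formally.

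Explicitly, write $\mu_{\operad P, \operad P'}\colon W_H(\operad P \otimeshadamard \operad P') \to W_H \operad P \otimeshadamard W_H \operad P'$ and $\nu\colon W_H \uCom \to \uCom$ for the oplax structure maps of $W_H(-)$, the latter being induced by the counit $\sigma_H\colon H \to \unit$ of the Hopf segment together with the isomorphism $W_\unit \uCom \simeq \uCom$ from the preceding proposition. Given a Hopf operad structure $(\Delta,\epsilon)$ on $\operad Q$, I would then define a Hopf operad structure on $W_H \operad Q$ by the composites
\begin{align*}
W_H \operad Q &\xrightarrow{W_H \Delta} W_H(\operad Q \otimeshadamard \operad Q) \xrightarrow{\mu} W_H \operad Q \otimeshadamard W_H \operad Q, \\
W_H \operad Q &\xrightarrow{W_H \epsilon} W_H \uCom \xrightarrow{\nu} \uCom.
\end{align*}
For a left coaction $\rho\colon \operad P \to \operad Q \otimeshadamard \operad P$, the coaction on $W_H \operad P$ is defined analogously as $\mu \circ W_H \rho$; the right comodule and bi-comodule cases are identical \emph{mutatis mutandis}, using symmetry of the oplax structure for the latter.

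The remaining content---coassociativity, counitality, and the comodule axioms---then reduces directly to the corresponding axioms on $\operad Q$ and $\operad P$ together with the naturality and coherence diagrams already packaged in the oplax symmetric monoidal structure of $W_H(-)$. No substantive verification is required beyond this formal transfer, so the only potential obstacle is purely bookkeeping: ensuring that the bi-comodule case uses the oplax and symmetry data compatibly, which is automatic because $W_{-}(-)$ was established to be \emph{oplax symmetric} monoidal in the earlier proposition.
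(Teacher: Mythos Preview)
Your proposal is correct and follows the same (implicit) approach as the paper: the corollary is stated without proof precisely because it is the standard consequence that an oplax monoidal functor carries comonoids to comonoids and comodules to comodules.

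One small inaccuracy: you invoke the \emph{symmetry} of the oplax structure for the bi-comodule case, but the preceding corollary only asserts that $W_H(-)$ is oplax monoidal, not oplax symmetric monoidal (and indeed the paper remarks that $W_H$ of a cocommutative Hopf operad is generally not cocommutative). Fortunately symmetry is not needed: the compatibility between the left and right coactions on a bi-comodule is an associativity-type condition, and it transfers using only the associativity coherence of the oplax monoidal structure together with naturality of $\mu$. So your argument goes through once you replace the appeal to symmetry by the appeal to the associator coherence of $W_H(-)$.
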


\subsection{Planar operads}
 
\begin{definition}
 A planar enriched operad $\operad P$
 is the data of a set of colours $\Ob(\operad P)$
 and elements $\operad P(\underline c ;c)$ of the
 ground category $\categ E$
 for any tuple of colours $\underline c$ and any colour
 $c \in \Ob(\operad P)$ together with maps
\begin{align*}
 	\gamma_i : \operad P (\underline c ; c) \otimes \operad P (\underline c'; \underline c[i]) 
	\to \operad P (\underline c \triangleleft_i \underline c' ; c) ;
	\\
	\eta_c : \II \to \operad P (c ; c) ;
\end{align*}
that satisfies the same associativity and unitality conditions as enriched operads.
A morphisms of planar enriched operads
from $\operad P$ to $\operad Q$
is the data of a function $\phi: \Ob(\operad P) \to \Ob(\operad Q)$
together with morphisms
\[
	\operad P (\underline c ; c) \to \operad Q (f(\underline c); f(c))
\]
that commutes with the structural maps $\gamma_i$ and $\eta_c$.
We denote $\mathsf{pl}\Operade$
the category of planar enriched operads.
\end{definition}

We have an adjunction relating coloured planar operad to coloured operad
\[
\begin{tikzcd}
 	\mathsf{pl}\Operade
	\arrow[rr, shift left, "-\otimes \categ S"]
	&& \sk(\Operade)
	\arrow[ll, shift left, "\forget^{\categ S}"]
\end{tikzcd}
\]
whose right adjoint is the forgetful functor and whose left adjoint sends
a planar enriched operad $\operad P$
to the enriched operad $\operad P_{\categ S}$ with the same set of colours
and defined as
\begin{align*}
	\operad P_{\categ S} (\underline c;c) 
	&= \coprod_{\underline c'\in {\set C}^n}
	\operad P(\underline c';c) \otimes \Hom{\categ S_{\set C}}{\underline c}{\underline c'}
	\\
	&=\coprod_{\sigma \in \categ S_n} \operad P(\underline c^{\sigma^{-1}};c) \otimes \{\sigma\} ,
\end{align*}
where $n= |\underline c|$. The action
$\mu^\ast : \operad P_{\categ S}(\underline c;c) 
\to \operad P_{\categ S} (\underline c^\mu;c)$ of 
the symmetric groups is defined by maps of the form
\[
	\operad P(\underline c^{\sigma^{-1}};c) \otimes \{\sigma\} \to \operad P(\underline c^{\sigma^{-1}};c) \otimes \{\sigma \circ \mu\}
	= \operad P((\underline c^\mu)^{(\sigma \circ \mu)^{-1}};c) \otimes \{\sigma \circ \mu\} ,
\]
while the operadic composition $\gamma_i$ is defined as
\[
\begin{tikzcd}
	{\left( \operad P(\underline c^{\sigma^{-1}};c) \otimes \{\sigma\} \right) \otimes 
	\left( \operad P(\underline c'^{\mu^{-1}};\underline c[i]) \otimes \{\mu\} \right) }
	\arrow[d, equal]
	\\
	{ \operad P(\underline c^{\sigma^{-1}};c) \otimes 
	\operad P(\underline c'^{\mu^{-1}};\underline c[i]) \otimes \{\sigma \triangleleft_i \mu\} }
	\arrow[d,"{\gamma_{\sigma(i)}^{\mathrm{pl}}}"]
	\\
	\operad P(\underline c^{\sigma^{-1}}\triangleleft_{\sigma(i)} \underline c'^{\mu^{-1}};c) \otimes \{\sigma \triangleleft_i \mu\}
	\arrow[d, equal]
	\\
	\operad P((\underline c \triangleleft_{i} \underline c')^{(\sigma \triangleleft_i \mu)^{-1}};c) \otimes \{\sigma \triangleleft_i \mu\} ,
\end{tikzcd}
\]
where ${\gamma_{\sigma(i)}^{\mathrm{pl}}}$ denotes the planar operadic composition in the planar operad $\operad P$
and where $\sigma \triangleleft_i \mu$ is the permutation
$$
\begin{tikzcd}
	\{1, \ldots, n+m-1\}
	\ar[d, equal]
	\\
	\{1, \ldots, i-1 \} \sqcup \{i, \ldots, i+m-1\}
	\sqcup \{i+m, \ldots, n+m-1\}
	\ar[d, "\simeq"]
	\\
	\{1, \ldots, i-1 \} \sqcup \{1, \ldots, m\}
	\sqcup \{i+ 1, \ldots, n\}
	\ar[d, equal]
	\\
	\{1, \ldots, n \}\backslash \{i\} \sqcup \{1, \ldots, m\}
	\ar[d,  "\sigma \sqcup \mu"]
	\\
	\{1, \ldots, n \}\backslash \{\sigma(i)\} \sqcup \{1, \ldots, m\}
	\ar[d, "\simeq"]
	\\
	\{1, \ldots, n+m-1\}
\end{tikzcd}
$$
where $n=|\underline{c}|$ and $m=|\underline{c'}|$.


 \section{Enriched symmetric monoidal categories and algebras}
 
In this section, we describe symmetric monoidal categories enriched
over the ground category $\categ E$. We show how they are related to enriched
operads and we describe how coalgebraic 
structures on enriched operads
induce monoidal structures on their categories of algebras
in a symmetric monoidal enriched category $\ecateg C$.

 \subsection{The monoidal context of enriched categories}
 
 \begin{definition}
 An enriched category $\ecateg C$ is the data of a set (possibly large) called
 the set of objects (or colours) and denoted $\Ob(\ecateg C)$ together with, for any two colours
 $x,y$, an object $\ecateg C(x,y) \in \categ E$ and with an associative composition
 $$
 \ecateg C (y,z) \otimes \ecateg C (x,y)
 \to \ecateg C (x,z)
 $$
 with units $\id_x \in \ecateg C (x,x)$.
 \end{definition}

 \begin{definition}
	An enriched category $\ecateg C$ is called small if
	its set of objects is small.
	\end{definition}
 
 \begin{definition}
 A functor of enriched categories from $\ecateg C$ to
 $\ecateg D$ is the data of
 \begin{itemize}
     \itemt a function $f(-) : \Ob(\ecateg C) \to \Ob(\ecateg D)$;
     \itemt for any two colours of $\ecateg C$ $x,y$,
     a morphism
     $$
     f(x,y) : \ecateg C (x,y) \to \ecateg C (f(x),f(y))
     $$
     that commutes with units and compositions.
 \end{itemize}
 \end{definition}
 
 \begin{definition}
 Given two functors of enriched categories $f,g :\ecateg C \to \ecateg D$,
 a natural transformation between them is the data of elements
 $$
 A(x) \in \ecateg D (f(x), g(x)), \  x \in \Ob(\ecateg C) ,
 $$
 so that the following square is commutative
 $$
 \begin{tikzcd}
      \ecateg{C}(x,y)
      \ar[r, "{g(x,y) \otimes A(x)}"] \ar[d, "{A(y) \otimes f(x,y)}"]
      & \ecateg{D}(g(x),g(y)) \otimes \ecateg{D}(f(x),g(x))
      \ar[d]
      \\
      \ecateg{D}(f(y),g(y)) \otimes \ecateg{D}(f(x),f(y))
      \ar[r]
      & \ecateg{D}(f(x),g(y))
 \end{tikzcd}
 $$
 for any objects $x,y \in \Ob(\ecateg C)$.
 \end{definition}

 \begin{definition}
 We denote $\categ{Cat}_{\categ E}$ the strict 2-category of
enriched categories . This a $\mathcal V$ strict 2-category and
 hence a $\mathcal W$-small strict 2-category.
 \end{definition}
 
 \begin{definition}
 Let $\ecateg C, \ecateg D$ be two enriched categories.
 Their tensor product $\ecateg C \otimes \ecateg D$ is the enriched category
so that
 \begin{itemize}
     \itemt $\Ob(\ecateg C \otimes \ecateg D) = \Ob(\ecateg C) \times \Ob(\ecateg D)$;
     \itemt $(\ecateg C \otimes \ecateg D) ((x,x'),(y,y')) =
	 \ecateg{C} (x,y) \otimes \ecateg{D} (x',y')$;
     \itemt the composition is given as follows
     $$
     \begin{tikzcd}
     (\ecateg C \otimes \ecateg D) ((y,y'),(z,z')) \otimes (\ecateg C \otimes \ecateg D) ((x,x'),(y,y'))
     \ar[d, equal]
     \\
     \ecateg{C} (y,z) \otimes \ecateg{D} (y',z') \otimes \ecateg{C} (x,y) \otimes \ecateg{D} (x',y')
     \ar[d,"\simeq"]
     \\
     \ecateg{C} (y,z) \otimes \ecateg{C} (x,y) \otimes \ecateg{D} (y',z') \otimes \ecateg{D} (x',y')
     \ar[d]
     \\
     \ecateg{C} (x,z) \otimes \ecateg{D} (x',z')
     \ar[d, equal]
     \\
     (\ecateg C \otimes \ecateg D) ((y,y'),(z,z')) ;
     \end{tikzcd}
     $$
     \itemt the unit of the object $(x,x')$ is $\id_x \otimes \id_{x'}$.
 \end{itemize}
 \end{definition}
 
 \begin{definition}
  Let $\eII$ be the enriched category with one object $\ast$ and so that
  $$
  \eII(\ast, \ast) = \II_{\categ E}.
  $$
 \end{definition}
 
 \begin{proposition}
 The tensor product described just above induces the structure of a monoidal context on the 2-category
 $\categ{Cat}_{\categ E}$ whose unit is $\eII$.
 \end{proposition}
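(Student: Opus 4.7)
The plan is to bootstrap the monoidal context structure on $\Cate$ from the symmetric monoidal structure of the ground category $\categ E$, componentwise on hom objects. The construction of the tensor product $\ecateg C \otimes \ecateg D$ has already been given on objects; what remains is to (i) extend it to a 2-functor, (ii) produce the associator and unitor 2-isomorphisms, and (iii) verify the coherence axioms.

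First I would describe the action of $-\otimes-$ on 1-morphisms and 2-morphisms. Given functors $f : \ecateg C \to \ecateg C'$ and $g : \ecateg D \to \ecateg D'$, the tensor $f \otimes g$ acts on objects by the Cartesian product $(x,x') \mapsto (f(x), g(x'))$ and on hom objects by the tensor product of the component maps in $\categ E$:
\[
	\ecateg C(x,y) \otimes \ecateg D(x',y') \xrightarrow{f(x,y)\otimes g(x',y')} \ecateg C'(f(x),f(y)) \otimes \ecateg D'(g(x'),g(y')).
\]
Compatibility with compositions and units reduces to the bifunctoriality of $\otimes$ in $\categ E$. For 2-morphisms $A : f \to f'$ and $B : g \to g'$, the components of $A \otimes B$ are the tensor products $A(x) \otimes B(x') \in \ecateg C'(f(x),f'(x)) \otimes \ecateg D'(g(x'),g'(x'))$; the naturality square follows from that of $A$ and $B$ together with the interchange law in $\categ E$. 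Strict 2-functoriality in each variable then reduces to the corresponding properties of $\otimes$ in $\categ E$.

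Second, I would define the associator and unitors. For enriched categories $\ecateg C, \ecateg D, \ecateg E$ the associator
\[
	\alpha_{\ecateg C,\ecateg D,\ecateg E} : (\ecateg C \otimes \ecateg D) \otimes \ecateg E
	\xrightarrow{\simeq} \ecateg C \otimes (\ecateg D \otimes \ecateg E)
\]
is the identity on objects (using the canonical set-theoretic associator for the Cartesian product of object sets) and on hom objects is given componentwise by the associator of $\categ E$. Likewise the left and right unitors $\eII \otimes \ecateg C \xrightarrow{\simeq} \ecateg C$ and $\ecateg C \otimes \eII \xrightarrow{\simeq} \ecateg C$ are induced by those of $\categ E$, using that $\eII(\ast,\ast) = \II_{\categ E}$. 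The symmetry $\ecateg C \otimes \ecateg D \xrightarrow{\simeq} \ecateg D \otimes \ecateg C$ is similarly inherited from the braiding of $\categ E$; note a mild subtlety here, namely that the symmetry on hom objects must be composed with the swap of composition order, but this is exactly the data encoded in the Cartesian swap of objects.

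Third, coherence (pentagon, triangle, hexagon) follows object-wise from the same axioms for $\categ E$: each axiom, when evaluated on a tuple of objects, becomes the corresponding diagram in $\categ E$ applied to the hom objects. The main point to check is 2-naturality of the associator and unitors with respect to functors and natural transformations of enriched categories, which is again a direct consequence of the 2-naturality statement in $\categ E$. The step that requires the most care is verifying that all the structure maps are genuine enriched functors (respect composition and units), but this is a routine bookkeeping exercise: the composition law in $\ecateg C \otimes \ecateg D$ was defined precisely as the tensor of the composition laws of $\ecateg C$ and $\ecateg D$ with an intermediate shuffle, so compatibility with the associator and unitors reduces to the standard coherence of $\categ E$. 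No genuine 2-dimensional obstruction arises because all the coherence isomorphisms come from $\categ E$ rather than from nontrivial 2-cells.
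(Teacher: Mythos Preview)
Your proposal is correct and follows essentially the same approach as the paper. The paper's own proof is a brief sketch that asserts the monoidal structure on $\mathrm{sk}(\categ{Cat}_{\categ E})$ and then indicates how it extends to 2-morphisms via the componentwise formula $A \otimes B$ given by $A(c) \otimes B(d)$; you have simply supplied more of the routine details (action on 1-cells, explicit associator/unitors inherited from $\categ E$, and coherence) that the paper leaves implicit.
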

 
 \begin{proof}
 This tensor product induces the structure of a monoidal
 category on $\mathrm{sk}(\categ{Cat}_{\categ E})$.
 Moreover, this monoidal structure extends to the 2-category
 $\categ{Cat}_{\categ E}$. For instance, given two enriched functors
 $f,f' : \ecateg C \to \ecateg C'$ and $g,g' : \ecateg D \to \ecateg D'$
 and natural transformations $A : f \to f'$ and $B : g \to g$,
 the natural transformation $A \otimes B : f \otimes g \to f' \otimes g'$
 is given by the elements $A(c) \otimes B(d)$ for
 $(c,d) \in \Ob(\ecateg C) \times \Ob(\ecateg D)$.
 \end{proof}
 
 \begin{definition}
  For an enriched category $\ecateg C$, we denote $\cat(\ecateg C)$ the underlying category, that is the category whose set of objects is $\Ob(\ecateg C)$ and so that
  $$
  \Hom{\cat(\ecateg C)}{x}{y} = \Hom{\categ E}{\II_{\categ E}}{\ecateg C(x,y)} .
  $$
  This defines a lax context functor
  $$
  \cat : \categ{Cat}_{\categ E} \to \categ{Cats}_{\mathcal U} .
  $$
 \end{definition}
 
 \subsection{The monoidal context of enriched symmetric monoidal categories}
 
 \begin{definition}
Let us recall that $\categ{CMon}_{\mathrm{lax}}
(\categ{Cat}_{\categ E})$ is the monoidal context
of pseudo-commutative monoids and lax morphisms
in the monoidal context of $\categ E$-enriched categories.
We will call its objects, morphisms and 2-morphisms respectively
symmetric monoidal enriched categories,
lax symmetric monoidal functors and
monoidal enriched natural transformations.
 \end{definition}
 
 Unravelling the definitions, the data of a symmetric monoidal enriched category is equivalent to the data of
  \begin{itemize}
      \itemt an enriched category $\ecateg C$;
      \itemt a functor of enriched categories
\begin{align*}
 \gamma : \ecateg C \otimes \ecateg C &\to \ecateg C
 \\
 x,y & \mapsto x \otimes_{\ecateg C} y ;
\end{align*}
      \itemt an element $\II_{\ecateg C}$;
      \itemt invertible enriched natural transformations
      $$
      x \otimes_{\ecateg C} (y \otimes_{\ecateg C} z) \simeq (x\otimes_{\ecateg C} y ) \otimes_{\ecateg C} z,
      \quad
      x \otimes_{\ecateg C} y \simeq y \otimes_{\ecateg C} x,
      \quad
      x \otimes_{\ecateg C} \II_{\ecateg C} \simeq x;
      $$
      \itemt so that the induced bifunctor
      $$
      \cat (\ecateg C) \times \cat(\ecateg C) \to \cat (\ecateg C \otimes \ecateg C) \xrightarrow{\cat(\gamma)} \cat(\ecateg C) .
      $$
      the element $\II_{\ecateg C}$ and the induced natural transformations involving $\cat (\ecateg C)$ instead of
      $\ecateg C$ make $\cat (\ecateg C)$ a symmetric monoidal category.
  \end{itemize}
 Moreover, the data of a lax symmetric monoidal functor
 between symmetric monoidal enriched categories $\ecateg C$ and $\ecateg D$ is equivalent to the data of
  \begin{itemize}
      \itemt an enriched functor $f: \ecateg C \to \ecateg D$;
      \itemt an enriched natural transformation $l: \gamma_D \circ (f \otimes f) \to f \circ \gamma_C$;
      \itemt a map $\II_{\ecateg D} \to f(\II_{\ecateg C})$; 
      \itemt so that the induced functor $\cat(f)$, the induced natural transformation $\cat(l)$ and the previous map
      form a lax symmetric monoidal functor between symmetric monoidal categories.
  \end{itemize}
  Finally, a 2-morphism in $\categ{CMon}_{\lax}(\categ{Cat}_{\categ E})$ between two lax symmetric monoidal functors
  $f$ and $f'$ is the data of an enriched natural transformation $A : f \to f'$ so that the following diagrams commute
  $$
  \begin{tikzcd}
       \gamma \circ (f \otimes f)
       \ar[r] \ar[d]
       & f \circ \gamma
       \ar[d]
       \\
       \gamma \circ (f' \otimes f')
       \ar[r]
       & f' \circ \gamma
  \end{tikzcd}
  \quad
    \begin{tikzcd}
       \II_{\ecateg D}
       \ar[r] \ar[rd]
       & f(\II_{\ecateg C})
       \ar[d, "A(\II_{\ecateg C})"]
       \\
       & f'(\II_{\ecateg C}).
  \end{tikzcd}
  $$

\subsection{Multiple tensors}

Let $\ecateg C$ be a symmetric monoidal enriched category and let $n \geq 3$.
	In the same way as in \cite[Definition 23 and Definition24]{LeGrignou22},
	one gets an enriched functor
	from $\ecateg C^{\otimes n}$
	to $\ecateg C$ for any binary planar tree with $n$-leaves:
	roughly, one composes operadically the tensor product
	$-\otimes -:\ecateg C \otimes \ecateg C \to \ecateg C$
	along this tree. The associators induces 2-isomorphisms
	between these functors so that we get a contractible groupoid
	(by Mac Lane's coherence, see \cite{MacLane63},\cite[Proposition 22]{LeGrignou22}).

\begin{definition}
	The colimit in $\Cate(\ecateg C^{\otimes n}, \ecateg C)$
	of this contractible groupoid
	is the $n$-tensor product of $\ecateg C$
	\begin{align*}
		\otimes^n: \ecateg C^{\otimes n} & \to \ecateg C
		\\
		(x_1, \ldots, x_n) &\mapsto x_1 \otimes \cdots \otimes x_n .
	\end{align*}
	We also define the 2-tensor product of $\ecateg C$ to be the tensor product, the
	1-tensor product to be the identity of $\ecateg C$ and the 0-tensor
	product to be the unit $\III \to \ecateg C$.
\end{definition}

Again, for any planar tree $t$ with $n$ leaves
and any permutation $\sigma \in \categ S_n$,
one obtains an enriched functor from $\ecateg C^{\otimes n}$
to $\ecateg C$ by first permuting the tensors using $\sigma^\ast$
and then applying the operadic composition of multi-tensors along the
planar tree $t$.
Then, one can consider the groupoid whose objects are such
pairs $(t, \sigma)$ and whose morphisms are those of
$\Cate(\ecateg C^{\otimes n}, \ecateg C)$ between the induced enriched functors
and that are generated by operadic compositions
of the associator, the commutator, the unitors
and the reduction of binary subtrees
into multi-tensors.
One can check after Mac Lane's coherence that such a groupoid is contractible.

 \subsection{The End 2-functor}
 
 \begin{definition}
 For any symmetric monoidal $\categ E$-enriched category $\ecateg C$,
 let $\End(\ecateg C)$ be the enriched operad whose colours are $\Ob(\ecateg C)$
 and so that
 $$
 \End(\ecateg C)(x_1, \ldots, x_n ; x) = \ecateg C (x_1 \otimes \cdots \otimes x_n , x) .
 $$
 The operadic composition is induced by the composition in $\ecateg C$
 and the action of symmetric groups is induced by the commutator of $\ecateg C$.
 \end{definition}
 
 \begin{proposition}
 The $\End$ construction induces a
 strict 2-functor from the
 2-category of symmetric monoidal enriched categories and lax enriched functors
 to the 2-category of enriched operads:
 $$
\End : \cmonlax(\Cate) \to \Operade .
 $$
 \end{proposition}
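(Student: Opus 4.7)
The plan is to construct the action of $\End$ on $1$-morphisms and $2$-morphisms explicitly and then verify strict $2$-functoriality. The action on objects is already given by the definition of $\End(\ecateg C)$, so only the functorial parts remain.

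First I would define $\End(f)$ for a lax symmetric monoidal enriched functor $f : \ecateg C \to \ecateg D$ with lax constraint $l : \otimes_{\ecateg D} \circ (f \otimes f) \to f \circ \otimes_{\ecateg C}$ and unit constraint $\eta : \II_{\ecateg D} \to f(\II_{\ecateg C})$. On colours, $\End(f)$ acts as $f$ does on objects. On operations, I use the iterated lax constraint
\[
l^n : f(x_1) \otimes \cdots \otimes f(x_n) \to f(x_1 \otimes \cdots \otimes x_n)
\]
obtained by composing instances of $l$ along a binary planar tree (with $\eta$ providing the case $n = 0$), and I define the enriched map
\[
\ecateg C(x_1 \otimes \cdots \otimes x_n, x) \to \ecateg D(f(x_1) \otimes \cdots \otimes f(x_n), f(x))
\]
by post-composing the enriched functoriality of $f$ with pre-composition by $l^n$. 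Because of Mac Lane's coherence applied to $f$, as recalled in the discussion of the multi-tensor groupoid above, the choice of parenthesisation is irrelevant, so $\End(f)$ is well defined. Compatibility with the operadic composition maps $\gamma_i$ and with the units $\eta_c$ amounts to two diagrams in $\ecateg D$ which follow from the coherence of $l$ with associators and units; commutation with the symmetric group action follows from the compatibility of $l^n$ with commutators, which is part of the data of a lax symmetric monoidal enriched functor.

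Next I would define $\End(A)$ for a $2$-morphism $A : f \to g$ in $\cmonlax(\Cate)$ by setting
\[
\End(A)(x) := A(x) \in \ecateg D(f(x), g(x)) = \End(\ecateg D)(f(x); g(x)) .
\]
The verification that this family is a $2$-morphism of enriched operads reduces, for each operation $\phi \in \End(\ecateg C)(\underline x; x)$, to the commutation of the defining square of a $2$-morphism. Expanding this square splits it into two pieces: the naturality of $A$ as an enriched natural transformation, and a square expressing that $A$ is compatible with the lax constraints of $f$ and $g$. Both pieces are precisely the two conditions recalled for $2$-morphisms in $\cmonlax(\categ{Cat}_{\categ E})$ just before the subsection on the $\End$ $2$-functor.

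Finally, strict $2$-functoriality is checked term by term. Identities are immediate since $\End(\id)$ uses the identity lax constraint. For composition of $1$-morphisms, the iterated lax constraint of $g \circ f$ is by definition the pasting $g(l_f^n) \circ l_g^n$, which translates exactly into $\End(g \circ f) = \End(g) \circ \End(f)$. For vertical and horizontal composition of $2$-morphisms, both equalities reduce to the pointwise formulas for composition of enriched natural transformations. The main obstacle is the careful bookkeeping around the iterated multi-tensor constraint $l^n$: one must verify independence of the choice of parenthesisation and correct interaction with operadic substitution $\gamma_i$, and this is precisely where Mac Lane's coherence for lax symmetric monoidal functors is invoked.
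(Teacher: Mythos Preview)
Your proposal is correct and follows the same approach as the paper: the paper defines $\End(f)$ via the composite $\ecateg C(x_1 \otimes \cdots \otimes x_n, x) \to \ecateg D(f(x_1 \otimes \cdots \otimes x_n), f(x)) \to \ecateg D(f(x_1) \otimes \cdots \otimes f(x_n), f(x))$ and $\End(A)$ via the same elements $A(x)$, then dismisses the remaining verifications as a straightforward check. You have simply unpacked that check in more detail, including the role of coherence for the iterated lax constraint $l^n$ and the verification of strict $2$-functoriality, which the paper leaves implicit.
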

 
 \begin{proof}
 For any lax monoidal functor between symmetric monoidal enriched categories
 $f :\ecateg C \to \ecateg D$, we obtain a morphism of enriched operads
 $$
\End(f) : \End(\ecateg C) \to \End(\ecateg D)
 $$
 whose underlying function on colours is the underlying function of $f$ on objects,
 and which is given by the maps
 $$
 \ecateg C(x_1 \otimes \cdots \otimes x_n , x)
 \to \ecateg D(f(x_1 \otimes \cdots \otimes x_n) , f(x))
 \to \ecateg D(f(x_1) \otimes \cdots \otimes f(x_n) , f(x)).
 $$
 For any monoidal natural transformation $A: f\to g$ between lax symmetric monoidal
 functors $f,g: \ecateg C \to \ecateg D$, we obtain an 2-morphism of enriched operads
 given by the same elements $A(x) \in \ecateg D(f(x), g(x))$ for
 $x \in \Ob(\ecateg C)$.
 
 A straightforward check shows that this defines a 2-functor.
 \end{proof}
 
 \begin{proposition}
 The 2-functor $\End$ is strictly fully faithful. 
 \end{proposition}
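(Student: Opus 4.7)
The plan is to exhibit an inverse to the functor induced by $\End$ on hom-categories. Fix two symmetric monoidal enriched categories $\ecateg C$ and $\ecateg D$. We need to show that
\[
\End : \Hom{\cmonlax(\Cate)}{\ecateg C}{\ecateg D} \to \Hom{\Operade}{\End(\ecateg C)}{\End(\ecateg D)}
\]
is an isomorphism of categories, by explicitly reconstructing a lax symmetric monoidal enriched functor $f : \ecateg C \to \ecateg D$ from a morphism of enriched operads $F : \End(\ecateg C) \to \End(\ecateg D)$, and a monoidal natural transformation from a 2-morphism in $\Operade$.

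On objects, I would proceed as follows. The underlying function on colours gives the object map $f(-) : \Ob(\ecateg C) \to \Ob(\ecateg D)$. For the action on hom-objects, use that $\End(\ecateg C)(x;y) = \ecateg C(x,y)$ so that the arity-$1$ component of $F$ furnishes morphisms $f(x,y) : \ecateg C(x,y) \to \ecateg D(f(x), f(y))$; operadic associativity and unitality for arities $(1,1)$ immediately give the functoriality of $f$. The lax structure is extracted from arity $2$ and arity $0$: the element $\id_{x_1 \otimes x_2} \in \ecateg C(x_1 \otimes x_2, x_1 \otimes x_2) = \End(\ecateg C)(x_1, x_2; x_1 \otimes x_2)$ maps to an element of $\ecateg D(f(x_1) \otimes f(x_2), f(x_1 \otimes x_2))$, giving the components of $l : \gamma_{\ecateg D} \circ (f \otimes f) \to f \circ \gamma_{\ecateg C}$, and similarly $\id_{\II_{\ecateg C}}$ produces the unit map $\II_{\ecateg D} \to f(\II_{\ecateg C})$. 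The naturality of $l$ in each variable follows from the operadic composition axioms for $F$ that mix arity $2$ with arity $1$.

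On morphisms, a 2-morphism in $\Operade$ from $F$ to $G$ consists of elements $A(x) \in \End(\ecateg D)(f(x); g(x)) = \ecateg D(f(x), g(x))$, which are precisely candidate components of an enriched natural transformation. The arity-$1$ naturality square forced by the definition of a 2-morphism in $\Operade$ is exactly the enriched naturality condition with respect to arbitrary hom-objects; the arity-$2$ square is the commutativity with the lax structural 2-morphism; the arity-$0$ square is compatibility with the unit map. Hence the same datum $A$ defines a monoidal enriched natural transformation, and vertical and horizontal composition on both sides are manifestly identified.

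The main obstacle is checking the coherence axioms of a lax symmetric monoidal functor from the operadic axioms. The associator coherence follows by applying $F$ to $\id_{x_1 \otimes x_2 \otimes x_3}$, seen as an element of $\End(\ecateg C)(x_1, x_2, x_3; x_1 \otimes x_2 \otimes x_3)$ in two ways via left- and right-parenthesised ternary compositions, and using that both compositions in $\Operade$ yield the same arity-$3$ element; similarly the symmetry coherence follows from $\mbs_2$-equivariance of $F$ together with the identification of the commutator with the symmetry inside the contractible groupoid of multi-tensors described in the previous subsection, and unit coherence follows from the arity-$(2,0)$ and arity-$(0,1)$ composition axioms. Writing these checks amounts to unpacking the operadic axioms in low arities, and is routine once one recognizes that all diagrams in question are images under $F$ of diagrams that already commute in $\End(\ecateg C)$ because they reflect identities of multi-tensors in $\ecateg C$. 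Finally, the two round-trips $\End \circ \End^{-1}$ and $\End^{-1} \circ \End$ are identities by direct inspection at each arity, which establishes strict full faithfulness.
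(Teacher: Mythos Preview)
Your proposal is correct and follows the same approach as the paper: build the inverse by restricting a morphism of operads to arity $1$ for the underlying enriched functor and extract the lax structure from the images of identities in higher arities. The paper is terser on the coherence checks but singles out the one point your ``direct inspection'' glosses over for $\End \circ r = \id$, namely that the arity-$n$ component of $f$ equals its arity-$1$ component precomposed with $f(\id_{x_1\otimes\cdots\otimes x_n})$, which is exactly the operadic compatibility of $f$ with the partial composition $-\triangleleft \id_{x_1\otimes\cdots\otimes x_n}$.
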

 
 \begin{proof}
 This amounts to prove that for any symmetric monoidal enriched categories
 $\ecateg C, \ecateg D$,
 the functor 
  $$
  \cmonlax(\categ{Cat}_{\categ E})(\ecateg C, \ecateg D)
  \to \Operade(\End(\ecateg C), \End(\ecateg D))
  $$
  is an isomorphism of categories.
  Let us describe its inverse. Let us consider a morphism of operads
  $f : \End(\ecateg C) \to \End(\ecateg D)$. By restriction to arity 1 elements,
  we obtain a functor $r(f)$ between the underlying enriched categories of
  $\End(\ecateg C)$ and $\End(\ecateg D)$ that are just respectively $\ecateg C$ and
  $\ecateg D$. This functor $r(f)$ has the structure of
  a lax functor given by the images through
  $$
  \End(\ecateg C)(x_1, \ldots, x_n ; x_1\otimes  \cdots \otimes x_n) \to \End(\ecateg D)(f(x_1), \ldots, f(x_n) ; f(x_1\otimes  \cdots \otimes x_n))
  $$
  of the identity of $x_1\otimes  \cdots \otimes x_n$. Besides, for any 2-morphism
  of enriched operads $A : f \to g$,
  it is straightforward to check that the induced enriched natural transformation from $r(f)$ to $r(g)$ is monoidal.
  This construction $f \mapsto r(f)$ actually defines a functor 
  $$
  r: \Operade(\End(\ecateg C), \End(\ecateg D)) \to 
  \cmonlax(\Cate)(\ecateg C, \ecateg D).
  $$
It is clear that $r \circ \End = \id$. The fact that $\End \circ r = \id$
follows from the commutation of the squares
$$
\begin{tikzcd}
 \ecateg C(x_1 \otimes \cdots \otimes x_n,x)
  \ar[r, "f"]  \ar[d, "- \triangleleft \id(x_1 \otimes \cdots \otimes x_n)"']
 &
 \ecateg D(f(x_1 \otimes \cdots \otimes x_n),f(x))
 \ar[d, "- \triangleleft f(\id(x_1 \otimes \cdots \otimes x_n))"]
 \\
 \End(\ecateg C)(x_1, \ldots, x_n;x)
 \ar[r, "f"']
 &  \End(\ecateg D)(f(x_1), \ldots, f(x_n);f(x))
\end{tikzcd}
$$
 for any morphism of enriched operads
 $f: \End(\ecateg C) \to \End(\ecateg D)$.
 \end{proof}

 \begin{proposition}\label{propositionendmonoidal}
 The $\End$ 2-functor has the canonical structure of a
 strong context functor from the monoidal context $\cmonlax(\Cate)$ of symmetric monoidal 
 enriched categories
 and lax enriched functors to the monoidal context of enriched operads
 $\Operade$.
 \end{proposition}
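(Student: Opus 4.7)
The plan is to exhibit explicit structural isomorphisms witnessing the strong monoidal structure and then to reduce coherence to Mac Lane's coherence theorem, which already underlies the definition of the multi-tensor functor.

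First, I would construct the binary coherence isomorphism. For symmetric monoidal enriched categories $\ecateg C, \ecateg D$, the key observation is that the multi-tensor product on $\ecateg C \otimes \ecateg D$ is computed componentwise up to canonical isomorphism: there is a natural identification
$(x_1,y_1) \otimes \cdots \otimes (x_n,y_n) \simeq (x_1 \otimes \cdots \otimes x_n, \, y_1 \otimes \cdots \otimes y_n)$
in $\ecateg C \otimes \ecateg D$, provided by the iterated interchange constructed from commutators and associators. Using the formula $(\ecateg C \otimes \ecateg D)((a,a'),(b,b')) = \ecateg C(a,b) \otimes \ecateg D(a',b')$, this yields a canonical isomorphism
\[
	\End(\ecateg C \otimes \ecateg D)\bigl((x_1,y_1), \ldots, (x_n,y_n); (x,y)\bigr)
	\simeq \bigl(\End(\ecateg C) \otimeshadamard \End(\ecateg D)\bigr)\bigl((x_1,y_1), \ldots, (x_n,y_n);(x,y)\bigr).
\]
For the unit, since $\eII$ has a unique object and $\eII(\ast,\ast) = \II$, one obtains $\End(\eII) \simeq \uCom$, which is the monoidal unit of the Hadamard tensor product.

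Next, I would verify that these color-by-color isomorphisms assemble into \emph{morphisms of enriched operads}. Compatibility with the symmetric group action follows because the interchange isomorphism $(x_1,y_1) \otimes \cdots \simeq (x_1 \otimes \cdots, y_1 \otimes \cdots)$ is equivariant with respect to the diagonal $\categ S_n$-action (since the commutator in $\ecateg C \otimes \ecateg D$ is the componentwise commutator). Compatibility with operadic composition $\gamma_i$ reduces to the fact that nested multi-tensors in $\ecateg C \otimes \ecateg D$ agree, via the interchange, with the tensor product of nested multi-tensors in $\ecateg C$ and $\ecateg D$ separately. By the fully faithful embedding of Proposition established just above and the contractibility of the multi-tensor groupoid, this reduces entirely to Mac Lane's coherence.

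Then I would check naturality at the 1- and 2-morphism levels. Given lax monoidal functors $f: \ecateg C \to \ecateg C'$ and $g: \ecateg D \to \ecateg D'$, the lax structure maps provide $f(x_1) \otimes \cdots \otimes f(x_n) \to f(x_1 \otimes \cdots \otimes x_n)$ and likewise for $g$; one checks that the square
\[
\begin{tikzcd}[column sep=small]
	\End(\ecateg C \otimes \ecateg D)\ar[r, "\simeq"] \ar[d, "\End(f \otimes g)"']
	& \End(\ecateg C) \otimeshadamard \End(\ecateg D)
	\ar[d, "\End(f) \otimeshadamard \End(g)"]
	\\
	\End(\ecateg C' \otimes \ecateg D')\ar[r, "\simeq"]
	& \End(\ecateg C') \otimeshadamard \End(\ecateg D')
\end{tikzcd}
\]
commutes. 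For monoidal enriched natural transformations $A:f \to f'$, $B: g \to g'$, one checks compatibility at the level of the defining elements $A(x) \otimes B(y)$, which is immediate.

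Finally, the associator and unitor coherence diagrams for a strong monoidal 2-functor must be verified. Each of these reduces to a statement about the compatibility between the iterated interchange isomorphisms and the associators/unitors/commutators of $\ecateg C \otimes \ecateg D$; since these are all manufactured from the corresponding structural 2-isomorphisms in $\ecateg C$ and $\ecateg D$ by diagonal constructions, the verification follows from the contractibility of the multi-tensor groupoid. The main conceptual obstacle is bookkeeping the coherence of the binary interchange, but this has essentially been subsumed by the multi-tensor formalism set up earlier in this section.
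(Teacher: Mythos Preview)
Your proposal is correct and takes essentially the same approach as the paper: the paper's proof simply writes down the same chain of identifications
\[
\End(\ecateg C)(x_1,\ldots;x)\otimes\End(\ecateg D)(y_1,\ldots;y)
\;\simeq\;
\End(\ecateg C\otimes\ecateg D)\bigl((x_1,y_1),\ldots;(x,y)\bigr)
\]
via the componentwise-interchange isomorphism, and leaves all the coherence checks implicit. Your version is more thorough in outlining what those checks amount to (operad compatibility, naturality, associator/unitor coherence) and in reducing them to Mac Lane coherence via the multi-tensor groupoid, but the underlying argument is identical.
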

 
 \begin{proof}
 Such a structure is given by the canonical isomorphisms
 $$
 \End(\ecateg C) \otimeshadamard \End(\ecateg D) \simeq \End(\ecateg C \otimes \ecateg D)
 $$
 given by the maps
 $$
\begin{tikzcd}
\left( \End(\ecateg C) \otimeshadamard \End(\ecateg D) \right)
((x_1, y1),\ldots , (x_n, y_n); (x, y))
\ar[d, equal]
\\
\End(\ecateg C)
(x_1,\ldots , x_n; x) \otimes \End(\ecateg D)
(y_1,\ldots , y_n; y)
\ar[d, equal]
\\
\ecateg C (x_1 \otimes \cdots \otimes x_n , x) \otimes \ecateg D (y_1 \otimes \cdots \otimes y_n , y)
\ar[d, equal]
\\
(\ecateg C \otimes \ecateg D)\left((x_1 \otimes \cdots \otimes x_n, y_1 \otimes \cdots \otimes y_n), (x,y)\right)
\ar[d, "\simeq"]
\\
(\ecateg C \otimes \ecateg D)\left((x_1, y_1) \otimes \cdots \otimes (x_n, y_n), (x,y)\right)
\ar[d, equal]
\\
\End(\ecateg C \otimes \ecateg D)((x_1, y1),\ldots , (x_n, y_n); (x, y)).
\end{tikzcd}
 $$
 \end{proof}

\subsection{The Yoneda 2-functor and algebras over an operad}

\begin{definition}
	Let $\categ C$ be a $\mathcal V$ strict 2-category. The
	canonical strict 2-functor
	\begin{align*}
		\categ{C}^{\op} \times \categ{C}  &\to \Cats_{\mathcal V-\mathrm{small}}
		\\
		X, Y & \mapsto \categ{C}(X, Y)
		\end{align*}
	is called the Yoneda 2-functor
	of $\categ C$.
\end{definition}

\begin{lemma}\label{lemmaconvolution}
	Let $\categ C$ be a $\mathcal V$ monoidal context. Then
	$\categ C \times\categ C^\op$
	gets the canonical structure of a $\mathcal V$ monoidal context and the
	Yoneda strict 2-functor
	$$
	X, Y  \mapsto \categ{C}(X, Y)
	$$
	has the canonical structure of a lax context functor.
	\end{lemma}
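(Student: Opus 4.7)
The plan is to import the monoidal data from $\categ C$ componentwise onto $\categ C \times \categ C^{\op}$, and to observe that the lax monoidal structure on the Yoneda 2-functor is nothing other than the bifunctoriality of $\otimes$ restricted to hom-categories.

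First I would endow $\categ C \times \categ C^{\op}$ with the pointwise monoidal context structure: $(X, Y) \otimes (X', Y') \coloneqq (X \otimes X', Y \otimes Y')$, unit $(\II, \II)$, and associator, unitors, commutator given as pairs of the corresponding 2-isomorphisms of $\categ C$ (viewed in $\categ C^{\op}$ on the second coordinate). All the monoidal context axioms on the product follow termwise from those of $\categ C$.

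Next I would define the lax structure on the Yoneda 2-functor
$H \colon \categ C \times \categ C^{\op} \to \categ{Cats}_{\mathcal V\text{-small}}$, $H(X, Y) = \categ C(X, Y)$, by setting the binary lax map at $((X, Y), (X', Y'))$ to be the functor
$$
\categ C(X, Y) \times \categ C(X', Y') \to \categ C(X \otimes X', Y \otimes Y'),
\qquad (f, g) \longmapsto f \otimes g,
$$
obtained by restricting $\otimes \colon \categ C \times \categ C \to \categ C$; the unit lax map $\ast \to \categ C(\II, \II)$ picks out $\id_{\II}$. The 2-naturality of these structure maps in the object variables is immediate from the bifunctoriality of $\otimes$ and the naturality of identities.

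It then remains to check the coherence axioms (associativity, left and right unit, symmetry) for a lax symmetric monoidal 2-functor of monoidal contexts. The strategy is to reduce each such axiom, evaluated on a tuple of elements $f_i \in \categ C(X_i, Y_i)$, to a 2-cell identity in $\categ C$ obtained by pasting $\otimes$ with the associator, unitors and commutator of $\categ C$; each such internal identity is exactly one of the monoidal context axioms of $\categ C$ itself. The main obstacle is therefore pure bookkeeping: unpacking the definition of a lax context functor between 2-categories and matching each coherence pasting to the appropriate axiom in $\categ C$. No genuinely 2-categorical subtlety should appear, since the lax structure maps are honest strict functors between hom-categories and the required coherence 2-cells are whiskerings of those already present in $\categ C$.
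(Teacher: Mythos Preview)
Your proposal is correct and matches the paper's approach exactly: the paper simply records the lax structure maps $\categ C(X,Y)\times\categ C(X',Y')\to\categ C(X\otimes X',Y\otimes Y')$ given by the tensor product and the unit map $\ast\to\categ C(\II_{\categ C},\II_{\categ C})$ given by $\id_{\II_{\categ C}}$, leaving the coherence checks implicit. Your write-up is more detailed than the paper's own proof, but the content is the same.
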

	
	\begin{proof}
	 The lax structure is given by the naturality of the tensor product, that is the map
	 $$
	 \categ C(X, Y) \times \categ C(X', Y') \to \categ C(X \otimes X', Y \otimes Y') ,
	 $$
	 and by the identity of the monoidal unit of $\categ C$:
	 $$
	 \ast \to  \categ C(\II_{\categ C}, \II_{\categ C}) .
	 $$
	\end{proof}

\begin{definition}
	Let $\catalg{-}{-}$ be the lax context functor that is the composition
	of the strong context functor $\id \times \End$ with Yoneda 2-functor
	 of 
$\Operad_{\categ E}$ (that is a lax context functor):
	$$
	\catalg{-}{-}:\Operade^\op \times \cmonlax(\categ{Cat}_{\categ E})
	\xrightarrow{\id \times \End}
	\Operade^\op \times \Operade
	\to \Cats_{\mathcal V-\mathrm{small}}.
	$$
	For any symmetric monoidal enriched category $\ecateg C$
	and any enriched operad $\operad P$,
	the $\mathcal V$-small category $\catalg{\ecateg C}{\operad P}$
	is called the category of $\operad P$-algebras in $\ecateg C$.
\end{definition}

\begin{definition}
	For any symmetric monoidal enriched category $\ecateg C$ and any
	morphism of enriched operads $f :\operad P \to \operad P'$, we denote
	$f^\ast$ the functor
	$$
f^\ast = \catalg{\ecateg C}{f} : \catalg{\ecateg C}{\operad P'} \to
\catalg{\ecateg C}{\operad P}.
	$$
\end{definition}

\begin{proposition}
	Let $\operad P$ be a $\mathcal U$-small enriched operad and let
	$\ecateg C$ be a symmetric monoidal enriched category.
	Then the $\mathcal V$-small category $\catalg{\ecateg C}{\operad P}$
	is actually a $\mathcal U$-category.
\end{proposition}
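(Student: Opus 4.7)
The plan is to unpack the definition $\catalg{\ecateg C}{\operad P} = \Operade(\operad P, \End(\ecateg C))$ and bound the sizes of the objects and of the hom sets directly. Recall that a $\mathcal U$-category is one whose set of objects is $\mathcal U$-large and whose hom sets are all $\mathcal U$-small, so two checks are required.

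First I would bound the set of objects, that is, the set of morphisms of enriched operads $f : \operad P \to \End(\ecateg C)$. Such a morphism is the data of a function $f : \Ob(\operad P) \to \Ob(\ecateg C)$ together with, for every pair $(\underline c, c)$ with $\underline c \in \categ S_{\Ob(\operad P)}$ and $c \in \Ob(\operad P)$, a morphism in $\categ E$
$$
\operad P(\underline c; c) \to \ecateg C(f(c_1) \otimes \cdots \otimes f(c_n), f(c)).
$$
Since $\operad P$ is $\mathcal U$-small, $\Ob(\operad P)$ is $\mathcal U$-small and so is $\categ S_{\Ob(\operad P)}$. Because $\ecateg C$ (and hence $\End(\ecateg C)$) has its object set $\mathcal U$-large, the set of functions $\Ob(\operad P) \to \Ob(\ecateg C)$ is $\mathcal U$-large. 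Given such a function, the set of valid families of maps in $\categ E$ is a subset of a $\mathcal U$-small-indexed product of hom sets in $\categ E$; since $\categ E$ is a $\mathcal U$-category, each such hom set is $\mathcal U$-small, and a $\mathcal U$-small product of $\mathcal U$-small sets is $\mathcal U$-small. Thus the total set of morphisms is a $\mathcal U$-large set indexed by a $\mathcal U$-large set of $\mathcal U$-small sets, hence $\mathcal U$-large.

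Next I would bound the hom sets. Given two morphisms $f, g : \operad P \to \End(\ecateg C)$, a 2-morphism $A : f \to g$ is by definition the data of elements
$$
A(c) \in \End(\ecateg C)(f(c); g(c)) = \Hom{\categ E}{\II}{\ecateg C(f(c), g(c))}
$$
for each $c \in \Ob(\operad P)$, subject to naturality equations. The set of such families is a subset of the $\mathcal U$-small-indexed product
$$
\prod_{c \in \Ob(\operad P)} \Hom{\categ E}{\II}{\ecateg C(f(c), g(c))},
$$
and each factor is $\mathcal U$-small because $\categ E$ is a $\mathcal U$-category. A $\mathcal U$-small product of $\mathcal U$-small sets is again $\mathcal U$-small, so the hom set from $f$ to $g$ is $\mathcal U$-small.

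The two bounds together establish that $\catalg{\ecateg C}{\operad P}$ is a $\mathcal U$-category. The only subtle point, which is really the main conceptual step rather than an obstacle, is recognising that although $\Ob(\ecateg C)$ may be $\mathcal U$-large, all indexings needed on the $\operad P$-side are controlled by $\Ob(\operad P)$, which is $\mathcal U$-small; this is what prevents the hom sets from exceeding $\mathcal U$-small size.
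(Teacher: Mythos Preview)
Your proposal is correct and follows essentially the same approach as the paper's own sketch: unpack an object of $\catalg{\ecateg C}{\operad P}$ as a $\mathcal U$-small family of elements of $\mathcal U$ (objects of $\ecateg C$ and morphisms in $\categ E$), conclude that each algebra is an element of $\mathcal U$ so the object set is $\mathcal U$-large, and bound the hom sets as subsets of a $\mathcal U$-small product of $\mathcal U$-small sets. Your write-up is in fact slightly more explicit than the paper's sketch, but the underlying argument is identical.
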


\begin{proof}[Sketch of the proof]
	A $\operad P$-algebra $A$ in $\ecateg C$
	is a tuple
	$$
\left((A_o)_{o \in \Ob(\operad P)},
((\gamma(o_1, \ldots, o_n; o)_{o_1, \ldots, o_n, o \in \Ob(\operad P)})_{n \in \mathbb N} \right)
	$$
	that satisfies some conditions, where
	\begin{itemize}
		\itemt $A_o$ is an object of $\ecateg C$ and thus an element of $\mathcal U$;
  		\itemt $\gamma(o_1, \ldots, o_n; o)$ is an element of
		  $$
		  \hom_{\categ E}(\operad P(o_1, \ldots, o_n;o),
		  \ecateg C(A_{o_1}\otimes \cdots \otimes A_{o_n};A_{o}))
		  $$
		  and thus an element of $\mathcal U$.
	\end{itemize}
	
	Hence, such an algebra is a $\mathcal U$-small collection
	of elements of $\mathcal U$. Thus it is an element of $\mathcal U$.
	Hence the set of morphisms of enriched operads from $\operad P$ to $\End(\ecateg C)$
	is $\mathcal U$-large.
	
	Besides, for any two $\operad P$-algebras $A,A'$ in $\ecateg C$,
	the set of morphisms of algebras from $A$ to $A'$ is a subset of
	$$
	\prod_{o \in \Ob(\operad P)}\hom_{\cat(\ecateg C)}(A_o, A'_o) ;
	$$
	which is small. It is thus small.
\end{proof}

\begin{corollary}
	The restriction of the lax context functor $\catalg{-}{-}$
	to $\cmonlax(\categ{Cat}_{\categ E})  \times \Operadesmall^\op$
	factorises through $\Cats_{\mathcal U}$
	$$
	\begin{tikzcd}
		\Operade^\op \times \cmonlax(\categ{Cat}_{\categ E})
		\ar[r, "\catalg{-}{-}"]
		& \Cats_{\mathcal V-\mathrm{small}}
		\\
		\Operadesmall^\op \times \cmonlax(\categ{Cat}_{\categ E})
		\ar[u, hookrightarrow] \ar[r]
		& \Cats_{\mathcal U}.
		\ar[u, hookrightarrow]
	\end{tikzcd}
	$$
	Moreover, since the vertical arrows are strictly fully faithful strict context functors,
	then the horizontal bottom arrow is also a lax context functor.
\end{corollary}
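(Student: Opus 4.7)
The plan is to deduce the corollary as a purely formal consequence of the immediately preceding proposition. The main content—the nontrivial size estimate showing that $\catalg{\ecateg C}{\operad P}$ is a $\mathcal U$-category when $\operad P$ is $\mathcal U$-small—has already been established. What remains is bookkeeping at the level of 2-categorical structure.

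First I would establish the underlying 2-functorial factorisation. By the preceding proposition, for every object $(\operad P, \ecateg C) \in \Operadesmall^\op \times \cmonlax(\Cate)$, the category $\catalg{\ecateg C}{\operad P}$ lies in $\Cats_{\mathcal U}$. Since $\Cats_{\mathcal U}$ is defined as the full sub-2-category of $\Cats_{\mathcal V-\mathrm{small}}$ spanned by $\mathcal U$-categories, the strict 2-functor underlying $\catalg{-}{-}$ automatically factors, and the factorisation on morphisms and 2-morphisms is uniquely determined by fullness.

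Next I would verify that both vertical arrows are strictly fully faithful strict context functors. The inclusion $\Operadesmall^\op \hookrightarrow \Operade^\op$ is the opposite of a full sub-2-category inclusion; it is strict monoidal for the Hadamard tensor product because the tensor product of two $\mathcal U$-small enriched operads is again $\mathcal U$-small (the set of colours is the product of two $\mathcal U$-small sets, and the hom-objects are tensor products in $\categ E$). The inclusion $\Cats_{\mathcal U} \hookrightarrow \Cats_{\mathcal V-\mathrm{small}}$ is similarly a full sub-2-category inclusion, strict monoidal for the cartesian product since the product of two $\mathcal U$-categories is a $\mathcal U$-category.

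Finally I would invoke the general principle that a lax context functor $F \colon \categ A \to \categ B$ whose underlying 2-functor factors as $F = \iota \circ F'$ through a strictly fully faithful strict context functor $\iota \colon \categ{B}_0 \hookrightarrow \categ B$ endows $F'$ with a unique lax context structure making $\iota$ a strict context 2-natural isomorphism between $F$ and $\iota \circ F'$. The structural 2-cells
\[
\catalg{\ecateg C}{\operad P} \times \catalg{\ecateg C'}{\operad P'}
\to \catalg{\ecateg C \otimes \ecateg C'}{\operad P \otimeshadamard \operad P'}
\]
and the unit comparison, originally landing in $\Cats_{\mathcal V-\mathrm{small}}$, have both source and target in $\Cats_{\mathcal U}$, so by full faithfulness they come from unique 2-cells in $\Cats_{\mathcal U}$; the coherence axioms are inherited automatically because $\iota$ reflects equalities of 2-cells. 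Applying this to the pair of inclusions from the previous step yields the claim.

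The only step with genuine content is the size estimate for $\catalg{\ecateg C}{\operad P}$, which has already been carried out. There is therefore no real obstacle: the potential pitfall is merely checking that both vertical inclusions are actually strict monoidal for the relevant tensor products, which reduces to verifying closure of the smallness conditions under cartesian products and Hadamard products.
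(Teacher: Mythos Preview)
Your proposal is correct and follows exactly the reasoning the paper intends: the corollary is stated without a proof environment in the paper, and its own wording (``since the vertical arrows are strictly fully faithful strict context functors, then the horizontal bottom arrow is also a lax context functor'') already encodes the argument you spell out. You have simply made explicit the bookkeeping that the paper leaves to the reader.
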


\subsection{Monoidal structures on categories of algebras}

Recall from \cite[Proposition 12]{LeGrignou22}
that the functor between $\mathcal W$-small categories:
$$
\tcatalg{-}{-}_{\lax}: \Operad_\cat^\op \times \categ{Context}_\lax \to \categ{Context}_\lax
$$
is lax monoidal. Hence, for any two categorical operads
$\catoperad B, \catoperad B'$
and any two monoidal contexts $\categ M, \categ M'$,
we get a canonical 2-functor
$$
\tcatalg{\categ M}{\catoperad B}_\lax \times \tcatalg{\categ M'}{\catoperad B'}_\lax \to
\tcatalg{\categ M \times \categ M'}{\catoperad B \times \catoperad B}_\lax ,
$$
that sends a $\catoperad B$-algebra $A$ in $\categ M$ and a $\catoperad B'$-algebra $A'$
in $\categ M'$ to the
$\catoperad B\times \catoperad B'$-algebra in $\categ M \times \categ M'$
given by the composition
$$
\catoperad B \times \catoperad B' \xrightarrow{A \times A'}
\catEnd{(\categ M)} \times \catEnd{(\categ M')} \simeq \catEnd(\categ M \times \categ M') . 
$$
Applying this result to $\categ M = \Operadesmall^\op$ and $\categ M' = \cmonlax(\Cate)$,
we get the following result.

\begin{theorem}\label{thm:main}
 Let $\catoperad B, \catoperad B'$ be two categorical operads.
 Then, the lax context functor
$$
    \catalg{-}{-}: \Operadesmall^\op  \times \cmonlax(\Cate) \to \Cats_{\mathcal{U}}
    $$
induces a lax context functor that is natural with respect to $B,B'$:
 $$
 \tcatalg{\Operadesmall^\op}{\catoperad B}_{\lax} \times
 \tcatalg{\categ{CMon}_{\lax}(\Cate)}{\catoperad B'}_{\lax}
  \to
  \tcatalg{\Cats_{\mathcal U}}{\catoperad B \times \catoperad{B}'}_{\lax} .
 $$
\end{theorem}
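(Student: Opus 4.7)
The plan is to build the asserted functor as a composition of two 2-functors, both of which come from general functoriality principles already established in the paper and in \cite{LeGrignou22}, and then to chase the lax structure and the naturality in $\catoperad B, \catoperad B'$ through that composition.

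First, I would apply the lax monoidality of the 2-functor $\tcatalg{-}{-}_{\lax}: \Operad_\cat^\op \times \categ{Context}_\lax \to \categ{Context}_\lax$ from \cite[Proposition 12]{LeGrignou22}, specialised to the pair of monoidal contexts $(\categ M, \categ M') = (\Operadesmall^\op, \cmonlax(\Cate))$ and to the pair of categorical operads $(\catoperad B, \catoperad B')$. As spelled out in the paragraph preceding the theorem, this gives a canonical 2-functor
$$
\tcatalg{\Operadesmall^\op}{\catoperad B}_{\lax} \times \tcatalg{\cmonlax(\Cate)}{\catoperad B'}_{\lax} \to \tcatalg{\Operadesmall^\op \times \cmonlax(\Cate)}{\catoperad B \times \catoperad B'}_{\lax} ,
$$
which, on objects, sends a pair $(A, A')$ to the $\catoperad B \times \catoperad B'$-algebra given by the composite
$$
\catoperad B \times \catoperad B' \xrightarrow{A \times A'} \catEnd(\Operadesmall^\op) \times \catEnd(\cmonlax(\Cate)) \simeq \catEnd(\Operadesmall^\op \times \cmonlax(\Cate)) .
$$
The lax monoidal structure on $\tcatalg{-}{-}_\lax$ endows this 2-functor with the structure of a lax context functor, and the naturality of that lax monoidal structure in its operad variable is precisely the naturality in $\catoperad B, \catoperad B'$ that we will need at the end.

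Second, I would post-compose with the 2-functor obtained by applying $\tcatalg{-}{\catoperad B \times \catoperad B'}_\lax$ to the lax context functor $\catalg{-}{-}: \Operadesmall^\op \times \cmonlax(\Cate) \to \Cats_{\mathcal U}$ itself. Since $\tcatalg{-}{\catoperad B \times \catoperad B'}_\lax$ is 2-functorial in lax context functors of its monoidal-context argument, this produces a lax context functor
$$
\tcatalg{\Operadesmall^\op \times \cmonlax(\Cate)}{\catoperad B \times \catoperad B'}_\lax \to \tcatalg{\Cats_{\mathcal U}}{\catoperad B \times \catoperad B'}_\lax ,
$$
whose lax structural 2-cells are induced by those of $\catalg{-}{-}$. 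Composing this with the 2-functor produced in the first step yields the desired functor, and naturality in $\catoperad B, \catoperad B'$ follows from the naturality at each step.

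The main obstacle, and essentially the only real content to verify, is the compatibility of the two lax structures under composition: one has to check that the lax 2-cells coming from the lax monoidality of $\tcatalg{-}{-}_{\lax}$ paste coherently with those coming from the laxity of $\catalg{-}{-}$, and that the resulting pasting still satisfies the associativity and unit coherences of a lax context functor. This is a diagram chase that is essentially formal once one unfolds the definitions, and it reduces—via the contractibility of $\catoperad E_{\infty,\cat}(n)$ exploited in Lemmas \ref{lemmapseudocommobject} and \ref{lemmaeifntylaxstruct}—to the same sort of coherence checks used there; I would defer the explicit verification but claim no new difficulty arises beyond bookkeeping.
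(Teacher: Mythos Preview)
Your proposal is correct and follows exactly the same two-step composition as the paper's proof: first the lax monoidal structure on $\tcatalg{-}{-}_\lax$ from \cite[Proposition 12]{LeGrignou22}, then the image of $\catalg{-}{-}$ under $\tcatalg{-}{\catoperad B \times \catoperad B'}_\lax$. The final paragraph's worry about pasting coherences is unnecessary here—the composite of lax context functors is automatically a lax context functor, and Lemmas \ref{lemmapseudocommobject} and \ref{lemmaeifntylaxstruct} play no role in this theorem.
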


\begin{proof}
This lax context functor is the composition
$$
\begin{tikzcd}
\tcatalg{\Operadesmall^\op}{\catoperad B}_{\lax}
\times \tcatalg{\cmonlax(\categ{Cate})}{\catoperad B'}_{\lax}
 \ar[d]
 \\
\tcatalg{\Operadesmall^\op  \times \cmonlax(\Cate)}{\catoperad B \times \catoperad B'}_{\lax}
\ar[d, "\catalg{-}{-}"]
\\
\tcatalg{\Cats_{\mathcal U}}{\catoperad B \times \catoperad B'}_{\lax} .
\end{tikzcd}
$$
The first arrow proceeds from the structure of
a lax monoidal functor on $\tcatalg{-}{-}_{\lax}$ as described above
and the second
arrow is the image through the functor
$\tcatalg{-}{\catoperad B \times \catoperad B'}_{\lax}$
of the lax context functor 
$\catalg{-}{-}$.
\end{proof}

\begin{corollary}
	Let us use the notations of Theorem \ref{thm:main}.
	The 2-functor
	$$
 \tcatalg{\Operadesmall^\op}{\catoperad B}_{\lax} \times
 \tcatalg{\categ{CMon}_{\lax}(\Cate)}{\catoperad B'}_{\lax}
  \to
  \tcatalg{\Cats_{\mathcal U}}{\catoperad B \times \catoperad{B}'}_{\lax} .
 $$
 sends a pair $(f,g)$ of a strict morphism of $\catoperad B$-algebras $f$
 and a strict morphism of $\catoperad B'$-algebras $g$ to a strict morphism
 of $\catoperad B \times \catoperad{B}'$-algebras. The same results holds for strong morphisms.
 One thus gets a commutative diagram
 $$
\begin{tikzcd}
	\tcatalg{\Operadesmall^\op}{\catoperad B}_{\lax} \times
 \tcatalg{\categ{CMon}_{\lax}(\Cate)}{\catoperad B'}_{\lax}
  \ar[r]
  &\tcatalg{\Cats_{\mathcal U}}{\catoperad B \times \catoperad{B}'}_{\lax}
  \\
  \tcatalg{\Operadesmall^\op}{\catoperad B}_{\strong} \times
 \tcatalg{\categ{CMon}_{\lax}(\Cate)}{\catoperad B'}_{\strong}
  \ar[r] \ar[u, hookrightarrow]
  &\tcatalg{\Cats_{\mathcal U}}{\catoperad B \times \catoperad{B}'}_{\strong}
  \ar[u, hookrightarrow]
  \\
  \tcatalg{\Operadesmall^\op}{\catoperad B}_{\strict} \times
 \tcatalg{\categ{CMon}_{\strict}(\Cate)}{\catoperad B'}_{\strict}
  \ar[r] \ar[u, hookrightarrow]
  &\tcatalg{\Cats_{\mathcal U}}{\catoperad B \times \catoperad{B}'}_{\strict}.
  \ar[u, hookrightarrow]
\end{tikzcd}
 $$
\end{corollary}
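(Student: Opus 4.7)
My plan is to check, one arrow at a time, that strictness (resp.\ strongness) is preserved by each of the two arrows whose composition defines the 2-functor of Theorem \ref{thm:main}. These are: first, the lax-monoidal coherence 2-functor
$$\tcatalg{\Operadesmall^\op}{\catoperad B}_{\lax} \times \tcatalg{\cmonlax(\Cate)}{\catoperad B'}_{\lax} \to \tcatalg{\Operadesmall^\op \times \cmonlax(\Cate)}{\catoperad B \times \catoperad B'}_{\lax}$$
coming from the lax monoidal structure of $\tcatalg{-}{-}_{\lax}$; and second, the 2-functor obtained by applying $\tcatalg{-}{\catoperad B \times \catoperad B'}_{\lax}$ to the lax context functor $\catalg{-}{-}$.

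For the first arrow, I would observe that the $\catoperad B \times \catoperad B'$-action on a pair $(A, A')$ is defined componentwise through $\catoperad B \times \catoperad B' \xrightarrow{A \times A'} \catEnd(\categ M) \times \catEnd(\categ M') \simeq \catEnd(\categ M \times \categ M')$, so the structural 2-morphism of a pair of morphisms $(f, g)$ at an operation $(b, b') \in \catoperad B(n) \times \catoperad B'(n)$ is the ordered pair made up of the structural 2-morphism of $f$ at $b$ and of $g$ at $b'$. A pair of identity 2-morphisms is an identity, and a pair of invertible 2-morphisms is invertible, so both strict and strong are preserved componentwise.

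For the second arrow, I would trace how a morphism $\phi$ of $\catoperad B \times \catoperad B'$-algebras in the source product monoidal context, with structural 2-morphisms $\alpha_{(b,b')}$, maps to its image under $\catalg{-}{-}$. Unfolding the induced algebra structure on the image built from the lax coherence of $\catalg{-}{-}$, the structural 2-morphism of the image equals $\catalg{-}{-}(\alpha_{(b,b')})$ pre- and post-composed with 1-cells arising from the lax coherence; the latter are strictly natural in $\phi$ and contribute no extra 2-cells. Since $\catalg{-}{-}$ is a 2-functor, it sends identities to identities and invertible 2-morphisms to invertible 2-morphisms, so strict-to-strict and strong-to-strong are both preserved by this arrow.

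The bottom row's use of $\cmonstrict(\Cate)$ in place of $\cmonlax(\Cate)$ is a necessary refinement: being a strict morphism in $\tcatalg{\cmonlax(\Cate)}{\catoperad B'}_{\strict}$ only forces identity 2-morphisms for the $\catoperad B'$-action, while leaving the underlying morphism of pseudo-commutative monoids potentially lax, and such residual laxness would enter the image's structural 2-morphisms and break strictness in $\Cats_{\mathcal U}$. Restricting the second factor to $\tcatalg{\cmonstrict(\Cate)}{\catoperad B'}_{\strict}$ imposes strictness on both levels, after which the analysis above produces a strict morphism in $\tcatalg{\Cats_{\mathcal U}}{\catoperad B \times \catoperad B'}_{\strict}$. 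The main bookkeeping obstacle lies in the second arrow: confirming that the lax coherence of $\catalg{-}{-}$ is strictly natural in $\phi$, so that no extraneous 2-cells sneak into the image's structural 2-morphisms. This ultimately reduces to the fact that $\End$ is a strong, not merely lax, context functor by Proposition \ref{propositionendmonoidal}, and that the lax structure of the Yoneda 2-functor is built from the strictly natural tensor product in $\Operade$.
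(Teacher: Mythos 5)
Your argument is correct, but it takes a genuinely different route from the paper: the paper's entire proof is a one-line citation of \cite[Proposition 12]{LeGrignou22}, deferring the preservation of strict and strong morphisms to the general statement about the lax monoidal structure of $\tcatalg{-}{-}_{\lax}$ established in the prequel. You instead verify the claim directly by unwinding the two arrows composing the 2-functor of Theorem \ref{thm:main}: componentwise preservation of identity/invertible structural 2-cells for the first arrow, and 2-functoriality of $\catalg{-}{-}$ together with strict 2-naturality of its lax coherence cells for the second. Your reduction of that last point to Proposition \ref{propositionendmonoidal} and to the strict naturality of the tensor product underlying the Yoneda 2-functor is exactly the right place to locate the only real verification; this is the content that \cite[Proposition 12]{LeGrignou22} packages abstractly. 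What your approach buys is a self-contained check; what the paper's buys is brevity and uniformity with the other corollaries proved the same way.

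One caveat: your closing claim that restricting the second factor of the bottom row to $\tcatalg{\cmonstrict(\Cate)}{\catoperad B'}_{\strict}$ is \emph{necessary} --- on the grounds that a lax underlying symmetric monoidal functor would inject non-identity 2-cells into the image's structural 2-morphisms --- does not follow from your own analysis. By the very computation you carry out for the second arrow, the structural 2-cell of the image at an operation $(b,b')$ is a pasting of the images of the structural 2-cells of $f$ and $g$ \emph{for the operad actions} with strictly commuting naturality squares; the lax structure of $g$ as a 1-cell of $\cmonlax(\Cate)$ only enters the definition of the underlying functor $\catalg{g}{f}$ (via $\End(g)$), not these 2-cells. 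So strictness would survive with $\cmonlax$ in the bottom row as well; the paper's choice of $\cmonstrict$ there is a harmless further restriction (and is what its later applications use), not a forced one. This does not affect the validity of your proof of the statement as given, since the restriction only shrinks the domain.
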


\begin{proof}
	This is a direct consequence of \cite[Proposition 12]{LeGrignou22}.
\end{proof}

\subsection{The op construction and coalgebras}

\begin{definition}
	For any enriched category $\ecateg C$, let $\ecateg C^\op$ be the enriched
	category with the same set of objects and so that
	$$
	\ecateg C^\op (x,y) = \ecateg C(y,x), \quad x,y \in \Ob(\ecateg C).
	$$
	The units and compositions are given by those of $\ecateg C$.
	This defines two inverse strict context functors 
	$$
	-^\op: \Cate \to \Cate^{co} ; \quad -^\op: \Cate^{co} \to \Cate .
	$$
\end{definition}

\begin{definition}
	Let
	$$
	\catcog{-}{-}: \Operadesmall^{coop} \times \cmonoplax(\Cate)
	 \to \Cats_{\mathcal U}
	$$
	be the lax context functor defined as the composition
	$$
	\begin{tikzcd}
		\Operadesmall^{coop} \times \cmonoplax(\Cate)
		\ar[d, equal]
		\\
		\Operadesmall^{coop} \times \cmonlax(\Cate^{co})^{co}
		\ar[d, equal]
		\\	
		\left(\Operadesmall^{op} \times \cmonlax(\Cate^{co})\right)^{co}
		\ar[d, "\left(\id \times \cmonlax(-^{\op})\right)^{co}"]
		\\
		\left(\Operadesmall^{op} \times \cmonlax(\Cate)\right)^{co}
		\ar[d, "(\catalg{-}{-})^{co}"]
		\\
		\Cats_{\mathcal U}^{co}
		\ar[d, "-^\op"]
		\\ 
		\Cats_{\mathcal U}.
	\end{tikzcd}
	$$
	In particular, for any enriched operad and any symmetric monoidal
	enriched category, we have 
	$$
	\catcog{\ecateg C}{\operad P} = \catalg{\ecateg C^\op}{\operad P}^\op .
	$$
\end{definition}

\begin{definition}
	For any symmetric monoidal enriched category $\ecateg C$ and any
	morphism of operads $f :\operad P \to \operad P'$, we denote
	$f^\ast$ the functor
	$$
f^\ast = \catcog{\ecateg C}{f} : \catcog{\ecateg C}{\operad P'} \to
\catcog{\ecateg C}{\operad P}.
	$$
\end{definition}

\begin{corollary}
	Let $\catoperad B, \catoperad B'$ be two categorical operads.
 Then, the lax context functor
$$
    \catcog{-}{-}: \Operadesmall^{coop}  \times \cmonoplax(\Cate) \to \Cats_{\mathcal{U}}
    $$
induces a lax context functor that is natural with respect to $\catoperad B,\catoperad B'$:
 $$
 \tcatalg{\Operadesmall^{coop}}{\catoperad B}_{\lax} \times
 \tcatalg{\cmonoplax(\Cate)}{\catoperad B'}_{\lax}
  \to
  \tcatalg{\Cats_{\mathcal U}}{\catoperad B \times \catoperad{B}'}_{\lax} .
 $$
 Moreover, this 2-functor sends pairs of strict (resp. strong)
 morphisms of algebras to strict (resp. strong) morphisms of 
 algebras.
\end{corollary}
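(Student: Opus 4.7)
The plan is to deduce this corollary from Theorem \ref{thm:main} by transporting it through the composition that, by definition, builds $\catcog{-}{-}$ out of $\catalg{-}{-}$. Recall that
\[
\catcog{-}{-} = (-)^{\op} \circ (\catalg{-}{-})^{co} \circ \bigl(\id \times \cmonlax(-^{\op})\bigr)^{co}
\]
on $\Operadesmall^{coop} \times \cmonoplax(\Cate) = (\Operadesmall^{op} \times \cmonlax(\Cate^{co}))^{co}$. I would apply the 2-functor $\tcatalg{-}{\catoperad B \times \catoperad B'}_{\lax}$ to each constituent of this composition, exploiting its functoriality and naturality in $\catoperad B, \catoperad B'$ (the lax-monoidal 2-functor property from \cite[Proposition 12]{LeGrignou22}). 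For the central step, Theorem \ref{thm:main} directly yields the required lax context functor at the level of algebras. For the outer strict context isofunctors $\cmonlax(-^{\op})$ and $-^{\op} : \Cats_{\mathcal U} \to \Cats_{\mathcal U}$, the functoriality of $\tcatalg{-}{-}_{\lax}$ gives strict 2-functors between the corresponding algebra 2-categories. The $(-)^{co}$ operations are 2-categorical formalities: a lax context functor remains a lax context functor after simultaneously reversing the direction of 2-morphisms in the domain, codomain, and structural 2-cells.

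The main obstacle is the bookkeeping of lax versus oplax directions under the $(-)^{co}$ operations. In general, $\tcatalg{\categ M^{co}}{\catoperad B}_{\lax}$ identifies naturally with $\tcatalg{\categ M}{\catoperad B}_{\oplax}^{co}$, since a lax structural 2-morphism in $\categ M^{co}$ is precisely an oplax one in $\categ M$, and 2-morphisms reverse. One must verify that when this family of identifications is threaded through the composition above, the two $(-)^{co}$ flanking $\catalg{-}{-}$ and the final $-^{\op}$ on $\Cats_{\mathcal U}$ conspire so that the oplax directions cancel out and the resulting 2-functor lands in $\tcatalg{\Cats_{\mathcal U}}{\catoperad B \times \catoperad B'}_{\lax}$, as asserted, rather than its oplax variant. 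This is in spirit the same bookkeeping already performed in the definition of $\catcog{-}{-}$ itself, lifted one 2-categorical level up. Naturality in $\catoperad B, \catoperad B'$ is then inherited from Theorem \ref{thm:main} together with the naturality at every other step.

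For the final claim concerning strict and strong morphisms, I would replace Theorem \ref{thm:main} in the central step by its strict/strong refinement (the corollary immediately following Theorem \ref{thm:main}). The remaining steps are strict context 2-functors or formal $(-)^{co}$-reversals, which trivially preserve the $\strict$ and $\strong$ sub-2-categories; the desired commutative diagram of the $\lax/\strong/\strict$ variants follows.
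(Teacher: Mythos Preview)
Your approach is workable but considerably more roundabout than the paper's. The paper simply observes that $\catcog{-}{-}$ is, by its very definition, a lax context functor from $\Operadesmall^{coop} \times \cmonoplax(\Cate)$ to $\Cats_{\mathcal U}$, and then repeats verbatim the two-step composition used to prove Theorem~\ref{thm:main}: first the lax-monoidal structure map of $\tcatalg{-}{-}_{\lax}$, then the image of $\catcog{-}{-}$ under $\tcatalg{-}{\catoperad B\times\catoperad B'}_{\lax}$. Nothing about that argument uses any special feature of $\catalg{-}{-}$ beyond its being a lax context functor, so it transfers wholesale.

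Your route instead tries to transport the \emph{result} of Theorem~\ref{thm:main} through the $(-)^{co}$ and $(-)^{\op}$ decomposition of $\catcog{-}{-}$. There is a genuine subtlety here that you flag but do not resolve: under the identification $\tcatalg{\categ M^{co}}{\catoperad B}_{\lax}\simeq\bigl(\tcatalg{\categ M}{\catoperad B}_{\oplax}\bigr)^{co}$, taking $(-)^{co}$ of Theorem~\ref{thm:main}'s output lands you in $\tcatalg{-}{-}_{\oplax}$ rather than $\tcatalg{-}{-}_{\lax}$, and the outer $-^{\op}$ on $\Cats_{\mathcal U}$ does not undo this swap at the level of $\catoperad B\times\catoperad B'$-algebras. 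In particular, $\cmonlax(\Cate)^{co}$ (which is what appears after $(-)^{co}$ of Theorem~\ref{thm:main}'s source) is not the same 2-category as $\cmonoplax(\Cate)$: the morphisms are lax in the former and oplax in the latter. So the cancellation you hope for does not occur as stated. What \emph{does} work is to apply $\tcatalg{-}{\catoperad B\times\catoperad B'}_{\lax}$ directly and functorially to the composite defining $\catcog{-}{-}$; but then for the central piece you are not invoking Theorem~\ref{thm:main}'s output, you are re-running its argument on $(\catalg{-}{-})^{co}$, which is exactly the paper's one-line proof in disguise.
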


\begin{proof}
	This follows from the same arguments as those used in the proof of
	Theorem \ref{thm:main}.
\end{proof}


\subsection{Structure of pseudo-commutative algebras on endomorphisms operads}

The 2-functor $\End$ factorises as
$$
\cmonlax(\Cate) \to \cmonlax\cmonlax(\Cate)
\xrightarrow{\cmonlax(\End)} \cmonlax(\Operade)
\to \Operade 
$$
where the first component is the 2-functor described in
Proposition \ref{corollarycmon} and the last component is just the
forgetful 2-functor.

In particular, for any symmetric monoidal enriched category
$\ecateg C$, the enriched operad $\End(\ecateg C)$ is a
pseudo commutative monoid in the monoidal context $\Operade$.
Unfolding the definitions, its product is the morphism
of enriched operads from $\End(\ecateg C) \otimeshadamard \End(\ecateg C)$
to $\End(\ecateg C)$ whose underlying function
sends a pair $(x,y)$ to $x \otimes y$ and whose structural
morphisms in $\categ E$ are the maps
$$
\begin{tikzcd}
	\End(\ecateg C) \otimeshadamard \End(\ecateg C)
\left((x_1, y_1), \ldots, (x_n, y_n);(x_0, y_0)\right)
\ar[d, equal]
\\
\ecateg C(x_1\otimes \cdots \otimes x_n,x_0) \otimes \ecateg C(y_1\otimes \cdots \otimes y_n,y_0)
\ar[d]
\\
\ecateg C((x_1\otimes \cdots \otimes x_n) \otimes (y_1\otimes \cdots \otimes y_n),x_0 \otimes y_0)
\ar[d, "\simeq"]
\\
\ecateg C((x_1\otimes y_1) \otimes \cdots \otimes (x_1\otimes y_1),x_0 \otimes y_0)
\ar[d, equal]
\\
\End(\ecateg C)\left((x_1 \otimes y_1), \ldots, (x_n \otimes y_n);(x_0 \otimes y_0)\right).
\end{tikzcd}
$$
The unit, associator, commutator and unitors of $\End(\ecateg C)$
are just given by those of $\ecateg C$.

Besides, for any lax functor between two symmetric
monoidal enriched categories $f: \ecateg C \to \ecateg D$,
the morphism $\End(f)$ has the canonical structure
of a lax morphism of pseudo-commutative monoids. This lax structure
is actually just given by that of $f$.


\section{Algebras and coalgebras in a symmetric monoidal enriched category}

Let $\ecateg C$ be a symmetric monoidal enriched category.
The goal of this section is to describe monoidal structures of categories
of algebras (or coalgebras) in $\ecateg C$
over a small enriched operad $\operad P$ induced by some coalgebraic structures
on $\operad P$.

From now on, $\Cats$ will denote the $\mathcal U$-categories (that were previously
denoted $\Cats_{\mathcal U}$).

\subsection{Monoidal structure on algebras and coalgebras}

Remember from Corollary \ref{corollarycmonstrongstrict}
that $\ecateg C$ has the canonical structure of an
$\catoperad{E}_{\infty, \cat}$-algebra
(and in particular the structure of an
$\catoperad{A}_{\infty, \cat}$-algebra)
within the monoidal context of symmetric monoidal
enriched categories and strong monoidal functors.

\begin{corollary}\label{corollaryalgebra}
 Let $\catoperad B$ be an operad in sets (thus it is a categorical operad).
 Then, the 2-functor from $\Operadesmall^\op$ to $\Cats$
 that sends an enriched operad $\operad Q$ to
 its category of algebras in $\ecateg C$ yields a functor
 \begin{align*}
	\sk\left(\tcatcog{\Operadesmall}{\catoperad B}_{\strict}\right)^\op =
	\sk\left(\tcatalg{\Operadesmall^\op}{\catoperad B}_{\strict}\right)
	&\to
	\sk\left(\tcatalg{\categ{Cats}}{\catoperad{E}_{\infty,\cat} \times \catoperad{B}}_{\strict}\right) ;	 
	\\
	(\operad P_o)_{o \in \Ob(\catoperad{B})} & \mapsto \left(\catalg{\ecateg C}{\operad P_o}\right)_{o \in \Ob(\catoperad{B})}
 \end{align*}
 that is natural with respect to $\catoperad B$.
\end{corollary}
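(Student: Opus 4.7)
The plan is to deduce this from Theorem \ref{thm:main} by specializing the second variable to $\catoperad B' = \catoperad{E}_{\infty,\cat}$ and plugging in a canonical fixed input coming from $\ecateg C$ itself. Concretely, Proposition \ref{corollarycmon} and Corollary \ref{corollarycmonstrongstrict}, applied to $\categ C = \Cate$ and $\catoperad Q = \catoperad E_{\infty,\cat}$, lift the symmetric monoidal enriched category $\ecateg C$ (viewed as an object of $\cmonlax(\Cate) = \tcatalg{\Cate}{\catoperad{E}_{\infty,\cat}}_\lax$) canonically to an object of $\tcatalg{\cmonlax(\Cate)}{\catoperad{E}_{\infty,\cat}}_\strict$. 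The first step will therefore be to use this fact, together with Theorem \ref{thm:main} for $\catoperad B' = \catoperad{E}_{\infty,\cat}$, to produce the 2-functor
$$
\tcatalg{\Operadesmall^\op}{\catoperad B}_{\lax} \times \tcatalg{\cmonlax(\Cate)}{\catoperad{E}_{\infty,\cat}}_{\lax} \to \tcatalg{\Cats}{\catoperad B \times \catoperad{E}_{\infty,\cat}}_{\lax}
$$
and then evaluate the second variable at this fixed lift of $\ecateg C$.

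Next, I will restrict to strict morphisms. Since the second input has been fixed at a strict $\catoperad{E}_{\infty,\cat}$-algebra in $\cmonlax(\Cate)$, the only non-trivial data to control lies in the first variable. Applying the commutative diagram of the Corollary following Theorem \ref{thm:main} (which asserts that pairs of strict morphisms of algebras are sent to strict morphisms), one obtains a 2-functor
$$
\tcatalg{\Operadesmall^\op}{\catoperad B}_{\strict} \to \tcatalg{\Cats}{\catoperad B \times \catoperad{E}_{\infty,\cat}}_{\strict}.
$$
Finally, taking $\sk$ on both sides yields the announced functor between 1-categories, and the identification $\sk\bigl(\tcatcog{\Operadesmall}{\catoperad B}_{\strict}\bigr)^\op = \sk\bigl(\tcatalg{\Operadesmall^\op}{\catoperad B}_{\strict}\bigr)$ is just a rewriting. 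Naturality in $\catoperad B$ is a direct consequence of the naturality statement in Theorem \ref{thm:main}, since evaluating the second argument at the lift of $\ecateg C$ is independent of $\catoperad B$.

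The main bookkeeping issue (the step that I expect to demand the most care) is to verify, by unwinding all the constructions, that the object assignment of the resulting 2-functor is indeed $(\operad P_o)_o \mapsto (\catalg{\ecateg C}{\operad P_o})_o$, equipped with the expected $\catoperad B \times \catoperad{E}_{\infty,\cat}$-algebra structure. Tracing through the lax monoidality of $\tcatalg{-}{-}_{\lax}$ recalled before Theorem \ref{thm:main} and the definition of $\catalg{-}{-}$ as the composition of Yoneda with $\End$, this structure corresponds on $n$-fold inputs to
$$
\operad P_o \otimeshadamard \catoperad{E}_{\infty,\cat} \xrightarrow{(\operad P_o \to \End(\ecateg C)) \otimeshadamard (\catoperad E_{\infty,\cat} \to \End(\ecateg C))} \End(\ecateg C) \otimeshadamard \End(\ecateg C) \to \End(\ecateg C),
$$
where the last arrow is the pseudo-commutative multiplication of $\End(\ecateg C)$ produced by Proposition \ref{corollarycmon}. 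The required compatibilities then reduce to the coherence verifications already carried out in Lemmas \ref{lemmapseudocommobject} and \ref{lemmaeifntylaxstruct}, so no new calculation is needed.
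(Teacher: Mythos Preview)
Your proposal is correct and follows essentially the same route as the paper: lift $\ecateg C$ to an $\catoperad E_{\infty,\cat}$-algebra in $\cmonlax(\Cate)$ via Proposition~\ref{corollarycmon} and Corollary~\ref{corollarycmonstrongstrict}, plug this into the second variable of Theorem~\ref{thm:main} with $\catoperad B' = \catoperad E_{\infty,\cat}$, restrict to strict morphisms using the subsequent corollary, and take $\sk$. The paper presents exactly this chain as a single vertical composite (passing explicitly through $\cmonstrict(\Cate)$ and $\tcatalg{\cmonstrong(\Cate)}{\catoperad E_{\infty,\cat}}_\strict$ before landing in the lax version), and omits your final bookkeeping paragraph, which is harmless extra detail.
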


\begin{proof}
	Such a functor may be actually decomposed as
	$$
\begin{tikzcd}
	\sk\left(\tcatalg{\Operadesmall^\op}{\catoperad B}_{\strict}\right)
	\ar[d, equal]
	\\
	\sk\left(\tcatalg{\Operadesmall^\op}{\catoperad B}_{\strict}\right) \times \ast
	\ar[d, "\id \times \ecateg C"]
	\\
	\sk\left(\tcatalg{\Operadesmall^\op}{\catoperad B}_{\strict}\right) \times
	\sk\left(\cmonstrict(\Cate)\right)
	\ar[d]
	\\
	\sk\left(\tcatalg{\Operadesmall^\op}{\catoperad B}_{\strict}\right) \times
	\sk\left(\tcatalg{\cmonstrong(\Cate)}{\catoperad E_{\infty, \cat}}_{\strict}\right)
	\ar[d, hookrightarrow]
	\\
	\sk\left(\tcatalg{\Operadesmall^\op}{\catoperad B}_{\strict}\right) \times
	\sk\left(\tcatalg{\cmonlax(\Cate)}{\catoperad E_{\infty, \cat}}_{\strict}\right)
	\ar[d]
	\\
	\sk\left(\tcatalg{\categ{Cats}}{\catoperad{E}_{\infty,\cat} \times \catoperad{B}}_{\strict}\right)
\end{tikzcd}
	$$
	The third map is described in Corollary \ref{corollarycmonstrongstrict}
	and the last map is described in Theorem \ref{thm:main}.
\end{proof}

One can work with the opposite symmetric monoidal enriched
category
$\ecateg C^\op$.

\begin{corollary}\label{corollarycoalgebra}
	Let $\catoperad B$ be an operad in sets.
	Then, the 2-functor from $\Operadesmall^{coop}$ to $\categ{Cats}$
	that sends an enriched operad $\operad Q$ to
	its category of coalgebras in $\ecateg C$ yields a functor
	$$
\begin{tikzcd}
	\sk\left(\tcatcog{\Operadesmall}{\catoperad B}_\strict\right)^\op
	\ar[d, equal]
	\\
	{\sk\left(\tcatalg{\Operadesmall^\op}{\catoperad B}_{\strict}\right)}
	\ar[d]
	& {\left(\operad P_o\right)_{o \in \Ob(\catoperad{B})}}
	\ar[d, mapsto]
	\\
	\sk\left(\tcatalg{\Cats}{\catoperad{E}_{\infty,\cat} \times \catoperad{B}}_{\strict}\right)
	\ar[d, "-^\op"]
	& \left(\catalg{\ecateg C^\op}{\operad P_o} \right)_{o \in \Ob(\catoperad{B})}
	\ar[d, mapsto]
	\\
	\sk\left(\tcatalg{\categ{Cats}}{\catoperad{E}_{\infty,\cat} \times \catoperad{B}}_{\strict}\right) ;	 
	& \left(\catcog{\ecateg C}{\operad P} \right)_{o \in \Ob(\catoperad{B})}
\end{tikzcd}
$$
	that is natural with respect to $\catoperad B$.
\end{corollary}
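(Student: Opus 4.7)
The plan is to reduce this statement to the already-established algebra version (Corollary \ref{corollaryalgebra}) via the identification
$$
\catcog{\ecateg C}{\operad P} = \catalg{\ecateg C^\op}{\operad P}^\op
$$
that was recorded just above in the definition of $\catcog{-}{-}$. In fact, the three arrows in the displayed composition of the corollary statement are precisely the steps I will carry out.

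First, I would identify $\sk\left(\tcatcog{\Operadesmall}{\catoperad B}_\strict\right)^\op$ with $\sk\left(\tcatalg{\Operadesmall^\op}{\catoperad B}_\strict\right)$: a strict $\catoperad B$-coalgebra in the monoidal 2-category $\Operadesmall$ is the same as a strict $\catoperad B$-algebra in $\Operadesmall^\op$, and passing to the skeleton commutes with ${}^\op$. Second, I would apply Corollary \ref{corollaryalgebra} to the symmetric monoidal enriched category $\ecateg C^\op$ in place of $\ecateg C$. This yields a functor
$$
\sk\left(\tcatalg{\Operadesmall^\op}{\catoperad B}_\strict\right) \to \sk\left(\tcatalg{\Cats}{\catoperad E_{\infty,\cat}\times \catoperad B}_\strict\right)
$$
natural in $\catoperad B$, sending $(\operad P_o)_{o \in \Ob(\catoperad B)}$ to the family $(\catalg{\ecateg C^\op}{\operad P_o})_{o \in \Ob(\catoperad B)}$. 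Third, I would post-compose with the $-^\op$ 2-functor on $\Cats$, which on objects replaces $\catalg{\ecateg C^\op}{\operad P_o}$ by $\catcog{\ecateg C}{\operad P_o}$ and thus matches the announced assignment.

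For the third step to land in the right target, I need $-^\op : \Cats \to \Cats$ to preserve $\catoperad E_{\infty,\cat}\times\catoperad B$-algebras and to send strict morphisms to strict morphisms. Since $\catoperad B$ is a set-valued operad and the arities of $\catoperad E_{\infty,\cat}$ are contractible groupoids whose 2-morphisms are already invertible, this follows formally from the fact that $-^\op$ is a strict context equivalence $\Cats \to \Cats^{co}$: the reversal of 2-morphisms either does nothing (on $\catoperad B$-operations) or is absorbed by invertibility (on $\catoperad E_{\infty,\cat}$-operations).

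The only real work is bookkeeping of the various $\mathrm{op}$ and $\mathrm{co}$ symbols — an irritation rather than an obstacle. Concretely, one must check that the three composed maps above really assemble into the single functor advertised in the statement and that the resulting construction is natural in $\catoperad B$; both are inherited from the corresponding naturality in Corollary \ref{corollaryalgebra} together with the strict 2-functoriality of $-^\op$. No further structural input is required.
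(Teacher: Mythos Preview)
Your proposal is correct and follows exactly the same approach as the paper: the paper gives no separate proof, since the displayed diagram in the corollary statement \emph{is} the construction, namely the identification $\sk(\tcatcog{\Operadesmall}{\catoperad B}_\strict)^\op = \sk(\tcatalg{\Operadesmall^\op}{\catoperad B}_\strict)$, followed by Corollary~\ref{corollaryalgebra} applied to $\ecateg C^\op$, followed by $-^\op$. Your unpacking of why $-^\op$ preserves strict $\catoperad E_{\infty,\cat}\times\catoperad B$-algebras is a helpful elaboration that the paper leaves implicit.
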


\begin{corollary}
	The statements of Corollary \ref{corollaryalgebra}
	and Corollary \ref{corollarycoalgebra}
	remain valid if $\catoperad B$ is a planar categorical operad and if
	we replace the categorical operad
	$\catoperad E_{\infty, \cat}$ by the planar categorical operad
	$\catoperad A_{\infty, \cat}$.
	\end{corollary}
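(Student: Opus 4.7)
The plan is to mirror, step by step, the proof of Corollary \ref{corollaryalgebra} (and of Corollary \ref{corollarycoalgebra}) with all symmetric ingredients replaced by their planar counterparts. The only place where symmetry is genuinely used in the derivation of Corollary \ref{corollaryalgebra} is when we invoke (i) that $\ecateg C$ is an $\catoperad E_{\infty, \cat}$-algebra in $\cmonlax(\Cate)$ via Corollary \ref{corollarycmonstrongstrict}, and (ii) that Theorem \ref{thm:main} applies to categorical operads. Both ingredients have straightforward planar analogs, and the decomposition used in Corollary \ref{corollaryalgebra} then goes through verbatim.

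First I would establish the planar analog of Proposition \ref{corollarycmon} and Corollary \ref{corollarycmonstrongstrict}. The proofs of Lemma \ref{lemmapseudocommobject} and Lemma \ref{lemmaeifntylaxstruct} rely only on the fact that $\catoperad Q(n)$ is a contractible groupoid for each $n$, never on the presence of a symmetric group action; the unique isomorphism $\phi$ used there is simply an isomorphism of binary planar trees. Since $\catoperad A_{\infty, \cat}(n)$ is likewise a contractible groupoid (of planar binary trees with $n$ leaves), the same arguments produce common sections
\[
\tcatalg{\categ D}{\catoperad A_{\infty, \cat}}_\lax
\to
\tcatalg{\tcatalg{\categ D}{\catoperad A_{\infty, \cat}}_\lax}{\catoperad A_{\infty, \cat}}_\lax
\]
of the two forgetful 2-functors, with the same strong/strict restriction statements. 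Applied to $\categ D = \Cate$, this gives the structure of an $\catoperad A_{\infty, \cat}$-algebra on $\ecateg C$ inside the monoidal context of monoidal (non-symmetric) enriched categories and strong monoidal functors—equivalently, inside $\cmonlax(\Cate)$ via the evident strict context functor from monoidal enriched categories to symmetric monoidal enriched categories after forgetting, but more directly by working in the planar world from the start.

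Next I would observe that Theorem \ref{thm:main} holds with $\catoperad B$ a planar categorical operad. The only input was that the functor
\[
\tcatalg{-}{-}_\lax: \Operad_\cat^\op \times \categ{Context}_\lax \to \categ{Context}_\lax
\]
is lax monoidal, a fact cited from \cite[Proposition 12]{LeGrignou22}; the corresponding statement for planar categorical operads is available from the same reference, since the construction of the induced 2-functor
\[
\tcatalg{\categ M}{\catoperad B}_\lax \times \tcatalg{\categ M'}{\catoperad B'}_\lax
\to
\tcatalg{\categ M \times \categ M'}{\catoperad B \times \catoperad B'}_\lax
\]
does not use any symmetric structure on $\catoperad B, \catoperad B'$. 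With this planar variant of Theorem \ref{thm:main} in hand, I can rerun the decomposition displayed in the proof of Corollary \ref{corollaryalgebra}, replacing $\catoperad E_{\infty, \cat}$ by $\catoperad A_{\infty, \cat}$ and the symmetric categorical operad $\catoperad B$ by a planar one. The dualisation argument of Corollary \ref{corollarycoalgebra} via $\ecateg C^\op$ then carries over unchanged.

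The main (and essentially only) obstacle is verifying the planar version of the lax monoidal structure on $\tcatalg{-}{-}_\lax$ and of the structural lemmas for contractible groupoid operads; both are formal once one drops the symmetric group factors from the indexing, but one must check that the composition isomorphisms
\[
p \triangleleft q^{\times m} \simeq (q \triangleleft p^{\times n})^\sigma
\]
used in Lemma \ref{lemmapseudocommobject} are replaced, in the planar case, by the corresponding planar isomorphism without a permutation $\sigma$, which is automatic because $\catoperad A_{\infty, \cat}$ has no nontrivial symmetric structure and every object is uniquely isomorphic to every other. Once this routine replacement is done, the proof of the corollary reduces to citing the planar analogs of Corollary \ref{corollarycmonstrongstrict} and Theorem \ref{thm:main}.
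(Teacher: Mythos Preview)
Your proposal is correct and matches the paper's (implicit) approach: the paper gives no proof for this corollary, treating it as immediate from the preceding results. One simplification you could make: rather than re-establishing planar analogs of Proposition \ref{corollarycmon} and Corollary \ref{corollarycmonstrongstrict} from scratch, you can use what the paper already notes just before Corollary \ref{corollaryalgebra}, namely that $\ecateg C$ ``has the canonical structure of an $\catoperad{E}_{\infty, \cat}$-algebra (and in particular the structure of an $\catoperad{A}_{\infty, \cat}$-algebra)'' in $\cmonlax(\Cate)$; the $\catoperad A_{\infty, \cat}$-structure is simply the restriction along the morphism of categorical operads $\catoperad A_{\infty, \cat} \to \catoperad E_{\infty, \cat}$, so no separate planar argument is required at that step, and the decomposition in the proof of Corollary \ref{corollaryalgebra} then goes through with $\catoperad E_{\infty, \cat}$ replaced by $\catoperad A_{\infty, \cat}$.
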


\subsection{Consequences}

We draw here some consequences of Corollary \ref{corollaryalgebra}
and Corollary \ref{corollarycoalgebra}.

\subsubsection{Hopf operads}

Remember that by a Hopf operad, we mean
a counital coassociative comonoid in the category
$\mathrm{sk}(\Operad_{\categ E})$
of enriched operads for the Hadamard tensor product.

Applying Corollary \ref{corollaryalgebra} to the planar categorical
operad $\catoperad{uAs}$
that encodes unital associative algebras, one gets that the construction 
$$
\operad P \in \sk(\Operadesmall) \mapsto \catalg{\ecateg C}{\operad P}
$$
induces a functor
$$
\categ{HopfOperad}_{\categ E, \smalll}^\op = \sk(\tcatalg{\Operadesmall^\op}{\catoperad{uAs}}_\strict)^\op
\to \sk(\tcatalg{\Operadesmall^\op}{\catoperad{A}_{\infty, \cat}}_\strict)
= \categ{MonoidalCategories}.
$$
More concretely, for any small Hopf operad $\operad Q$, the category $\catalg{\ecateg C}{\operad Q}$
has a structure of a monoidal category whose tensor product $A \otimes A'$
is given by 
$$
\operad Q \to 
 \operad Q \otimeshadamard \operad Q \to \End(\ecateg C) \otimes \End(\ecateg C)
 \simeq \End(\ecateg C \otimes \ecateg C) \to \End(\ecateg C) . 
$$
Moreover, for any morphism of Hopf operads $f:\operad Q \to \operad Q'$,
the induced functor
$$
f^\ast : \catalg{\ecateg C}{\operad Q'} \to \catalg{\ecateg C}{\operad Q}
$$
is strict monoidal.

One has the same results mutatis mutandis
for coalgebras
over a Hopf operad
using Corollary \ref{corollaryalgebra}.

\subsubsection{Cocommutative Hopf operad}

Pursuing the link between Hopf operads and monoidal structures,
for any cocommutative Hopf operad $\operad Q$,
the monoidal category of $\operad Q$-algebras in $\ecateg C$ is
actually a symmetric monoidal category.
For any morphism of cocommutative Hopf operads $f:\operad Q \to \operad Q'$,
the induced functor
$$
f^\ast : \catalg{\ecateg C}{\operad Q} \to \catalg{\ecateg C}{\operad P}
$$
remains a strict symmetric monoidal functor. Again,
one has the same results mutatis mutandis
for coalgebras over a cocommutative Hopf operad.

\subsubsection{Comodules and tensorisation}

Let $\catoperad{LMod}$ the planar coloured operad in sets
that encodes a unital associative algebra and a left module
of this algebra. It has two colours $(a,m)$ so that 
\begin{align*}
    \catoperad{LMod}(a, \ldots, a ; a) &= \ast;
    \\
    \catoperad{LMod}(a, \ldots, a, m ; m) &= \ast;
    \\
    \catoperad{LMod}(\ldots ; -) &= \emptyset \text{ otherwise}.
\end{align*}
A $\catoperad{LMod}$-coalgebra in $\sk(\Operadesmall)$ is a pair
$(\operad Q, \operad P)$ of a small Hopf operad $\operad Q$ and small left $\operad Q$-comodule
$\operad P$. Given such a pair, the pair of categories
$$
(\catalg{\ecateg C}{\operad Q}, \catalg{\ecateg C}{\operad P})
$$
has the structure of a $\catoperad{A}_{\infty, \cat} \times \catoperad{LMod}$-algebra
in the cartesian monoidal context of categories.

We recover in particular, the structure of a
$\catoperad{A}_{\infty, \cat}$-algebra (that is the structure of a monoidal category)
on $\catalg{\ecateg C}{\operad Q}$ described above. Moreover, the whole structure
of a $\catoperad{A}_{\infty, cat} \times \catoperad{LMod}$-algebra on the pair 
$(\catalg{\ecateg C}{\operad Q}, \catalg{\ecateg C}{\operad P})$ induces
a tensorisation of the category $\catalg{\ecateg C}{\operad P}$ over $\operad Q$-algebras.
Such a tensorisation is given by the map
$$
\operad P \to \operad Q \otimeshadamard \operad P \xrightarrow{A \otimeshadamard A'}
\End(\ecateg C) \otimeshadamard \End(\ecateg C) \to \End(\ecateg C)
$$
for any $\operad Q$-algebra $A$ and any $\operad P$-algebra $A'$. One can notice
that the structural natural transformations of the tensorisation are invertible
since they proceed from the associators and unitors of $\ecateg C$.

Similarly, the category $\catcog{\ecateg C}{\operad P}$ is tensored
over $\operad Q$-coalgebras.


\subsection{The Grothendieck construction perspective}
\label{section:grothendieckconstruction}
Since $\ecateg C$ is a pseudo-commutative monoid in
$\cmonlax(\Cate)$,
it should correspond to some weak notion of a lax $\catoperad E_{\infty,\cat}$-morphism
$$
\ast \to \cmonlax(\Cate)
$$
that is less tight than our notion of lax $\catoperad E_{\infty,\cat}$-morphism.
Indeed, we can notice that a lax $\catoperad E_{\infty,\cat}$-morphism
as defined in \cite{LeGrignou22} would not give a pseudo-commutative monoid
but a strictly commutative monoid.
Then, the composite map
$$
\Operadesmall^\op \simeq \Operadesmall^\op \times \ast
\xrightarrow{\id \times \ecateg C}
\Operadesmall^\op \times \cmonlax(\Cate)
\xrightarrow{\catalg{-}{-}} \Cats_{\mathcal U}
$$
should have the same structure of a weak lax $\catoperad E_{\infty,\cat}$-morphism.
In particular the composite 2-functor
$$
\sk(\Operadesmall^\op) \to \Operadesmall^\op \to\Cats_{\mathcal U}
$$
should also have this structure of a weak lax $\catoperad E_{\infty,\cat}$-morphism.
Such a structure is related
by the Grothendieck construction to the notion of a symmetric monoidal
fibration as described in \cite{Shulman08}.
Let us first describe the Grothendieck construction
$\categ{OpAlg}_{\ecateg C}$
of the 2-functor $\catalg{\ecateg C}{-}:\sk(\Operadesmall)^\op \to \Cats_{\mathcal{U}}$.

\begin{definition}
 Let $\categ{OpAlg}_{\ecateg C}$
 be the category whose objects are pairs $(\operad P, \algebra A)$
of a small enriched operad $\operad P$ and a $\operad P$-algebra $\algebra A$
in $\ecateg C$.
A morphism from $(\operad P, \algebra A)$ to $(\operad P', \algebra A')$ is the data of a morphism
of enriched operads $f: \operad P \to \operad P'$ and a morphism of $\operad P$-algebras,
\[
	g : \algebra A \to f^\ast \algebra A' .
\]
In particular, the fiber of the forgetful functor
$\categ{OpAlg}_{\ecateg C} \to \sk(\Operadesmall)$
above an enriched operad $\operad P$ is the category $\catalg{\ecateg C}{\operad P}$.
\end{definition}

\begin{proposition}\label{prop:symmonoidalgc}
	The category $\categ{OpAlg}_{\ecateg C}$
	admits the structure of a symmetric monoidal category so that 
	the forgetful functor towards $\sk(\Operadesmall)$ is a symmetric
	monoidal cartesian fibration (\cite{Shulman08}) in the sense that
   \begin{itemize}
	\itemt it is a cartesian fibration;
	\itemt it is a strict symmetric monoidal functor;
	\itemt for any two cartesian lifting maps in $\categ{OpAlg}_{\ecateg C}$,
	their tensor product
	is also cartesian.
   \end{itemize}
   \end{proposition}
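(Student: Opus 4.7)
The plan is to realise this as the Grothendieck construction associated to the pseudofunctor $\catalg{\ecateg C}{-}: \sk(\Operadesmall)^\op \to \Cats$, whose lax monoidal structure comes from the pseudo-commutative monoid structure of $\End(\ecateg C)$ in $\Operade$ established at the end of Section~3. First, I would define the tensor product on objects: identifying a $\operad P$-algebra $A$ in $\ecateg C$ with a morphism of enriched operads $A: \operad P \to \End(\ecateg C)$, set
\[
(\operad P, A) \otimes (\operad P', A') \coloneqq \bigl(\operad P \otimeshadamard \operad P',\ m \circ (A \otimeshadamard A')\bigr),
\]
where $m: \End(\ecateg C) \otimeshadamard \End(\ecateg C) \to \End(\ecateg C)$ is the multiplication of the pseudo-commutative monoid $\End(\ecateg C)$. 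The monoidal unit is $(\uCom, e)$, with $e: \uCom \to \End(\ecateg C)$ the unit morphism. For morphisms $(f,g):(\operad P, A)\to(\operad P_1, A_1)$ and $(f',g'):(\operad P', A')\to(\operad P'_1, A'_1)$, I would tensor the operad components as $f \otimeshadamard f'$ and define the algebra component by postcomposing $g \otimeshadamard g'$ with $m$, using the identification $m \circ (f^*A_1 \otimeshadamard f'^*A'_1) = (f \otimeshadamard f')^*(A_1 \otimes A'_1)$.

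Next, I would assemble the associator, commutator and unitors of $\categ{OpAlg}_{\ecateg C}$ from those of $\otimeshadamard$ on $\Operadesmall$ together with the associator, commutator and unitors of the pseudo-commutative monoid $\End(\ecateg C)$. The coherence axioms then follow from those of both structures; the key point is that $\catoperad E_{\infty,\cat}$ is level-wise a contractible groupoid, so Mac Lane's coherence applies and reduces each pentagon or hexagon to an equality of morphisms already identified by the pseudo-commutative structure. By construction, the forgetful functor $\categ{OpAlg}_{\ecateg C} \to \sk(\Operadesmall)$ is then strict symmetric monoidal.

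For the cartesian fibration property, given a morphism of enriched operads $\phi: \operad P \to \operad P'$ and an object $(\operad P', A')$, the pair $(\phi, \id)$ is a cartesian lift: any morphism $(h,k):(\operad Q, B)\to(\operad P', A')$ with $h = \phi \circ h'$ factors uniquely as $(\phi, \id) \circ (h', k)$, using that $h'^*\phi^*A' = h^*A'$. Finally, the tensor product of two such cartesian lifts $(\phi, \id)$ and $(\phi', \id)$ is $(\phi \otimeshadamard \phi', \id)$, where the algebra identification $\phi^*A \otimes \phi'^*A' = (\phi \otimeshadamard \phi')^*(A \otimes A')$ follows from the bifunctoriality of the Hadamard tensor product, so the tensor of cartesian maps is cartesian.

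The main technical obstacle will be the verification of the pentagon and hexagon coherences for this proposed monoidal structure on $\categ{OpAlg}_{\ecateg C}$; however, since both the Hadamard tensor product on $\Operadesmall$ and the pseudo-commutative monoid structure on $\End(\ecateg C)$ already carry their own coherences, these reduce to formal consequences of the Grothendieck construction for lax symmetric monoidal pseudofunctors, and the remaining checks are routine diagram chases in $\ecateg C$.
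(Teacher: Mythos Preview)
Your proposal is correct and follows essentially the same route as the paper: the tensor product $(\operad P, A)\otimes(\operad P', A')$ you define via the pseudo-commutative multiplication $m$ on $\End(\ecateg C)$ is precisely the paper's $A\lozenge A'$, since $m$ is the composite $\End(\ecateg C)\otimeshadamard\End(\ecateg C)\simeq\End(\ecateg C\otimes\ecateg C)\xrightarrow{\End(-\otimes_{\ecateg C}-)}\End(\ecateg C)$; the associators, commutators and unitors are likewise built from those of $\Operadesmall$ and of $\ecateg C$ in both cases. The only minor difference is that the paper characterises cartesian morphisms as pairs $(f,g)$ with $g$ an isomorphism, which immediately gives closure under tensor, whereas you verify this only for the distinguished lifts $(\phi,\id)$; this suffices but is slightly less direct.
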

   
   \begin{proof}
First, the forgetful functor towards $\sk(\Operadesmall)$
is a cartesian fibration since, for any morphism of enriched operads
$f: \operad P \to \operad P'$
and any $\operad P'$-algebra $A'$, a cartesian lifting
of $f$ at $A'$ is given by the pair $(f, \id_{f^\ast(A')})$.

	The tensor product $(\operad P_1, \algebra A_1) \otimes (\operad P_2, \algebra A_2)$
   is the pair
   $(\operad P_1 \otimeshadamard \operad P_2, \algebra A_1 \lozenge \algebra A_2)$, where 
   $\algebra A_1 \lozenge \algebra A_2$ is the algebra given by the map
   \[
	   \operad P_1 \otimeshadamard \operad P_2
	   \xrightarrow{\algebra A_1 \otimeshadamard \algebra A_2} \End (\ecateg{C})
	   \otimeshadamard  \End (\ecateg{C})
	   \simeq \End (\ecateg C \otimes \ecateg C) 
	   \xrightarrow{\End(-\otimes_{\ecateg C} -)}
	   \End (\ecateg C).
   \]
   The unit is the pair $(\uCom, \II_{\ecateg C})$, where the monoidal unit
   $\II_{\ecateg C}$ of $\ecateg C$ is equipped with its canonical structure
   of a commutative algebra.
   The associator, commutator and unitors proceed from those of $\ecateg{C}$
   and from those of $\Operadesmall$.
   It is clear that the forgetful functor towards operads is a
   strict symmetric monoidal functor.
   Finally, one can notice that cartesian morphism are pairs $(f, g)$ so that $g$ is an
   isomorphism.
   It is then clear that the tensor product of two cartesian morphisms is cartesian.
   \end{proof}

The monoidal structures on categories of algebras described
above (Corollary \ref{corollaryalgebra})
may be rethinked in terms of the symmetric monoidal cartesian fibration
$\categ{OpAlg}_{\ecateg C} \to \Operadesmall$.
For instance, let us consider a Hopf enriched
operad $\operad Q$. We know from Corollary \ref{corollaryalgebra}
that the category of
$\operad Q$-algebras in $\ecateg C$ has the structure of a monoidal
category. Then, denoting $-\otimes_Q-$ the underlying tensor product
and $\II_Q$ the monoidal unit we have:
\begin{itemize}
	\itemt  For any two $\operad Q$-algebras $A_1, A_2$ in $\ecateg C$,
 we have a cartesian map in $\categ{OpAlg}_{\ecateg C}$
 $$
(\operad Q, A_1 \otimes_{Q} A_2) \to
(\operad Q, A_1 ) \otimes (\operad Q,  A_2)
= (\operad Q \otimeshadamard \operad Q, \algebra A_1 \lozenge \algebra A_2)
 $$
 above the coproduct map $\operad Q \to \operad Q \otimeshadamard \operad Q$.
 This actually defines the tensor
 product
 $A_1 \otimes_{Q} A_2$ up to a unique isomorphism.
  \itemt We have a cartesian map
  $$
  (\operad Q, \II_{Q}) \to (\uCom, \II_{\ecateg C}) = \II_{\categ{OpAlg}_{\ecateg C}}
   $$
  above the counit map $\operad Q \to \uCom$.  This actually defines the monoidal unit
  $\II_Q$ up to a unique isomorphism.
  \itemt For any three $\operad Q$-algebras $A_1, A_2, A_3$ in $\ecateg C$,
  the horizontal maps of the following diagram
  $$
\begin{tikzcd}
	(\operad Q, (A_1 \otimes_Q A_2) \otimes_Q A_3)
	\ar[r]
	& \left((\operad Q, A_1) \otimes (\operad Q, A_2)\right) \otimes (\operad Q, A_3)
	\ar[d]
	\\
	(\operad Q, A_1 \otimes_Q (A_2 \otimes_Q A_3))
	\ar[r]
	& (\operad Q, A_1) \otimes \left( (\operad Q, A_2) \otimes (\operad Q, A_3) \right)
\end{tikzcd}
  $$
  are cartesian since cartesian maps are stable through composition and tensor product.
  Moreover, the right vertical map is an isomorphism. Hence, the two maps
  $$
  \begin{tikzcd}
	  (\operad Q, (A_1 \otimes_Q A_2) \otimes_Q A_3)
	  \ar[rd] \ar[d, dotted]
	  \\
	  (\operad Q, A_1 \otimes_Q (A_2 \otimes_Q A_3))
	  \ar[r]
	  & (\operad Q, A_1) \otimes \left( (\operad Q, A_2) \otimes (\operad Q, A_3) \right)
  \end{tikzcd}
	$$
are cartesian liftings of the same morphism of operads. Thus, we get a unique isomorphism
in $\categ{OpAlg}_{\ecateg C}$ from $(\operad Q, (A_1 \otimes_Q A_2) \otimes_Q A_3)$
to $(\operad Q, A_1 \otimes_Q (A_2 \otimes_Q A_3))$ above the identity of $\operad Q$; this is equivalently
an isomorphism in $\catalg{\ecateg C}{\operad Q}$ which is actually the associator of
the monoidal structure.
  \itemt The unitors may be recovered in a similar way from the fact that the forgetful functor
  $\categ{OpAlg}_{\ecateg C} \to \Operadesmall$ is a symmetric monoidal cartesian fibration.
\end{itemize}

One can make the same work for coalgebras.

\begin{definition}
Let $\categ{OpCog}_{\ecateg C}$
be the category whose objects are pairs $(\operad P, V)$
of a small enriched operad $\operad P$ and a $\operad P$-coalgebra $V$
in $\ecateg C$. A morphism from $(\operad P, V)$
to $(\operad P', V')$ in this category is the data
of a morphism of enriched operads $f :\operad P \to \operad P'$ and
a morphism of $\operad P$-coalgebras $V \to f^\ast (V')$.
\end{definition}

\begin{corollary}
	The category $\categ{OpCog}_{\ecateg C}$ admits the structure
	of a symmetric monoidal category so that the forgetful functor
	towards $\Operadesmall$
	is a symmetric monoidal cartesian fibration (Proposition \ref{prop:symmonoidalgc}
	and \cite{Shulman08}).
\end{corollary}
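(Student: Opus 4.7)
The plan is to mirror the construction and verification in Proposition \ref{prop:symmonoidalgc}, adapting it to the coalgebra setting. Since $\ecateg C^\op$ inherits the structure of a symmetric monoidal enriched category from $\ecateg C$, and the $\End$ 2-functor is strong monoidal by Proposition \ref{propositionendmonoidal}, all the ingredients used in the algebra case remain available.

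First, I would define the tensor product of $(\operad P_1, V_1)$ and $(\operad P_2, V_2)$ to be $(\operad P_1 \otimeshadamard \operad P_2, V_1 \lozenge V_2)$, where the coalgebra $V_1 \lozenge V_2$ is the composite morphism of enriched operads
$$
\operad P_1 \otimeshadamard \operad P_2 \xrightarrow{V_1 \otimeshadamard V_2} \End(\ecateg C^\op) \otimeshadamard \End(\ecateg C^\op) \simeq \End(\ecateg C^\op \otimes \ecateg C^\op) \xrightarrow{\End(-\otimes-)} \End(\ecateg C^\op).
$$
The monoidal unit is $(\uCom, \II_{\ecateg C})$, where $\II_{\ecateg C}$ carries its canonical cocommutative coalgebra structure. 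The associator, commutator and unitors are induced by those of $\ecateg C^\op$ together with those of the Hadamard tensor product on $\Operadesmall$, and the coherence axioms follow formally from the coherences in $\ecateg C$ and in $\Operadesmall$.

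Second, I would verify that the forgetful functor $\categ{OpCog}_{\ecateg C} \to \sk(\Operadesmall)$ is a cartesian fibration: for any $f:\operad P \to \operad P'$ and any $\operad P'$-coalgebra $V'$ in $\ecateg C$, the pair $(f, \id_{f^\ast V'})$ is a cartesian lifting of $f$ at $V'$. As in Proposition \ref{prop:symmonoidalgc}, this identifies cartesian morphisms with the pairs $(f, g)$ for which $g$ is an isomorphism of $\operad P$-coalgebras. Strict symmetric monoidality of the forgetful functor is immediate from the definition, and stability of cartesian morphisms under tensor product follows since $\lozenge$ is bifunctorial in its coalgebra arguments, hence sends isomorphisms to isomorphisms.

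The main subtlety is the variance: although objects of $\categ{OpCog}_{\ecateg C}$ match those of $\categ{OpAlg}_{\ecateg C^\op}$, morphisms over a fixed operad $\operad P$ go in opposite directions via the identification $\catcog{\ecateg C}{\operad P} = \catalg{\ecateg C^\op}{\operad P}^\op$. For this reason I prefer repeating the construction of Proposition \ref{prop:symmonoidalgc} verbatim with $\ecateg C^\op$ in place of $\ecateg C$ and coalgebras in place of algebras, rather than invoking it as a black box. No coherence checks beyond those already performed in Proposition \ref{prop:symmonoidalgc} are required.
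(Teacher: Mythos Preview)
Your proposal is correct and matches the paper's approach exactly: the paper's proof is the single sentence ``This follows from the same arguments as those used to prove Proposition~\ref{prop:symmonoidalgc},'' and you have simply spelled out what those arguments are, including the variance subtlety that justifies rerunning the argument with $\ecateg C^\op$ rather than invoking the earlier proposition directly.
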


\begin{proof}
	This follows from the same arguments as those used to prove
	Proposition \ref{prop:symmonoidalgc}.
\end{proof}

Again, the monoidal structures
on categories of coalgebras described
above (Corollary \ref{corollarycoalgebra})
may be rethinked using the symmetric monoidal cartesian fibration
$\categ{OpCog}_{\ecateg C} \to \Operadesmall$.


\subsection{The tensored case and the cotensored context}

Let us suppose that for any object $x \in \ecateg C$ the functor 
\begin{align*}
	\cat(\ecateg C) & \to \categ E
	\\
	y & \mapsto \ecateg C(x,y)
\end{align*}
has a left adjoint $z \mapsto z \boxtimes x$.
In this context, we get natural
maps
$$
(z \otimes_E z') \boxtimes (x \otimes_{\ecateg C} x')
\to (z \boxtimes x) \otimes_{\ecateg C} (z' \boxtimes x').
$$
as the adjoint of the composition
$$
z \otimes z' \to C(x, z \boxtimes x) \otimes
C(x', z' \boxtimes x')
\to C(x \otimes x', (z \boxtimes x) \otimes (z' \boxtimes x')).
$$
Moreover, for any $x \in \Ob(\ecateg C)$, the map
$$
\II_{\categ E} \xrightarrow{\id_x} \ecateg C(x,x)
$$
gives a morphism $\II_{\categ E} \boxtimes x \to x$.

Let $\operad P$ be a small enriched operad. A
$\operad P$-algebra $\algebra A$ in $\ecateg C$ is the data of objects
$$
\algebra A_o \in \ecateg C, \quad o \in \Ob(\operad P)
$$
and morphisms in $\cat(\ecateg C)$
$$
\operad P (o_1, \ldots, o_n;o) \boxtimes
(\algebra A_{o_1} \otimes \cdots, \otimes \algebra A_{o_n})
\to \algebra A_{o}
$$
that satisfies conditions with respect to the structure
of an operad on $\operad P$.
In the context where the forgetful functor
$$
\catalg{\ecateg C}{\operad P} \to \cat(\ecateg C)^{\Ob(\operad P)}
$$
is monadic, with monad $M_{\operad P}$, then the following diagram is commutative
$$
\begin{tikzcd}
	\operad P (o_1, \ldots, o_n;o) \boxtimes
(\algebra A_{o_1} \otimes \cdots, \otimes \algebra A_{o_n})
\ar[r] \ar[d]
& \operad P (o_1, \ldots, o_n;o) \boxtimes
(M_{\operad P}(\algebra A)_{o_1} \otimes \cdots, \otimes M_{\operad P}(\algebra A)_{o_n})
\ar[d]
\\
\algebra A_o
& M_{\operad P}(\algebra A)_o. \ar[l]
\end{tikzcd}
$$

Let $\operad P'$ be another small enriched operad and let $A'$
be a $\operad P'$-algebra in $\ecateg C$. The tensor product 
in the category $\categ{OpAlg}_{\ecateg C}$ of $(\operad P, A)$
with $(\operad P', A')$ gives the pair 
$$
(\operad P, A) \otimes (\operad P', A') =
(\operad P \otimeshadamard\operad P', A \lozenge A')
$$
where $A \lozenge A'$ is the $\operad P \otimeshadamard\operad P'$-algebra
so that:
$$
(A \lozenge A')_{(o,o')} = A_o \otimes A'_{o'}
\quad \forall(o,o') \in \Ob(\operad P) \times \Ob(\operad P')
= \Ob(\operad P \otimeshadamard\operad P').
$$
Moreover, the structure of an algebra is given by the maps
$$
\begin{tikzcd}
	\left(\operad P(o_1, \ldots, o_n;o) \otimes \operad P'(o'_1, \ldots, o'_n;o')\right)
	\boxtimes \left( (A_{o_1} \otimes A'_{o'_1}) \otimes \cdots \otimes (A_{o_n} \otimes A'_{o'_n}) \right)
	\ar[d, "\simeq"]
	\\
	\left(\operad P(o_1, \ldots, o_n;o) \otimes \operad P'(o'_1, \ldots, o'_n;o')\right)
	\boxtimes \left( (A_{o_1} \otimes \cdots \otimes A_{o_n}) \otimes (A'_{o'_1} \otimes \cdots \otimes A'_{o'_n}) \right)
	\ar[d]
	\\
	\left( \operad P(o_1, \ldots, o_n;o) 
	\boxtimes (A_{o_1} \otimes \cdots \otimes A_{o_n})\right)
	\otimes
	\left(\operad P'(o'_1, \ldots, o'_n;o') 
	\boxtimes (A'_{o'_1} \otimes \cdots \otimes A'_{o'_n})\right)
	\ar[d]
	\\
	A_o \otimes A'_{o'}.
\end{tikzcd} 
$$
The structure of a $\uCom$-algebra on the monoidal unit $\II_{\ecateg C}$ of $\ecateg C$
(that yields the monoidal unit of $\categ{OpAlg}_{\ecateg C}$) is given by the map
$$
\uCom (n) \boxtimes \II_{\ecateg C}^{\otimes n} = 
\II_{\categ E} \boxtimes \II_{\ecateg C}^{\otimes n} \to \II_{\ecateg C}^{\otimes n}
\to \II_{\ecateg C}.
$$

One sees, mutatis mutandis the same phenomena for coalgebras over
small enriched operads in the context where 
for any object $y \in \ecateg C$ the functor 
\begin{align*}
	\cat(\ecateg C)^\op & \to \categ E
	\\
	x & \mapsto \ecateg C(x,y)
\end{align*}
has a left adjoint $z \mapsto \langle y, z \rangle$.

\subsection{Mapping coalgebras}

Let us consider two small enriched operads $\operad P_0, \operad P_1$.
We have a bifunctor
$$
\catcog{\ecateg C}{\operad P_0} \times \catcog{\ecateg C}{\operad P_1}
\xrightarrow{- \lozenge -}
\catcog{\ecateg C}{\operad P_0 \otimeshadamard \operad P_1}
$$
which is just the restriction
to $\catcog{\ecateg C}{\operad P_0} \times \catcog{\ecateg C}{\operad P_1}$
of the tensor product of $\categ{OpCog}_{\ecateg C}$.

\begin{proposition}\label{proposition:tensoradjoint}
	Let us suppose that :
	\begin{itemize}
		\itemt the symmetric monoidal category
		$\cat(\ecateg C)$ is a closed symmetric monoidal category
		with internal hom object denoted $[-,-]$;
  \itemt the category $\cat(\ecateg C)$ has small products;
  \itemt the functors
  \begin{align*}
	\catcog{\ecateg C}{\operad P_0}
	&\to \cat(\ecateg C)^{\Ob(\operad P_0)}
	\\
	\catcog{\ecateg C}{\operad P_0 \otimeshadamard \operad P_1}
	&\to \cat(\ecateg C)^{\Ob(\operad P_0 \otimeshadamard \operad P_1)}
  \end{align*}
  are comonadic;
  \itemt the category $\catcog{\ecateg C}{\operad P_0}$ has coreflexive
equalisers.
	\end{itemize}
	Then, for any $\operad P_1$-coalgebra $V_1$
	the functor
	$$
V_0 \in \catcog{\ecateg C}{\operad P_0}
\mapsto
V_0 \lozenge V_1 \in \catcog{\ecateg C}{\operad P_0 \otimeshadamard \operad P_1}
	$$
	has a right adjoint.
The result still holds mutatis mutandis if we swap $\operad P_0$
and $\operad P_1$ in the proposition.
\end{proposition}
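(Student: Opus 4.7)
The plan is to apply the comonadic adjoint lifting theorem, reducing the construction of the right adjoint between coalgebra categories to the corresponding adjunction at the level of underlying families.

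First, I would identify the functor covered by $V_0 \mapsto V_0 \lozenge V_1$ under the comonadic forgetful functors. From the description of $\lozenge$ in Section~\ref{section:grothendieckconstruction}, this underlying functor is the componentwise tensor
\begin{align*}
L' : \cat(\ecateg C)^{\Ob(\operad P_0)} &\to \cat(\ecateg C)^{\Ob(\operad P_0) \times \Ob(\operad P_1)} \\
(W_{o_0})_{o_0} &\mapsto \left(W_{o_0} \otimes V_{1, o_1}\right)_{(o_0,o_1)} ,
\end{align*}
where $V_{1, o_1}$ denotes the $o_1$-component of the underlying family of $V_1$. Since $\cat(\ecateg C)$ is closed symmetric monoidal with internal hom $[-,-]$ and admits small products, $L'$ has a right adjoint $R'$ given pointwise by
\begin{align*}
R'(X)_{o_0} = \prod_{o_1 \in \Ob(\operad P_1)} \left[V_{1, o_1}, X_{(o_0, o_1)}\right] .
\end{align*}

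Second, I would assemble the commutative square
\[
\begin{tikzcd}
 \catcog{\ecateg C}{\operad P_0} \ar[r, "{-\lozenge V_1}"] \ar[d] & \catcog{\ecateg C}{\operad P_0 \otimeshadamard \operad P_1} \ar[d] \\
 \cat(\ecateg C)^{\Ob(\operad P_0)} \ar[r, "L'"'] & \cat(\ecateg C)^{\Ob(\operad P_0 \otimeshadamard \operad P_1)}
\end{tikzcd}
\]
in which the vertical arrows are comonadic by hypothesis, and apply the comonadic adjoint lifting theorem (dual to Dubuc's adjoint triangle theorem). Since the bottom row has a right adjoint $R'$ and the upper-left category has coreflexive equalisers, the top row inherits a right adjoint. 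Concretely, for a $(\operad P_0 \otimeshadamard \operad P_1)$-coalgebra $W$, the value of the right adjoint at $W$ is computed as the coreflexive equaliser in $\catcog{\ecateg C}{\operad P_0}$ of two natural maps between cofree $\operad P_0$-coalgebras built from $R'$; the common section that witnesses coreflexivity is supplied by the counit of the comonad on $\cat(\ecateg C)^{\Ob(\operad P_0 \otimeshadamard \operad P_1)}$ that encodes $(\operad P_0 \otimeshadamard \operad P_1)$-coalgebras.

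The main obstacle is to verify that the underlying data truly intertwines the two comonads in the manner required by the adjoint lifting theorem, so that the candidate equaliser diagram in $\catcog{\ecateg C}{\operad P_0}$ lifts $R'$ along the comonadic forgetful functor. This reduces, however, to a routine chase through the universal properties of internal hom, products, and the structure of $\lozenge$ recalled in Proposition~\ref{prop:symmonoidalgc}; no new categorical input is required. For the symmetric version obtained by exchanging $\operad P_0$ and $\operad P_1$, the same argument applies verbatim after swapping the subscripts in the four bulleted hypotheses.
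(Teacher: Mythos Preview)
Your proposal is correct and follows essentially the same approach as the paper: both invoke the comonadic adjoint lifting theorem applied to the commutative square of forgetful functors, build the right adjoint $R'$ at the base level via $R'(X)_{o_0} = \prod_{o_1}[V_{1,o_1}, X_{(o_0,o_1)}]$ using closedness and small products, and then obtain the lifted right adjoint as a coreflexive equaliser of cofree $\operad P_0$-coalgebras. Your additional remarks on the intertwining condition and the common section witnessing coreflexivity are sound elaborations that the paper leaves implicit.
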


\begin{proof}
	This is consequence of the adjoint lifting theorem
	(\cite{Johnstone75}, \cite[Theorem 4]{LeGrignou22}).
	Indeed, let us denote $Q_0$ and $Q_{0,1}$ the comonad related
	to the enriched operads respectively
	$\operad P_0$ and $\operad P_0 \otimeshadamard \operad P_1$,
	let us denote $U_{Q_0} \dashv L^{Q_0}$ and $U_{Q_{0,1}}\dashv L^{Q_{0,1}}$
	the corresponding comonadic adjunctions,
	and let us consider the following commutative square diagram of categories
	$$
	\begin{tikzcd}
		\catcog{\ecateg C}{\operad P_0}
		\arrow[rrrr, "V \mapsto V \lozenge V_1"] \arrow[d, "U_{Q_0}"']
		&&&& \catcog{\ecateg C}{\operad P_0 \otimeshadamard \operad P_1}
		\arrow[d, "U_{Q_{0,1}}"]
		\\
		\cat(\ecateg C)^{\Ob(\operad P_0)}
		\ar[rrrr, "X \mapsto(X_o \otimes V_{1,o'})_{(o,o')\in \Ob(\operad P_0) \times \Ob(\operad P_1)}"']
		&&&& \cat(\ecateg C)^{\Ob(\operad P_0) \times \Ob(\operad P_1)} .
	\end{tikzcd}
	$$
	The functor on the bottom horizontal arrow 
	has a right adjoint $R$ defined as:
	$$
	R(Y)_o = \prod_{o' \in \Ob(\operad P_1)} [V_{1, o'}, Y_{o, o'}] .
	$$
	Then, the functor that sends a $\operad P_0 \otimeshadamard \operad P_1$-coalgebra $W$
	to the coreflexive equaliser of the pair of maps of $\operad P_0$-coalgebras
	$$
	L^{Q_0} R U_{Q_{0,1}} (W) \rightrightarrows L^{Q_0} R Q_{0,1} U_{Q_{0,1}} (W)
	$$
	is right adjoint to the functor on the top horizontal arrow of the diagram.
\end{proof}

Let $f: \operad P \to \operad P'$ be a morphism of small enriched operads.

\begin{proposition}\label{proposition:adjointexistence}
	Let us suppose that
	\begin{itemize}
		\itemt The category $\cat(\ecateg C)$ is complete;
  		\itemt The functors 
    \begin{align*}
		\catcog{\ecateg C}{\operad P} &\to \cat(\ecateg C)^{\Ob(\operad P)}
		\\
		\catcog{\ecateg C}{\operad P'} &\to \cat(\ecateg C)^{\Ob(\operad P')}
	\end{align*}
	are comonadic and the related comonad
	preserve coreflexive equalisers.
	\end{itemize}
	Then the functor $f^\ast: \catcog{\ecateg C}{\operad P} 
	\to \catcog{\ecateg C}{\operad P'}$ has a right adjoint $f^!$. Moreover,
	$f^\ast$ is comonadic if the function $f : \Ob(\operad P) \to \Ob(\operad P')$
	is surjective.
\end{proposition}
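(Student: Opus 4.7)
The plan is to follow the same blueprint as Proposition~\ref{proposition:tensoradjoint}: produce the right adjoint $f^!$ via the adjoint lifting theorem applied to a commuting square, and then derive comonadicity from the crude dual Beck criterion. Let $Q$ and $Q'$ be the comonads on $\cat(\ecateg C)^{\Ob(\operad P)}$ and $\cat(\ecateg C)^{\Ob(\operad P')}$ corresponding, under the comonadic hypotheses, to $\catcog{\ecateg C}{\operad P}$ and $\catcog{\ecateg C}{\operad P'}$, and write $U_Q\dashv L^Q$, $U_{Q'}\dashv L^{Q'}$ for the comonadic adjunctions.

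First I would consider the commuting square
$$
\begin{tikzcd}
\catcog{\ecateg C}{\operad P'} \ar[r, "f^\ast"] \ar[d, "U_{Q'}"']
& \catcog{\ecateg C}{\operad P} \ar[d, "U_Q"]
\\
\cat(\ecateg C)^{\Ob(\operad P')} \ar[r, "f^\ast_0"']
& \cat(\ecateg C)^{\Ob(\operad P)},
\end{tikzcd}
$$
where $f^\ast_0$ is the plain restriction $(X_{o'})_{o'}\mapsto (X_{f(o)})_o$. Since $\cat(\ecateg C)$ is complete, $f^\ast_0$ admits a right adjoint $f_\ast$ given by $f_\ast(Y)_{o'}=\prod_{o\in f^{-1}(o')}Y_o$. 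With both vertical arrows comonadic and the associated comonads preserving coreflexive equalisers, the adjoint lifting theorem (\cite{Johnstone75}; see \cite[Theorem 4]{LeGrignou22}) produces a right adjoint $f^!$ to $f^\ast$. Concretely, $f^!(W)$ for $W\in\catcog{\ecateg C}{\operad P}$ is the coreflexive equaliser in $\catcog{\ecateg C}{\operad P'}$ of the canonical pair
$$
L^{Q'} f_\ast U_Q(W)\rightrightarrows L^{Q'} f_\ast Q U_Q(W)
$$
built from the coaction of $W$ and the counit of $f^\ast_0\dashv f_\ast$, exactly as in the proof of Proposition~\ref{proposition:tensoradjoint}.

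For the comonadicity statement I would invoke the crude dual Beck theorem: a left adjoint that is conservative and preserves coreflexive equalisers is comonadic. Conservativity is immediate once $f$ is surjective, since a morphism of $\operad P'$-coalgebras is an isomorphism iff every component indexed by $\Ob(\operad P')$ is (by comonadicity of $U_{Q'}$), and surjectivity ensures every such component already appears as a component of $f^\ast$. Coreflexive equalisers exist in $\catcog{\ecateg C}{\operad P'}$ because $\cat(\ecateg C)$ is complete and $Q'$ preserves them, so $U_{Q'}$ creates them; they are preserved by $f^\ast$ because at the underlying level $f^\ast_0$ preserves all limits componentwise and $U_Q$ likewise creates coreflexive equalisers. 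The main technical burden is the explicit identification of the parallel pair whose equaliser defines $f^!(W)$ and the verification that it is coreflexive, but this is a formal unpacking of the adjoint lifting theorem, directly parallel to the bookkeeping already carried out in Proposition~\ref{proposition:tensoradjoint}.
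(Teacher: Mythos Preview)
Your proposal is correct and follows essentially the same route as the paper: both arguments exhibit the right adjoint of $f^\ast_0$ at the base as the fibrewise product $f_\ast(Y)_{o'}=\prod_{o\in f^{-1}(o')}Y_o$, invoke the adjoint lifting theorem of \cite{Johnstone75} (after noting that $\catcog{\ecateg C}{\operad P'}$ has coreflexive equalisers), and then deduce comonadicity from the crude Beck criterion via conservativity (from surjectivity of $f$ on colours) and preservation of coreflexive equalisers. Your version is slightly more explicit about the equaliser formula for $f^!(W)$, but the content is the same.
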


\begin{proof}
	The functor
	\begin{align*}
		f^\ast : \cat(\ecateg C)^{\Ob(\operad P')} &\to \cat(\ecateg C)^{\Ob(\operad P)}
		\\
		X &\mapsto \left(\Ob(\operad P) \xrightarrow{f} \Ob(\operad P') \xrightarrow{X} \ecateg C\right)
	\end{align*}
	has a right adjoint $R$ given by
	$$
	R(Y)_{o'} = \prod_{o \in \Ob(\operad P) | f(o) = o'} Y_o
	$$
	Moreover, the category $\catcog{\ecateg C}{\operad P'}$
	has coreflexive equalisers (\cite[Corollary 15]{LeGrignou22}).
	The right adjoint of $f^\ast: \catcog{\ecateg C}{\operad P} 
	\to \catcog{\ecateg C}{\operad P'}$ is then built using the adjoint lifting
	theorem
	(\cite{Johnstone75}, \cite[Theorem 4]{LeGrignou22}).

	Besides, let us assume that the underlying function of $f$ on colours is
	surjective. In that context, the functor $f^\ast: \catcog{\ecateg C}{\operad P} 
	\to \catcog{\ecateg C}{\operad P'}$ is conservative and preserves coreflexive
	equalisers. Since it is left adjoint, then it is comonadic.
\end{proof}

\begin{corollary}
	Let us assume that the symmetric monoidal category $\cat(\ecateg C)$
	is closed and that it is a complete category. Moreover, let us assume
	that for any small enriched operad $\operad P$, the functor
	$$
	\catcog{\ecateg C}{\operad P} \to \cat(\ecateg C)^{\Ob(\operad P)}
	$$
	is comonadic and that the related comonad preserves coreflexive equalisers.
	Then for any morphism of small enriched operads
	$$
	f: \operad P_2 \to \operad P_0 \otimeshadamard \operad P_1
	$$
	any $\operad P_0$-coalgebra $V$ in $\ecateg C$ and any
	$\operad P_1$-coalgebra $W$ in $\ecateg C$, the functors
	\begin{align*}
		V' \in \catcog{\ecateg C}{\operad P_0} &\mapsto f^\ast(V' \lozenge W) \in \catcog{\ecateg C}{\operad P_2}
		\\
		W' \in \catcog{\ecateg C}{\operad P_1} &\mapsto f^\ast(V \lozenge W') \in \catcog{\ecateg C}{\operad P_2}
	\end{align*}
	are both left adjoint functors.
\end{corollary}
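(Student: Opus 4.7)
The plan is to observe that the functor $V' \mapsto f^\ast(V' \lozenge W)$ is literally the composition
\[
\catcog{\ecateg C}{\operad P_0} \xrightarrow{-\lozenge W} \catcog{\ecateg C}{\operad P_0 \otimeshadamard \operad P_1} \xrightarrow{f^\ast} \catcog{\ecateg C}{\operad P_2},
\]
and that the two previous propositions exactly produce right adjoints for each factor. A composition of left adjoints is a left adjoint (right adjoints are then obtained by composing in the reverse order), which gives the result. The same argument applies, mutatis mutandis, to $W' \mapsto f^\ast(V \lozenge W')$ by swapping the roles of $\operad P_0$ and $\operad P_1$ in Proposition \ref{proposition:tensoradjoint}.

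What remains is to check that the hypotheses of Propositions \ref{proposition:tensoradjoint} and \ref{proposition:adjointexistence} are met in the present setting. For Proposition \ref{proposition:adjointexistence} applied to $f: \operad P_2 \to \operad P_0 \otimeshadamard \operad P_1$, the only inputs are the completeness of $\cat(\ecateg C)$ and the comonadicity assumption, both of which are part of the standing hypotheses. For Proposition \ref{proposition:tensoradjoint}, one needs $\cat(\ecateg C)$ to be closed symmetric monoidal (part of the hypotheses) and to have small products (follows from completeness), the comonadicity of the forgetful functors out of $\catcog{\ecateg C}{\operad P_0}$ and $\catcog{\ecateg C}{\operad P_0 \otimeshadamard \operad P_1}$ (part of the hypotheses), and the existence of coreflexive equalisers in $\catcog{\ecateg C}{\operad P_0}$. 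The latter is exactly the point invoked in the proof of Proposition \ref{proposition:adjointexistence}: since the forgetful functor $\catcog{\ecateg C}{\operad P_0}\to \cat(\ecateg C)^{\Ob(\operad P_0)}$ is comonadic with a comonad preserving coreflexive equalisers, and $\cat(\ecateg C)$ is complete, the category of coalgebras inherits coreflexive equalisers.

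The main work has therefore already been done in the previous two propositions; this corollary is a formal consequence. The only subtlety worth flagging is that one must apply Proposition \ref{proposition:tensoradjoint} to the coalgebra category $\catcog{\ecateg C}{\operad P_0 \otimeshadamard \operad P_1}$ as the target, and this requires the comonadicity hypothesis for the Hadamard tensor product of operads, not just for each factor separately. Since the hypothesis of the corollary quantifies over \emph{any} small enriched operad, this is covered. No further argument is needed.
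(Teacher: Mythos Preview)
Your proof is correct and follows exactly the paper's approach: the paper's proof is the single sentence ``This is just a consequence of Proposition \ref{proposition:tensoradjoint} and Proposition \ref{proposition:adjointexistence}'', and you have simply unpacked how the hypotheses of those two propositions are verified and why composing the two left adjoints gives the result. Your added remarks on verifying the hypotheses (in particular the existence of coreflexive equalisers in $\catcog{\ecateg C}{\operad P_0}$ via comonadicity, and the need for comonadicity over $\operad P_0 \otimeshadamard \operad P_1$ rather than just each factor) are accurate and make explicit what the paper leaves implicit.
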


\begin{proof}
	This is just a consequence of
	Proposition \ref{proposition:tensoradjoint} and
	Proposition \ref{proposition:adjointexistence}
\end{proof}

\begin{corollary}\label{corollary:adjointexistence}
	Let us suppose that
	\begin{itemize}
		\itemt The category $\cat(\ecateg C)$ is cocomplete;
  		\itemt The functors 
    \begin{align*}
		\catalg{\ecateg C}{\operad P} &\to \cat(\ecateg C)^{\Ob(\operad P)}
		\\
		\catalg{\ecateg C}{\operad P'} &\to \cat(\ecateg C)^{\Ob(\operad P')}
	\end{align*}
	are monadic and the related monads preserve reflexive coequalisers.
	\end{itemize}
	Then the functor $f^\ast: \catalg{\ecateg C}{\operad P} 
	\to \catalg{\ecateg C}{\operad P'}$ has a left adjoint $f_!$. Moreover,
	$f^\ast$ is monadic if the function $f : \Ob(\operad P) \to \Ob(\operad P')$
	is surjective.
\end{corollary}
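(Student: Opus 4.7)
The plan is to dualise the argument used in Proposition \ref{proposition:adjointexistence} from coalgebras and limits to algebras and colimits. Let me first correct the direction: the pullback functor is $f^\ast : \catalg{\ecateg C}{\operad P'} \to \catalg{\ecateg C}{\operad P}$, and the goal is to produce a left adjoint to it and to check monadicity when $f$ is surjective on colours.

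First I would consider the commutative square
$$
\begin{tikzcd}
\catalg{\ecateg C}{\operad P'} \ar[r, "f^\ast"] \ar[d, "U_{\operad P'}"']
& \catalg{\ecateg C}{\operad P} \ar[d, "U_{\operad P}"]
\\
\cat(\ecateg C)^{\Ob(\operad P')} \ar[r, "\bar f^\ast"']
& \cat(\ecateg C)^{\Ob(\operad P)}
\end{tikzcd}
$$
where the bottom arrow is precomposition with $f$ on colours. Since $\cat(\ecateg C)$ is cocomplete, this bottom arrow has a pointwise left adjoint given by fibrewise coproducts,
$$
\bar f_!(X)_{o'} = \coprod_{o \in \Ob(\operad P),\, f(o) = o'} X_o.
$$
The assumption that $U_{\operad P}$ and $U_{\operad P'}$ are monadic with monads preserving reflexive coequalisers ensures that $\catalg{\ecateg C}{\operad P'}$ has reflexive coequalisers, computed after applying $U_{\operad P'}$. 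I would then invoke the adjoint lifting theorem of \cite{Johnstone75} (dual form of Theorem 4 of \cite{LeGrignou22}), which in this exact configuration — two vertical monadic functors, a left adjoint downstairs, reflexive coequalisers upstairs — produces the required left adjoint $f_! \dashv f^\ast$.

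For the monadicity statement, I would apply Beck's monadicity theorem to $f^\ast$, now known to be a right adjoint. Conservativity is the first point: when $f$ is surjective on colours, the underlying functor $\bar f^\ast$ is obviously conservative (a family of morphisms indexed by $\Ob(\operad P')$ is an isomorphism in $\cat(\ecateg C)^{\Ob(\operad P')}$ iff its image in $\cat(\ecateg C)^{\Ob(\operad P)}$ is, using surjectivity of $f$ to hit every component $o'$). Since $U_{\operad P}$ and $U_{\operad P'}$ are both monadic hence conservative, this transfers to conservativity of $f^\ast$. The second point is preservation of reflexive coequalisers: such colimits in both algebra categories are computed in the underlying product categories because the monads preserve them; on the underlying side, $\bar f^\ast$ is itself a left adjoint (reindexing in a product category has both adjoints), so it preserves all colimits. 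Combining, $f^\ast$ preserves reflexive coequalisers, and Beck's theorem gives monadicity.

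The main subtlety, as in the coalgebra case, is the verification that reflexive coequalisers actually exist in $\catalg{\ecateg C}{\operad P'}$ and are preserved appropriately; once the hypotheses on monads are in place this is routine, but it is the only nontrivial ingredient. Everything else is formal application of adjoint lifting and Beck's theorem, so there is no deep new obstacle beyond what was already handled for coalgebras.
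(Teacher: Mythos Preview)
Your proof is correct and follows the same approach as the paper. The paper's own proof is the one-line observation that this is Proposition \ref{proposition:adjointexistence} applied to $\ecateg C^\op$ in place of $\ecateg C$; you have simply unpacked what that dualisation amounts to, using the adjoint lifting theorem and Beck's monadicity criterion exactly as in the proof of that proposition (and you correctly flagged the direction typo for $f^\ast$).
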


\begin{proof}
	This is Proposition \ref{proposition:adjointexistence} written for $\ecateg C^\op$ instead of $\ecateg C$.
\end{proof}

Now, let us consider a small Hopf enriched operad $\operad Q$ and a
small left
$\operad Q$-comodule $\operad P$. We know that the category of
$\operad Q$-coalgebras in $\ecateg C$ has the structure of a monoidal category
and that the category of $\operad P$-coalgebras in $\ecateg C$ is tensored
over that of $\operad Q$-coalgebras.
We denote as follows the tensorisation functor:
\begin{align*}
\catcog{\ecateg C}{\operad Q} \times \catcog{\ecateg C}{\operad P}
&\to \catcog{\ecateg C}{\operad P}
\\
(Z, V) & \mapsto Z \boxtimes V . 
\end{align*}

\begin{theorem}
	Let us suppose that the functor
	$$
		V \in \catcog{\ecateg C}{\operad P} \mapsto  Z \boxtimes V
		\in \catcog{\ecateg C}{\operad P}
	$$
	has a right adjoint $W \mapsto \langle W, Z \rangle $
	for any $\operad Q$-coalgebra $Z$. Then, the construction
	$$
	(W, Z) \in \catcog{\ecateg C}{\operad P} \times \catcog{\ecateg C}{\operad Q}^\op
	\mapsto \langle W, Z \rangle \in \catcog{\ecateg C}{\operad P}
	$$
	is natural. Moreover, the category of $\operad P$-coalgebras is cotensored
	over the monoidal category of $\operad Q$-coalgebras
	through this bifunctor.
\end{theorem}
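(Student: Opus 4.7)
The plan is to apply the general principle of parameterized right adjoints to the tensorisation
\[
\boxtimes: \catcog{\ecateg C}{\operad Q} \times \catcog{\ecateg C}{\operad P} \to \catcog{\ecateg C}{\operad P}
\]
constructed in the previous subsection, which equips $\catcog{\ecateg C}{\operad P}$ with the structure of a left module over the monoidal category $\catcog{\ecateg C}{\operad Q}$. The hypothesis supplies, for each $Z$, a right adjoint $\langle -, Z\rangle$ to $Z \boxtimes -$. The two tasks are to upgrade this pointwise family to a bifunctor on $\catcog{\ecateg C}{\operad P} \times \catcog{\ecateg C}{\operad Q}^{\op}$ and then to transport the coherence isomorphisms of the left action to the cotensorisation structure.

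For bifunctoriality, a morphism $f: Z \to Z'$ in $\catcog{\ecateg C}{\operad Q}$ induces, by bifunctoriality of $\boxtimes$, a natural transformation $f \boxtimes - : Z \boxtimes - \to Z' \boxtimes -$ of left adjoints. Its mate under the adjunctions $Z \boxtimes - \dashv \langle -, Z\rangle$ and $Z' \boxtimes - \dashv \langle -, Z'\rangle$ is a natural transformation $\langle -, Z'\rangle \to \langle -, Z\rangle$. Functoriality in $f$ follows from the triangle identities of each adjunction and the naturality of the mate construction, so one obtains a bifunctor $\catcog{\ecateg C}{\operad P} \times \catcog{\ecateg C}{\operad Q}^{\op} \to \catcog{\ecateg C}{\operad P}$ whose values are the pointwise right adjoints and whose hom-sets are linked to those of $\boxtimes$ by the natural bijection given by the adjunctions.

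For the cotensorisation structure, the associator and left unitor of the action $\boxtimes$,
\[
(Z \otimes_{\operad Q} Z') \boxtimes V \simeq Z \boxtimes (Z' \boxtimes V), \qquad \II_{\operad Q} \boxtimes V \simeq V,
\]
are natural isomorphisms between functors of $V$ that are left adjoints. Passing to right adjoints by the uniqueness of right adjoints up to unique isomorphism yields canonical natural isomorphisms
\[
\langle W, Z \otimes_{\operad Q} Z'\rangle \simeq \langle \langle W, Z\rangle, Z'\rangle, \qquad \langle W, \II_{\operad Q}\rangle \simeq W,
\]
which serve as the structural isomorphisms of the sought cotensorisation.

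The main obstacle is verifying that these isomorphisms satisfy the pentagon and triangle coherence axioms of a right module over $\catcog{\ecateg C}{\operad Q}$. The cleanest route is to appeal to the general categorical fact that the mate correspondence sends every commuting diagram of natural transformations between parameterised left adjoints to a commuting diagram between the corresponding right adjoints. Applied to the pentagon and triangle diagrams satisfied by the action $\boxtimes$, this yields the dual coherence diagrams for $\langle -, -\rangle$ automatically, and similarly produces whatever further 2-categorical compatibility is needed so that $\langle -, -\rangle$ assembles into a right action of $\catcog{\ecateg C}{\operad Q}^{\op}$ on $\catcog{\ecateg C}{\operad P}$, that is, a cotensorisation in the required sense.
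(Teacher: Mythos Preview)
Your argument is correct and is exactly the content of the paper's one-line proof, which simply asserts that the structure of a tensorisation on $-\boxtimes-$ is equivalent to the structure of a cotensorisation on $\langle -,-\rangle$. You have unpacked this equivalence via the mate correspondence, which is precisely what underlies that assertion.
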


\begin{proof}
	This follows from the fact that the structure of a tensorisation
	on $- \boxtimes -$ is equivalent to the structure of a cotensorisation
	on $\langle -, - \rangle$.
\end{proof}

\begin{theorem}
	Let us suppose that the functor
	$$
		Z \in \catcog{\ecateg C}{\operad Q} \mapsto  Z \boxtimes V
		\in \catcog{\ecateg C}{\operad P}
	$$
	has a right adjoint $W \mapsto \{V, W \}$
	for any $\operad P$-coalgebra $V$. Then, the construction
	$$
	(V, W) \in \catcog{\ecateg C}{\operad P}^\op \times \catcog{\ecateg C}{\operad P}
	\mapsto \{ V, W\}  \in \catcog{\ecateg C}{\operad Q}
	$$
	is natural. Moreover, the category of $\operad P$-coalgebras is enriched
	over the monoidal category of $\operad Q$-coalgebras
	through this bifunctor.
\end{theorem}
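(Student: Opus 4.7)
The plan is to derive the enrichment as a purely formal consequence of having a left action of the monoidal category $\catcog{\ecateg C}{\operad Q}$ on $\catcog{\ecateg C}{\operad P}$, together with the hypothesis that for each $V$ the action functor $-\boxtimes V$ has a right adjoint $\{V,-\}$. This is the well-known pattern that an action with parametric right adjoints in the acted-upon variable produces an enrichment; I would simply instantiate it in our 2-categorical setup.

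First I would extend $\{-,-\}$ to a bifunctor $\catcog{\ecateg C}{\operad P}^\op \times \catcog{\ecateg C}{\operad P} \to \catcog{\ecateg C}{\operad Q}$. Functoriality in the second slot is immediate from the adjunction. For contravariance in the first slot, a morphism $f: V \to V'$ of $\operad P$-coalgebras and a $\operad P$-coalgebra $W$ provide a natural transformation of functors $(-\boxtimes V) \to (-\boxtimes V')$; taking the mate under the two adjunctions $(-\boxtimes V)\dashv\{V,-\}$ and $(-\boxtimes V')\dashv\{V',-\}$ produces $\{f,W\}: \{V',W\}\to\{V,W\}$. Naturality of this construction and the compatibility when both variables vary follow from the Yoneda lemma applied to the composed adjunction bijection
\[
\hom_{\catcog{\ecateg C}{\operad P}}(Z \boxtimes V, W) \simeq
\hom_{\catcog{\ecateg C}{\operad Q}}(Z, \{V, W\}).
\]

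Next I would construct the enrichment data. The tensorisation, being a tensorisation of one coalgebra category over the monoidal category of another (as established in the previous subsection via the $\catoperad{LMod}$-algebra structure), comes equipped with coherent associators $(Z \otimes_Q Z') \boxtimes V \simeq Z \boxtimes (Z' \boxtimes V)$ and unitors $\II_Q \boxtimes V \simeq V$. The composition $\{V', V''\} \otimes_Q \{V, V'\} \to \{V, V''\}$ is then defined as the adjunct of
\[
(\{V', V''\} \otimes_Q \{V, V'\}) \boxtimes V
\simeq \{V', V''\} \boxtimes (\{V, V'\} \boxtimes V)
\to \{V', V''\} \boxtimes V'
\to V'',
\]
using the associator and two counits. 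The unit $\II_Q \to \{V, V\}$ is the adjunct of the unitor $\II_Q \boxtimes V \simeq V$.

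Finally I would verify the enrichment axioms (associativity and unitality of composition). These reduce by the adjunction to equalities of morphisms out of iterated tensorisations $(\{-,-\}\otimes_Q \{-,-\}\otimes_Q\{-,-\}) \boxtimes V$, and they follow formally from the coherence (associator pentagon, unit triangle) of the $\catcog{\ecateg C}{\operad Q}$-action on $\catcog{\ecateg C}{\operad P}$ together with naturality of counits. The main obstacle, or rather the only nontrivial bookkeeping, is arranging the coherence diagrams so that each axiom of an enrichment is matched, under the adjunction, with one of the coherence diagrams of the module structure; given the explicit $\catoperad{LMod}$-algebra description of the action from the previous subsection, this reduces to routine verification.
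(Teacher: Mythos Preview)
Your proposal is correct and is precisely an unpacking of the paper's one-line proof, which simply invokes the standard equivalence between a tensorisation structure on $-\boxtimes-$ and an enrichment structure on its parametric right adjoint $\{-,-\}$. What you have written is the explicit construction underlying that equivalence, so the approach is the same; the paper merely chooses not to spell out the details you provide.
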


\begin{proof}
	This follows from the fact that the structure of a tensorisation
	on $- \boxtimes -$ is equivalent to the structure of an enrichment
	on $\{ -, - \}$.
\end{proof}

\begin{remark}
	Why are we talking about mapping coalgebras and not
	about mapping algebras? After all, coalgebras are just algebras
	in the opposite category. So talking about coalgebras
	instead of algebras is just a matter of conventions.
	Actually, when dealing with usual categories (sets, simplicial
	sets, chain complexes, \ldots), one encounters much more
	mapping coalgebras than mapping algebras (see for instance 
	Section \ref{section:hopfoperadsinaction}). One explanation for this
	is the fact that the operation
	$$
x \in \ecateg C \mapsto x \otimes y
	$$
	is often left adjoint.
\end{remark}



\section{Algebras, coalgebras and convolution}

\subsection{Oplax right modules}

Remember from \cite{LeGrignou22} that $\catoperad{RM}_\oplax$ is the planar categorical operad
that encodes
a pair of a pseudo-monoid $A$ and an oplax right module $M$ of $A$.
For instance, a $\catoperad{RM}_\oplax$-algebra in the monoidal context
of categories is the data of a monoidal category together with a category
cotensored over the opposite of this monoidal category.

It has two colours $(a,m)$ and is generated by operations
$$
\begin{cases}
	\mu \in \catoperad{RM}_\oplax(a, a;a)
	\\
	\iota \in \catoperad{RM}_\oplax (;a)
	\\
	\kappa \in \catoperad{RM}_\oplax(m, a;m)
\end{cases}
$$
together with isomorphisms
\begin{align*}
	\mu \triangleleft (\mu, 1_a) &\simeq \mu \triangleleft (1_a,\mu)
	\\
	\mu \triangleleft (\iota, 1_a) &\simeq 1_a \simeq  \mu \triangleleft (1_a,\iota)
\end{align*}
that satisfy the same relation as in the definition of $\catoperad{A}_{\infty,\cat}$
(the pentagon identity and and triangle identities), and morphisms
\begin{align*}
	\kappa \triangleleft (\kappa, 1_a) &\to \kappa \triangleleft (1_m, \mu)
	\\
	1_m & \to \kappa \triangleleft (1_m, \iota)
\end{align*}
so that the following diagrams are commutative
$$
\begin{tikzcd}
	\left(\kappa \triangleleft (\kappa, 1_m) \right) \triangleleft (\kappa, 1_a, 1_a)
	\ar[r, equal] \ar[d]
	&\kappa \triangleleft \left(\kappa \triangleleft (\kappa, 1_a), 1_a\right)
	\ar[d]
	\\
	\left(\kappa \triangleleft (1_m ,\mu) \right) \triangleleft (\kappa, 1_a, 1_a)
	\ar[d, equal]
	&\kappa \triangleleft \left(1_a, \kappa \triangleleft (1_m, \mu)\right)
	\ar[d, equal]
	\\
	\left(\kappa \triangleleft (\kappa, 1_a) \right) \triangleleft (1_m, 1_a, \mu)
	\ar[d]
	& \left(\kappa \triangleleft (\kappa, 1_a) \right) \triangleleft (1_m, \mu, 1_a)
	\ar[d]
	\\
	\left(\kappa \triangleleft (1_m, \mu) \right) \triangleleft (1_m,  1_a, \mu)
	\ar[d, equal]
	& \left(\kappa \triangleleft (1_m, \mu) \right) \triangleleft (1_m, \mu, 1_a)
	\ar[d, equal]
	\\
	\kappa \triangleleft \left(1_m, \mu \triangleleft (1_a, \mu)\right)
	\ar[r, "\simeq"]
	& \kappa \triangleleft \left(1_m,\mu \triangleleft (\mu, 1_a)\right)
\end{tikzcd}
$$
$$\begin{tikzcd}
	\kappa
	\ar[d, equal]
	\\
	\kappa \triangleleft (1_m, 1_a)
	\ar[r, "\simeq"]\ar[d]
	& \kappa \triangleleft \left(1_m, \mu \triangleleft (\iota, 1_a), 1_m\right)
	\ar[dd, equal]
	\\
	\kappa \triangleleft \left(\kappa \triangleleft (1_m, \iota), 1_a \right)
	\ar[d, equal]
	\\
	\kappa \triangleleft (\kappa, 1_a) \triangleleft (1_m, \iota, 1_a)
	\ar[r]
	& \kappa \triangleleft (1_m, \mu) \triangleleft (1_m, \iota, 1_a)
\end{tikzcd}
$$
$$\begin{tikzcd}
	\kappa
	\ar[d, equal]
	\ar[r, equal]
	& \kappa \triangleleft (1_m, 1_a)
	\ar[d, "\simeq"]
	\\
	1_m \triangleleft \kappa
	\ar[d]
	& \kappa \triangleleft \left(1_m, \mu \triangleleft (1_a, \iota), 1_m\right)
	\ar[dd, equal]
	\\
	\left(\kappa \triangleleft  (1_m, \iota)\right) \triangleleft \kappa
	\ar[d, equal]
	\\
	\kappa \triangleleft (\kappa, 1_a) \triangleleft 
	\ar[r]
	& \kappa \triangleleft (1_m, \mu) \triangleleft (1_m, 1_a, \iota).
\end{tikzcd}
$$
Moreover, an oplax right module $M$ of a pseudo-monoid $A$ 
is called a strong right module if the canonical 2-morphisms
$$
\begin{tikzcd}
	M \otimes (A \otimes A)
	\ar[r, "\simeq"]
	\ar[d]
	& (M \otimes A) \otimes A
	\ar[ddl, Rightarrow]
	\ar[d]
	\\
	M \otimes A
	\ar[d]
	& M \otimes A
	\ar[d]
	\\
	M
	\ar[r, equal]
	& M 
\end{tikzcd}
\quad
\begin{tikzcd}
	M \otimes \II
	\ar[r, "\simeq"]
	\ar[d]
	& M 
	\ar[dl, Rightarrow]
	\ar[d, equal]
	\\
	M \otimes A \ar[r]
	& M 
\end{tikzcd}
$$
are invertible. The categorical operad $\catoperad{RM}_\strong$
that encodes a pseudo-monoid $A$ and a strong right module $M$
is built from 
$\catoperad{RM}_\oplax$ by imposing that the morphisms
\begin{align*}
	\kappa \triangleleft (\kappa, 1_a) &\to \kappa \triangleleft (1_m, \mu)
	\\
	1_m & \to \kappa \triangleleft (1_m, \iota)
\end{align*}
are invertible.

\begin{lemma}
	Let us suppose that the symmetric monoidal
category $\categ E$ is closed and let us denote $[-,-]$ its internal hom.
In that context, the enriched category $\ecateg E$
induced by $\categ E$
has the canonical structure of a symmetric monoidal enriched category.
\end{lemma}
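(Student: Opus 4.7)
The plan is to make $\ecateg E$, the canonical self-enrichment of $\categ E$ given by $\ecateg E(x,y) = [x,y]$, into a symmetric monoidal enriched category by lifting each piece of the symmetric monoidal structure of $\categ E$ through the hom-tensor adjunction. Concretely, I will exhibit an enriched functor $\gamma : \ecateg E \otimes \ecateg E \to \ecateg E$, a unit object $\II_{\ecateg E} = \II_{\categ E}$, enriched natural transformations serving as associator, commutator and unitors, and then verify the coherence axioms by reducing them to those already holding in $\categ E$.

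The enriched functor $\gamma$ sends $(x,y)$ to $x \otimes y$ on objects; on hom objects it is the canonical map
$$
[x,x'] \otimes [y,y'] \to [x \otimes y,\, x' \otimes y']
$$
adjoint to
$$
[x,x'] \otimes [y,y'] \otimes x \otimes y \xrightarrow{\simeq} [x,x'] \otimes x \otimes [y,y'] \otimes y \xrightarrow{\mathrm{ev} \otimes \mathrm{ev}} x' \otimes y',
$$
using the symmetry of $\categ E$ and the evaluation maps. Standard manipulations of the closed-category adjunction show that this assignment respects composition and identities, making $\gamma$ a well-defined enriched functor.

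Next, at a triple $(x,y,z)$ the associator is defined as the element $\II_{\categ E} \to [(x \otimes y) \otimes z,\, x \otimes (y \otimes z)]$ adjoint to the ordinary associator of $\categ E$; the commutator and left/right unitors are defined pointwise in the same way. These assemble into enriched natural transformations: the enriched-naturality squares, transposed across the closed adjunction, become exactly the ordinary naturality squares of the associator, commutator and unitors of $\categ E$ (instantiated at various tensor products), which hold by hypothesis.

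Finally one must verify that the induced data on the underlying ordinary category $\cat(\ecateg E)$ forms a symmetric monoidal category. But $\cat(\ecateg E) \cong \categ E$ as ordinary categories, and under this identification $\cat(\gamma)$ coincides with the tensor product of $\categ E$, $\II_{\ecateg E}$ with its unit, and the natural transformations extracted from the enriched associator/commutator/unitors are precisely those of $\categ E$ (again by the defining adjunction). Hence all the pentagon, hexagon and triangle coherences required of $\ecateg E$ reduce to those already satisfied by $\categ E$. The only substantive step in the entire argument is checking the enriched functoriality of $\gamma$ and the enriched naturality of the structural transformations; both are routine diagram chases that collapse, under the hom-tensor adjunction, to facts about $\categ E$ that we already have.
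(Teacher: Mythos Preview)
Your proposal is correct and follows essentially the same approach as the paper: you define the tensor product enriched functor via the canonical map $[x,x']\otimes[y,y']\to[x\otimes y,x'\otimes y']$, take the unit, associator, commutator and unitors from those of $\categ E$, and reduce all coherence to the symmetric monoidal coherence already satisfied by $\categ E$. The paper's proof is terser, merely naming these ingredients, while you spell out the adjoint description and the reduction via $\cat(\ecateg E)\cong\categ E$ more explicitly, but the substance is identical.
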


\begin{proof}
	The tensor product of $\ecateg E$ is the morphism of enriched categories
	$\ecateg E \otimes \ecateg E \to \ecateg E$ given by the function on objects
	$$
	(x,y)\in \Ob(\ecateg E \otimes \ecateg E) \mapsto x \otimes_{\categ E} y
	\in \Ob(\ecateg E)
	$$
	and the structural morphisms in $\categ E$
	$$
	(\ecateg E \otimes \ecateg E) ((x,x'), (y,y'))=
	[x,y] \otimes_{\categ E} [x',y'] \to [x \otimes_{\categ E} x', y \otimes_{\categ E} y']
	= \ecateg E (x \otimes_{\categ E} x', y \otimes_{\categ E} y').
	$$
	Moreover, its monoidal unit, its associators,
	unitors and commutators are given by those of the symmetric monoidal category $\categ E$.
	They satisfy the conditions to form a symmetric monoidal enriched category
	since they proceed from the structural maps making $\categ E$ a
	symmetric monoidal category.
\end{proof}

Thus, in the context where the symmetric monoidal category $\categ E$
is closed, and using Proposition \ref{corollarycmon} and Corollary \ref{corollarycmonstrongstrict},
the symmetric monoidal enriched categories $\ecateg E$ and $\ecateg E^\op$ have canonical
structures
of $\catoperad{E}_{\infty,\cat}$-algebras within $\cmonstrong(\Cate)
\subset \cmonlax(\Cate)$. In particular, they are $\catoperad{A}_{\infty,\cat}$-algebras.

	One has an enriched functor
	$$
	\langle -, - \rangle: \ecateg E \otimes \ecateg E^\op \to \ecateg E
	$$
	whose underlying function
	sends a pair of objects $(x,x')$ to $[x', x]$ and whose structural
	morphisms in $\categ E$ are the maps
	$$
	\begin{tikzcd}
		(\ecateg E \otimes \ecateg E^\op)((x,x'), (y, y'))	
		\ar[d, equal]
		\\
		{[x, y] \otimes [y', x']}
		\ar[d]
		\\
		{\left[[x',x], [y',y] \right]}
		\ar[d, equal]
		\\
		{\ecateg E([x',x], [y',y])	.}
	\end{tikzcd}
	$$
	that are adjoint to the maps
	$$
	([x, y] \otimes [y', x'] )\otimes [x',x]
	\simeq  [x, y]\otimes [x',x] \otimes [y', x']
	\to  [y', y].
	$$

	\begin{lemma}\label{lemma:laxstructure}
		The natural maps
	\begin{align*}
		[x, y] \otimes [x' , y'] &\to [x \otimes x' , y \otimes y']
		\\
		\II_{\categ E} &\to  [\II_{\categ E}, \II_{\categ E}].
	\end{align*}
	form two 2-morphisms in $\Cate$
	$$
\begin{tikzcd}
	(\ecateg E \otimes \ecateg E^\op)
	\otimes (\ecateg E \otimes \ecateg E^\op)
	\ar[d, swap, "\simeq"]
	\ar[rr, "{\langle - , - \rangle \otimes \langle -,- \rangle}"]
	&&  \ecateg E \otimes \ecateg E
	\ar[ddll, Rightarrow]
	\ar[dd, "{-\otimes_E-}"]
	\\
	(\ecateg E \otimes \ecateg E)
	\otimes (\ecateg E^\op \otimes \ecateg E^\op)
	\ar[d, swap, "(-\otimes_E -) \otimes (- \otimes_{E^\op}-)"]
	\\
	\ecateg E \otimes \ecateg E^\op
	\ar[rr, swap, "{\langle -,-\rangle}"]
	&& \ecateg E
\end{tikzcd}
\quad
\begin{tikzcd}
	\eII
	\ar[r, equal]
	\ar[d, swap, "1_{E \otimes E^\op}"]
	& \eII
	\ar[dl, Rightarrow]
	\ar[d, "1_E"]
	\\
	\ecateg E \otimes \ecateg E^\op
	\ar[r, swap, "{\langle -,-\rangle}"]
	& \ecateg E.
\end{tikzcd}
	$$
	\end{lemma}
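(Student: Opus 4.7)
The plan is to identify the two 2-morphisms as the standard lax symmetric monoidal structure maps of the internal hom of $\categ E$, and then verify that these natural transformations of underlying functors in fact lift to enriched natural transformations in $\Cate$.

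First, I would exhibit the candidate 2-morphisms explicitly on objects. Since an element of $\ecateg E(a,b)$ is just a morphism $a \to b$ in $\categ E$, defining a 2-morphism at each object amounts to giving such a morphism. For the first diagram, at a quadruple $(x_1, x'_1, x_2, x'_2)$, the component is the canonical lax-monoidal constraint
\[
\alpha_{(x_1, x'_1, x_2, x'_2)} : [x'_1, x_1] \otimes [x'_2, x_2] \to [x'_1 \otimes x'_2, x_1 \otimes x_2]
\]
obtained, via the adjunction $- \otimes (x'_1 \otimes x'_2) \dashv [x'_1 \otimes x'_2, -]$, as the transpose of
\[
[x'_1, x_1] \otimes [x'_2, x_2] \otimes x'_1 \otimes x'_2 \xrightarrow{\simeq} [x'_1, x_1] \otimes x'_1 \otimes [x'_2, x_2] \otimes x'_2 \xrightarrow{\mathrm{ev} \otimes \mathrm{ev}} x_1 \otimes x_2.
\]
For the second diagram, the component is the map $\II_{\categ E} \to [\II_{\categ E}, \II_{\categ E}]$ adjoint to the unitor $\II_{\categ E} \otimes \II_{\categ E} \simeq \II_{\categ E}$.

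Second, the substance of the proof is to verify the enriched-naturality square for these data. Denoting the two enriched functors involved in the first diagram by $F$ and $G$ (so $F(x_1, x'_1, x_2, x'_2) = [x'_1, x_1] \otimes [x'_2, x_2]$ and $G(x_1, x'_1, x_2, x'_2) = [x'_1 \otimes x'_2, x_1 \otimes x_2]$), and writing $X, Y$ for two quadruples, the required square lives over the hom object
\[
[x_1, y_1] \otimes [y'_1, x'_1] \otimes [x_2, y_2] \otimes [y'_2, x'_2].
\]
Transposing the whole square through the adjunction $- \otimes (y'_1 \otimes y'_2) \dashv [y'_1 \otimes y'_2, -]$, it reduces to an equality between two morphisms
\[
[x_1, y_1] \otimes [y'_1, x'_1] \otimes [x_2, y_2] \otimes [y'_2, x'_2] \otimes [x'_1, x_1] \otimes [x'_2, x_2] \otimes y'_1 \otimes y'_2 \to y_1 \otimes y_2,
\]
both of which are built exclusively from evaluation morphisms, associators, unitors and the braiding of $\categ E$. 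Mac Lane's coherence theorem for symmetric monoidal closed categories then forces their equality. The second 2-morphism is handled analogously: naturality reduces, again by the same adjunction, to the triangle identity expressing the unit axiom for $\II_{\categ E}$.

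The main obstacle is the combinatorial bookkeeping of the several braidings that appear when one disentangles the variables $x_i$ from $x'_i$ (which live respectively in $\ecateg E$ and $\ecateg E^\op$) after transposing through the adjunction. This is tedious but shallow: once both legs of each naturality square are brought into normal form after transposition, they become two composites in the free symmetric monoidal closed category on the relevant objects, and coherence guarantees they agree. No new categorical input is needed beyond the closed symmetric monoidal structure of $\categ E$.
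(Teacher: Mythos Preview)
Your approach is essentially the same as the paper's: identify the components of the 2-morphisms as the canonical lax monoidal constraints of the internal hom, write down the enriched-naturality square over the hom object, and transpose through the $\otimes$/$[-,-]$ adjunction to reduce it to an equality of composites into the bare target. The paper then simply asserts that the two transposed maps are \emph{equal by inspection} (``both maps are adjoint to the same morphism''), whereas you invoke coherence.

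One caveat: the coherence result you cite for symmetric monoidal \emph{closed} categories (Kelly--Mac Lane) is not the blanket ``any two formal composites agree'' statement that holds in the merely symmetric monoidal case; it requires a graph/linkage condition on the diagram. In this instance the condition is satisfied, but it would be cleaner either to note that explicitly or, as the paper does, to observe that after transposition both legs are visibly the same composite of evaluations and symmetries. With that adjustment your argument matches the paper's.
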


\begin{proof}
For the first map, this amounts to prove that the following diagram
is commutative
$$
\begin{tikzcd}
	{[y_0, y'_0] \otimes [x'_0, x_0]
	\otimes [y_1, y'_1] \otimes [x'_1, x_1]}
	\ar[r] \ar[d, "\simeq"']
	& {\left[[x_0, y_0], [x'_0, y'_0]\right] \otimes 
	\left[[x_1, y_1], [x'_1, y'_1]\right]}
	\ar[dd]
	\\
	{[y_0, y'_0] \otimes [y_1, y'_1] \otimes [x'_0, x_0]
	 \otimes [x'_1, x_1]}
	 \ar[d]
	\\
	{[y_0 \otimes y_1, y'_0 \otimes y'_1] \otimes [x'_0 \otimes x'_1, x_0
	 \otimes x_1]}
	 \ar[d]
	 &
	{\left[[x_0, y_0]\otimes [x_1, y_1], [x'_0, y'_0]
	\otimes [x'_1, y'_1]\right]}
	\ar[d]
	\\
	{\left[[x_0 \otimes x_1, y_0\otimes y_1],
	[x'_0 \otimes x'_1, y'_0\otimes  y'_1]\right]}
	\ar[r]
	& {\left[[x_0, y_0]\otimes [x_1, y_1],
	[x'_0 \otimes x'_1, y'_0\otimes  y'_1]\right]}
\end{tikzcd}
$$
for any objects $x_i, x'_i, y_i, y'_i$ (for $i\in \{0,1\}$).
This follows from the fact that both maps are adjoint to the
same morphism
$$
[y_0, y'_0] \otimes [x'_0, x_0]
	\otimes [y_1, y'_1] \otimes [x'_1, x_1]
	\otimes [x_0, y_0]\otimes [x_1, y_1]
	\to [x'_0 \otimes x'_1, y'_0\otimes  y'_1] .
$$
The naturality of the second map is clear.
\end{proof}

\begin{proposition}\label{prop:cotensorlax}
	The enriched functor
	$
	\langle -, - \rangle: \ecateg E \otimes \ecateg E^\op \to \ecateg E
	$
	has the structure of a lax monoidal enriched functor whose structural
	maps are those of Lemma \ref{lemma:laxstructure}.
\end{proposition}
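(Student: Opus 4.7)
The plan is to verify directly the three coherence axioms (associativity, left unitality, and right unitality) for a lax monoidal functor, using the structural 2-morphisms supplied by Lemma \ref{lemma:laxstructure}. The enriched functor $\langle -, - \rangle$ itself, together with these 2-morphisms, is already constructed, so the proposition reduces to coherence.

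My key observation is that every map involved is obtained by adjunction from maps built out of the evaluation morphisms $[x,y]\otimes x \to y$ and $[x',x]\otimes x' \to x$. Concretely, in the symmetric monoidal closed category $\categ{E}$, I would use repeatedly that the canonical map
\[
[x_0, y_0] \otimes [x_1, y_1] \to [x_0 \otimes x_1, y_0 \otimes y_1]
\]
is the unique adjoint to the composite of two evaluations interleaved with the symmetry. For each coherence diagram, I would take the adjoint of the two paths being compared; both will then reduce to the same iterated composition of evaluation and symmetry, hence agree. This is exactly the same pattern as the argument given in the proof of Lemma \ref{lemma:laxstructure} and is the standard way one checks that internal-hom functors are lax monoidal.

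Concretely, for associativity I would write down the pentagon comparing the two composites
\[
\langle -,-\rangle \circ \bigl((\langle-,-\rangle \otimes \langle-,-\rangle) \otimes \langle-,-\rangle\bigr) \Rightarrow \otimes_{\categ{E}} \circ \bigl((\otimes_{\categ{E}} \otimes \otimes_{\categ{E}^{\op}}) \otimes \langle-,-\rangle\bigr)
\]
and the analogous one pivoting to the right, chased through the associators of $\ecateg{E}\otimes\ecateg{E}^{\op}$. Since the tensor product on $\ecateg{E}^{\op}$ is the \emph{opposite} of that on $\ecateg{E}$, the variance is already tracked by the definition, and the diagram becomes a diagram of internal-hom objects in $\categ{E}$. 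Taking adjoints, both composites yield the same map
\[
[y_0,y'_0]\otimes[x'_0,x_0]\otimes[y_1,y'_1]\otimes[x'_1,x_1]\otimes[y_2,y'_2]\otimes[x'_2,x_2]\otimes[x_0,y_0]\otimes[x_1,y_1]\otimes[x_2,y_2] \to y'_0\otimes y'_1\otimes y'_2,
\]
so they coincide. For the left and right unitality triangles the argument is the same, simplified, using the map $\II_{\categ{E}} \to [\II_{\categ{E}},\II_{\categ{E}}]$ adjoint to the unitor of $\categ{E}$.

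The main potential obstacle is purely bookkeeping: keeping track of the variance in the second slot and the symmetry isomorphisms that appear when reshuffling tensor factors. But since every structural map in sight is canonically adjoint to a composite of evaluation and braiding in $\categ{E}$, by the universal property of the internal hom each diagram admits at most one morphism of a given source and target (up to the coherence of $\categ{E}$ itself), so the two sides of each coherence diagram are forced to agree. The verification is therefore routine given Mac Lane's coherence for the symmetric monoidal closed category $\categ{E}$.
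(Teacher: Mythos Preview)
Your approach is essentially the paper's: the proof there reduces the claim to the commutativity of a short list of diagrams in $\categ{E}$ involving the maps $[x,y]\otimes[x',y']\to[x\otimes x',y\otimes y']$ and $\II\to[\II,\II]$, and declares these ``straightforward to prove'' --- which is exactly what your adjunction-and-evaluation argument supplies.

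One point to flag: you list the axioms as ``associativity, left unitality, right unitality'', but since $\langle-,-\rangle$ is meant to be a lax morphism in $\cmonlax(\Cate)$ (the 2-category of \emph{symmetric} monoidal enriched categories), there is also a compatibility-with-symmetry square
\[
\begin{tikzcd}
{[x, y] \otimes [x',y']} \ar[r, "\simeq"] \ar[d] & {[x', y'] \otimes [x,y]} \ar[d] \\
{[x \otimes x', y \otimes y']} \ar[r, "\simeq"'] & {[x' \otimes x, y' \otimes y]}
\end{tikzcd}
\]
to check. The paper includes it explicitly (and conversely does not separate left and right unit). Your adjunction method handles this square just as well, so no real damage, but be sure to include it.
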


\begin{proof}
	The results amounts the commutation of the following
	diagrams
	$$
\begin{tikzcd}
	{\left([x, y] \otimes [x',y']\right) \otimes [x'',y'] }
	\ar[r, "\simeq"] \ar[d]
	& {[x, y] \otimes \left([x',y'] \otimes [x'',y']\right) }
	\ar[d]
	\\
	{[x \otimes x', y \otimes y'] \otimes [x'',y']}
	\ar[d]
	& {[x, y] \otimes  [x' \otimes x'',y' \otimes y'] }
	\ar[d]
	\\
	{[(x \otimes x') \otimes x'', (y \otimes y') \otimes y'']}
	\ar[r, "\simeq"']
	& {[x \otimes (x' \otimes x''), y \otimes (y' \otimes y'')]}
\end{tikzcd}
$$
$$
\begin{tikzcd}
	{[x, y] \otimes [x',y']}
	\ar[r, "\simeq"] \ar[d]
	& {[x', y'] \otimes [x,y]}
	\ar[d]
	\\
	{[x \otimes x', y \otimes y'] }
	\ar[r, "\simeq"']
	& {[x' \otimes x, y' \otimes y] }
\end{tikzcd}
\quad
\begin{tikzcd}
	{1_{\categ E} \otimes [x,y]}
	\ar[r, "\simeq"] \ar[d, "\simeq"']
	& {[1_{\categ E}, 1_{\categ E}] \otimes [x,y]}
	\ar[d]
	\\
	{[x,y]} 
	\ar[r, "\simeq"']
	& {[1_{\categ E} \otimes x, 1_{\categ E} \otimes y]}
\end{tikzcd}
	$$
	which is straightforward to prove.
\end{proof}

\begin{proposition}\label{proposition:cotensorlaxtwo}
	The maps
	\begin{align*}
		[x \otimes x', y] &\simeq [x' \otimes x, y] \simeq [x' , [x, y]]
		\\
		[1_E , x] &\simeq x
	\end{align*}
	form two 2-morphisms in the 2-category of symmetric monoidal enriched categories
	and lax morphisms
	$$
\begin{tikzcd}
\ecateg E \otimes \ecateg E^\op \otimes \ecateg E^\op
\ar[r] \ar[d]
& \ecateg E \otimes \ecateg E^\op
\ar[d] \ar[ld, Rightarrow]
\\
\ecateg E \otimes \ecateg E^\op
\ar[r]
& \ecateg E
\end{tikzcd}
\quad
\begin{tikzcd}
\ecateg E \otimes \eII
\ar[r, "\id \otimes 1_E"] \ar[d, "\simeq"']
& \ecateg E \otimes \ecateg E^\op
\ar[d]
\\
\ecateg E
\ar[r, equal]
& \ecateg E .
\end{tikzcd}
	$$
\end{proposition}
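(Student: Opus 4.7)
The plan is to verify two things: first, that the displayed maps are enriched natural transformations between the relevant enriched functors, and second, that they are compatible with the lax monoidal structures of the two functors appearing in each diagram, as required for a 2-morphism in $\cmonlax(\Cate)$.

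For the first point, I would observe that $[x \otimes x', y] \simeq [x', [x,y]]$ is obtained by composing the symmetry isomorphism $x \otimes x' \simeq x' \otimes x$ with the standard currying isomorphism of the closed symmetric monoidal category $\categ E$, while $[\II_{\categ E}, x] \simeq x$ is the unit isomorphism of the internal hom. Their enriched naturality is just the naturality of the adjunction and of the unit isomorphism; it can be verified by passing to adjoints in $\categ E$ and using the explicit definition of the structural maps of $\langle -, - \rangle$ given in the previous subsection, which are themselves adjoint to evaluation composed with the symmetry of $\otimes$.

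For the second point, the functors appearing in each diagram inherit lax monoidal structures by composing the lax structure of $\langle -, - \rangle$ from Proposition \ref{prop:cotensorlax} with the strict monoidal structures of the tensor product of enriched categories and of the unit functor $\eII \to \ecateg E \otimes \ecateg E^\op$. The 2-morphism condition then unfolds into the commutation of two coherence squares in $\categ E$, relating the lax structure maps $[x,y] \otimes [x',y'] \to [x \otimes x', y \otimes y']$ and $\II_{\categ E} \to [\II_{\categ E}, \II_{\categ E}]$ from Lemma \ref{lemma:laxstructure} to the currying and unit isomorphisms. Reducing each such square to a commutative diagram in $\categ E$ via the tensor--hom adjunction, one obtains a diagram built solely from associators, commutators, unitors and evaluation morphisms, which commutes by Mac Lane's coherence theorem for the closed symmetric monoidal category $\categ E$. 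The main obstacle is therefore not conceptual but notational: the diagrams are large and the translation through the adjunctions must be performed with care, but once written in $\categ E$ the verification is entirely formal.
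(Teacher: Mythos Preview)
Your proposal is correct and follows essentially the same strategy as the paper: first verify enriched naturality (i.e.\ that the maps are 2-morphisms in $\Cate$), then verify compatibility with the lax monoidal structures coming from Lemma~\ref{lemma:laxstructure} (i.e.\ that they are 2-morphisms in $\cmonlax(\Cate)$), in each case by passing to adjoints under the tensor--hom adjunction. The only difference is stylistic: the paper writes out the specific coherence squares explicitly and checks them by observing that both composites are adjoint to the same map, whereas you package the final verification as an appeal to coherence in a closed symmetric monoidal category; note that strictly speaking this requires the Kelly--Mac~Lane extension of coherence to closed structures rather than Mac~Lane's theorem alone, but the underlying content is the same.
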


\begin{proof}
	Let us consider the first set of maps $[x \otimes x', y] \simeq [x , [x', y]]$.
	On the one hand, it form a 2-morphism in $\Cate$ since the following diagram
	is commutative
	$$
\begin{tikzcd}
	{[y,y'] \otimes [x'_0, x_0] \otimes [x'_1, x_1]}
	\ar[r] \ar[d]
	& {\left[ [x_1, [x_0, y]], [x'_1, [x'_0, y']] \right]}
	\ar[d]
	\\
	{\left[ [x_0 \otimes x_1, y], [x'_0 \otimes x'_1, y'] \right]}
	\ar[r]
	& {\left[ [x_0 \otimes x_1, y], [x'_1, [x'_0, y']] \right]}
\end{tikzcd}
	$$
	for any objects $x_0,x'_0, x_1, x'_1, y, y'$ of $\categ E$.
	Indeed, both maps from the top left to the bottom right of the
	square are adjoint to the same map
	$$
	{[y,y'] \otimes [x'_0, x_0] \otimes [x'_1, x_1]} \otimes 
	[x_0 \otimes x_1, y] \otimes x'_1 \otimes x'_0
	\to y' .
	$$
	On the other hand, this set of maps form a 2-morphism in $\cmonlax(\Cate)$
	since the following square diagrams are commutative
	$$
	\begin{tikzcd}
		{[x_0 \otimes x_1, y] \otimes [x'_0 \otimes x'_1, y']}
		\ar[r] \ar[d]
		& {[(x_0 \otimes x_1) \otimes (x'_0 \otimes x'_1), y \otimes y']}
		\ar[r]
		& {[(x_0 \otimes x'_0) \otimes (x_1 \otimes x'_1), y \otimes y']}
		\ar[d]
		\\
		{[x_1 ,[x_0, y]] \otimes [x'_1 ,[x'_0, y']]}
		\ar[r]
		& {[x_1 \otimes x'_1 ,[x_0, y]\otimes [x'_0, y']] }
		\ar[r]
		& {[x_1 \otimes x'_1 ,[x_0 \otimes x'_0, y\otimes y']] }
	\end{tikzcd}
	$$
	$$
	\begin{tikzcd}
		1_E
		\ar[d, equal] \ar[r]
		& {[1_E \otimes 1_E, 1_E]}
		\ar[d]
		\\
		1_E
		\ar[r]
		& {[1_E ,[1_E, 1_E]]}
	\end{tikzcd}
	$$
	for any objects $x_0,x'_0, x_1, x'_1, y, y'$ of $\categ E$.

	The fact that the second set of maps $[1_E , x] \simeq x$
	forms a 2-morphism in the 2-category of symmetric monoidal enriched categories
	and lax morphisms follows from similar arguments.
\end{proof}

\begin{theorem}\label{thm:closedcase}
	If the symmetric monoidal
	category $\categ E$ is closed, then
	the cotensorisation of $\categ E$
	over itself canonically makes $\ecateg E$ a strong right module over the pseudo-monoid
	$\ecateg E^\op$ in the monoidal
	context $\cmonlax(\Cate)$.
\end{theorem}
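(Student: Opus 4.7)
The plan is to assemble an $\catoperad{RM}_{\strong}$-algebra structure on the pair $(\ecateg E^\op, \ecateg E)$ inside $\cmonlax(\Cate)$ by collating the data built in Proposition \ref{prop:cotensorlax} and Proposition \ref{proposition:cotensorlaxtwo}. For the 1-morphism layer, I would interpret the colour $a$ by the symmetric monoidal enriched category $\ecateg E^\op$, whose pseudo-monoid structure (the generators $\mu$ and $\iota$ together with their associator and unitor 2-isomorphisms) is inherited from that of $\ecateg E$ through the strict context functor $-^{\op}$ and the section of Proposition \ref{corollarycmon} / Corollary \ref{corollarycmonstrongstrict}. The colour $m$ is interpreted by $\ecateg E$ itself, and the generator $\kappa$ is the lax symmetric monoidal enriched functor $\langle -, - \rangle : \ecateg E \otimes \ecateg E^\op \to \ecateg E$ produced in Proposition \ref{prop:cotensorlax}.

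For the 2-morphism layer, the invertible witnesses of associativity and unitality of $\mu$ are those coming from the pseudo-monoid structure of $\ecateg E^\op$. The two further structural 2-morphisms
$$
\kappa \triangleleft (\kappa, 1_a) \to \kappa \triangleleft (1_m, \mu), \qquad 1_m \to \kappa \triangleleft (1_m, \iota)
$$
required for an oplax right module are precisely those furnished by Proposition \ref{proposition:cotensorlaxtwo}, built from the closed-structure isomorphisms $[x \otimes x', y] \simeq [x', [x, y]]$ and $[1_E, x] \simeq x$. Because both of these are genuine isomorphisms (this is exactly the definition of closedness), the induced 2-morphisms in $\cmonlax(\Cate)$ are invertible, which promotes the oplax right module structure to a strong one, matching the defining property of $\catoperad{RM}_\strong$.

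The remaining work is to verify the three coherence diagrams in the definition of $\catoperad{RM}_\oplax$: the pentagon-style diagram relating nested applications of $\kappa$ to the associator of $\mu$, and the two unit triangles involving $\iota$. Each of these is a diagram in $\cmonlax(\Cate)$ that, after evaluation on an arbitrary tuple of objects, unfolds to a diagram of morphisms in $\categ E$; passing through the closed adjunction, the two parallel composites in each diagram become adjoint to the same canonical evaluation morphism, so commutativity reduces to a purely symmetric-monoidal-coherence statement of Mac Lane type. The main obstacle is therefore the bookkeeping of this coherence verification — juggling associators, commutators, and the adjunction counits of $[-,-]$ through a three-variable diagram — rather than any conceptual difficulty; invertibility of the structural 2-morphisms is automatic once they are identified, via Proposition \ref{proposition:cotensorlaxtwo}, with the defining isomorphisms of the closed structure.
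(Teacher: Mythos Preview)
Your proposal is correct and follows essentially the same approach as the paper: both assemble the module data from Proposition \ref{prop:cotensorlax} and Proposition \ref{proposition:cotensorlaxtwo}, observe that the structural 2-morphisms are invertible because they come from the closed-structure isomorphisms $[x\otimes x',y]\simeq [x',[x,y]]$ and $[1_E,x]\simeq x$, and then reduce the three coherence axioms of $\catoperad{RM}_\oplax$ to commutativity of the corresponding internal-hom diagrams in $\categ E$. The paper simply writes those three diagrams out explicitly rather than invoking a Mac Lane--type coherence argument, but the content is the same.
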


\begin{proof}
	Proposition \ref{prop:cotensorlax} provides us with a lax morphism
	$$
\langle - , - \rangle : \ecateg E\otimes \ecateg E^\op \to \ecateg E
	$$
	and Proposition \ref{proposition:cotensorlaxtwo}
	provides us with invertible 2-morphisms
	\begin{align*}
		\langle - , - \rangle \circ \left(\langle - , - \rangle \otimes \id \right)
		& \to \langle - , - \rangle \circ \left(\id \otimes (- \otimes_{\ecateg E^\op} -) \right)
		\\
		\id &\to \langle - , - \rangle \circ \left(\id \otimes 1_E \right)
	\end{align*}
	The fact that they define the structure of a strong
	right module on $\ecateg E$ over the pseudo-monoid $\ecateg E^\op$
	 within the monoidal context $\cmonlax(\Cate)$ follows from the
	commutation of the following diagrams
	$$
\begin{tikzcd}
	{[x'' ,[x',[x, y]]]}
	\ar[r] \ar[d]
	& {[x'' ,[x \otimes x', y]]}
	\ar[dd]
	\\
	{[x'  \otimes x'',[x, y]]]}
	\ar[d]
	\\
	{[(x \otimes x') \otimes x'', y]}
	\ar[r]
	&{[x \otimes (x' \otimes x''), y]}
\end{tikzcd}
	$$
	$$
	\begin{tikzcd}
	{[x,y]}
	\ar[r] \ar[rd, "\simeq"']
	& {[1_E, [x,y]]}
	\ar[d]
	\\
	&{[x \otimes 1_E, y]}
	\end{tikzcd}
	\quad
	\begin{tikzcd}
		{[x,y]}
		\ar[r] \ar[rd, "\simeq"']
		& {[x,[1_E, y]]}
		\ar[d]
		\\
		&{[1_E \otimes x, y]}
		\end{tikzcd}
	$$
	for any objects $x,x',x'',y$ of $\categ E$.
\end{proof}

\begin{corollary}
	Let us suppose that the symmetric monoidal
	category $\categ E$ is closed and let us denote $[-,-]$ its internal hom.
	In that context the cotensorisation of $\categ E$
over itself canonically makes $\categ E$ a strong right module over the pseudo-monoid
$\categ E^\op$ in the monoidal
context $\cmonlax(\Cats)$.
\end{corollary}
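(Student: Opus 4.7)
The plan is to descend the enriched statement of Theorem \ref{thm:closedcase} along the lax context functor
$$
\cat : \Cate \to \Cats .
$$
Being lax monoidal, this 2-functor induces, by the same mechanism as in Theorem \ref{thm:main} (or rather the general fact that algebras over a categorical operad are natural in lax context functors), a 2-functor
$$
\tcatalg{\cmonlax(\Cate)}{\catoperad{RM}_\strong}_\strict
\to \tcatalg{\cmonlax(\Cats)}{\catoperad{RM}_\strong}_\strict .
$$
Indeed, the categorical operad $\catoperad{RM}_\strong$ encodes exactly the datum of a pseudo-monoid together with a strong right module, so an object of the source category is precisely a pair $(\ecateg A, \ecateg M)$ of a pseudo-commutative monoid $\ecateg A$ in $\Cate$ (viewed as a pseudo-monoid by forgetting the commutator) and a strong right $\ecateg A$-module $\ecateg M$, all within the monoidal context $\cmonlax(\Cate)$.

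Theorem \ref{thm:closedcase} gives us such a pair $(\ecateg E^\op, \ecateg E)$, whose structural lax morphism is $\langle -,- \rangle : \ecateg E \otimes \ecateg E^\op \to \ecateg E$. The plan then is simply to apply the 2-functor above to this pair. One identifies $\cat(\ecateg E)$ with $\categ E$, $\cat(\ecateg E^\op)$ with $\categ E^\op$, and $\cat(\langle -,- \rangle)$ with the internal hom functor $[-,-] : \categ E \times \categ E^\op \to \categ E$ viewed as a lax symmetric monoidal functor with structural natural transformations
$$
[x,y] \otimes [x',y'] \to [x \otimes x', y \otimes y'], \qquad \II_{\categ E} \to [\II_{\categ E}, \II_{\categ E}] ,
$$
exactly those of Lemma \ref{lemma:laxstructure} read at the level of elements (i.e.\ morphisms out of $\II_{\categ E}$). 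The invertibility of the structural 2-morphisms of the right module structure
$$
[x \otimes x', y] \simeq [x', [x,y]] , \qquad [\II_{\categ E}, x] \simeq x
$$
is transferred from the enriched setting, since $\cat$ sends (invertible) enriched natural transformations to (invertible) natural transformations.

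The only thing one has to verify along the way is that $\cat$ is really a lax context functor, which is stated in the paper as a definition, and that the assignment $(\ecateg A, \ecateg M) \mapsto (\cat(\ecateg A), \cat(\ecateg M))$ equipped with the induced lax structural morphisms actually yields a $\catoperad{RM}_\strong$-algebra in $\cmonlax(\Cats)$. This is a formal consequence of the naturality of the $\tcatalg{-}{\catoperad{RM}_\strong}_\strict$ construction with respect to lax context functors, which is the content of \cite[Proposition 12]{LeGrignou22} applied here. No further verification is required since strong-ness is an internal property (certain 2-morphisms being invertible) preserved by any 2-functor. This is the mildest step of the argument; the substantive work was carried out already in Proposition \ref{prop:cotensorlax}, Proposition \ref{proposition:cotensorlaxtwo} and Theorem \ref{thm:closedcase}.
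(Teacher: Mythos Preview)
Your proof is correct and follows the route the paper implicitly intends: the corollary is stated immediately after Theorem~\ref{thm:closedcase} with no proof, so the intended argument is simply ``descend from the enriched statement''. You make this descent precise by invoking the functoriality of $\tcatalg{-}{\catoperad B}$ in lax context functors and applying it to $\cat:\Cate\to\Cats$, which is a legitimate and clean way to carry this out.

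One small remark on presentation: the reference to \cite[Proposition 12]{LeGrignou22} concerns $\tcatalg{-}{-}_\lax$, not $\tcatalg{-}{-}_\strict$, so strictly speaking you are first using the functoriality at the level of objects (which is all you need) and then observing that the structural $2$-morphisms of the transported $\catoperad{RM}_\strong$-algebra remain invertible because they are images of invertible $2$-cells under a $2$-functor, whiskered by the lax structure maps of $\cat$. You say this in your last paragraph, so the argument is complete, but the subscript $\strict$ on your displayed $2$-functor is a slight over-claim that is not actually used. An alternative, perhaps more elementary, reading of the paper's intention is that the commutative diagrams appearing in the proofs of Proposition~\ref{prop:cotensorlax}, Proposition~\ref{proposition:cotensorlaxtwo} and Theorem~\ref{thm:closedcase} are diagrams in $\categ E$ itself, and hence already constitute a direct proof of the unenriched corollary without any transport machinery; but your approach is equally valid and arguably more conceptual.
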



\subsection{Operads and cotensorisation}

Let $\ecateg D$ be a symmetric monoidal enriched category and let us equip it with
its canonical structure of an $\catoperad{A}_{\infty, \cat}$-algebra within
the monoidal context of symmetric monoidal enriched categories and lax functors
$\cmonlax(\Cate)$
(Proposition \ref{corollarycmon}). Then
let $\ecateg C$ be a symmetric monoidal enriched category equipped with the 
structure of a $\ecateg D^\op$ oplax right module within the monoidal context $\cmonlax(\Cate)$.

\begin{corollary}\label{corollary:convcase}
	Let $\operad Q$ be a Hopf operad and let $\operad P$ be a right $\operad Q$-comodule.
	Then the category $\catalg{\ecateg C}{\operad P}$ is cotensored over the monoidal
	category $\catcog{\categ D}{\operad P} = \catalg{\categ D^\op}{\operad P}^\op$.
	Moreover, for any morphism of $\catoperad{RM}$-coalgebras
	$$
	(f,g):(\operad Q, \operad P) \to (\operad Q', \operad P')
	$$
	the functor $g^\ast: \catalg{\categ D}{\operad P'} \to \catalg{\categ D}{\operad P}$
	commutes strictly with the cotensorisation structures. 
\end{corollary}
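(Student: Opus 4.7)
The idea is to apply Theorem~\ref{thm:main} (in its planar refinement) to the categorical operads $\catoperad B = \catoperad{RM}$, treated as a discrete planar categorical operad, and $\catoperad B' = \catoperad{RM}_\oplax$, and then restrict the output along a diagonal morphism of operads. The two input data are already available: since $\operad Q$ is a Hopf operad and $\operad P$ is a right $\operad Q$-comodule, the pair $(\operad Q, \operad P)$ is by definition an $\catoperad{RM}$-coalgebra in $\sk(\Operadesmall)$ with respect to the Hadamard tensor product, equivalently an $\catoperad{RM}$-algebra in $\Operadesmall^\op$; and the pair $(\ecateg D^\op, \ecateg C)$ is by hypothesis an $\catoperad{RM}_\oplax$-algebra in the monoidal context $\cmonlax(\Cate)$.

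Feeding these data into Theorem~\ref{thm:main} produces an $\catoperad{RM} \times \catoperad{RM}_\oplax$-algebra in $\Cats$ supported on the four categories indexed by $\{a,m\} \times \{a,m\}$, namely $\catcog{\ecateg D}{\operad Q}$, $\catalg{\ecateg C}{\operad Q}$, $\catcog{\ecateg D}{\operad P}$, and $\catalg{\ecateg C}{\operad P}$. The next step is to restrict this structure along the diagonal morphism of planar categorical operads $\Delta: \catoperad{RM}_\oplax \to \catoperad{RM} \times \catoperad{RM}_\oplax$ that sends colours $a \mapsto (a,a)$ and $m \mapsto (m,m)$, each generator $p \in \{\mu, \iota, \kappa\}$ to $(p, p)$, and each generating 2-morphism $\phi$ to $(\id, \phi)$. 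This prescription is well-defined because $\catoperad{RM}$ has only identity 2-morphisms and its underlying operad in sets coincides with that of $\catoperad{RM}_\oplax$; formally, $\Delta$ is the pairing of the identity of $\catoperad{RM}_\oplax$ with the canonical discretisation $\catoperad{RM}_\oplax \to \catoperad{RM}$. The pulled-back structure is an $\catoperad{RM}_\oplax$-algebra on the pair $(\catcog{\ecateg D}{\operad Q}, \catalg{\ecateg C}{\operad P})$, which after unpacking the definitions realises $\catalg{\ecateg C}{\operad P}$ as cotensored over the monoidal category $\catcog{\ecateg D}{\operad Q}$.

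For the naturality statement, I would invoke the strict refinement of Theorem~\ref{thm:main} recorded in the corollary immediately following it. A morphism $(f,g): (\operad Q, \operad P) \to (\operad Q', \operad P')$ of $\catoperad{RM}$-coalgebras in operads is automatically strict, since $\catoperad{RM}$ is discrete; the strict variant of the main theorem then yields a strict morphism of $\catoperad{RM} \times \catoperad{RM}_\oplax$-algebras in $\Cats$, and restriction along $\Delta$ preserves strictness. Hence $g^\ast: \catalg{\ecateg C}{\operad P'} \to \catalg{\ecateg C}{\operad P}$ is a strict morphism of $\catoperad{RM}_\oplax$-algebras sitting over $f^\ast: \catcog{\ecateg D}{\operad Q'} \to \catcog{\ecateg D}{\operad Q}$, which is precisely strict compatibility with the cotensorisation structures.

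The only real obstacle is bookkeeping: one has to unfold the $\catoperad{RM} \times \catoperad{RM}_\oplax$-algebra produced by Theorem~\ref{thm:main} explicitly enough to check that, on the diagonal colours, the generators $\mu$, $\iota$ and $\kappa$ act by the expected convolution formulae. Concretely, $\mu$ should act as the monoidal product on $\catcog{\ecateg D}{\operad Q}$ induced by the Hopf comultiplication $\operad Q \to \operad Q \otimeshadamard \operad Q$, while $\kappa$ should act as the cotensorisation of $\catalg{\ecateg C}{\operad P}$ induced by the coaction $\operad P \to \operad Q \otimeshadamard \operad P$ post-composed with the lax cotensorisation bifunctor $\langle -, - \rangle: \ecateg C \otimes \ecateg D^\op \to \ecateg C$. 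All remaining coherences then reduce automatically to those already encoded in the main theorem.
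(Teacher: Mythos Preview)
Your proposal is correct and follows essentially the same approach as the paper's own proof: both apply Theorem~\ref{thm:main} with $\catoperad B = \catoperad{RM}$ and $\catoperad B' = \catoperad{RM}_{\oplax}$, feed in the pair $(\operad Q, \operad P)$ on the operad side and $(\ecateg C, \ecateg D^\op)$ on the enriched-category side, and then restrict the resulting $\catoperad{RM} \times \catoperad{RM}_{\oplax}$-algebra along the canonical morphism $\catoperad{RM}_{\oplax} \to \catoperad{RM} \times \catoperad{RM}_{\oplax}$. Your write-up is in fact more explicit than the paper's about how this diagonal morphism is built and about why the naturality statement follows from the strict refinement of Theorem~\ref{thm:main}; the only cosmetic slip is that the four categories you list at the intermediate stage should carry an extra ${}^\op$ (they are $\catalg{\ecateg D^\op}{-}$ rather than $\catcog{\ecateg D}{-}$), which gets absorbed at the end exactly as you describe.
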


\begin{proof}
	Such a construction is given by the following 2-functor
	$$
	\begin{tikzcd}
		\sk\left(\tcatcog{\Operadesmall}{\catoperad{RM}}_\strict\right)^\op
		\ar[d, "\simeq"]
		\\
		\sk\left(\tcatcog{\Operadesmall}{\catoperad{RM}}_\strict\right)^\op \times \ast
		\ar[d, "{\id \times (\ecateg C, \ecateg D^\op)}"]
		\\
		\sk\left(\tcatcog{\Operadesmall}{\catoperad{RM}}_\strict\right)^\op \times
		\sk\left(\tcatalg{\cmonlax(\Cate)}{\catoperad{RM}_{\oplax}}_\strict\right)
		\ar[d]
		\\
		\sk\left(\tcatalg{\Cats}{ \catoperad{RM} \times \catoperad{RM}_{\oplax}}_\strict\right)
		\ar[d]
		\\
		\sk\left(\tcatalg{\Cats}{\catoperad{RM}_{\oplax}}_\strict\right)
	\end{tikzcd}
	$$
	where the third map proceeds from Theorem \ref{thm:main} and the last map
	proceeds from the canonical morphism of categorical operads
	$$
	\catoperad{RM}_{\oplax} \to \catoperad{RM} \times \catoperad{RM}_{\oplax}.
	$$
\end{proof}

\subsection{The Grothendieck construction perspective}

\begin{proposition}
	The category $\categ{OpAlg}_{\ecateg C}$ is canonically cotensored over the monoidal
	category $\categ{OpAlg}_{\ecateg D^\op}^\op$.
\end{proposition}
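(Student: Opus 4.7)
The plan is to lift the fiberwise cotensorisation of Corollary \ref{corollary:convcase} to the Grothendieck construction $\categ{OpAlg}_{\ecateg C}$, following the blueprint used in Proposition \ref{prop:symmonoidalgc} for the monoidal structure on $\categ{OpAlg}_{\ecateg C}$.

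First, I would describe the action bifunctor
$
\boxtimes : \categ{OpAlg}_{\ecateg C} \times \categ{OpAlg}_{\ecateg D^\op} \to \categ{OpAlg}_{\ecateg C}
$
by the formula
$(\operad P, A) \boxtimes (\operad Q, V) := (\operad P \otimeshadamard \operad Q, \langle A, V \rangle)$,
where $\langle A, V \rangle$ is the $(\operad P \otimeshadamard \operad Q)$-algebra in $\ecateg C$ obtained as the composite
$$
\operad P \otimeshadamard \operad Q \xrightarrow{A \otimeshadamard V} \End(\ecateg C) \otimeshadamard \End(\ecateg D^\op) \simeq \End(\ecateg C \otimes \ecateg D^\op) \xrightarrow{\End(\langle -, - \rangle)} \End(\ecateg C),
$$
using the strong-monoidal isomorphism of Proposition \ref{propositionendmonoidal} and the image under $\End$ of the lax enriched cotensor functor $\langle -, - \rangle : \ecateg C \otimes \ecateg D^\op \to \ecateg C$ provided by the right $\ecateg D^\op$-module structure on $\ecateg C$ in $\cmonlax(\Cate)$. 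Since cotensoring is a right action covariant in the cotensored variable and contravariant in the coacting variable, the bifunctor $\boxtimes$ equivalently presents a functor $\categ{OpAlg}_{\ecateg D^\op}^\op \times \categ{OpAlg}_{\ecateg C} \to \categ{OpAlg}_{\ecateg C}$, as required.

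Next, the associator and unitor of the module structure are lifted through the cartesian-lifting mechanism used in Proposition \ref{prop:symmonoidalgc}. Concretely, I would produce the associator by pairing the (cartesian) lift of the associativity isomorphism $(\operad P \otimeshadamard \operad Q_1) \otimeshadamard \operad Q_2 \simeq \operad P \otimeshadamard (\operad Q_1 \otimeshadamard \operad Q_2)$ of the Hadamard tensor product with the associator 2-morphism of the module structure on $\ecateg C$; the unitor is produced analogously from the isomorphism $\operad P \otimeshadamard \uCom \simeq \operad P$ together with the module unitor lifted from $\ecateg C$ being an oplax right $\ecateg D^\op$-module in $\cmonlax(\Cate)$.

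The main — and in fact only delicate — step is to verify the pentagon and triangle coherences for the resulting action. By uniqueness of cartesian liftings above a given morphism of operads, together with stability of cartesian morphisms under composition and under $\otimeshadamard$ as in Proposition \ref{prop:symmonoidalgc}, these coherences reduce to their fiberwise analogues, which hold by Corollary \ref{corollary:convcase}. Equivalently, one can package the whole argument by applying Theorem \ref{thm:main} to the pair $(\catoperad B, \catoperad B') = (\catoperad{RM}, \catoperad{RM}_\oplax)$, Grothendieck-integrating the resulting 2-functor, and restricting along the canonical morphism $\catoperad{RM}_\oplax \to \catoperad{RM} \times \catoperad{RM}_\oplax$, exactly as in the proof of Corollary \ref{corollary:convcase}; the output of this procedure is precisely the claimed cotensorisation of $\categ{OpAlg}_{\ecateg C}$ over $\categ{OpAlg}_{\ecateg D^\op}^\op$.
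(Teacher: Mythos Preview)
Your construction of the action bifunctor $(\operad P, A), (\operad Q, V) \mapsto (\operad P \otimeshadamard \operad Q, \langle A, V\rangle)$ is correct and matches the paper exactly. The divergence from the paper, and the gap, lies in how you produce the structural natural transformations and verify their coherences.

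The paper does \emph{not} use a cartesian-lifting mechanism here. Instead it constructs the two structural maps
\[
(\operad P,A)\to\langle(\operad P,A),\II_{\categ{OpAlg}_{\ecateg D^\op}}\rangle,
\qquad
\langle\langle(\operad P,A),(\operad Q,V)\rangle,(\operad Q',V')\rangle\to\langle(\operad P,A),(\operad Q,V)\otimes(\operad Q',V')\rangle
\]
\emph{directly}, as the images under the 2-functor $\End$ of the structural 2-morphisms of the oplax right $\ecateg D^\op$-module $\ecateg C$ in $\cmonlax(\Cate)$. Coherence then follows immediately: the pentagon and triangle diagrams for $\categ{OpAlg}_{\ecateg C}$ are literally the $\End$-images of the corresponding coherence diagrams for $\ecateg C$, which commute by hypothesis. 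No appeal to fibers, cartesian lifts, or Corollary~\ref{corollary:convcase} is needed.

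Your proposed reduction ``to fiberwise analogues, which hold by Corollary~\ref{corollary:convcase}'' does not go through as stated. That corollary concerns the fibers over a fixed Hopf operad~$\operad Q$ and a fixed right $\operad Q$-comodule~$\operad P$, whereas the coherence diagrams for the module structure on $\categ{OpAlg}_{\ecateg C}$ live over \emph{arbitrary} tuples of operads (with no Hopf or comodule structure). Moreover, in the paper the logical flow runs in the opposite direction: the Grothendieck-construction proposition is proved first and directly, and the cartesian-lifting mechanism is then used in the subsequent subsection to \emph{reinterpret} Corollary~\ref{corollary:convcase} in terms of it. Finally, note that in the oplax case the structural maps are genuine non-invertible morphisms, so they are not obtained as cartesian lifts of isomorphisms of operads; they come straight from the module 2-morphisms on $\ecateg C$.
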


\begin{proof}
	This is equivalent to the fact that $\categ{OpAlg}_{\ecateg C}$ is an oplax
	right module over the pseudo-monoid $\categ{OpAlg}_{\ecateg D^\op}$ within
	the monoidal context of categories.
	For any two elements $X = (\operad P, A) \in \categ{OpAlg}_{\ecateg C}$
	and $Y =(\operad P', V) \in \categ{OpAlg}_{\ecateg D^\op}$, let us denote
	$\langle X, Y \rangle$ the element
	$$
	(\operad P \otimeshadamard \operad P', \langle A,V\rangle) \in \categ{OpAlg}_{\ecateg C} 
	$$
	where $\langle A,V\rangle$ is the $\operad P \otimeshadamard \operad P'$-algebra in $\ecateg C$
	given by the composition
	$$
	\operad P \otimeshadamard \operad P' \xrightarrow{A \otimeshadamard V}
	\End(\ecateg C) \otimeshadamard \End(\ecateg D^\op)
	\simeq \End(\ecateg C \otimes\ecateg D^\op)
	\xrightarrow{\End(\langle -, - \rangle)} \End(\ecateg C) .
	$$
	This construction is natural and thus defines a functor
	$$
	\categ{OpAlg}_{\ecateg C} \times \categ{OpAlg}_{\ecateg D^\op}
	\to \categ{OpAlg}_{\ecateg C}.
	$$
	Moreover, one has natural maps
	\begin{align*}
		(\operad P, A) &\to \langle (\operad P, A), \II_{\categ{OpAlg}_{\ecateg D^\op}} \rangle
		\\	
		\langle \langle (\operad P, A), (\operad Q, V) \rangle, (\operad Q', V'') \rangle
		&\to \langle (\operad P, A), (\operad Q, V) \otimes_{\categ{OpAlg}_{\ecateg D^\op}} (\operad Q', V'') \rangle
	\end{align*}
given by the following 2-morphisms in the 2-category of enriched operads
	$$
\begin{tikzcd}
	\operad P
	\ar[r, "A"] \ar[d, "\simeq"']
	& \End(\ecateg C)
	\ar[r, "\simeq"]
	\ar[d]
	& \End(\ecateg C \otimes \eII)
	\ar[r, "\simeq"] \ar[ddd]
	& \End(\ecateg C)
	\ar[ddd, equal]
	\ar[dddl, Rightarrow]
	\\
	\operad P \otimeshadamard \uCom
	\ar[r]
	& \End(\ecateg C) \otimeshadamard \uCom
	\ar[d]
	\\
	& \End(\ecateg C) \otimeshadamard \End(\ecateg D^\op)
	\ar[d]
	\\
	& \End(\ecateg C \otimes \ecateg D^\op)
	\ar[r, equal]
	& \End(\ecateg C \otimes \ecateg D^\op)
	\ar[r, "{\End(\langle -,- \rangle)}"']
	& \End(\ecateg C)
\end{tikzcd}
	$$
	$$
	\begin{tikzcd}
		\operad P \otimeshadamard \operad Q \otimeshadamard \operad Q'
		\ar[d, "{A \otimeshadamard V' \otimeshadamard V''}"]
		\\
		\End(\ecateg C) \otimeshadamard \End(\ecateg D^\op) \otimeshadamard \End(\ecateg D^\op)
		\ar[d, "\simeq"]
		\\
		\End(\ecateg C \otimes \ecateg D^\op \otimes \ecateg D^\op)
		\ar[d, "\End(\id \otimes (- \otimes_D -))"'] \ar[r, "{\End(\langle -, - \rangle \otimes \id)}"]
		& \End(\ecateg C \otimes \ecateg D^\op)
		\ar[d, "{\End(\langle -, - \rangle)}"]
		\ar[ld, Rightarrow]
		\\
		\End(\ecateg C \otimes \ecateg D^\op)
		\ar[r, "{\End(\langle -, - \rangle)}"']
		& \End(\ecateg C)
	\end{tikzcd}
	$$
	that proceeds from the structure of an oplax right module of $\ecateg C$
	over $\ecateg D^\op$.

	This functor and these two natural
	transformations define a cotensorisation of
	$\categ{OpAlg}_{\ecateg C}$
	over $\categ{OpAlg}_{\ecateg D^\op}^\op$ since these natural
	transformations proceeds from the structure of an oplax right module of $\ecateg C$
	over $\ecateg D^\op$.
\end{proof}

Besides, using the canonical morphism of categorical operads
$$
\catoperad{RM}_{\oplax} \to \catoperad{A}_{\infty, \cat}
$$
and the monoidal structure on $\sk(\Operadesmall)$,
we get that the pair of categories $(\sk(\Operadesmall), \sk(\Operadesmall))$
has the structure of a $\catoperad{RM}_{\oplax}$-algebra in the monoidal
context of categories.

\begin{corollary}
	The forgetful functors 
	\begin{align*}
		\categ{OpAlg}_{\ecateg C} &\to \sk(\Operadesmall)
		\\
		\categ{OpAlg}_{\ecateg D^\op}&\to \sk(\Operadesmall)
	\end{align*}
	are cartesian fibrations that form
	a strict morphism of $\catoperad{RM}_{\oplax}$-algebras.
	Moreover, the bifunctor
	$$
\langle - , - \rangle : \categ{OpAlg}_{\ecateg C} \times \categ{OpAlg}_{\ecateg D^\op} \to
\categ{OpAlg}_{\ecateg C}
	$$
	sends pairs of cartesian maps to cartesian maps.
\end{corollary}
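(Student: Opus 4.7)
The plan is to split the statement into three claims---that both forgetful functors are cartesian fibrations, that together they constitute a strict morphism of $\catoperad{RM}_{\oplax}$-algebras, and that $\langle-,-\rangle$ preserves pairs of cartesian morphisms---and to treat each by invoking Proposition \ref{prop:symmonoidalgc} together with the explicit formulas of the immediately preceding proposition.

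First, Proposition \ref{prop:symmonoidalgc} applies verbatim with $\ecateg D^\op$ in place of $\ecateg C$, exhibiting both $\categ{OpAlg}_{\ecateg C}$ and $\categ{OpAlg}_{\ecateg D^\op}$ as (symmetric monoidal) cartesian fibrations over $\sk(\Operadesmall)$. I would recall from that proof that a cartesian lifting of $f: \operad P \to \operad P'$ at $(\operad P', A')$ is given by $(f, \id_{f^\ast A'})$, and, crucially, that a morphism $(f,g)$ is cartesian if and only if its algebra-part $g$ is an isomorphism; this characterisation will be reused in the final step.

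Next, I would verify that the pair of forgetful functors forms a strict morphism of $\catoperad{RM}_{\oplax}$-algebras. Concretely, this amounts to the strict commutativity of the squares
\[
\begin{tikzcd}
\categ{OpAlg}_{\ecateg D^\op} \times \categ{OpAlg}_{\ecateg D^\op} \ar[r, "\otimes"] \ar[d] & \categ{OpAlg}_{\ecateg D^\op} \ar[d] \\
\sk(\Operadesmall) \times \sk(\Operadesmall) \ar[r, "\otimeshadamard"] & \sk(\Operadesmall)
\end{tikzcd}
\qquad
\begin{tikzcd}
\categ{OpAlg}_{\ecateg C} \times \categ{OpAlg}_{\ecateg D^\op} \ar[r, "\langle-{,}-\rangle"] \ar[d] & \categ{OpAlg}_{\ecateg C} \ar[d] \\
\sk(\Operadesmall) \times \sk(\Operadesmall) \ar[r, "\otimeshadamard"] & \sk(\Operadesmall)
\end{tikzcd}
\]
together with strict commutation of the associator and unitor coherence 2-morphisms of the oplax module structure after projection. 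On objects this is immediate, since by construction both the tensor product $(\operad P, A)\otimes(\operad Q, V)$ and the cotensor $\langle(\operad P, A),(\operad Q, V)\rangle$ have underlying operad $\operad P\otimeshadamard\operad Q$, with the unit being $\uCom$. The associator and unitor 2-morphisms of the module structure are, by the formulas of the preceding proposition, built from those of the Hadamard tensor product on $\sk(\Operadesmall)$, so their projection is strictly equal to the corresponding associator and unitor of $\sk(\Operadesmall)$ viewed as a module over itself.

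Finally, if $(f_1, g_1):(\operad P_1, A_1)\to(\operad P'_1, A'_1)$ in $\categ{OpAlg}_{\ecateg C}$ and $(f_2, g_2):(\operad Q_1, V_1)\to(\operad Q'_1, V'_1)$ in $\categ{OpAlg}_{\ecateg D^\op}$ are both cartesian, then $g_1$ and $g_2$ are isomorphisms by the first step. Their image $\langle(f_1,g_1),(f_2,g_2)\rangle$ lies above $f_1\otimeshadamard f_2$ and has module-part induced functorially by $\langle g_1, g_2\rangle$, which is an isomorphism since $\langle-,-\rangle$ is a bifunctor and functors preserve pairs of isomorphisms. Applying the characterisation of cartesian morphisms recalled above, the image is cartesian. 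I do not anticipate a conceptual obstacle: the proof is a systematic translation of the preceding proposition into the language of the Grothendieck construction.
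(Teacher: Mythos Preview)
Your proposal is correct and follows essentially the same approach as the paper. The paper's proof is terser: for the cartesian fibration claim it simply cites Proposition \ref{prop:symmonoidalgc}; for the strict-morphism claim it says this ``follows from the definitions''; and for preservation of cartesian pairs it invokes the identity $\langle f^\ast(A), g^\ast(V)\rangle = (f\otimeshadamard g)^\ast\langle A,V\rangle$, which directly shows that canonical cartesian lifts go to canonical cartesian lifts. Your argument for the last part, via the characterisation ``cartesian iff the algebra-component is an isomorphism,'' is equivalent---it implicitly uses the same identity to identify the target of $\langle g_1,g_2\rangle$---and has the small advantage of handling arbitrary cartesian morphisms rather than only the canonical lifts.
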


\begin{proof}
	We already know from Proposition \ref{prop:symmonoidalgc}
	that they are
	cartesian fibrations. The fact that they form
	a strict morphism of $\catoperad{RM}_{\oplax}$-algebras
	follows from the definitions.

	Finally the bifunctor $\langle - , - \rangle$
	sends pairs of cartesian maps to cartesian maps
	because for any two morphisms of enriched operads
	\begin{align*}
		f:& \operad P \to \operad P'
		\\
		g:& \operad Q \to \operad Q',
	\end{align*}
	any $\operad P'$-algebra $A$ in $\ecateg C$ and any $\operad Q'$-coalgebra
	$V$ in $\ecateg D$, then 
	$$
	\langle f^\ast(A), g^\ast (V) \rangle  = (f \otimeshadamard g)^\ast(\langle A, V \rangle).
	$$

\end{proof}

As in Section \ref{section:grothendieckconstruction},
the monoidal structures relating
categories of coalgebras in $\ecateg D$ and categories of algebras
in $\ecateg C$ described
above (Corollary \ref{corollary:convcase})
may be rethinked in terms of the fibrations
	\begin{align*}
		\categ{OpAlg}_{\ecateg C} &\to \sk(\Operadesmall);
		\\
		\categ{OpAlg}_{\ecateg D^\op}&\to \sk(\Operadesmall).
	\end{align*}
For instance, let us consider a small Hopf enriched
operad $\operad Q$ and an small enriched operad $\operad P$
that has the structure of right $\operad Q$-comodule.
We know
that the category of
$\operad P$-algebras in $\ecateg C$ is cotensored over the monoidal
category of $\operad Q$-coalgebras. Then, denoting $\langle -, -\rangle_{Q}$
the underlying cotensorisation bifunctor, we have:
\begin{itemize}
	\itemt  For any $\operad P$-algebra $A$ in $\ecateg C$
	and any $\operad Q$-coalgebra $V$ in $\ecateg D$
 	we have a cartesian map in $\categ{OpAlg}_{\ecateg C}$
 $$
(\operad P, \langle A,V \rangle_Q) \to
\langle (\operad P, A ) , (\operad Q,  V) \rangle
 $$
 above the decomposition map $\operad P \to \operad P \otimeshadamard \operad Q$.
 This actually defines the cotensor
 $\langle A,V \rangle_Q$ up to a unique isomorphism.
  \itemt For any $\operad P$-algebra $A$ in $\ecateg C$
  and any two $\operad Q$-coalgebras $V_1, V_2$ in $\ecateg D$,
  the horizontal maps of the following diagram
  $$
\begin{tikzcd}
	(\operad P, \langle \langle A, V_1 \rangle_Q, V_2 \rangle_Q)
	\ar[r]
	& \langle \langle (\operad P, A), (\operad Q, V_1)\rangle , (\operad Q, V_2) \rangle
	\ar[d]
	\\
	(\operad P, \langle A, V_1 \otimes_Q V_2 \rangle_Q)
	\ar[r]
	& \langle  (\operad P, A), (\operad Q, V_1) \otimes (\operad Q, V_2) \rangle
\end{tikzcd}
  $$
  are cartesian since cartesian maps are stable through composition, cotensorisation
  and tensor product.
  Thus, we get a canonical map in $\categ{OpAlg}_{\ecateg C}$
  $$
  (\operad P, \langle \langle A, V_1 \rangle_Q, V_2 \rangle_Q)
   \to (\operad P, \langle A, V_1 \otimes_Q V_2 \rangle_Q)
	$$
	above the identity of $\operad P$. This gives us the structural map
	$$
	\langle \langle A, V_1 \rangle_Q, V_2 \rangle_Q
	\to \langle A, V_1 \otimes_Q V_2 \rangle_Q.
	$$
  \itemt The structural map
 $$
 A \to \langle  A, \II_{\catcog{\ecateg D}{\operad Q}} \rangle_Q
 $$
  may be recovered in a similar way.
\end{itemize}

\subsection{Mapping coalgebras}

Let us consider two small enriched operads $\operad P, \operad Q$.
We have a bifunctor
$$
\catalg{\ecateg C}{\operad P} \times \catcog{\ecateg D}{\operad Q}^\op
\xrightarrow{\langle -, - \rangle}
\catalg{\ecateg C}{\operad P \otimeshadamard \operad Q}
$$
which is just the restriction
to $\catalg{\ecateg C}{\operad P} \times \catcog{\ecateg D}{\operad Q}$
of the cotensorisation bifunctor
$$
\langle -, -\rangle : \categ{OpAlg}_{\ecateg C} \times
\categ{OpAlg}_{\ecateg D^\op} \to \categ{OpAlg}_{\ecateg C}.
$$

\begin{corollary}\label{proposition:cotensoradjointalg}
	Let us suppose that :
	\begin{itemize}
		\itemt For any object $y \in \Ob(\ecateg C)$,
	the functor 
	\begin{align*}
		\langle y, - \rangle : \cat(\ecateg D)^\op &\to \cat(\ecateg C)
		\\
		x & \mapsto \langle y, x \rangle
	\end{align*}
	has a left adjoint $z \mapsto \lVert z, y\rVert$;
  \itemt the category $\cat(\ecateg D)$ has small products;
  \itemt the functor
  $$
	\catalg{\ecateg C}{\operad P \otimeshadamard \operad Q}
	\to \cat(\ecateg C)^{\Ob(\operad P) \times \Ob(\operad Q)}
	$$
	is monadic and the functor
	$$
	\catcog{\ecateg D}{\operad Q}
	\to \cat(\ecateg D)^{\Ob(\operad Q)}
	$$
  is comonadic;
  \itemt the category $\catcog{\ecateg C}{\operad Q}$ has coreflexive
equalisers.
	\end{itemize}
	Then, for any $\operad P$-algebra $A$
	the functor
	$$
V \in \catcog{\ecateg C}{\operad Q}^\op
\mapsto
\langle A, V\rangle \in \catalg{\ecateg C}{\operad P \otimeshadamard \operad Q}
	$$
	has a left adjoint.
\end{corollary}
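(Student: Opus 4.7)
The plan is to mimic the proof of Proposition \ref{proposition:tensoradjoint} by applying the adjoint lifting theorem (\cite{Johnstone75}, \cite[Theorem 4]{LeGrignou22}). I form the commutative square
\[
\begin{tikzcd}
\catcog{\ecateg D}{\operad Q}^\op \ar[r, "{\langle A, -\rangle}"] \ar[d] & \catalg{\ecateg C}{\operad P \otimeshadamard \operad Q} \ar[d] \\
\left(\cat(\ecateg D)^{\Ob(\operad Q)}\right)^\op \ar[r, "\overline F"'] & \cat(\ecateg C)^{\Ob(\operad P) \times \Ob(\operad Q)}
\end{tikzcd}
\]
whose vertical arrows are the forgetful functors and whose bottom arrow $\overline F$ sends a family $(V_{o'})_{o'}$ to the family $(\langle A_o, V_{o'} \rangle)_{(o, o')}$. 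The right vertical is monadic by hypothesis; the left vertical is monadic since it is the opposite of the comonadic functor $\catcog{\ecateg D}{\operad Q} \to \cat(\ecateg D)^{\Ob(\operad Q)}$.

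Next I would produce a left adjoint $\overline L$ to $\overline F$ explicitly. Using the underlying adjunction $\lVert -, y\rVert \dashv \langle y, -\rangle$ and the existence of small products in $\cat(\ecateg D)$, a direct chain of hom-set bijections yields the formula
\[
\overline L(B)_{o'} = \prod_{o \in \Ob(\operad P)} \lVert B_{o, o'}, A_o \rVert ,
\]
with counit and unit derived in the expected way.

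With the base adjunction constructed, the adjoint lifting theorem produces the desired left adjoint to $\langle A, - \rangle$ as follows: at a $\operad P \otimeshadamard \operad Q$-algebra $B$, push the monadic comparison diagram of $B$ through $\overline L$, then take the appropriate reflexive coequalizer in $\catcog{\ecateg D}{\operad Q}^\op$ (equivalently, a coreflexive equalizer in $\catcog{\ecateg D}{\operad Q}$); this (co)equalizer exists by the coreflexive-equalizer hypothesis assumed in the statement. The main technical obstacle is tracking the op-duality carefully when translating between the comonadic structure of $\catcog{\ecateg D}{\operad Q}$ and the monadic side required by the adjoint lifting theorem, so that the relevant (co)equalizer hypothesis dualises correctly; once this bookkeeping is set up, verifying naturality and the triangle identities reduces to a routine hom-set calculation.
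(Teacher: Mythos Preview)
Your proposal is correct and follows exactly the approach the paper intends: the paper's proof simply states that the result follows from the same arguments as Proposition~\ref{proposition:tensoradjoint}, namely the adjoint lifting theorem applied to the square you wrote down, with the explicit base left adjoint $\overline L(B)_{o'} = \prod_{o \in \Ob(\operad P)} \lVert B_{o,o'}, A_o \rVert$ and the coreflexive equaliser hypothesis supplying the needed (co)limits. Your handling of the op-duality (turning the comonadic forgetful functor into a monadic one) is the correct way to translate the argument, and there is nothing to add.
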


\begin{proof}
	This follows from the same arguments as those used to prove
	Proposition \ref{proposition:tensoradjoint}.
\end{proof}

\begin{corollary}\label{proposition:cotensoradjointalg2}
	Let us suppose that :
	\begin{itemize}
		\itemt For any object $x \in \Ob(\ecateg D)$,
	the functor 
	\begin{align*}
		\langle -, x \rangle : \cat(\ecateg C) &\to \cat(\ecateg C)
		\\
		y & \mapsto \langle y, x \rangle
	\end{align*}
	has a left adjoint $z \mapsto x \boxtimes z$;
  \itemt the category $\cat(\ecateg C)$ has small coproducts;
  \itemt the functors
  \begin{align*}
	\catalg{\ecateg C}{\operad P \otimeshadamard \operad Q}
	&\to \cat(\ecateg C)^{\Ob(\operad P) \times \Ob(\operad Q)}
	\\
	\catalg{\ecateg C}{\operad P}
	&\to \cat(\ecateg C)^{\Ob(\operad P)}
  \end{align*}
	are monadic;
  \itemt the category $\catalg{\ecateg C}{\operad P}$ has reflexive
coequalisers.
	\end{itemize}
	Then, for any $\operad Q$-coalgebra $V$ in $\ecateg D$
	the functor
	$$
A \in \catalg{\ecateg C}{\operad P}
\mapsto
\langle A, V\rangle \in \catalg{\ecateg C}{\operad P \otimeshadamard \operad Q}
	$$
	has a left adjoint.
\end{corollary}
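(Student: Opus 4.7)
My plan is to apply the adjoint lifting theorem in the same form as in the proof of Proposition \ref{proposition:tensoradjoint}, only dualised from the comonadic to the monadic setting. Concretely, I would set up the commutative square
$$
\begin{tikzcd}
\catalg{\ecateg C}{\operad P}
\ar[rr, "{\langle -, V \rangle}"] \ar[d]
&& \catalg{\ecateg C}{\operad P \otimeshadamard \operad Q}
\ar[d]
\\
\cat(\ecateg C)^{\Ob(\operad P)}
\ar[rr, "L"']
&& \cat(\ecateg C)^{\Ob(\operad P) \times \Ob(\operad Q)}
\end{tikzcd}
$$
whose vertical arrows are the monadic forgetful functors (by hypothesis) and whose bottom arrow $L$ is defined colour-wise by $L(X)_{(o, o')} = \langle X_o, V_{o'}\rangle$; this is the underlying-tuple description of $\langle -, V \rangle$ read off from the description of the cotensor in $\categ{OpAlg}_{\ecateg C}$ given in the previous subsection.

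First I would exhibit a left adjoint $R$ to $L$: because each functor $\langle -, x \rangle : \cat(\ecateg C) \to \cat(\ecateg C)$ admits by assumption a left adjoint $x \boxtimes -$, and because $\cat(\ecateg C)$ has small coproducts, the assignment
$$
R(Y)_o = \coprod_{o' \in \Ob(\operad Q)} V_{o'} \boxtimes Y_{(o, o')}
$$
is seen by a direct hom-set computation to be left adjoint to $L$. Second, I would verify that the above square commutes strictly: unfolding the construction of $\langle A, V \rangle$ through the strong context functor $\End$ and the lax morphism $\langle -, - \rangle : \ecateg C \otimes \ecateg D^\op \to \ecateg C$ (Theorem \ref{thm:closedcase}), one reads off that its underlying $\Ob(\operad P) \times \Ob(\operad Q)$-tuple coincides with $L$ applied to the underlying $\Ob(\operad P)$-tuple of $A$. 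Third, with both verticals monadic and reflexive coequalisers available in $\catalg{\ecateg C}{\operad P}$, the adjoint lifting theorem (\cite{Johnstone75}, \cite[Theorem 4]{LeGrignou22}) immediately yields the desired left adjoint of $\langle -, V \rangle$.

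The main obstacle I expect is the bookkeeping in the second step: pinning down the explicit identification of the underlying functor of $\langle -, V \rangle$ with $L$ is not conceptually deep, but it must traverse several layers of structure (the 2-functor $\End$, the Hadamard tensor product, and the oplax right module structure of $\ecateg C$ over $\ecateg D^\op$). Once that identification is in place, the argument should run in exact formal parallel to the proof of Proposition \ref{proposition:tensoradjoint}, merely replacing comonadic by monadic and coreflexive equalisers by reflexive coequalisers.
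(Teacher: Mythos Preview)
Your proposal is correct and follows essentially the same approach as the paper, which simply states that the result follows from the same arguments as those used in the proof of Proposition \ref{proposition:tensoradjoint}. You have correctly dualised that argument from the comonadic to the monadic setting, setting up the appropriate square and invoking the adjoint lifting theorem; the only minor inaccuracy is the citation of Theorem \ref{thm:closedcase}, which treats the special case $\ecateg C = \ecateg D = \ecateg E$, whereas here the oplax right module structure on $\ecateg C$ over $\ecateg D^\op$ is part of the ambient hypotheses of the section.
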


\begin{proof}
	Again, this follows from the same arguments as those used to prove
	Proposition \ref{proposition:tensoradjoint}.
\end{proof}

\begin{corollary}
	Let us assume that the category $\cat(\ecateg C)$
	is cocomplete, that the category $\cat(\ecateg D)$ is complete
	and that both functors
	\begin{align*}
		\langle y, - \rangle &: \cat(\ecateg D)^\op \to \cat(\ecateg C)
		\\
		\langle -, x \rangle &: \cat(\ecateg C) \to \cat(\ecateg C)
	\end{align*}
	have a left adjoint for any $x \in \Ob(\ecateg D),y
	\in \Ob(\ecateg C)$. Moreover, let us assume
	that for any small enriched operad $\operad P$, the functor
	$$
	\catalg{\ecateg C}{\operad P} \to \cat(\ecateg C)^{\Ob(\operad P)}
	$$
	is monadic and that the related monad preserves reflexive coequalisers
	and that the functor
	$$
	\catcog{\ecateg D}{\operad P} \to \cat(\ecateg C)^{\Ob(\operad P)}
	$$
	is comonadic and that the related comonad preserves coreflexive equalisers.
	Then for any morphism of small enriched operads
	$$
	f: \operad P_2 \to \operad P_0 \otimeshadamard \operad P_1
	$$
	any $\operad P_1$-coalgebra $V$ in $\ecateg D$ and any
	$\operad P_0$-algebra $A$ in $\ecateg C$, the functors
	\begin{align*}
		A' \in \catalg{\ecateg C}{\operad P_0} &\mapsto f^\ast(\langle A', V \rangle) \in \catalg{\ecateg C}{\operad P_2}
		\\
		V' \in \catcog{\ecateg D}{\operad P_1}^\op &\mapsto f^\ast(\langle A, V' \rangle) \in \catalg{\ecateg C}{\operad P_2}
	\end{align*}
	both have a left adjoint functor.
\end{corollary}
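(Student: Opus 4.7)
The plan is to factor each of the two functors as a composition of two functors that have already been shown to admit left adjoints (one by a cotensor-adjoint corollary and one by the restriction-adjoint corollary), and then invoke the fact that a composition of left adjoints is a left adjoint. No genuine construction is needed; the work is entirely bookkeeping to verify that the hypotheses of the earlier corollaries are satisfied in the present setup.

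Concretely, I factor the first functor as
$$
\catalg{\ecateg C}{\operad P_0} \xrightarrow{\langle -, V \rangle}
\catalg{\ecateg C}{\operad P_0 \otimeshadamard \operad P_1} \xrightarrow{f^\ast}
\catalg{\ecateg C}{\operad P_2}.
$$
The first arrow admits a left adjoint by Corollary \ref{proposition:cotensoradjointalg2} applied to the $\operad P_1$-coalgebra $V$: indeed $\langle -, x \rangle$ has a left adjoint by assumption, $\cat(\ecateg C)$ has small coproducts since it is cocomplete, the two relevant algebra-forgetful functors are monadic by hypothesis, and $\catalg{\ecateg C}{\operad P_0}$ inherits reflexive coequalisers from $\cat(\ecateg C)$ via the monad (which preserves them). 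The second arrow $f^\ast$ admits a left adjoint by Corollary \ref{corollary:adjointexistence}, whose hypotheses are immediate from the cocompleteness of $\cat(\ecateg C)$, monadicity, and the preservation of reflexive coequalisers.

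For the second functor I factor analogously as
$$
\catcog{\ecateg D}{\operad P_1}^\op \xrightarrow{\langle A, - \rangle}
\catalg{\ecateg C}{\operad P_0 \otimeshadamard \operad P_1} \xrightarrow{f^\ast}
\catalg{\ecateg C}{\operad P_2},
$$
and I apply Corollary \ref{proposition:cotensoradjointalg} to the $\operad P_0$-algebra $A$ to produce a left adjoint to the first arrow (this time using that $\langle y, - \rangle$ has a left adjoint, that $\cat(\ecateg D)$ has small products since it is complete, comonadicity of $\catcog{\ecateg D}{\operad P_1} \to \cat(\ecateg D)^{\Ob(\operad P_1)}$, and the existence of coreflexive equalisers in $\catcog{\ecateg D}{\operad P_1}$ coming from completeness of $\cat(\ecateg D)$ and the assumption on the comonad). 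Composing with the left adjoint of $f^\ast$ gives the result.

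There is no real obstacle. The only thing to watch is the transfer of hypotheses: the cocompleteness of $\cat(\ecateg C)$ together with the preservation assumption for the monad ensures that the intermediate algebra category $\catalg{\ecateg C}{\operad P_0 \otimeshadamard \operad P_1}$ has the reflexive coequalisers needed by Corollary \ref{corollary:adjointexistence}, and symmetrically on the coalgebra side for the input of Corollary \ref{proposition:cotensoradjointalg}. Once those routine verifications are recorded, the corollary follows as a one-line composition of adjoints.
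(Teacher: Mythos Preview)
Your proposal is correct and follows exactly the approach indicated in the paper, which simply records the result as a consequence of Corollary~\ref{proposition:cotensoradjointalg}, Corollary~\ref{proposition:cotensoradjointalg2}, and Corollary~\ref{corollary:adjointexistence}. Your factorisation of each functor as a composition of $\langle -,V\rangle$ (resp.\ $\langle A,-\rangle$) with $f^\ast$, together with the verification that the standing hypotheses transfer to the required inputs of those corollaries, is precisely the intended argument.
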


\begin{proof}
	This is just a consequence of
	Corollary \ref{proposition:cotensoradjointalg},
	Corollary \ref{proposition:cotensoradjointalg2} and
	Corollary \ref{corollary:adjointexistence}.
\end{proof}

Now, let us assume that $\operad Q$ is a small Hopf enriched operad
and that $\operad P$ is a small right
$\operad Q$-comodule. We know that the category of
$\operad Q$-coalgebras in $\ecateg D$ has the structure of a monoidal category
and that the category of $\operad P$-algebras in $\ecateg C$ is cotensored
over that of $\operad Q$-coalgebras through the bifunctor
$$
\langle -,- \rangle: \catalg{\ecateg C}{\operad P} \times \catcog{\ecateg D}{\operad Q}^\op
\to \catalg{\ecateg C}{\operad P}.
$$

\begin{theorem}
	Let us suppose that the functor
	$$
		V \in \catcog{\categ D}{\operad Q}^\op \mapsto  \langle A, V \rangle
		\in \catalg{\ecateg C}{\operad P}
	$$
	has a left adjoint $A' \mapsto \{ A', A \} $
	for any $\operad P$-algebra $A$ in $\ecateg C$. Then, the construction
	$$
	(A', A) \in \catalg{\ecateg C}{\operad P}^\op \times \catalg{\ecateg C}{\operad P}
	\mapsto \{ A',A \} \in \catcog{\ecateg D}{\operad Q}
	$$
	is natural. Moreover, the category of $\operad P$-algebras
	in $\ecateg C$ is enriched
	over the monoidal category of $\operad Q$-coalgebras
	in $\ecateg D$
	through this bifunctor.
\end{theorem}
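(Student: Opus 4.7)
The plan is to transport the oplax right module structure of the cotensorisation $\langle -, - \rangle$ to an enrichment structure on $\{-,-\}$ by taking mates under the adjunctions. The whole argument is an instance of the standard principle: if a bifunctor $F : \mathcal{A}^{\op} \times \mathcal{B} \to \mathcal{A}$ endows $\mathcal{A}$ with a right action of the monoidal category $\mathcal{B}^{\op}$, and if $F(-, A)$ admits a left adjoint for every $A$, then the adjoint bifunctor canonically enriches $\mathcal{A}$ over $\mathcal{B}$.

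First, I construct $\{-,-\}$ as a bifunctor
$\catalg{\ecateg C}{\operad P}^{\op} \times \catalg{\ecateg C}{\operad P} \to \catcog{\ecateg D}{\operad Q}$.
Covariant functoriality in the second variable and contravariant functoriality in the first are obtained by the usual parametric adjoint construction: for $f : A_1 \to A_2$ in $\catalg{\ecateg C}{\operad P}$, the natural transformation $\langle f, - \rangle$ has, via the adjunctions $\{-, A_i\} \dashv \langle A_i, -\rangle$, a unique mate $\{A', f\} : \{A', A_1\} \to \{A', A_2\}$; and similarly for a morphism in the first variable. Uniqueness of adjoint mates yields functoriality and bifunctoriality.

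Next, I construct the composition and unit. The oplax right module structure on $\langle -, - \rangle$ supplies, for any $\operad{P}$-algebra $A$ and any two $\operad{Q}$-coalgebras $V_1, V_2$, a natural morphism
\[
  \langle \langle A, V_1 \rangle, V_2 \rangle \to \langle A, V_1 \otimes_Q V_2 \rangle,
\]
together with a unit map $A \to \langle A, \II_Q \rangle$. Applying the left adjoints $\{-, A\}$ to these and using the counits of the adjunctions, we obtain by mate calculus
\[
  c_{A_0, A_1, A_2} : \{A_1, A_2\} \otimes_Q \{A_0, A_1\} \to \{A_0, A_2\}
  \quad\text{and}\quad
  u_A : \II_Q \to \{A, A\}.
\]
Concretely $u_A$ is the adjoint of $A \to \langle A, \II_Q\rangle$, and $c$ is obtained by taking the adjoint of the composite
\[
A_0 \to \langle \{A_0, A_1\}, A_1 \rangle \to \langle \{A_0, A_1\}, \langle \{A_1, A_2\}, A_2\rangle \rangle \to \langle \{A_1,A_2\}\otimes_Q \{A_0,A_1\}, A_2\rangle,
\]
where the last arrow uses the cotensor associator.

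Finally, I verify the enrichment axioms. Associativity of $c$ and left/right unitality with respect to $u$ correspond, by the adjunctions $\{-,A\} \dashv \langle A,-\rangle$, exactly to the pentagon and triangle coherences of the oplax right module structure on $\langle -, - \rangle$ that are guaranteed by Corollary \ref{corollary:convcase}. Naturality of $c$ and $u$ in $A_i$ follows again from the naturality of the cotensor structural 2-morphisms combined with uniqueness of mates. The main, and essentially only, technical obstacle is to check that these enrichment diagrams commute, which is a diagram chase in the 2-category of categories, functors, and natural transformations; the key input is that taking adjoints transforms the coherence cells of an oplax right module into those of an enrichment, so no additional hypothesis beyond the existence of the left adjoints $\{-, A\}$ is needed.
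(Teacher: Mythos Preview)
Your proof is correct and follows the same approach as the paper. The paper's own proof is a single sentence invoking the equivalence between a cotensorisation structure on $\langle -,-\rangle$ and an enrichment structure on $\{-,-\}$; your argument is simply an explicit unpacking of that equivalence via mates under the adjunctions $\{-,A\}\dashv \langle A,-\rangle$.
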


\begin{proof}
	This follows from the fact that the structure of an 
	enrichment on $\{-,-\}$ is equivalent to the structure of a cotensorisation
	on $\langle -, - \rangle$.
\end{proof}

\begin{theorem}
	Let us suppose that the functor
	$$
		A \in \catalg{\ecateg C}{\operad P} \mapsto  \langle A, V \rangle
		\in \catalg{\ecateg C}{\operad P}
	$$
	has a left adjoint $A \mapsto V \boxtimes A$
	for any $\operad Q$-coalgebra $V$ in $\ecateg D$. Then, the construction
	$$
	(V, A) \in \catcog{\categ D}{\operad Q} \times \catalg{\ecateg C}{\operad P}
	\mapsto  V \boxtimes A \in \catalg{\ecateg C}{\operad P}
	$$
	is natural. Moreover, the category of $\operad P$-algebras
	in $\ecateg C$ is tensored
	over the monoidal category of $\operad Q$-coalgebras
	in $\ecateg D$
	through this bifunctor.
\end{theorem}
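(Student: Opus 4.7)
The plan is to derive the tensorisation on $-\boxtimes-$ as the formal transpose of the cotensorisation on $\langle -,- \rangle$ by means of the pointwise adjunction $V \boxtimes - \dashv \langle -, V \rangle$; the argument is strictly dual to the one used for the enrichment $\{-,-\}$ in the preceding theorem.

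First I would upgrade $\boxtimes$ to a bifunctor. Functoriality in $A$ is automatic since each $V \boxtimes -$ is a left adjoint. For functoriality in $V$, a morphism $f : V \to V'$ of $\operad Q$-coalgebras induces a natural transformation $\langle -, f \rangle : \langle -, V' \rangle \to \langle -, V \rangle$ between the corresponding right adjoints; its mate under the two adjunctions is a natural transformation $V \boxtimes - \to V' \boxtimes -$, whose component at $A$ I would take as the definition of $f \boxtimes A$. Functoriality in $V$ then follows from the functoriality of mate formation.

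Next I would produce the structural isomorphisms by Yoneda. For any objects $A, B \in \catalg{\ecateg C}{\operad P}$ and $V, W \in \catcog{\categ D}{\operad Q}$, one has natural bijections
\begin{align*}
	\Hom{\catalg{\ecateg C}{\operad P}}{(V \otimes_Q W) \boxtimes A}{B}
	&\simeq \Hom{\catalg{\ecateg C}{\operad P}}{A}{\langle B, V \otimes_Q W \rangle}
	\\
	&\simeq \Hom{\catalg{\ecateg C}{\operad P}}{A}{\langle \langle B, V \rangle, W \rangle}
	\\
	&\simeq \Hom{\catalg{\ecateg C}{\operad P}}{W \boxtimes A}{\langle B, V \rangle}
	\\
	&\simeq \Hom{\catalg{\ecateg C}{\operad P}}{V \boxtimes (W \boxtimes A)}{B},
\end{align*}
using the cotensor associator, together with $\Hom{\catalg{\ecateg C}{\operad P}}{\II_Q \boxtimes A}{B} \simeq \Hom{\catalg{\ecateg C}{\operad P}}{A}{B}$ coming from the cotensor unitor. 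The Yoneda lemma then yields canonical natural isomorphisms $(V \otimes_Q W) \boxtimes A \simeq V \boxtimes (W \boxtimes A)$ and $\II_Q \boxtimes A \simeq A$, which play the role of associator and unitor of the tensorisation.

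Finally, the pentagon and triangle coherence identities of the tensorisation follow from the corresponding coherences of the cotensorisation by the same Yoneda principle: testing each diagram against $\Hom{\catalg{\ecateg C}{\operad P}}{-}{B}$ for varying $B$ and transposing across the adjunction converts it into the corresponding coherence diagram for $\langle -,- \rangle$, which commutes by hypothesis. The main obstacle, if any, is purely bookkeeping, namely verifying that the mate correspondence respects all parameter dependencies in $V$ and $W$ simultaneously. This is routine adjoint-module calculus, entirely parallel to the preceding theorem for $\{-,-\}$, and does not introduce any new conceptual difficulty.
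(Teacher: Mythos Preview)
Your proposal is correct and follows the same approach as the paper, whose proof is the single sentence ``This follows from the fact that the structure of a tensorisation on $-\boxtimes-$ is equivalent to the structure of a cotensorisation on $\langle -,-\rangle$''; you have simply unpacked this equivalence via mates and the Yoneda lemma. One small caution: in the general setting of this section the cotensorisation is only \emph{oplax} (the structural maps $\langle\langle B,V\rangle,W\rangle \to \langle B, V\otimes_Q W\rangle$ and $B \to \langle B,\II_Q\rangle$ need not be invertible), so your chain of bijections should in general be a chain of maps, yielding a tensorisation of the same (op)lax strength rather than a strong one; the transposition argument is otherwise unchanged.
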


\begin{proof}
	This follows from the fact that the structure of a tensorisation
	on $- \boxtimes -$ is equivalent to the structure of a cotensorisation
	on $\langle -, - \rangle$.
\end{proof}

\subsection{The case of the ground category}

Let us suppose that the ground symmetric monoidal category $\categ E$
is closed with induced enriched category denoted $\ecateg E$.
Let us also suppose that $\ecateg C = \ecateg D = \ecateg E$
and that the structure of a $\catoperad{RM}_\oplax$-algebra on the pair
$(\ecateg E^\op, \ecateg E)$ is the one induced by the internal hom
$[-,-]$ of $\categ E$ (see Theorem \ref{thm:closedcase}).

Let $A$ be a $\operad P$-algebra and let $V$ be a $\operad Q$-coalgebra.
In that context, $\langle A, V \rangle$ is the $\operad P \otimeshadamard \operad Q$-algebra
whose underlying objects are 
$$
[V_{o'}, A_o], \quad (o,o') \in \Ob(\operad P \otimeshadamard \operad Q)
= \Ob(\operad P) \times \Ob(\operad Q)
$$
and whose structural maps are adjoint to the maps
$$
\begin{tikzcd}
	{\operad P(o_1, \ldots, o_n;o) \otimes \operad Q(o'_1, \ldots, o'_n;o')
	\otimes \left( [ V_{o'_1}, A_{o_1}]  \otimes \cdots 
	\otimes [V_{o'_n}, A_{o_n} ] \right) \otimes V_{o'}}
	\ar[d]
	\\
	{\operad P(o_1, \ldots, o_n;o) \otimes 
	[ V_{o'_1} \otimes \cdots \otimes V_{o'_n}, 
	A_{o_1} \otimes \cdots \otimes A_{o_n}]  
	\otimes \operad Q(o'_1, \ldots, o'_n;o') \otimes  V_{o'}}
	\ar[d]
	\\
	{\operad P(o_1, \ldots, o_n;o) \otimes 
	[ V_{o'_1} \otimes \cdots \otimes V_{o'_n}, 
	A_{o_1} \otimes \cdots \otimes A_{o_n}]  
	\otimes V_{o'_1} \otimes \cdots \otimes V_{o'_n}}
	\ar[d]
	\\
	{\operad P(o_1, \ldots, o_n;o) \otimes 
	A_{o_1} \otimes \cdots \otimes A_{o_n} }
	\ar[d]
	\\
	A_o.
\end{tikzcd}
$$


\section{Hopf operads in action}
\label{section:hopfoperadsinaction}

In the final section of this part, we suppose that the ground symmetric monoidal category
$(\categ E, \otimes , \II)$ 
is a closed symmetric monoidal category and that is
complete and cocomplete. We denote $\ecateg E$ the induced $\categ E$-enriched
category. Moreover, we suppose that for any small enriched operad 
$\operad P$ the functor
$$
\catalg{\ecateg E}{\operad P} \to \categ E^{\Ob(\operad P)}
$$
is monadic and that the related monad preserves reflexive coequalisers. We also
suppose that the functor
$$
\catcog{\ecateg E}{\operad P} \to \categ E^{\Ob(\operad P)}
$$
is comonadic and that the related comonad preserves coreflexive equalisers.
In that context, for any morphism of small enriched operads
$$
f:\operad P_2 \to \operad P_0 \otimeshadamard \operad P_1
$$
the functors
\begin{align*}
	A \in \catalg{\ecateg E}{\operad P_0} &\mapsto f^\ast\langle A, V_1 \rangle \in \catalg{\ecateg E}{\operad P_2}
	\\
	V \in \catcog{\ecateg E}{\operad P_1}^\op &\mapsto f^\ast\langle A_0, V \rangle \in \catalg{\ecateg E}{\operad P_2}
\end{align*}
have a left adjoint for all $V_1 \in \catcog{\ecateg E}{\operad P_1}$ and all $A_0 \in \catalg{\ecateg E}{\operad P_0}$
and the functors
\begin{align*}
	V \in \catcog{\ecateg E}{\operad P_0} &\mapsto f^\ast(V \lozenge V_1) \in \catcog{\ecateg E}{\operad P_2}
	\\
	V \in \catcog{\ecateg E}{\operad P_1}^\op &\mapsto f^\ast(V_0 \lozenge V) \in \catcog{\ecateg E}{\operad P_2}
\end{align*}
have a right adjoint for all $V_1 \in \catcog{\ecateg E}{\operad P_1}$ and all $V_0 \in \catcog{\ecateg E}{\operad P_0}$.

The goal of this section is to draw some more concrete consequences
of these results for algebras and coalgebras
in $\categ E$.

\subsection{Mapping cocommutative coalgebras and linear duality}

Any small enriched operad $\operad P$ is a right and a left comodule
over the cocommutative Hopf operad
$\uCom$.

\begin{corollary}
	For any $\uCom$-algebra $A$, one has a linear duality functor
	\begin{align*}
		\catcog{\ecateg E}{\operad P}^\op &\to \catalg{\ecateg E}{\operad P}
		\\
		V & \mapsto \langle A, V \rangle
	\end{align*}
	that has a left adjoint.
\end{corollary}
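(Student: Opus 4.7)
The plan is to specialise the adjunction machinery developed earlier in the section to the case where one of the two operads entering the cotensor bifunctor is the terminal Hopf operad $\uCom$. Since $\uCom$ is the monoidal unit for the Hadamard tensor product on operads with a fixed set of colours, any small enriched operad $\operad{P}$ fits into a canonical isomorphism $\operad{P} \simeq \uCom \otimeshadamard \operad{P}$, which in particular endows $\operad{P}$ with the structure of a right (and a left) $\uCom$-comodule.

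First I would check that $V \mapsto \langle A, V \rangle$ is a well-defined functor from $\catcog{\ecateg E}{\operad{P}}^{\op}$ to $\catalg{\ecateg E}{\operad{P}}$. By the cotensorisation bifunctor constructed in the subsection on mapping coalgebras, a pair $(A,V)$ with $A \in \catalg{\ecateg E}{\uCom}$ and $V \in \catcog{\ecateg E}{\operad{P}}$ produces an object $\langle A, V \rangle \in \catalg{\ecateg E}{\uCom \otimeshadamard \operad{P}}$. Pulling back along the Hadamard unit isomorphism $f : \operad{P} \xrightarrow{\simeq} \uCom \otimeshadamard \operad{P}$ lands this object in $\catalg{\ecateg E}{\operad{P}}$, and naturality of the cotensor makes $V \mapsto f^\ast \langle A, V \rangle$ into a functor of $V$; this is the linear duality functor of the statement.

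Second, the existence of a left adjoint is then immediate from the standing hypothesis recalled at the beginning of Section~\ref{section:hopfoperadsinaction}, which asserts that for every morphism of small enriched operads $f : \operad{P}_2 \to \operad{P}_0 \otimeshadamard \operad{P}_1$ and every $\operad{P}_0$-algebra $A_0$, the functor $V \in \catcog{\ecateg E}{\operad{P}_1}^{\op} \mapsto f^\ast \langle A_0, V \rangle \in \catalg{\ecateg E}{\operad{P}_2}$ admits a left adjoint. I would specialise to $\operad{P}_0 = \uCom$, $\operad{P}_1 = \operad{P}_2 = \operad{P}$, $A_0 = A$, and $f$ the Hadamard unit isomorphism; this recovers exactly the functor above and produces the desired left adjoint.

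No substantial obstacle is expected, since the corollary is a direct application of the machinery already in place; the only bookkeeping is the compatibility of the isomorphism $\uCom \otimeshadamard \operad{P} \simeq \operad{P}$ with the cotensorisation bifunctor, which is automatic from the strict monoidal structure on $\sk(\Operadesmall)$ and the fact that $\uCom$ is its strict tensor unit.
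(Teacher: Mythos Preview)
Your proof is correct and is exactly the argument the paper intends: the corollary is stated without proof because it is a direct specialisation of the standing hypotheses at the beginning of Section~\ref{section:hopfoperadsinaction} to $\operad P_0 = \uCom$, $\operad P_1 = \operad P_2 = \operad P$, and $f$ the Hadamard unit isomorphism, precisely as you unwind it. The observation just before the corollary that every $\operad P$ is canonically a $\uCom$-comodule is the only additional input, and you identify it correctly.
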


\begin{corollary}
The category of $\uCom$-coalgebras is closed symmetric monoidal.
Moreover, the category of $\operad P$-algebras in $\categ E$
and the category of $\operad P$-coalgebras in $\categ E$
are both tensored-cotensored and enriched over $\uCom$-coalgebras.
\end{corollary}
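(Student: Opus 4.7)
The plan is to deduce both corollaries from the general framework already established, specialised to the case where the Hopf operad is $\uCom$, which is the unit of the Hadamard tensor product on $\sk(\Operade)$.

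First, I would record that $\uCom$ is (canonically isomorphic to) the monoidal unit of $(\sk(\Operade), \otimeshadamard)$, so for every small enriched operad $\operad P$ the unitors $\operad P \simeq \uCom \otimeshadamard \operad P \simeq \operad P \otimeshadamard \uCom$ equip $\operad P$ with canonical left and right $\uCom$-comodule structures; and $\uCom$ itself, as the unit of a symmetric monoidal structure, is a cocommutative Hopf operad with coproduct given by the unitor. With these observations in hand, the first corollary is immediate: the right $\uCom$-comodule structure on $\operad P$ yields, by Corollary \ref{corollary:convcase}, the cotensorisation bifunctor
\[
\catalg{\ecateg E}{\operad P} \times \catcog{\ecateg E}{\uCom}^\op \to \catalg{\ecateg E}{\operad P \otimeshadamard \uCom} \simeq \catalg{\ecateg E}{\operad P},
\]
and fixing a $\uCom$-algebra $A$ gives the functor $V \mapsto \langle A, V\rangle$; its left adjoint exists by the last corollary of the previous section, since the standing hypotheses of the present section verify all the required completeness, monadicity and preservation of (co)reflexive (co)equalisers assumptions.

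For the second corollary, the symmetric monoidal structure on $\catcog{\ecateg E}{\uCom}$ is furnished by the cocommutative Hopf operad subsection. To obtain closedness I specialise the enrichment theorem to $\operad P = \operad Q = \uCom$: the $\uCom$-comodule structure of $\uCom$ on itself gives the tensor $-\lozenge -$ on $\catcog{\ecateg E}{\uCom}$, and the standing hypotheses provide a right adjoint in each variable, producing the internal hom $\{V,W\}$. For the tensored-cotensored-enrichment of $\catalg{\ecateg E}{\operad P}$ over $\catcog{\ecateg E}{\uCom}$, the cotensorisation was just described; the tensorisation and enrichment are then produced by the two Theorems immediately preceding Section \ref{section:hopfoperadsinaction}, whose hypotheses are again exactly the standing assumptions of this section. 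For $\catcog{\ecateg E}{\operad P}$, the symmetric analogue applies: the left $\uCom$-comodule structure on $\operad P$ gives the tensorisation
\[
\catcog{\ecateg E}{\uCom} \times \catcog{\ecateg E}{\operad P} \to \catcog{\ecateg E}{\operad P},
\]
and the adjoint existence results of the preceding section provide the corresponding cotensorisation and enrichment.

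The main (minor) obstacle is simply bookkeeping: one must check that the associators, unitors and pentagon/hexagon coherences of the resulting structures coincide with those inherited from the general framework, so that the statements genuinely amount to "closed symmetric monoidal" and "tensored-cotensored-enriched" in the standard sense. This is routine because in each invocation above, both the monoidal structure on $\catcog{\ecateg E}{\uCom}$ and the (co)tensor actions are transported through canonical isomorphisms of the form $\operad P \otimeshadamard \uCom \simeq \operad P$, so coherence propagates from that of $\ecateg E$ and of the Hadamard tensor product. No new construction is needed; the proof is a specialisation followed by adjoint-existence invocations.
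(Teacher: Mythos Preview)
Your proposal is correct and follows essentially the same approach as the paper: the paper states this corollary without proof, having just noted that any small enriched operad $\operad P$ is a left and right comodule over the cocommutative Hopf operad $\uCom$, leaving the reader to specialise the general machinery exactly as you do. One small organisational point: you spend a paragraph on the linear duality corollary, which is a separate statement preceding this one; the statement you are asked to prove is only the closed-symmetric-monoidal and tensored-cotensored-enriched claims, so that paragraph is extraneous here.
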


\begin{corollary}
	The monoidal category of $\uCom$-coalgebras is cartesian.
   \end{corollary}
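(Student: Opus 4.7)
The plan is to verify directly that the Hadamard tensor product on $\uCom$-coalgebras realises the categorical product, by first showing that the monoidal unit is terminal, and then exhibiting natural projections together with a unique pairing. The monoidal unit of $\catcog{\ecateg E}{\uCom}$ is $\II_{\categ E}$ equipped with its canonical $\uCom$-coalgebra structure (coproduct $\II_{\categ E} \simeq \II_{\categ E} \otimes \II_{\categ E}$, counit $\id_{\II_{\categ E}}$). For any $\uCom$-coalgebra $V$, the structural counit $\epsilon_V \colon V \to \II_{\categ E}$ (the image of the arity-$0$ operation of $\uCom$) is a morphism of $\uCom$-coalgebras; any other coalgebra morphism $f \colon V \to \II_{\categ E}$ satisfies $f = \epsilon_{\II_{\categ E}} \circ f = \epsilon_V$, since $\epsilon_{\II_{\categ E}} = \id_{\II_{\categ E}}$. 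Hence $\II_{\categ E}$ is terminal in $\catcog{\ecateg E}{\uCom}$.

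Next, I will construct the projections. Given $\uCom$-coalgebras $V$ and $W$, set
\[
\pi_V \coloneqq \bigl(V \otimes W \xrightarrow{\id_V \otimes \epsilon_W} V \otimes \II_{\categ E} \simeq V\bigr), \quad
\pi_W \coloneqq \bigl(V \otimes W \xrightarrow{\epsilon_V \otimes \id_W} \II_{\categ E} \otimes W \simeq W\bigr).
\]
A straightforward computation, using that the Hadamard coproduct $\Delta_{V \otimes W}$ is the shuffle $(\id \otimes \tau \otimes \id) \circ (\Delta_V \otimes \Delta_W)$ together with the counit axioms on each factor, verifies that $\pi_V$ and $\pi_W$ are coalgebra morphisms, and a similar unravelling yields the key identity
\[
(\pi_V \otimes \pi_W) \circ \Delta_{V \otimes W} = \id_{V \otimes W}.
\]

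For the universal property, given coalgebra morphisms $f \colon U \to V$ and $g \colon U \to W$, I set
\[
\langle f, g \rangle \coloneqq \bigl(U \xrightarrow{\Delta_U} U \otimes U \xrightarrow{f \otimes g} V \otimes W\bigr).
\]
Here $\Delta_U$ is itself a coalgebra morphism (this is where cocommutativity enters, since $\Delta_U$ must intertwine with the shuffled Hadamard coproduct on $U \otimes U$, which reduces via cocommutativity and coassociativity to the symmetry of the iterated coproduct of $U$), and tensor products of coalgebra morphisms are coalgebra morphisms for the Hadamard structure, so $\langle f, g \rangle$ is a coalgebra morphism; the counit axiom of $U$ then gives $\pi_V \circ \langle f, g \rangle = f$ and $\pi_W \circ \langle f, g \rangle = g$. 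Uniqueness follows from the identity displayed above: for any coalgebra morphism $h \colon U \to V \otimes W$ with $\pi_V h = f$ and $\pi_W h = g$,
\[
h = (\pi_V \otimes \pi_W) \circ \Delta_{V \otimes W} \circ h = (\pi_V \otimes \pi_W) \circ (h \otimes h) \circ \Delta_U = (f \otimes g) \circ \Delta_U = \langle f, g \rangle.
\]
The main obstacle is verifying that $\Delta_U$ and the projections are coalgebra morphisms and establishing the identity $(\pi_V \otimes \pi_W) \circ \Delta_{V \otimes W} = \id$; both are routine but require carefully tracking the shuffle isomorphism inside the Hadamard coproduct and using cocommutativity to align the inner factors of the iterated coproduct.
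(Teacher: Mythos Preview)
Your proposal is correct and follows essentially the same approach as the paper: the paper's map $f+g \coloneqq (f\otimes g)\circ\Delta_Z$ is your pairing $\langle f,g\rangle$, the paper's $d_1(t),d_2(t)$ are your $\pi_V\circ t,\pi_W\circ t$, and the paper's commutative diagram establishing $t=d_1(t)+d_2(t)$ is precisely your identity $(\pi_V\otimes\pi_W)\circ\Delta_{V\otimes W}=\id$ applied after $t$. You are in fact slightly more careful than the paper in flagging that $\Delta_U$ being a coalgebra morphism requires cocommutativity, which the paper leaves implicit.
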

   
   \begin{proof}
	Let $\coalgebra V$, $\coalgebra W$ and $\coalgebra Z$ be three $\uCom$-coalgebras.
	Given two morphisms $f: \coalgebra Z \to \coalgebra V$ and $g: \coalgebra Z \to \coalgebra W$,
	we can build a morphism $f +g$ as follows
	\[
		\coalgebra Z \to \coalgebra Z \otimes \coalgebra Z \xrightarrow{f \otimes g} \coalgebra V \otimes \coalgebra W.
	\]
	Moreover, given a morphism $t=  \coalgebra Z \to \coalgebra V \otimes \coalgebra W$, we can build two morphisms
   \begin{align*}
		d_1 (t): \coalgebra Z \xrightarrow{t} \coalgebra V \otimes \coalgebra W \to \coalgebra V \otimes \II \simeq \coalgebra V ;
	   \\
	   d_2 (t): \coalgebra Z \xrightarrow{t} \coalgebra V \otimes \coalgebra W \to \II \otimes \coalgebra W \simeq \coalgebra W.
   \end{align*}
   It is clear that $d_1(f+g)=f$ and $d_2(f+g)=g$. Moreover, the fact that the following diagram commutes
   \[
   \begin{tikzcd}
		\coalgebra Z
	   \arrow[r,"t"] \arrow[dd]
	   & \coalgebra V \otimes \coalgebra W
	   \arrow[d] \arrow[ddrr,equal]
	   \\
	   & \coalgebra V \otimes \coalgebra V \otimes \coalgebra W \otimes \coalgebra W
	   \arrow[d,"\simeq"]
	   \\
	   \coalgebra Z \otimes \coalgebra Z
	   \arrow[r,"t \otimes t"']
	   & \coalgebra V \otimes \coalgebra W \otimes \coalgebra V \otimes \coalgebra W
	   \arrow[r]
	   & \coalgebra V \otimes \II \otimes \II \otimes \coalgebra W
	   \arrow[r,"\simeq"']
	   & \coalgebra V \otimes \coalgebra W
   \end{tikzcd}
   \]
   implies that $t= d_1(t) + d_2(t)$. Thus, we have a bijection
   \[
	   \hom_{\catcog{\ecateg E}{\uCom}}(\coalgebra Z, \coalgebra V) \times \hom_{\catcog{\ecateg E}{\uCom}}(\coalgebra Z, \coalgebra W)
	   \simeq \hom_{\catcog{\ecateg E}{\uCom}}(\coalgebra Z, \coalgebra V \otimes \coalgebra W).
   \]
   So, $\coalgebra V \otimes \coalgebra W$ is the product of $\coalgebra V$ with $\coalgebra W$.
   Finally, $\II$ is the final object and the associators, unitors and commutators 
are those of the cartesian monoidal structure.
   \end{proof}

\subsection{Planar operads are $\uAs$-comodules}

For any planar enriched operad $\operad P$, the induced enriched operad
$\operad P_{\categ S}$
has the natural structure of a $\uAs$-comodule as follows
\[
	\operad P(\underline c^{\sigma^{-1}};c) \otimes \{\sigma\} \to \operad P(\underline c^{\sigma^{-1}};c) \otimes \{\sigma\} \otimes \{\sigma\}
	\hookrightarrow \operad P_{\categ S}(\underline c;c) \otimes \uAs(n).
\]
Thus the functor $-_{\categ S}: \sk(\mathsf{pl}\Operade) \to \sk(\Operade)$ factorises through a functor
\[
	\sk(\mathsf{pl}\Operade) \to \mathsf{Comodules}_{\sk(\Operade)}\left(\uAs\right),
\]
followed by the forgetful functor.

\begin{theorem}\label{theoremplanaroperad}
Let us suppose that coproducts in $\categ E$ are disjoint and pullback-stable. Then, this functor
from planar enriched operads to $\uAs$-comodules in enriched operads
is an equivalence of categories.
\end{theorem}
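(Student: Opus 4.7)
The plan is to construct an explicit pseudo-inverse functor $\Phi$. The key input is that $\uAs(n)$, at arity $n$, is the set $\categ S_n$ and hence $\uAs(n) \cong \coprod_{\sigma \in \categ S_n} \II$ in $\categ E$. By bilinearity of the tensor product,
$$
\operad Q(\underline c; c) \otimes \uAs(n) \cong \coprod_{\sigma \in \categ S_n} \operad Q(\underline c; c) \otimes \{\sigma\},
$$
and the hypothesis that coproducts in $\categ E$ are disjoint and pullback-stable means these summands behave as honest disjoint subobjects, so one can extract the ``$\sigma$-component'' of any morphism into this coproduct by pulling back along the corresponding summand inclusion.

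I would define $\Phi$ on objects as follows: given a $\uAs$-comodule with coaction $\Delta: \operad Q \to \operad Q \otimeshadamard \uAs$, let $\Phi(\operad Q)(\underline c; c)$ be the pullback of $\Delta(\underline c; c)$ along the identity-summand inclusion $\operad Q(\underline c; c) \otimes \{e\} \hookrightarrow \operad Q(\underline c; c) \otimes \uAs(n)$. This yields a subobject of $\operad Q(\underline c; c)$; similarly, pulling back along $\{\sigma\}$ extracts a component $\Phi(\operad Q)_\sigma$. The planar operadic composition and units on $\Phi(\operad Q)$ are induced by those of $\operad Q$, and the fact that they land in the correct summand is guaranteed by $\Delta$ being a morphism of enriched operads together with the fact that the Hopf coproduct of $\uAs$ sends the identity permutation to $(e,e)$ and the operadic unit of $\uAs$ is the identity permutation in arity $1$.

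The round trip $\Phi \circ (-)_{\categ S} \cong \id$ holds essentially by construction, since the coaction on $\operad P_{\categ S}$ is built to send the $\sigma$-summand to the $\sigma$-component, so pulling back along $\{e\}$ recovers exactly $\operad P(\underline c; c) \otimes \{e\}$. The other round trip $(-)_{\categ S} \circ \Phi \cong \id$ requires showing the natural decomposition
$$
\operad Q(\underline c; c) \cong \coprod_{\sigma \in \categ S_n} \Phi(\operad Q)(\underline c^{\sigma^{-1}}; c) \otimes \{\sigma\}.
$$
Counitality of $\Delta$ gives $\operad Q(\underline c; c) \cong \coprod_\sigma \Phi(\operad Q)_\sigma(\underline c; c)$ after using disjointness, while equivariance of $\Delta$ under the symmetric group action (together with the fact that the right action on $\uAs(n) = \categ S_n$ is by right multiplication) furnishes a canonical isomorphism $\Phi(\operad Q)_\sigma(\underline c; c) \cong \Phi(\operad Q)(\underline c^{\sigma^{-1}}; c)$ induced by $(\sigma^{-1})^\ast$.

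The main obstacle is verifying that the full symmetric-operad structure on $\operad Q$ transported across this decomposition coincides with the one arising from the $(-)_{\categ S}$ construction applied to $\Phi(\operad Q)$. This amounts to checking that the twisted composition formula involving the permutation $\sigma \triangleleft_i \mu$ appearing in the definition of $(-)_{\categ S}$ is precisely dictated by the compatibility of $\Delta$ with operadic composition together with the Hopf coproduct on $\uAs$. Disjointness of coproducts is essential here, as it allows the comparison to be carried out component by component. Functoriality of $\Phi$ on morphisms of $\uAs$-comodules and naturality of the round-trip isomorphisms are then straightforward consequences of the same pullback-extraction arguments.
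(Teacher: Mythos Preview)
Your proposal is correct and follows essentially the same construction as the paper. The paper organizes the argument as faithful $+$ full $+$ essentially surjective, whereas you package it as an explicit pseudo-inverse, but the core content is identical: extract the $\sigma$-component of a $\uAs$-comodule by pulling the coaction back along the $\{\sigma\}$-summand inclusion, use pullback-stability of coproducts to decompose $\operad Q(\underline c; c)$ as the coproduct of these components, use $\categ S_n$-equivariance of the coaction to identify the $\sigma$-component with the identity-component evaluated at $\underline c^{\sigma^{-1}}$, and finally check that the operadic composition respects the block-permutation formula $\sigma \triangleleft_i \mu$ because the coaction is a morphism of operads. One small slip: the decomposition $\operad Q(\underline c; c) \cong \coprod_\sigma \Phi(\operad Q)_\sigma(\underline c; c)$ follows directly from pullback-stability of coproducts applied to the coaction map, not from counitality; no comodule axiom is needed for that step.
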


\begin{remark}
Recall that the fact that pullbacks are disjoint means that any map $X \to X \sqcup Y$ is a monomorphism
and that any square of the form
\[
\begin{tikzcd}
 	\emptyset
	\arrow[r] \arrow[d]
	& X
	\arrow[d]
	\\
	Y \arrow[r]
	& X \sqcup Y
\end{tikzcd}
\]
is a pullback.
\end{remark}

\begin{proof}
It is clear that this functor is faithful. Let us show that it is full.
Consider two planar enriched operads $\operad P, \operad P'$ and
a morphism $f$ of comodules from $\operad P_{\categ S}$ to $\operad P'_{\categ S}$.
Let us denote $\phi$ the underlying function of $f$ from $\Ob(\operad P)$ to $\Ob(\operad P')$.
Then, $f$ is given by maps of the form
\[
	f(\underline c;c): \coprod_{\sigma \in \categ S_n} \operad P(\underline c^{\sigma^{-1}};c) \otimes \{\sigma\}
	\to
	\coprod_{\sigma \in \categ S_n} \operad P'(\phi(\underline c)^{\sigma^{-1}};\phi(c)) \otimes \{\sigma\}
\]
Since this is a morphism of comodules, then the following diagram commutes
\[
\begin{tikzcd}
 	\operad P(\underline c^{\sigma^{-1}};c) \otimes \{\sigma\} 
	\arrow[r] \arrow[d]
	& \coprod_{\mu}\operad P'(\phi(\underline c)^{\mu^{-1}};\phi(c)) \otimes \{\mu\}
	\arrow[d]
	\\
	\operad P(\underline c^{\sigma^{-1}};c) \otimes \{\sigma\}  \otimes \{\sigma\}  
	 \ar[d]
	& \coprod_{\mu}\operad P'(\phi(\underline c)^{\mu^{-1}};\phi(c)) \otimes \{\mu\} \otimes \{\mu\}
	\arrow[d, hookrightarrow]
	\\
	\left(\coprod_{\mu}\operad P'(\phi(\underline c)^{\mu^{-1}};\phi(c)) \otimes \{\mu\}\right)  \otimes \{\sigma\}  \arrow[r]
	& \coprod_{\mu, \nu}\operad P'(\phi(\underline c)^{\mu^{-1}};\phi(c)) \otimes \{\mu\} \otimes \{\nu\} .
\end{tikzcd}
\]
Thus the top horizontal map factors through the pullback of the span
\[
	\left(\coprod_{\mu}\operad P'(\phi(\underline c)^{\mu^{-1}};\phi(c)) \otimes \{\mu\}\right)  \otimes \{\sigma\} 
	\rightarrow \coprod_{\mu, \nu}\operad P'(\phi(\underline c)^{\mu^{-1}};\phi(c)) \otimes \{\mu\} \otimes \{\nu\}
	\leftarrow \coprod_{\mu}\operad P'(\phi(\underline c)^{\mu^{-1}};\phi(c)) \otimes \{\mu\}
\]
 which is by assumption $\operad P'(\phi(\underline c)^{\sigma^{-1}};\phi(c)) \otimes \{\sigma\}$.
 Subsequently we have maps 
 \[
 	g(\underline c;c; \sigma): \operad P(\underline c^{\sigma^{-1}};c) \to 
	\operad P'(\phi(\underline c)^{\sigma^{-1}};\phi(c))
\]
so that $f(\underline c;c)=\coprod_\sigma g(\underline c;c; \sigma)$.
The fact that $f$ is a natural with respect to the actions of symmetric groups
implies that $g(\underline c;c; \sigma) = g(\underline c^\mu;c;\sigma\circ \mu)$ for any two permutations $\sigma,\mu$. Thus, if we denote
$g(\underline c;c)\coloneqq g(\underline c;c; 1)$,
we have for any permutation $\sigma$
\[
	g(\underline c;c; \sigma) =g(\underline c^{\sigma^{-1}};c).
\]
A straightforward check shows that the maps $g(\underline c;c): \operad P(\underline c;c) \to \operad P'(\phi(\underline c);\phi(c))$
form a morphism of planar operads whose image in the category of
$\uAs$-comodules is $f$. So,
the functor from planar enriched operads
to $\uAs$-comodules is full.
 
 Let us show now that this functor is essentially surjective.
 Let $\operad P$ be a $\uAs$-comodule in enriched operads.
 Then, for any $n$-tuple of colours $\underline c$
 and any colour $c$, let $\operad Q (\underline c;c;\sigma)$ be the following pullback
 (which is actually a coreflexive equaliser)
 in $\categ E$
 \[
\begin{tikzcd}
 	\operad Q (\underline c;c;\sigma) 
	\arrow[rr]
	\arrow[d]
	&& \operad P (\underline c;c) \otimes \{\sigma\}
	\arrow[d]
	\\
	\operad P (\underline c;c)
	\arrow[r]
	& \operad P (\underline c;c)\otimes \uAs (n)
	\arrow[r,equal]
	& \coprod_{\mu} \operad P (\underline c;c)\otimes\{\mu\} .
\end{tikzcd}
 \]
 and let us denote $\operad Q (\underline c;c)\coloneqq \operad Q (\underline c;c;1)$. 
 Since coproducts are pullback stable, we have
 \[
 	\operad P(\underline c ; c) = \coprod_{\sigma} \operad Q (\underline c;c;\sigma) .
 \]
 Then, one can check that the operadic composition $\gamma_i$ in $\operad P$ decomposes 
 into maps
 \[
 	\operad Q(\underline c;c;\sigma) \otimes \operad Q(\underline c';\underline c[i];\mu) \to
	\operad Q(\underline c \triangleleft_i \underline c'; c; \sigma \triangleleft_i \mu) .
 \]
 where $\sigma \triangleleft_i \mu = \gamma_i^{\uAs}(\{\sigma\}\otimes \{\mu\})$.
 In particular, the collection of objects $\operad Q (\underline c; c)$ inherits from this operadic composition
 the structure of a planar operad. Our goal now is to exhibit an isomorphism of enriched operads 
 between $\operad P$ and $\operad Q_{\categ S}$. The commutativity of the following diagram
\[
\begin{tikzcd}
 	\operad P (\underline c;c) 
	\arrow[r] \arrow[d]
	& \coprod_{\mu} \operad P (\underline c;c)\otimes\{\mu\}
	\arrow[d]
	& \operad P (\underline c;c) \otimes \{\sigma\}
	\arrow[l] \arrow[d]
	\\
	\operad P (\underline c^\nu;c) 
	\arrow[r]
	& \coprod_{\mu} \operad P (\underline c^\nu;c)\otimes\{\mu\circ \nu\}
	& \operad P (\underline c^\nu;c) \otimes \{\sigma\circ \nu\}
	\arrow[l]
\end{tikzcd}
\]
whose vertical arrows are all isomorphisms
induces an isomorphism $\operad Q(\underline c;c;\sigma) \simeq \operad Q(\underline c^\nu;c;\sigma\circ \nu)$. for
any two permutations $\sigma,\nu$.
So in particular $\operad Q(\underline c;c;\sigma) \simeq \operad Q(\underline c^{\sigma^{-1}};c)$.
Thus, we get an isomorphism of objects indexed by tuples of colours
\[
	\operad P(\underline c;c) \simeq \coprod_{\sigma} \operad Q(\underline c^{\sigma^{-1}};c)
	= \coprod_{\sigma} \operad Q(\underline c^{\sigma^{-1}};c) \otimes \{\sigma\}
	=  \operad Q_{\categ S}(\underline c;c) .
\]
Actually, this map relating $\operad P$ to $\operad Q_{\categ S}$ commutes with the action of
$\categ S$ and is thus an isomorphism
of coloured symmetric sequences.
Finally, one can check that it commutes with the units and the operadic composition. Hence
this is
an isomorphism of operads. So the functor from planar enriched operads
to $\uAs$-comodules is essentially surjective.
\end{proof}

\begin{corollary}
For any planar enriched operad $\operad P$,
the category of $\operad P$-algebras in $\ecateg E$ and the
category of $\operad P$-coalgebras in $\ecateg E$
are both tensored-cotensored and enriched over $\uAs$-coalgebras.
\end{corollary}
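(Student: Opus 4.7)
The plan is to reduce the statement to the general Hopf-operad-comodule framework already developed in Sections 4 and 5. By Theorem \ref{theoremplanaroperad}, assuming as we do that $\categ E$ has disjoint and pullback-stable coproducts, the construction $\operad P \mapsto \operad P_{\categ S}$ realises any planar enriched operad $\operad P$ as a $\uAs$-comodule in the category of enriched operads; moreover the coaction is symmetric enough that $\operad P_{\categ S}$ is a bi-$\uAs$-comodule, so in particular it is both a left and a right $\uAs$-comodule. Since the algebras (resp.\ coalgebras) of $\operad P$ in $\ecateg E$ are the algebras (resp.\ coalgebras) of $\operad P_{\categ S}$, it suffices to show that $\catalg{\ecateg E}{\operad P_{\categ S}}$ and $\catcog{\ecateg E}{\operad P_{\categ S}}$ are tensored-cotensored and enriched over $\catcog{\ecateg E}{\uAs}$.

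For the algebras, applying Corollary \ref{corollary:convcase} with $\operad Q = \uAs$ and with $\operad P_{\categ S}$ playing the role of the right $\operad Q$-comodule gives the cotensorisation bifunctor
$$
\langle -, - \rangle : \catalg{\ecateg E}{\operad P_{\categ S}} \times \catcog{\ecateg E}{\uAs}^{\op} \to \catalg{\ecateg E}{\operad P_{\categ S}} .
$$
The standing hypotheses of Section \ref{section:hopfoperadsinaction} then guarantee that for every fixed algebra $A$ and every fixed coalgebra $V$, both partial functors $\langle A , - \rangle$ and $\langle - , V \rangle$ admit left adjoints (this is exactly how the adjointness results are packaged at the beginning of this section, combining Corollaries \ref{proposition:cotensoradjointalg} and \ref{proposition:cotensoradjointalg2} with Corollary \ref{corollary:adjointexistence}). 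By the last two theorems of the Mapping coalgebras subsection, the adjoint in the first variable provides the enrichment $\{-,-\}$ of $\catalg{\ecateg E}{\operad P}$ over $\catcog{\ecateg E}{\uAs}$, while the adjoint in the second variable provides the tensorisation $- \boxtimes -$, and these assemble with $\langle -, - \rangle$ into the full tensored-cotensored-enriched structure.

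For the coalgebras, the same pattern applies with Proposition \ref{proposition:tensoradjoint} in place of Corollary \ref{corollary:convcase}: the tensorisation
$$
- \lozenge - : \catcog{\ecateg E}{\uAs} \times \catcog{\ecateg E}{\operad P_{\categ S}} \to \catcog{\ecateg E}{\operad P_{\categ S}}
$$
produced by the comodule structure of $\operad P_{\categ S}$, combined with the existence of right adjoints in each variable guaranteed by the standing hypotheses, yields a cotensorisation and then, by duality of the abstract argument used for algebras, an enrichment over $\catcog{\ecateg E}{\uAs}$.

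The only real obstacle is bookkeeping: one must verify that the conditions assumed globally in Section \ref{section:hopfoperadsinaction} (completeness/cocompleteness of $\categ E$, monadicity and comonadicity with preservation of reflexive coequalisers resp.\ coreflexive equalisers) indeed imply every adjointness hypothesis in the cited theorems of Section 4, and that the left and right $\uAs$-comodule structures on $\operad P_{\categ S}$ coming from Theorem \ref{theoremplanaroperad} are the ones that feed into Corollary \ref{corollary:convcase} and Proposition \ref{proposition:tensoradjoint}. No new construction is required beyond this dictionary.
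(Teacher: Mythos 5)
Your argument is correct and follows the same route the paper intends: Theorem \ref{theoremplanaroperad} (or rather the comodule structure constructed just before it) exhibits $\operad P_{\categ S}$ as a $\uAs$-comodule, and the standing hypotheses of Section \ref{section:hopfoperadsinaction} feed this into the general tensorisation/cotensorisation/enrichment machinery of the earlier sections. The only cosmetic remark is that the disjointness and pullback-stability of coproducts are needed only for the equivalence statement of Theorem \ref{theoremplanaroperad}, not for the comodule structure on $\operad P_{\categ S}$, so the corollary does not actually require that extra hypothesis.
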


\begin{corollary}
	For any planar enriched operad $\operad P$
	and any $\uAs$-algebra $A$, one has a linear duality functor
	\begin{align*}
		\catcog{\ecateg E}{\operad P}^\op &\to \catalg{\ecateg E}{\operad P}
		\\
		V & \mapsto \langle A, V \rangle
	\end{align*}
	that has a left adjoint.
\end{corollary}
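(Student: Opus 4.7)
The plan is to reduce this to the general cotensorisation theory of the preceding sections, using the equivalence of Theorem \ref{theoremplanaroperad}.

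First, Theorem \ref{theoremplanaroperad} identifies the planar enriched operad $\operad P$ (strictly, its symmetrisation $\operad P_{\categ S}$) with a right $\uAs$-comodule in enriched operads, that is a morphism $\rho : \operad P \to \operad P \otimeshadamard \uAs$. The Hopf operad $\uAs$ is cocommutative, since its comultiplication is the diagonal $\{\sigma\} \to \{\sigma\} \otimes \{\sigma\}$ on permutations; composing $\rho$ with the braiding of the Hadamard tensor product therefore produces a coassociative left $\uAs$-comodule structure
\[
\bar \rho : \operad P \to \operad P \otimeshadamard \uAs \xrightarrow{\simeq} \uAs \otimeshadamard \operad P.
\]

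Next, I invoke the cotensorisation bifunctor established earlier in the section,
\[
\langle -, - \rangle : \catalg{\ecateg E}{\uAs} \times \catcog{\ecateg E}{\operad P}^\op \to \catalg{\ecateg E}{\uAs \otimeshadamard \operad P},
\]
and pull back along $\bar \rho$ to land in $\catalg{\ecateg E}{\operad P}$. Fixing a $\uAs$-algebra $A$, this produces the desired linear duality functor $V \mapsto \bar \rho^{\ast} \langle A, V \rangle$.

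Finally, for the existence of a left adjoint, I combine two adjunctions: Corollary \ref{proposition:cotensoradjointalg} provides a left adjoint to $V \mapsto \langle A, V \rangle$ regarded with codomain $\catalg{\ecateg E}{\uAs \otimeshadamard \operad P}$, while Corollary \ref{corollary:adjointexistence} provides a left adjoint $\bar \rho_{!}$ to $\bar \rho^{\ast}$. All the completeness, cocompleteness, monadicity, comonadicity and reflexive (co)equaliser-preservation hypotheses required by these two corollaries are standing assumptions of this section, so no further verification is needed. Composing $\bar \rho_{!}$ with the first left adjoint yields the left adjoint to $\langle A, - \rangle$. The only delicate point is the left/right comodule interchange in the first step, which is harmless precisely because $\uAs$ is cocommutative; were one to attempt the same statement with a non-cocommutative Hopf operad in place of $\uAs$, Theorem \ref{theoremplanaroperad} alone would not suffice.
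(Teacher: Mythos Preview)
Your argument is correct and is exactly the approach the paper intends: the corollary is stated without proof because it is a direct specialisation of the general machinery (the unnamed corollary combining Corollary~\ref{proposition:cotensoradjointalg}, Corollary~\ref{proposition:cotensoradjointalg2} and Corollary~\ref{corollary:adjointexistence}) to the morphism $\operad P \to \uAs \otimeshadamard \operad P$ furnished by Theorem~\ref{theoremplanaroperad}, under the standing hypotheses of Section~\ref{section:hopfoperadsinaction}. Your remark that the left/right comodule swap requires the cocommutativity of $\uAs$ is accurate and worth noting; the paper leaves this implicit, relying on the fact (Example in \S6.5) that every set-theoretical operad becomes a cocommutative Hopf operad in $\categ E$.
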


\subsection{Categories and their modules as operads and their algebras}

The category $\sk(\Cate)$ of enriched-categories
is a coreflexive localisation of the category of enriched operads
$\sk(\Operade)$. Indeed, one can see an enriched category
$\ecateg A$ as an operad by declaring
\[
	\ecateg A(\underline c;c)=\emptyset
\]
whenever $|\underline c|\neq 1$. The right adjoint to this inclusion functor
just sends an enriched operad $\operad P$ to its underlying enriched category.

\begin{proposition}
 The category of enriched categories is canonically isomorphic to the category
 of Hopf $\III$-comodules within enriched operads.
\end{proposition}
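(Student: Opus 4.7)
My plan is to follow the structure of the proof of Theorem \ref{theoremplanaroperad}, but the argument simplifies considerably because the Hopf operad $\III$ is essentially trivial, being concentrated in arity $1$.

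First I would exhibit the canonical Hopf operad structure on $\III$. Since $\III \otimeshadamard \III$ has a single colour $(\ast,\ast)$ and is canonically isomorphic to $\III$ (the arities different from $1$ giving $\emptyset \otimes \emptyset = \emptyset$, the arity $1$ component being $\II \otimes \II \simeq \II$), the comultiplication $\III \to \III \otimeshadamard \III$ is this canonical isomorphism, while the counit $\III \to \uCom$ is the unique morphism of enriched operads determined by the identity in arity $1$. Coassociativity and counitality are then immediate.

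Next I would define the functor from enriched categories to Hopf $\III$-comodules. Given an enriched category $\ecateg A$, viewed as an enriched operad concentrated in arity $1$, the canonical isomorphism $\ecateg A \otimeshadamard \III \simeq \ecateg A$ endows $\ecateg A$ with a right $\III$-comodule structure, and the required axioms follow by inspection. This yields a functor from $\sk(\Cate)$ to the category of Hopf $\III$-comodules in $\sk(\Operade)$. Faithfulness is inherited from the inclusion of enriched categories into enriched operads, and fullness follows from the fact that any morphism between two enriched operads concentrated in arity $1$ is determined by (and is the same data as) its arity $1$ restriction.

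The essential surjectivity is the main step. Given a Hopf $\III$-comodule $(\operad P, \rho : \operad P \to \operad P \otimeshadamard \III)$, I would argue that in any arity $n \neq 1$ the target $(\operad P \otimeshadamard \III)(\underline c; c) \simeq \operad P(\phi(\underline c); \phi(c)) \otimes \emptyset$ is the initial object of $\categ E$, since tensoring with a fixed object is a left adjoint and hence preserves the empty colimit. The counit condition $(\id \otimeshadamard \epsilon) \circ \rho = \id$ in arity $n \neq 1$ then forces the identity of $\operad P(\underline c; c)$ to factor through $\emptyset$; the disjointness of coproducts in $\categ E$ (the standing hypothesis inherited from Theorem \ref{theoremplanaroperad}) implies strictness of the initial object, so $\operad P(\underline c; c) = \emptyset$. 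In arity $1$ the counit condition forces $\rho$ to coincide with the canonical isomorphism $\operad P \simeq \operad P \otimeshadamard \III$, whence $\operad P$ is concentrated in arity $1$, its underlying enriched category provides the desired preimage, and the $\III$-comodule structure of $\operad P$ agrees with the canonical one. The main obstacle is thus the careful handling of the initial object in arities $\neq 1$, which is straightforward under the disjointness hypothesis already in force.
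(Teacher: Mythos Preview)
Your overall approach is sound and correctly identifies the key mechanism: the counit axiom forces the identity of $\operad P(\underline c;c)$ to factor through $\operad P(\underline c;c)\otimes\III(n)$, which is initial whenever $n\neq 1$ because the tensor product preserves colimits (this follows already from $\categ E$ being closed). Since the paper's own proof is the single word ``Straightforward'', you are essentially supplying the omitted details, and they are the right ones.

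There is one genuine slip. You invoke ``disjointness of coproducts'' as a ``standing hypothesis inherited from Theorem~\ref{theoremplanaroperad}'', but that hypothesis is specific to Theorem~\ref{theoremplanaroperad} and is \emph{not} assumed in the present section (whose standing hypotheses are only that $\categ E$ is closed symmetric monoidal, complete and cocomplete, plus the monadicity/comonadicity assumptions). Fortunately you do not need it: once the identity of $\operad P(\underline c;c)$ factors through an initial object, $\operad P(\underline c;c)$ is a retract of an initial object and therefore itself initial, with no appeal to strictness or disjointness. The detour through ``strictness of the initial object'' is both unjustified (under the actual hypotheses) and unnecessary. Note also that this argument yields $\operad P(\underline c;c)\simeq\emptyset$ rather than literal equality; this is enough for an equivalence of categories, and the word ``isomorphic'' in the statement should be read in that spirit.
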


\begin{proof}
Straightforward.
\end{proof}

Under this point of view, the inclusion $\sk(\Cate) 
\hookrightarrow \sk(\Operade)$
is the functor that forgets the structure of a
$\III$-comodule and its right adjoint
is the cofree $\III$-comodule functor.

\begin{proposition}
For any small enriched category $\operad A$,
we have a canonical isomorphism of categories
\[
	\catalg{\ecateg E}{\operad A} = \catcog{\ecateg E}{\operad A^\op} .
\]
that is natural is the sense that for any morphism of 
small enriched categories
$f :\operad A \to \operad B$, the following diagram of categories
commutes
\[
\begin{tikzcd}
 	\catalg{\ecateg E}{\operad B}
	\arrow[r,"f^\ast"] \arrow[d,equal] 
	&\catalg{\ecateg E}{\operad A} 
	\arrow[d, equal]
	\\
	\catcog{\ecateg E}{\operad B^\op}
	 \arrow[r,"f^\ast"']
	  &\catcog{\ecateg E}{\operad A^\op}.
\end{tikzcd}
\]
\end{proposition}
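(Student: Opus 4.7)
The plan is to identify both sides of the claimed isomorphism with the category of enriched functors from $\operad A$ to $\ecateg E$, and then to observe that this identification is natural in $\operad A$.

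First, I would show that $\catalg{\ecateg E}{\operad A}$ agrees with the category of enriched functors $\operad A \to \ecateg E$ and their enriched natural transformations. Viewing $\operad A$ as an enriched operad means setting $\operad A(\underline c ; c) = \emptyset$ whenever $|\underline c| \neq 1$; hence a morphism of enriched operads $\operad A \to \operad Q$ is determined entirely by its arity-one component. Applied to $\operad Q = \End(\ecateg E)$, where $\End(\ecateg E)(a' ; a) = \ecateg E(a', a)$ in arity one, this gives precisely the data of an enriched functor $\operad A \to \ecateg E$. The description of $2$-morphisms of enriched operads---families of elements $A(a) \in \End(\ecateg E)(f(a); g(a)) = \ecateg E(f(a), g(a))$ satisfying the naturality square---matches enriched natural transformations.

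Next, I would apply the same reasoning with $\ecateg E$ replaced by $\ecateg E^\op$ and $\operad A$ replaced by $\operad A^\op$, identifying $\catalg{\ecateg E^\op}{\operad A^\op}$ with the category of enriched functors $\operad A^\op \to \ecateg E^\op$. Now an enriched functor $F : \operad A^\op \to \ecateg E^\op$ is the same data as an enriched functor $F : \operad A \to \ecateg E$ (both amount to maps $\operad A(a, a') \to \ecateg E(Fa, Fa')$ respecting composition and units). However, a natural transformation $F \to G$ between functors $\operad A^\op \to \ecateg E^\op$ is given by elements $\II \to \ecateg E^\op(Fa, Ga) = \ecateg E(Ga, Fa)$, that is, a natural transformation in the reverse direction for the underlying $\operad A \to \ecateg E$ functors. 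Hence the category of enriched functors $\operad A^\op \to \ecateg E^\op$ is canonically the opposite of the category of enriched functors $\operad A \to \ecateg E$, and taking the outer opposite from the definition $\catcog{\ecateg E}{\operad A^\op} = \catalg{\ecateg E^\op}{\operad A^\op}^\op$ recovers the latter category.

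For naturality in $\operad A$, given a morphism $f : \operad A \to \operad B$ of small enriched categories, the restriction functors $f^\ast$ on both sides correspond, under the identifications above, to precomposition with $f$ viewed as an enriched functor $\operad A \to \operad B$; the commutativity of the square is then immediate. The only genuine subtlety is the careful handling of the double opposite in the definition of $\catcog$, ensuring that the direction of enriched natural transformations matches on both sides; once this bookkeeping is done, the proof reduces to straightforward unfolding of definitions.
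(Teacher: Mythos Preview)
Your argument is correct and is precisely the unfolding of definitions that the paper leaves implicit by writing ``Straightforward.'' Both sides are identified with the category of enriched functors $\operad A \to \ecateg E$ and enriched natural transformations, and the naturality square reduces to precomposition with $f$; your handling of the double opposite in $\catcog{\ecateg E}{\operad A^\op} = \catalg{\ecateg E^\op}{\operad A^\op}^\op$ is the only point requiring care, and you treat it correctly.
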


\begin{proof}
Straightforward.
\end{proof}

Subsequently, for any morphism of small enriched categories
$f :\operad A \to \operad B$
the functor $f^\ast : \catalg{\ecateg E}{\operad B} \to
\catalg{\ecateg E}{\operad A}$
has a left adjoint and a right adjoint.
The left adjoint is $f_!$ while
the right adjoint is the composite functor
 \[
	\catalg{\ecateg E}{\operad A}  \simeq \catcog{\ecateg E}{\operad A^\op}
	\xrightarrow{(f^{\op})^!} \catcog{\ecateg E}{\operad B^\op}
	\to \catalg{\ecateg E}{\operad B} . 
 \]
In particular, given a small enriched
category $\operad A$, the forgetful functor
$\forget^{\operad A}: \catalg{\ecateg E}{\operad A} \to \categ E^{\Ob(\operad A)}$
has a right adjoint
$$
X \mapsto \left( \prod_{c'} [\operad A(c,c'), X_{c'}] \right)_{c \in \Ob(\operad P)}
$$
and a left adjoint
$$
X \mapsto \left( \coprod_{c'} \operad A(c',c) \otimes X_{c'} \right)_{c \in \Ob(\operad P)}.
$$

\subsection{The cartesian case and beyond}

In this section, we suppose that
$(\categ{E}, \times, *)$ is a cartesian closed monoidal category.

\begin{lemma}
In the cartesian context, the two forgetful functors
\[
	\catcog{\ecateg E}{\uCom} \to \catcog{\ecateg E}{\uAs}
	\to \categ E
\]
are isomorphisms of categories.
The inverse functors with source
$\categ E$ send an object $X$
to itself equipped with the diagonal coalgebra structure.
\end{lemma}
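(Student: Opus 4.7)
The plan is to exploit the universal property of the product. In the cartesian context, the monoidal unit $\II$ is the terminal object $*$ of $\categ E$, so for any object $X \in \categ E$ there is a unique counit $\epsilon_X : X \to *$. Moreover, the product $X \times X$ comes equipped with its two projections $\pi_1, \pi_2$, and the diagonal $\Delta_X : X \to X \times X$ is the unique morphism satisfying $\pi_1 \circ \Delta_X = \pi_2 \circ \Delta_X = \id_X$.

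First I would handle the $\uAs$-coalgebra side. Let $(X, \Delta, \epsilon)$ be any $\uAs$-coalgebra in $\ecateg E$, that is, an object $X$ of $\categ E$ together with morphisms $\Delta : X \to X \times X$ and $\epsilon : X \to *$ satisfying coassociativity and counitality. Since $*$ is terminal, $\epsilon$ is forced to equal $\epsilon_X$. The counit axioms $(\epsilon \times \id) \circ \Delta = \id_X = (\id \times \epsilon) \circ \Delta$, under the canonical isomorphisms $* \times X \simeq X \simeq X \times *$, identify exactly with the statement that $\pi_1 \circ \Delta = \pi_2 \circ \Delta = \id_X$. Hence by the universal property of the product, $\Delta = \Delta_X$. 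This shows that each object $X$ of $\categ E$ admits a unique $\uAs$-coalgebra structure, namely the diagonal one $(\Delta_X, \epsilon_X)$; coassociativity of $\Delta_X$ is itself immediate by applying the universal property of the triple product.

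Next I would verify that this unique $\uAs$-coalgebra structure is automatically cocommutative, so the forgetful functor $\catcog{\ecateg E}{\uCom} \to \catcog{\ecateg E}{\uAs}$ is bijective on objects: cocommutativity amounts to $\tau \circ \Delta_X = \Delta_X$ where $\tau : X \times X \to X \times X$ is the symmetry, and this holds because both sides of the equation have the same composite with each projection. For morphisms, any $f : X \to Y$ in $\categ E$ automatically gives a morphism of $\uAs$- (and thus $\uCom$-)coalgebras between the diagonal structures: compatibility with the counit is forced by terminality of $*$, while compatibility with the comultiplication $\Delta_Y \circ f = (f \times f) \circ \Delta_X$ follows from the universal property by composing with $\pi_1$ and $\pi_2$ on $Y \times Y$. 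Since, conversely, any coalgebra morphism has an underlying morphism in $\categ E$, both forgetful functors are fully faithful, hence isomorphisms of categories, with inverses sending $X$ to $(X, \Delta_X, \epsilon_X)$.

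The argument is a direct application of the universal property of the cartesian product, and I do not anticipate a serious obstacle; the mildest subtlety is simply to recall that a $\uAs$- or $\uCom$-coalgebra in $\ecateg E$ is determined by the single pair $(\Delta, \epsilon)$ of morphisms in $\categ E$, so that the whole discussion reduces to the classical fact that in a cartesian monoidal category every object carries a unique (cocommutative) comonoid structure.
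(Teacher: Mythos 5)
Your argument is correct and is exactly the standard verification the paper has in mind (its proof is simply declared ``Straightforward''): terminality of the unit forces the counit, the counit axioms force the comultiplication to be the diagonal via the universal property of the product, cocommutativity and compatibility of arbitrary morphisms then come for free. Nothing is missing, since an $\uAs$- or $\uCom$-coalgebra in $\ecateg E$ is indeed determined by the pair $(\Delta,\epsilon)$, the operads being generated by the binary operation and the unit.
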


\begin{proof}
 Straightforward.
\end{proof}

In that case :
\begin{itemize}
 \itemt Any enriched operad $\operad{P}$ has a unique structure of a cocommutative Hopf operad. Thus,
 the category of $\operad{P}$-algebras and the category of $\operad{P}$-coalgebras inherit
 the structures of closed symmetric monoidal categories.
 \itemt For any enriched operad $\operad{P}$, the category of $\operad{P}$-algebras and the category of $\operad{P}$-coalgebras are
 tensored-cotensored-enriched over the category $\categ{E}$.
\end{itemize}
Moreover, for any enriched operad $\operad Q$, a $\operad Q$-comodule (left or right, for the Hadamard tensor product) is just
an enriched operad $\operad P$ together with a morphism $f: \operad P \to \operad Q$.

\begin{proposition}
Since $\categ E$ is cartesian, for any enriched operad $\operad P$
there exists an enriched category $\operad P_{\mathrm{cart}}$,
and a canonical isomorphism of categories
\[
	\catcog{\ecateg E}{\operad P}
	\simeq \catcog{\ecateg E}{\operad P_{\mathrm{cart}}^\op}
	= \catalg{\ecateg E}{\operad P_{\mathrm{cart}}} 
\]
that commutes with the forgetful functor towards
$\categ E^{\Ob(\operad P)}$.
\end{proposition}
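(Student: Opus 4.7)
The plan is to construct the enriched category $\operad P_{\mathrm{cart}}$ explicitly by taking colours from $\operad P$ and building the hom objects out of the operations of $\operad P$ once we forget the distinction between input positions, then verifying the claimed equivalence by unravelling the universal property of the cartesian product on the coalgebra side.

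For the construction, set $\Ob(\operad P_{\mathrm{cart}}) := \Ob(\operad P)$. For colours $c,d \in \Ob(\operad P)$, let $\mathcal G_{c}$ be the groupoid whose objects are triples $(n, \underline c, i)$ with $n \ge 1$, $\underline c \in \Ob(\operad P)^n$ and $\underline c[i] = c$, and whose morphisms $(n, \underline c, i) \to (n, \underline c', i')$ are permutations $\sigma \in \mbs_n$ with $\underline c = \underline c' \circ \sigma$ and $\sigma(i') = i$. Define
\[
    \operad P_{\mathrm{cart}}(c,d) \coloneqq \colim_{(n,\underline c,i) \in \mathcal G_c} \operad P(\underline c; d),
\]
with transition maps given by the symmetric group action $\sigma^\ast$ of $\operad P$. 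The identity $\mathrm{id}_c$ is the image of $\eta_c \in \operad P(c;c)$ at $(1,(c),1)$, and composition
\[
    \operad P_{\mathrm{cart}}(c,d) \times \operad P_{\mathrm{cart}}(b,c) \to \operad P_{\mathrm{cart}}(b,d)
\]
is induced on representatives $(p, i) \in \operad P(\underline c; d) \times \{i\}$ and $(q, j) \in \operad P(\underline c'; c) \times \{j\}$ by $\gamma_i(p,q) \in \operad P(\underline c \triangleleft_i \underline c'; d)$ with distinguished position $i+j-1$. Associativity and unitality for $\operad P_{\mathrm{cart}}$ then follow from the corresponding operadic axioms in $\operad P$ (and well-definedness after the quotient is the standard fact that $\gamma$ is equivariant for the symmetric group action). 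Note that nullary operations $\operad P(; d)$ play no role here, because the corresponding cooperation into the terminal object $*$ is always uniquely determined in the cartesian setting.

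Next, to produce the isomorphism
\[
    \catcog{\ecateg E}{\operad P} \simeq \catalg{\ecateg E}{\operad P_{\mathrm{cart}}},
\]
let $V = (V_o)_{o \in \Ob(\operad P)}$ be a $\operad P$-coalgebra in $\ecateg E$. Its structure is the data of maps $\operad P(\underline c; d) \to \ecateg E(V_d;\, V_{c_1} \times \cdots \times V_{c_n})$ for each $\underline c, d$, compatible with $\gamma_i$, $\eta_c$ and the symmetric group. Since $\categ E$ is cartesian, the universal property of the product makes the target the product $\prod_{i=1}^n \ecateg E(V_d; V_{c_i})$, so this data is equivalent to a collection of maps $\operad P(\underline c; d) \to \ecateg E(V_d; V_{\underline c[i]})$ indexed by pairs $(\underline c, i)$, equivariant with respect to the $\mbs_n$-actions on both sides. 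By the universal property of the colimit defining $\operad P_{\mathrm{cart}}(c,d)$, this collection assembles into morphisms $\operad P_{\mathrm{cart}}(c,d) \to \ecateg E(V_d; V_c)$ for all $c, d$. Unitality and the composition rule for $\operad P_{\mathrm{cart}}$ translate exactly into the $\eta$- and $\gamma_i$-compatibility of the coalgebra structure, so one obtains a $\operad P_{\mathrm{cart}}$-algebra on the same underlying family; the inverse assignment is produced symmetrically using that a map into a product is its tuple of components. A morphism of $\operad P$-coalgebras consists precisely of morphisms $f_o : V_o \to V'_o$ commuting with all cooperations, which unwinds (by the same universal property) into a morphism of $\operad P_{\mathrm{cart}}$-algebras. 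Commutation with the forgetful functor to $\categ E^{\Ob(\operad P)}$ is automatic since both sides have underlying family $(V_o)_o$.

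The main obstacle will be bookkeeping: verifying that the composition $\gamma_i$ of $\operad P$ descends to a well-defined, associative composition on $\operad P_{\mathrm{cart}}(c,d)$ after the symmetric group colimit, and organising the equivariance carefully so that the reassembly of a tuple of maps into a single cooperation into the product is inverse to its projection. Both are formal, but slightly tedious, consequences of (i) the equivariance axioms of $\operad P$, (ii) the universal property of the cartesian product in $\categ E$, and (iii) the universal property of the defining colimit; once these are spelled out, the final equality $\catcog{\ecateg E}{\operad P} = \catalg{\ecateg E}{\operad P_{\mathrm{cart}}}$ compatible with the forgetful functors follows immediately.
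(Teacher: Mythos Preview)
Your construction of $\operad P_{\mathrm{cart}}$ is missing a relation, and as a result the claimed inverse assignment does not land in $\operad P$-coalgebras. Concretely: in your $\operad P_{\mathrm{cart}}$ the only identifications among classes $[p,i]$ come from the symmetric group action, and your composition only encodes operadic substitution $\gamma_i$ \emph{at the distinguished slot}. But a $\operad P$-coalgebra also forces, for every $k\neq i$, that the $i$-th component of the cooperation associated to $\gamma_k(p,q)$ equals the $i$-th component of the cooperation associated to $p$ (after the obvious index shift). This is precisely the extra family of quotient relations the paper imposes when defining the generating object $X(c,c')$ before taking the free category. Without it, your bijection between ``tuples of component maps'' and ``coalgebra structures'' only goes one way: every $\operad P$-coalgebra gives one of your $\operad P_{\mathrm{cart}}$-algebras, but not conversely.

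A one-line counterexample makes this visible. Take $\operad P=\uCom$ in $\categ{Set}$. Your groupoid $\mathcal G_\ast$ has one isomorphism class per arity $n\ge 1$, so your $\operad P_{\mathrm{cart}}(\ast,\ast)\cong\mathbb N_{\ge 1}$ with composition $n\cdot m=n+m-1$; its algebras are sets equipped with an endomorphism. But $\uCom$-coalgebras in $\categ{Set}$ are just sets. The missing relation collapses all arities to one. (Relatedly, your remark that nullary operations ``play no role'' is not quite right: they contribute no cooperations, but via $\gamma_k$ at $k\neq i$ they still generate identifications in $\operad P_{\mathrm{cart}}$.) Once you add the relation $[\gamma_k(p,q),\,i]\sim[p,i]$ for $k\neq i$ to your colimit, your argument goes through and agrees with the paper's presentation.
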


\begin{proof}
A $\operad P$-coalgebra is the data of objects $\coalgebra V(c)\in \categ E$
for any $c \in \Ob(\operad P)$ and of maps
\[
	a(\underline c ;c ;i) : \operad P(\underline c;c) \times \coalgebra V(c)   \to \coalgebra V(\underline c[i]) 
\]
for $|\underline c|\geq 1$, so that the following diagram commutes
\[
\begin{tikzcd}
 	\coalgebra V(c) \times \operad P (\underline c ; c) \times \operad P (\underline c' ; {\underline c[i]})
	\arrow[r,"a(\underline c;c;i)\times \id{}"] \arrow[d,"\id{}\times \gamma_i"']
	&{\coalgebra V(\underline c[i]) \times \operad P (\underline c' ; {\underline c[i]})}
	\arrow[d,"a(\underline c';{\underline c[i]};j)"]
	\\
	\coalgebra V(c) \times \operad P (\underline c \triangleleft_i \underline c'; c) 
	\arrow[r,"a(\underline c \triangleleft_i \underline c';c;i+j-1)"']
	& \coalgebra V(\underline c'{[j]})
\end{tikzcd}
\]
for any $1\leq i \leq |\underline c|$ and any $1\leq j \leq |\underline c'|$,
the following diagram commutes
\[
\begin{tikzcd}
 	\coalgebra V(c) \times \operad P (\underline c ; c) \times \operad P (\underline c' ; {\underline c[k]})
	\arrow[r,"a(\underline c;c;i)\times \id{}"] \arrow[d,"\id{}\times \gamma_k"']
	&{\coalgebra V(\underline c[i]) \times \operad P (\underline c' ; {\underline c[j]})}
	\arrow[d,"\mathrm{proj}_1"]
	\\
	\coalgebra V(c) \times \operad P (\underline c \triangleleft_k \underline c'; c) 
	\arrow[r,"a(\underline c \triangleleft_k \underline c';c;i)"']
	& \coalgebra V(\underline c{[i]})
\end{tikzcd}
\]
for $1 \leq i<k\leq |\underline c|$ ($\underline c'$ may be empty),
the following diagram commutes
\[
\begin{tikzcd}
 	\coalgebra V(c) \times \operad P (\underline c ; c) \times \operad P (\underline c' ; {\underline c[k]})
	\arrow[r,"a(\underline c;c;i)\times \id{}"] \arrow[d,"\id{}\times \gamma_k"']
	&{\coalgebra V(\underline c[i]) \times \operad P (\underline c' ; {\underline c[k]})}
	\arrow[d,"\mathrm{proj}_1"]
	\\
	\coalgebra V(c) \times \operad P (\underline c \triangleleft_k \underline c'; c) 
	\arrow[r,"a(\underline c \triangleleft_k \underline c';c;i+ |\underline c'|-1)"']
	& \coalgebra V(\underline c{[i]})
\end{tikzcd}
\]
for $1 \leq k<i \leq |\underline c|$ ($\underline c'$ may be empty),
and the following diagrams commute
\[
\begin{tikzcd}
 	\coalgebra V(c)
	\arrow[r,"\id{}\times \eta"] \arrow[rd,equal]
	&{\coalgebra V(c) \times \operad P (c ; c)}
	\arrow[d,"{a(c;c;1)}"]
	\\
	& \coalgebra V(c)
\end{tikzcd}
\quad
\begin{tikzcd}
 	\coalgebra V(c) \times \operad P (\underline c ; c)
	\arrow[r,"{a(\underline c;c;i)}"] \arrow[d,"{\id{}\times \sigma^\ast}"']
	&{\coalgebra V(\underline c[i])}
	\\
	\coalgebra V(c) \times \operad P (\underline c^\sigma ; c) .
	\arrow[ru,"{a(\underline c^\sigma;c;\sigma^{-1}(i))}"']
\end{tikzcd}
\]
for any permutation $\sigma \in \categ S_n$.
For any two colours $c',c$, let $X(c,c')$
be the quotient in $\categ E$ of
$$
\coprod_{n>0; 1 \leq i \leq n}  \coprod_{\underline{c}, |\underline{c}|=n, \underline{c}[i]=c'}
\operad P(\underline{c};c)
$$
by the following relations
\[
\begin{tikzcd}
	\coprod_{\underline c[i]=c'}\coprod_\sigma \operad P(\underline c;c)
	\arrow[rrd, shift left] \arrow[rrd, shift right]
	\\
	&& \coprod_{\underline c[i]=c'}
	\operad P(\underline{c};c)
	\\
	\coprod_{\underline c[i]=c'}
	\coprod_{ k \neq i ,\underline c'}
	\operad P (\underline c ; c) \times \operad P (\underline c' ; {\underline c[k]}).
	\arrow[rru, shift left] \arrow[rru, shift right]
\end{tikzcd}
\]
Then, let $\operad P_{\mathrm{cart}}$ be the quotient in enriched categories
of the enriched category 
$\treeoperad X$ freely generated by $X$
by the following relations
\[
\begin{tikzcd}
 	\operad P (\underline c ; c) \times \operad P (\underline c' ; {\underline c[i]})
	\arrow[r] \arrow[d]
	& X (c;\underline c{[i]} ) \times X ({\underline c[i]}; \underline c'{[j]} )
	\arrow[r,equal]
	& X ({\underline c[i]}; \underline c'{[j]} ) \times X (c;\underline c{[i]} ) 
	\arrow[d]
	\\
	\operad P(\underline c \triangleleft_i \underline c';c)
	\arrow[r]
	& X(c; \underline c'{[j]})
	\arrow[r]
	& \treeoperad X(c;\underline c'{[j]})
\end{tikzcd}
\]
\[
\begin{tikzcd}
 	\ast
	\arrow[r] \arrow[rrd]
	& \operad P(c;c)
	\arrow[r]
	& X(c;c)
	\arrow[d]
	\\
	&& \treeoperad X(c;c) .
\end{tikzcd}
\]
The small enriched category $\operad P_{\mathrm{cart}}$ having such a presentation,
one gets a canonical isomorphism
$$
\catcog{\ecateg E}{\operad P}
	\simeq \catalg{\ecateg E}{\operad P_{\mathrm{cart}}} .
$$
\end{proof}

\begin{corollary}
The forgetful functor from $\operad P$-coalgebras to $\categ E^{\Ob(\operad P)}$
has a right adjoint 
$$
X \mapsto \left( \prod_{c'} [\operad P_{\mathrm{cart}}(c,c'), X(c')] \right)_{c \in \Ob(\operad P)}
$$
and a left adjoint
$$
X \mapsto \left( \coprod_{c'} \operad P_{\mathrm{cart}}(c',c) \times X(c') \right)_{c \in \Ob(\operad P)}.
$$
\end{corollary}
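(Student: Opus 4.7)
The plan is to reduce the statement to the already-established adjunctions for the forgetful functor from algebras over a small enriched category, using the isomorphism produced in the preceding proposition.

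More precisely, the preceding proposition gives a canonical isomorphism of categories
\[
\catcog{\ecateg E}{\operad P} \simeq \catalg{\ecateg E}{\operad P_{\mathrm{cart}}}
\]
that commutes strictly with the forgetful functors down to $\categ E^{\Ob(\operad P)} = \categ E^{\Ob(\operad P_{\mathrm{cart}})}$. Hence the question of producing right and left adjoints to $\forget^{\operad P}: \catcog{\ecateg E}{\operad P} \to \categ E^{\Ob(\operad P)}$ transports directly to the question of producing adjoints to $\forget^{\operad P_{\mathrm{cart}}}: \catalg{\ecateg E}{\operad P_{\mathrm{cart}}} \to \categ E^{\Ob(\operad P_{\mathrm{cart}})}$.

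Now I would invoke the formulas recorded at the end of the previous subsection on enriched categories: for any small enriched category $\operad A$ the forgetful functor $\forget^{\operad A}$ admits as right adjoint $X \mapsto \bigl(\prod_{c'} [\operad A(c,c'), X_{c'}]\bigr)_{c}$ and as left adjoint $X \mapsto \bigl(\coprod_{c'} \operad A(c',c) \otimes X_{c'}\bigr)_{c}$. Specializing to $\operad A = \operad P_{\mathrm{cart}}$ and using that the monoidal product of $\categ E$ is the cartesian product in the present setting (so $\otimes = \times$), the claimed formulas fall out immediately. No obstacle arises, since all the real work has been absorbed into the construction of $\operad P_{\mathrm{cart}}$ and the comonadic/monadic adjoint lifting for enriched categories performed earlier; this corollary is essentially a repackaging via the isomorphism $\catcog{\ecateg E}{\operad P} \simeq \catalg{\ecateg E}{\operad P_{\mathrm{cart}}}$.
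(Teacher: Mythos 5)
Your argument is correct and is exactly the one the paper intends: the corollary is obtained by transporting the adjoints along the isomorphism $\catcog{\ecateg E}{\operad P} \simeq \catalg{\ecateg E}{\operad P_{\mathrm{cart}}}$ (which commutes with the forgetful functors) and then applying the formulas already recorded for the forgetful functor from modules over a small enriched category, with $\otimes = \times$ in the cartesian setting.
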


For any symmetric monoidal category $\left(\categ F, \otimes , \II_{\categ F}\right)$, any oplax symmetric monoidal functor from the cartesian monoidal category $\categ E$ to $\categ F$ factorises essentially uniquely through $\uCom$-coalgebras in $\categ F$. Hence, in some sense $\uCom$-coalgebras in $\categ F$ form the best cartesian approximation of $\categ F$.

\subsection{Changing the ground category}

Consider two bilinear symmetric monoidal categories $(\categ E, \otimes , \II_{\categ E})$
and $(\categ F, \otimes , \II_{\categ F})$ and a functor $G: \categ E \to \categ F$. It induces a
functor from $\categ E$-enriched coloured symmetric sequences to
$\categ F$-enriched coloured symmetric sequences.

\begin{proposition}\label{propchangeone}
 Suppose that $G$ is lax symmetric monoidal. Then, for any $\categ E$-enriched operad $\operad P$,
 the coloured symmetric sequence $G(\operad P)$ inherits the structure of a $\categ F$-enriched
 operad. This gives us a functor from
 $\categ E$-enriched operads to $\categ F$-enriched operads.
\end{proposition}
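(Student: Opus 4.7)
The plan is to construct the $\categ F$-enriched operad structure on $G(\operad P)$ directly from the operad structure of $\operad P$ together with the lax symmetric monoidal structure of $G$. Recall that a lax symmetric monoidal functor comes equipped with coherence maps
\[
\mu_{X,Y} : G(X) \otimes G(Y) \to G(X \otimes Y), \qquad \eta : \II_{\categ F} \to G(\II_{\categ E}),
\]
natural in $X,Y$ and compatible with the associators, unitors, and symmetries.

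First I would set $\Ob(G(\operad P)) = \Ob(\operad P)$ and take the underlying coloured symmetric sequence of $G(\operad P)$ to be obtained by applying $G$ objectwise, i.e. $G(\operad P)(\underline c;c) = G(\operad P(\underline c;c))$. The symmetric group action is defined as $G(\sigma^\ast)$; the relations $\sigma^\ast \circ \mu^\ast = (\mu \circ \sigma)^\ast$ and $1^\ast = \id$ are preserved since $G$ is a functor. Next, I would define the operadic composition $\gamma_i^{G(\operad P)}$ as the composite
\[
\begin{tikzcd}
G(\operad P(\underline c;c)) \otimes G(\operad P(\underline c';\underline c[i])) \ar[r, "\mu"] & G(\operad P(\underline c;c) \otimes \operad P(\underline c';\underline c[i])) \ar[r, "G(\gamma_i)"] & G(\operad P(\underline c \triangleleft_i \underline c';c)),
\end{tikzcd}
\]
and the units $\eta^{G(\operad P)}_c$ as the composite $\II_{\categ F} \xrightarrow{\eta} G(\II_{\categ E}) \xrightarrow{G(\eta_c)} G(\operad P(c;c))$.

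The main verification step is then to check the operad axioms: associativity of $\gamma_i$, unitality with respect to $\eta_c$, and equivariance with respect to the symmetric group actions. For associativity, I would paste together two instances of $\mu$ and one instance of $G$ applied to an associativity square of $\operad P$; the hexagonal coherence identity for a lax monoidal structure together with the associator identity in $\categ E$ and the functoriality of $G$ produces the required commuting diagram. The unitality axioms reduce to a combination of the unit coherence of $G$ (the triangles relating $\eta$, $\mu$, and the unitors of $\categ E, \categ F$) and the unit axiom of $\operad P$. Equivariance is the only place the \emph{symmetry} of $G$ is needed: when comparing $\sigma \triangleleft_i \tau$-style actions, one swaps tensor factors using the symmetry of $\categ F$, which matches $G$ applied to the symmetry of $\categ E$ precisely because $G$ is lax \emph{symmetric} monoidal.

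Finally, to obtain a functor, I would observe that for any morphism $f: \operad P \to \operad Q$ of $\categ E$-enriched operads, the maps $G(f(\underline c;c))$ together with the function $f(-): \Ob(\operad P) \to \Ob(\operad Q)$ form a morphism of $\categ F$-enriched operads, since $G$ preserves the commuting diagrams expressing compatibility with composition, units, and the symmetric actions. Strict functoriality in $f$ is immediate from the functoriality of $G$. The main obstacle here is not conceptual but bookkeeping: spelling out the coherence diagrams for the operad axioms is a lengthy but entirely routine diagram chase driven by the lax symmetric monoidal coherences of $G$ and the operad axioms of $\operad P$.
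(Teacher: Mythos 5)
Your construction is exactly the standard one the paper has in mind (the paper's proof is simply ``Straightforward''): apply $G$ objectwise, define $\gamma_i$ as $G(\gamma_i)\circ\mu$ and the units as $G(\eta_c)\circ\eta$, and check the axioms via the lax symmetric monoidal coherences, with the symmetry of $G$ entering only for equivariance. This is correct and matches the intended argument; no gaps.
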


\begin{proof}
 Straightforward.
\end{proof}

\begin{proposition}\label{propositionchange}
 Suppose that $G$ is both lax symmetric monoidal and is oplax monoidal so that the following diagrams
 commute
 \[
\begin{tikzcd}
 	G(X\otimes Y) \otimes G(U \otimes V) 
	\arrow[r] \arrow[d]
	& G(X\otimes Y \otimes U \otimes V )
	\arrow[d]
	\\
	G(X)\otimes G(Y) \otimes G(U) \otimes G(V)
	\arrow[d]
	& G(X\otimes U \otimes Y \otimes V )
	\arrow[d]
	\\
	G(X)\otimes G(U) \otimes G(X) \otimes G(V)
	\arrow[r]
	& G(X \otimes U) \otimes G(Y \otimes V)
\end{tikzcd}
\begin{tikzcd}
 	\II
	\arrow[r] \arrow[rd, equal]
	& G(\II)
	\arrow[d]
	\\
	& \II
\end{tikzcd}
 \]
 \[
\begin{tikzcd}
 	G(\II) \otimes G(\II)
	\arrow[r] \arrow[d]
	& G(\II\otimes \II)
	\arrow[d]
	\\
	\II \otimes \II
	\arrow[rd]
	& G(\II )
	\arrow[d]
	\\
	&\II
\end{tikzcd}
\begin{tikzcd}
 	G(\II) \otimes G(\II)
	& G(\II\otimes \II)
	\arrow[l]
	\\
	\II \otimes \II
	\arrow[u]
	& G(\II )
	\arrow[u]
	\\
	&\II
	\arrow[lu] \arrow[u]
\end{tikzcd}
 \]
 Then, the induced functor from operads enriched in $\categ E$ to
operads enriched in $\categ F$ is oplax monoidal.
 If $G$ is actually oplax symmetric monoidal, then so is the induced functor.
\end{proposition}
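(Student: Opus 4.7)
The plan is to construct explicit oplax monoidal structure maps on the induced functor $G_\ast : \Operade \to \Operad_{\categ F}$ and then verify the axioms one by one, using the four commutative diagrams in the hypothesis to reduce them to the oplax coherence of $G$ itself.

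First, I would define the candidate comparison morphisms. For two $\categ E$-enriched operads $\operad P$ and $\operad Q$, I would take the morphism of $\categ F$-enriched operads
\[
G_\ast(\operad P \otimeshadamard \operad Q) \to G_\ast(\operad P) \otimeshadamard G_\ast(\operad Q)
\]
to be the identity on the set of colours $\Ob(\operad P) \times \Ob(\operad Q)$ and, on arity-$n$ components, to be the oplax structural map
\[
G\bigl(\operad P(\underline c;c) \otimes \operad Q(\underline d;d)\bigr) \to G(\operad P(\underline c;c)) \otimes G(\operad Q(\underline d;d)).
\]
For the unit, I would use the oplax counit to build the map $G_\ast(\uCom_{\categ E}) \to \uCom_{\categ F}$ given componentwise by $G(\II_{\categ E}) \to \II_{\categ F}$.

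Second, I would verify that the arity-wise oplax maps above assemble into morphisms of $\categ F$-enriched operads. Compatibility with the symmetric group actions is automatic from naturality of the oplax structure. Compatibility with operadic composition is the crucial point: it amounts to a hexagonal diagram whose two boundaries express, respectively, $G_\ast(\gamma_i^{\operad P \otimeshadamard \operad Q})$ followed by the oplax map, and the oplax map followed by $\gamma_i^{G_\ast \operad P \otimeshadamard G_\ast \operad Q}$. Unwinding the definitions, this hexagon is precisely the first commutative diagram assumed in the statement (with $X = \operad P(\underline c;c)$, $Y = \operad Q(\underline d;d)$, $U = \operad P(\underline c';\underline c[i])$, $V = \operad Q(\underline d';\underline d[i])$), composed with the operadic composition of $\operad P$ and $\operad Q$. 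Compatibility with the units $\eta_c$ comes from the other three diagrams (relating $G(\II) \otimes G(\II) \to G(\II \otimes \II)$ with the oplax counit $G(\II) \to \II$ and with $\II \to G(\II)$); these express that the composite $\II_{\categ F} \otimes \II_{\categ F} \to G(\II_{\categ E}) \otimes G(\II_{\categ E}) \to G(\II_{\categ E} \otimes \II_{\categ E}) \to G(\II_{\categ E}) \otimes G(\II_{\categ E})$ agrees with the oplax counits.

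Third, I would check the oplax monoidal coherence axioms of $G_\ast$ — the hexagon for the associator and the two triangles for the unitors — by restricting to the underlying coloured symmetric sequences. On each arity-$n$ component the comparison maps are built purely from the oplax structure maps of $G$, so the coherence axioms for $G_\ast$ on operads reduce componentwise to the oplax coherence axioms already satisfied by $G$ (together with the associators and unitors of $\categ F$, which match those used to define the Hadamard tensor product by construction). For the symmetric refinement, if $G$ is moreover oplax \emph{symmetric} monoidal, then on each arity the oplax map commutes with the symmetry of $\categ F$, and since the commutator of the Hadamard tensor product is defined arity-wise from that of $\categ E$ and $\categ F$, the symmetry axiom for $G_\ast$ is immediate.

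I expect the main obstacle to be the careful bookkeeping in step two, namely translating the definition of the Hadamard operadic composition $\gamma_i$ on $\operad P \otimeshadamard \operad Q$ into a diagram of the exact shape appearing in the first hypothesis of the proposition; once this identification is made, all remaining verifications are componentwise consequences of the hypothesized diagrams and the oplax axioms of $G$.
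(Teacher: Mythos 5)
Your proposal is correct and follows essentially the same route as the paper: the induced oplax structure is defined arity-wise from the oplax structure maps of $G$ (and the counit $G(\II)\to\II$ for the unit operad), and the whole content of the proof is that these componentwise maps assemble into morphisms of enriched operads, which is exactly what the four hypothesized diagrams guarantee. Your write-up merely spells out in more detail the identification of the composition-compatibility hexagon with the first diagram and the reduction of the coherence and symmetry axioms to those of $G$, which the paper leaves implicit.
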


\begin{proof}
This amounts to prove that for any $\categ E$-enriched operads $\operad P$ and $\operad Q$,
the structural maps that make $G$ oplax monoidal
\begin{align*}
	G(\operad P(\underline c;c)\otimes \operad Q(\underline c;c))
	&\to G(\operad P(\underline c;c))\otimes G(\operad Q(\underline c;c))
	\\
	G(\III) &\to \III 
\end{align*}
form morphisms of operads. This follows from the commutation of the above diagrams.
\end{proof}

\begin{corollary}
 Under the assumption of Proposition \ref{propositionchange}, the functor $G$ induces a functor
 from Hopf operads enriched in $\categ E$ to Hopf operads enriched in $\categ F$ and a functor from Hopf $\operad Q$-comodules
 to Hopf $G(\operad Q)$-comodules, for any $\categ E$-enriched Hopf operad $\operad Q$. If $G$ is oplax symmetric
 monoidal the induced functor sends cocommutative Hopf operads to cocommutative Hopf operads.
\end{corollary}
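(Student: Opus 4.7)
The proof is essentially a formal consequence of Proposition \ref{propositionchange} together with the general principle that oplax monoidal functors between monoidal categories preserve comonoids and their comodules, and that oplax \emph{symmetric} monoidal functors preserve cocommutativity. So the plan is to spell out how to transport these structures rather than do new computational work.

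First, recall that a Hopf operad is by definition a comonoid in $\mathrm{sk}(\Operad_{\categ E})$ for the Hadamard tensor product $\otimeshadamard$. Given a $\categ E$-enriched Hopf operad $(\operad Q, \Delta\colon \operad Q \to \operad Q \otimeshadamard \operad Q, \varepsilon\colon \operad Q \to \uCom)$, Proposition \ref{propchangeone} gives us the $\categ F$-enriched operad $G(\operad Q)$, and Proposition \ref{propositionchange} supplies natural morphisms of $\categ F$-enriched operads
\[
	G(\operad Q) \otimeshadamard G(\operad Q) \leftarrow G(\operad Q \otimeshadamard \operad Q), \qquad G(\uCom) \to \uCom,
\]
(the first being part of the oplax monoidal structure on the induced functor, the second being its counit). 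Composing $G(\Delta)$ and $G(\varepsilon)$ with these structural maps, I obtain a coproduct and counit on $G(\operad Q)$; coassociativity and counitality of these follow from coassociativity and counitality of $(\Delta,\varepsilon)$ together with the coherence axioms that make the induced functor oplax monoidal.

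Second, for a Hopf $\operad Q$-comodule, i.e.\ an enriched operad $\operad P$ with coaction $\rho\colon \operad P \to \operad P \otimeshadamard \operad Q$ (or $\operad Q \otimeshadamard \operad P$) compatible with $\Delta$ and $\varepsilon$, the same recipe applies: the composite
\[
	G(\operad P) \xrightarrow{G(\rho)} G(\operad P \otimeshadamard \operad Q) \to G(\operad P) \otimeshadamard G(\operad Q)
\]
endows $G(\operad P)$ with the structure of a $G(\operad Q)$-comodule, and the coassociativity and counitality of the coaction transfer to $G(\operad P)$ from the oplax monoidal coherence of the induced functor. One also checks functoriality in morphisms of Hopf comodules, which is immediate from the naturality of the oplax monoidal structure.

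Finally, suppose $G$ is oplax \emph{symmetric} monoidal, so that the induced functor on operads is oplax symmetric monoidal by Proposition \ref{propositionchange}. Then the structural map $G(\operad Q \otimeshadamard \operad Q) \to G(\operad Q) \otimeshadamard G(\operad Q)$ intertwines the symmetry of $\otimeshadamard$ on the source and target. Consequently, if $\Delta$ satisfies $\tau \circ \Delta = \Delta$ for the symmetry $\tau$ of $\otimeshadamard$, the induced coproduct on $G(\operad Q)$ also satisfies this, i.e.\ $G(\operad Q)$ is cocommutative. No step presents a genuine obstacle; the only thing requiring care is bookkeeping, i.e.\ checking the various hexagon/pentagon-type diagrams in the proof that the induced maps assemble into honest comonoid/comodule structures, but this is entirely formal given Proposition \ref{propositionchange}.
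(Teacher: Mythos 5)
Your proposal is correct and matches the paper's (implicit) argument: the paper offers no separate proof, treating the corollary as the formal consequence of Proposition \ref{propositionchange} together with the standard fact that oplax monoidal functors preserve comonoids and their comodules, and oplax symmetric monoidal functors preserve cocommutativity — exactly the argument you spell out.
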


\begin{example}
 For any bilinear symmetric monoidal category $(\categ E, \otimes , \II_{\categ E})$, the
essentially unique cocontinuous functor from sets to $\categ E$ that sends the one element set $\ast$ to
$ \II_{\categ E}$ is symmetric monoidal. Hence, the image through this functor of any operad in sets
in an enriched cocommutative Hopf operad.
\end{example}

Let $R$ be a commutative ring. The normalised Moore complex $N$ is the left adjoint functor from simplicial sets
to chain complexes of $R$-modules so that
\[
	N(X)_k \coloneqq \field \cdot \{\text{non degenerate k simplicies of }X\} = \left(\field \cdot X_k\right)/\{\text{degenerate k simplicies}\}.
\]
Moreover, $d(x)= \sum_{i=0}^k (-1)^i d_i (x)$, for any $x\in X_k$.

 On the one hand, $N$ has the structure of a lax symmetric monoidal functor given by the Eilenberg-Zilber shuffle map
that may be defined as follows. An k-simplex of $\Delta[n] \times \Delta[m]$ is a function $f=(f_1,f_2)$ from $\{0, \ldots, k\}$
to $\{0, \ldots, n\} \times \{0, \ldots, m\}$ whose two projections 
$f_1$ and $f_2$ are nondecreasing.
It is degenerate if and only if there exists $1\leq i\leq k$ so that $f(i-1)=f(i)$. Subsequently
a n+m-simplex $f=(f_1,f_2)$ is nondegenerate if and only if
for any $1\leq i \leq n+m$ either
\begin{itemize}
 \itemt $f_1(i)=f_1(i-1) +1$ and $f_2(i)=f_2(i-1)$;
\itemt or $f_1(i)=f_1(i-1)$ and $f_2(i)=f_2(i-1)+1$.
\end{itemize}
Such a simplex is determined by the subset of $\{1, \ldots n+m\}$ spanned
by elements $i$ so that  $f_1(i)=f_1(i-1) +1$. So, the set of
nondegenerate n+m-simplicies of $\Delta[n] \times \Delta[m]$ is
isomorphic to the set of subsets $a \subset\{1, \ldots n+m\}$ of cardinal $n$
which is isomorphic to the set of subsets $\overline a \subset\{1, \ldots n+m\}$ of cardinal $m$.
If $1_n$ and $1_m$ are the top nondegenerate simplicies of respectively $\Delta[n]$ and $\Delta[m]$, then
\[
	\mathrm{sh}(1_n\otimes 1_m) =\sum_{a \subset\{1, \ldots n+m\}; \#a=n} \mathrm{sign}(a) \cdot a ,
\]
 where $\mathrm{sign}(a)$ denotes the signature of the only (n,m)-shuffle permutation  that sends $\{1, \ldots, n\} \subset \{1, \ldots n+m\}$
 to $a$.

On the other hand, $N$ has the structure of an oplax monoidal functor given by the Alexander-Whitney map
\begin{align*}
 	\mathrm{aw}: N(X\times Y) &\to N(X) \otimes N(Y)
	\\
	(x,y) \in X_n \times Y_n &\mapsto \sum_{p=0}^n x_{|\{1, \ldots, p\}} \otimes y_{|\{p+1, \ldots, n\}}
\end{align*}
If $a$ is a subset of $\{1, \ldots, n+m\}$ of cardinal $n$ seen as an element of $N(\Delta[n]\times \Delta[m])$, then
$\mathrm{aw}(a)=0$ whenever $a\neq \{1, \ldots, n\}$. Otherwise, $\mathrm{aw}(a)=1_n \otimes 1_m$.

\begin{proposition}\label{propsimpdg}
The normalised Moore complex functor $N$ together with the Eilenberg-Zilber map and the Alexander-Whitney map
satisfy the conditions of Proposition \ref{propositionchange}. 
\end{proposition}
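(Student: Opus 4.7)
The plan is to unpack the two structures on $N$ and then verify each of the four diagrams in Proposition \ref{propositionchange} by direct calculation on basis elements, reducing to a combinatorial identity about shuffles. The fact that the Eilenberg--Zilber shuffle $\mathrm{sh}$ endows $N$ with the structure of a lax symmetric monoidal functor, and that the Alexander--Whitney map $\mathrm{aw}$ endows it with an oplax (but not oplax symmetric) monoidal structure, is classical; I would cite it and then concentrate on the compatibility diagrams, which are the real content.

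First I would record the unit compatibilities: the unit $N(\ast)\simeq R$ is strict on both sides because $\Delta[0]$ has a single nondegenerate simplex in degree $0$, and $\mathrm{aw}$ and $\mathrm{sh}$ both reduce to the identity when one of the factors is $\ast$. This instantly handles the three diagrams involving $\II$, so the only nontrivial point is the first diagram. Writing $\tau_{YU}$ for the middle swap, this diagram asks that
\[
(\mathrm{sh}_{X,U}\otimes\mathrm{sh}_{Y,V})\circ(\id\otimes\tau_{N(Y),N(U)}\otimes\id)\circ(\mathrm{aw}_{X,Y}\otimes\mathrm{aw}_{U,V})
=\mathrm{aw}_{X\otimes U,Y\otimes V}\circ N(\id\otimes\tau_{Y,U}\otimes\id)\circ\mathrm{sh}_{X\otimes Y,U\otimes V}.
\]
I would test this on a generator $(x,y)\otimes(u,v)$ with $(x,y)\in(X\times Y)_n$ and $(u,v)\in(U\times V)_m$. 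On the left-hand side, $\mathrm{aw}\otimes\mathrm{aw}$ produces $\sum_{p,q} x|_{\{1,\dots,p\}}\otimes y|_{\{p+1,\dots,n\}}\otimes u|_{\{1,\dots,q\}}\otimes v|_{\{q+1,\dots,m\}}$, and after applying $\mathrm{sh}\otimes\mathrm{sh}$ each of the two pieces becomes a signed sum over $(p,q)$-shuffles and $(n-p,m-q)$-shuffles. On the right-hand side, $\mathrm{sh}$ first produces a signed sum over $(n,m)$-shuffles $a\subset\{1,\dots,n+m\}$ of $(x,y,u,v)$-simplices in $X\times Y\times U\times V$, and then $\mathrm{aw}$ picks up only those terms where the partition $\{1,\dots,r\}\cup\{r+1,\dots,n+m\}$ is compatible with $a$ in the unique way that sends the nondegenerate part to $\{1,\dots,n\}$-like patterns.

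The combinatorial matching to carry out is then the identity that a single $(n,m)$-shuffle together with the Alexander--Whitney cut corresponds bijectively (with matching signs) to a pair of a $(p,q)$-shuffle and an $(n-p,m-q)$-shuffle plus the cut position. This is a standard manipulation — it is precisely the classical Eilenberg--Zilber/Alexander--Whitney Hopf compatibility used e.g.\ to show that $N$ is a Hopf monoidal functor, or equivalently that $N(X\times Y)\to N(X)\otimes N(Y)$ and its partner endow $N$ with a bialgebra-like structure. I would carry out the sign bookkeeping by writing a shuffle $a$ as the disjoint union of $a\cap\{1,\dots,r\}$ and $a\cap\{r+1,\dots,n+m\}$, noting that $\mathrm{aw}$ forces $r=p+q$ with $a\cap\{1,\dots,r\}=\{1,\dots,p\}$ (up to a shuffle) and $a\cap\{r+1,\dots,n+m\}$ corresponding to an $(n-p,m-q)$-shuffle, and observing that the signature of $a$ factors as the product of the signatures of the two sub-shuffles.

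The main obstacle is purely this sign-and-combinatorics bookkeeping; conceptually it is the well-known compatibility of $\mathrm{sh}$ and $\mathrm{aw}$ that turns the normalized chains on a simplicial coalgebra into a dg bialgebra. Once the identity is verified, the remaining diagrams of Proposition \ref{propositionchange} follow from the unit analysis above, and the conditions of that proposition are established.
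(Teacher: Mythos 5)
Your proposal is correct in substance, but it takes a genuinely different route from the paper. The paper does not verify the braiding diagram on arbitrary elements: it first observes that both composites $f,g\colon N(X\times Y)\otimes N(U\times V)\to N(X\times U)\otimes N(Y\times V)$ are natural and that $N$ preserves colimits, so it suffices to check the identity on representables $X=\Delta[n]$, $Y=\Delta[m]$, $U=\Delta[l]$, $V=\Delta[k]$, and there only on top-degree generators $a\otimes b$ (with $a,b$ encoding nondegenerate top simplices as shuffles); since the Alexander--Whitney map kills every shuffle except the "unshuffled" one, both sides vanish unless $a,b$ are the distinguished subsets, and in that case both equal $(-1)^{mk}\,\{1,\dots,n\}\otimes\{1,\dots,m\}$, so the whole sign analysis collapses to a single sign. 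Your approach instead proves the identity for arbitrary simplicial sets by a term-by-term bijection: an $(n,m)$-shuffle $a$ of the total simplex together with an Alexander--Whitney cut at $r$ corresponds to the pair of sub-shuffles $a\cap\{1,\dots,r\}$ and $a\cap\{r+1,\dots,n+m\}$ together with the bi-cut $(p,q)$, $p+q=r$, and the underlying restricted/degenerated simplices on the two sides agree under this correspondence (the counts $\binom{n+m}{n}(n+m+1)=\sum_{p,q}\binom{p+q}{p}\binom{n+m-p-q}{n-p}$ confirm the bijection is exhaustive, so no cancellation is needed). This buys a self-contained elementwise argument that avoids the reduction step, at the price of heavier combinatorics. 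One point you must correct when carrying out the bookkeeping: the signature of $a$ is \emph{not} the product of the signatures of the two sub-shuffles; it is that product times the block sign $(-1)^{(n-p)q}$ coming from moving the back $x$-block past the front $u$-block, and this extra factor is exactly the Koszul sign produced by the middle symmetry $\id\otimes\tau_{N(Y),N(U)}\otimes\id$ in your left-hand composite. As written ("the signature of $a$ factors as the product of the signatures of the two sub-shuffles"), the signs would not match; once the block sign is identified with the Koszul sign, your matching closes and the unit diagrams are indeed immediate, as in the paper.
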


\begin{proof}
The commutation of the diagrams that involve the unit is clear. Thus, let us prove that the first diagram commutes, that is
the two maps
\[
	f,g: {N(X\times Y)\otimes N(U\times V)} \to 
	 {N(X\times U)\otimes N(Y\times V)}
\]
from the initial object to the final object of the first diagram of Proposition \ref{propositionchange} are equal. Since these two maps
are natural and since $N$ preserves colimits, it suffices to prove the result for objects $X,Y,U,V$ of the form
\[
\begin{cases}
 	X= \Delta[n];
	\\
	Y= \Delta[m];
	\\
	U= \Delta[l];
	\\
	V= \Delta[k].
\end{cases}
\]
Moreover, it suffices to prove the result for top degree elements of the form $a\otimes b$ where $a$ and $b$ are subsets of respectively $\{1, \ldots, n+m\}$
and $\{1, \ldots, l+k\}$ and of cardinals respectively
$n$ and $k$; indeed, elements of lower degrees are image of some
top degrees elements through a map
such as
\[
	\Delta[n'] \times \Delta[m'] \times \Delta[l'] \times \Delta[k'] \to \Delta[n] \times \Delta[m] \times \Delta[l] \times \Delta[k] .
\]
One can check
that $f(a\otimes b)=0=g(a\otimes b)$ whenever $a \neq \{1, \ldots, n\}$ and $b \neq \{1, \ldots, k\}$.
Otherwise, one can check that
\[
	f(a\otimes b)=(-1)^{mk} \{1, \ldots, n\} \otimes \{1, \ldots, m\} =g(a\otimes b) .
\]
\end{proof}

Since the monoidal category of simplicial sets is cartesian, then all simplicial operads are cocommutative Hopf. Moreover, Proposition
\ref{propsimpdg} tells us that the images under the functor $N$ of simplicial operads are Hopf dg operads. However, they are not in general
cocommutative Hopf operads. This is the reason why we deal so much about non necessarily cocommutative Hopf operads.
Actually, these dg operads should be cocommutative Hopf up to homotopy.

\subsection{Mapping operadic structures}

Monochromatic enriched operads (as well as operads over a fixed set of colours)
are themselves algebras over a set-theoretical operad $\catoperad{Op}$.
Hence, the category $\catalg{\categ E}{\catoperad{Op}}$
of monochromatic enriched operads have the structure of a symmetric monoidal category ; this is the Hadamard
tensor product.
Moreover, the category the category $\catalg{\categ E}{\catoperad{Op}}$ of $\catoperad{Op}$-coalgebras
endow a closed symmetric monoidal structure and operads are enriched-tensored-cotensored
over $\catoperad{Op}$-coalgebras. We call these coalgebras monochromatic enriched co-operads.
The case of operads and co-operads in chain complexes is for instance treated
in \cite{Roca2022}.

\begin{definition}
 A monochromatic enriched co-operad is a coalgebra in $\categ E$ over the
 operad $\catoperad{Op}$, that is a sequence $(\operad D(n))_{n \in \mathbb N}$ of objects of $\categ E$ together
 with maps
\begin{align*}
 	\delta_i &: \operad D (n+m-1) \to  \operad D(n) \otimes \operad D(m)
	\\
	\tau&: \operad D (1) \to  \II
	\\
	\sigma_\ast &: \operad D(n) \to \operad D(n)
\end{align*}
for any $n\geq 1,m\geq 0$, any $1\leq i\leq n$ and any $\sigma \in \categ S_n$, that satisfy relations that are dual to those defining an operad.
We denote $\coOperad_{\categ E,\ast}$ and $\Operad_{\categ E,\ast}$ respectively the category of monochromatic
enriched co-operads and
the category of monochromatic
enriched operads.
\end{definition}

\begin{corollary}
The category $\coOperad_{\categ E,\ast}$
of monochromatic co-operads admits a closed symmetric monoidal structure given by the Hadamard
monoidal structure on monochromatic symmetric sequences. Moreover, the category of monochromatic operads
is tensored-cotensored-enriched
over the category of monochromatic co-operads.
\end{corollary}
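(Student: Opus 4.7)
The plan is to exhibit the set-theoretic operad $\catoperad{Op}$ that encodes monochromatic enriched operads as a (cocommutative) Hopf operad enriched in $\categ{E}$, and then deduce both claims from the general machinery assembled in this section. First I would invoke the example at the end of the previous subsection: the essentially unique cocontinuous symmetric monoidal functor from $\Set$ to $\categ{E}$ sends any operad in sets to an enriched cocommutative Hopf operad. Applied to $\catoperad{Op}$, this yields a cocommutative Hopf operad structure on $\catoperad{Op}$ viewed inside $\sk(\Operade)$, whose Hadamard coproduct $\catoperad{Op}\to \catoperad{Op}\otimeshadamard \catoperad{Op}$ concretely implements the diagonal on operations. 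Under the isomorphisms $\catalg{\ecateg E}{\catoperad{Op}}\simeq \Operad_{\categ{E},\ast}$ and $\catcog{\ecateg E}{\catoperad{Op}}\simeq \coOperad_{\categ{E},\ast}$, the induced Hadamard tensor product on coalgebras is precisely the Hadamard tensor product of underlying monochromatic symmetric sequences.

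Next I would apply the monoidal package of Corollary \ref{corollarycoalgebra} (together with its cocommutative refinement discussed after it) to conclude that $\coOperad_{\categ{E},\ast}$ inherits a symmetric monoidal structure lifting that of symmetric sequences, and that $\Operad_{\categ{E},\ast}$ inherits a symmetric monoidal structure as well. Taking $\operad{Q}=\operad{P}=\catoperad{Op}$ in the structural results on Hopf operads and their comodules (viewing $\catoperad{Op}$ as a left and right comodule over itself via the coproduct), the same section yields a canonical tensorisation
\[
\coOperad_{\categ{E},\ast}\times \Operad_{\categ{E},\ast}\to \Operad_{\categ{E},\ast},
\]
and an analogous cotensorisation, obtained by the concrete formulas given in the Hopf comodule corollary.

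To upgrade these to a closed symmetric monoidal structure on $\coOperad_{\categ{E},\ast}$ and to a full tensored-cotensored-enriched structure of $\Operad_{\categ{E},\ast}$ over $\coOperad_{\categ{E},\ast}$, I would invoke the standing assumptions of the section: $\categ{E}$ is closed, complete and cocomplete, and the forgetful functors from $\operad{P}$-algebras (resp. $\operad{P}$-coalgebras) to $\categ{E}^{\Ob(\operad{P})}$ are monadic (resp. comonadic) with monads (resp. comonads) preserving reflexive coequalisers (resp. coreflexive equalisers). These are precisely the hypotheses of Corollaries \ref{proposition:cotensoradjointalg} and \ref{proposition:cotensoradjointalg2}, which produce the left adjoints to the cotensorisation in each variable and hence the internal hom for $\coOperad_{\categ{E},\ast}$, the tensorisation by co-operads on $\Operad_{\categ{E},\ast}$, and the enrichment $\{-,-\}$ of $\Operad_{\categ{E},\ast}$ in $\coOperad_{\categ{E},\ast}$. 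The compatibility of all these structures (associators, unitors, adjunction counits, etc.) is then automatic from the abstract Hopf comodule framework of the preceding subsections.

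The only genuine verification, and the step I expect to be the main obstacle, is checking that taking $\operad{Q}=\catoperad{Op}$ and unwinding the Hopf comodule tensorisation really reproduces the Hadamard tensor product on symmetric sequences. This amounts to showing that the composition
\[
\catoperad{Op}\to \catoperad{Op}\otimeshadamard \catoperad{Op}\xrightarrow{A\otimeshadamard A'}\End(\ecateg{E})\otimeshadamard \End(\ecateg{E})\to \End(\ecateg{E})
\]
encodes the componentwise tensor product $(A\otimeshadamard A')(n)=A(n)\otimes A'(n)$ with its diagonal $\Sy_n$-action; this is a direct calculation using the fact that the Hopf coproduct on $\catoperad{Op}$ is induced from the set-level diagonal, so that all structure maps factor through the shuffle and symmetry morphisms of $\categ{E}$ as required. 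Once this identification is made, the two claims follow at once from the general results cited above.
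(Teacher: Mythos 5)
Your proposal is correct and follows essentially the same route the paper intends: view the set-theoretic operad $\catoperad{Op}$ encoding monochromatic operads as an enriched cocommutative Hopf operad (via the symmetric monoidal functor $\Set \to \categ E$, equivalently the cartesian diagonal), identify its algebras and coalgebras in $\ecateg E$ with $\Operad_{\categ E,\ast}$ and $\coOperad_{\categ E,\ast}$, and then apply the general Hopf-operad machinery together with the standing adjoint-existence hypotheses of this section to get the closed symmetric monoidal structure on co-operads and the tensor-cotensor-enrichment of operads over them. The verification you flag as the main obstacle is in fact immediate, since the coproduct on $\catoperad{Op}$ is the set-level diagonal, so the induced tensor product is componentwise, i.e.\ the Hadamard product of underlying symmetric sequences.
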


\begin{definition}
Let $\Perm$ be the monochromatic operad in sets generated by $m \in \Perm(2)$
that satisfies the relations
\[
\begin{cases}
  m \triangleleft (1, m) = m \triangleleft (m, 1) ;
  \\
  m \triangleleft (1, m) = m \triangleleft (1, m^\sigma).
\end{cases}
\]
\end{definition}

\begin{remark}
 A Perm algebra that admits a unit for the product is actually a unital commutative algebra. Indeed, if $u$ is a unit
 we have for any element $x,y$
 \[
 	m(x,y)= m(u,m(x,y))=m(u,m(y,x))= m(y,x).
 \]
I was communicated this fact that makes purposeless the study for themselves of unital Perm algebras by Joost Nuiten.
\end{remark}

Let us denote $\catoperad{nuOp}$
the operad that encodes nonunital monochromatic operads.
Its set of colours is $\mathbb N$ and it is
generated by composition products
$\gamma_i \in \catoperad{nuOp}((n,m);n+m-1)$ and
permutations $\sigma^\ast \in \catoperad{nuOp}(n;n)$.
We have a morphism of set theoretical operads from $\catoperad{nuOp}$ to
$\Perm$
that sends $\sigma^\ast$ to $1$ and sends any element $\gamma_i$ to $m$.

\begin{corollary}
 The category of monochromatic enriched non unital operads is
 tensored-cotensored-enriched over $\Perm$-coalgebras in $\categ E$.
\end{corollary}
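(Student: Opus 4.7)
My plan is to realize $\catoperad{nuOp}$ as a right Hopf comodule over $\Perm$ and then invoke the general tensored--cotensored--enriched machinery developed in the previous subsections.

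First, since $\Set$ is cartesian closed, every set-theoretical operad carries a canonical cocommutative Hopf structure given by the cartesian diagonal; in particular both $\Perm$ and $\catoperad{nuOp}$ are cocommutative Hopf operads in $\Set$, and the map $\catoperad{nuOp} \to \Perm$ stated in the paper is automatically a morphism of Hopf operads (in the cartesian setting every morphism of operads commutes with the diagonal). The essentially unique cocontinuous symmetric monoidal functor $\Set \to \categ E$ sending $\ast$ to $\II_{\categ E}$ is both lax and oplax symmetric monoidal, so by Proposition \ref{propositionchange} it transports $\Perm$, $\catoperad{nuOp}$, and the morphism between them to enriched cocommutative Hopf operads in $\categ E$ and a Hopf morphism between them.

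Next, I equip $\catoperad{nuOp}$ with a right $\Perm$-comodule structure via the composite
\[
\catoperad{nuOp} \to \catoperad{nuOp} \otimeshadamard \catoperad{nuOp} \to \catoperad{nuOp} \otimeshadamard \Perm,
\]
where the first arrow is the Hopf diagonal of $\catoperad{nuOp}$ and the second is $\id \otimeshadamard(\catoperad{nuOp} \to \Perm)$. Coassociativity and counitality follow from those of the diagonal together with the fact that $\catoperad{nuOp} \to \Perm$ is a Hopf morphism, so no further check is needed.

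With this structure in hand, the general results established just after Corollary \ref{corollary:convcase}, applied to $\ecateg C = \ecateg D = \ecateg E$, to the small Hopf enriched operad $\operad Q = \Perm$ and its right comodule $\operad P = \catoperad{nuOp}$, show that $\catalg{\ecateg E}{\catoperad{nuOp}}$---which is by definition the category of monochromatic enriched non-unital operads---is cotensored, tensored, and enriched over the monoidal category $\catcog{\ecateg E}{\Perm}$. The only real thing to verify is the existence of the left adjoints to the cotensorisation bifunctors (and dually of the right adjoint making the tensorisation into an enrichment); this is exactly where the completeness, cocompleteness, monadicity and comonadicity, and (co)reflexive (co)equaliser-preservation assumptions imposed at the beginning of Section \ref{section:hopfoperadsinaction} are used, via Proposition \ref{proposition:tensoradjoint}, Proposition \ref{proposition:adjointexistence} and Corollary \ref{corollary:adjointexistence}. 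These hypotheses are in fact designed so that no further verification is required, so the hardest part of the argument is simply bookkeeping the chain of earlier results in the right order.
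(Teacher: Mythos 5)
Your proposal is correct and follows essentially the route the paper intends (the paper leaves this corollary's proof implicit): the morphism of set-theoretical operads $\catoperad{nuOp}\to\Perm$ constructed just beforehand makes $\catoperad{nuOp}$ a Hopf $\Perm$-comodule via the cartesian diagonal, this structure is transported to $\categ E$ by the strong symmetric monoidal functor $\Set\to\categ E$, and the tensored--cotensored--enriched structure then follows from the general machinery under the standing hypotheses of Section \ref{section:hopfoperadsinaction}. The only cosmetic point is that for the algebra side the relevant adjoint-existence statements are Corollaries \ref{proposition:cotensoradjointalg} and \ref{proposition:cotensoradjointalg2} (together with Corollary \ref{corollary:adjointexistence}) rather than Proposition \ref{proposition:tensoradjoint}, but the section's preamble already records exactly the adjoints you need.
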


\begin{definition}
 A nonunital wheeled PROP is the data of objects $\operad P(n,m) \in \categ E$ together
 with an action of $\categ S_n^\op \times \categ S_m$, with horizontal and vertical
 composition
\begin{align*}
	m_v:\operad P(n,m) \otimes \operad P(k,n) &\to \operad P(k,m) 
	\\
	m_h:\operad P(n,m) \otimes \operad P(k,l) &\to \operad P(n+k,m+l) 
\end{align*}
 and together with contraction operators
 \[
 	\xi^{i}_j: \operad P(n,m)\to \operad P(n-1,m-1), \quad \forall \ 1\leq i\leq n, \  1\leq i\leq m
 \]
 that satisfy coherence conditions.
 See \cite{MarklMerkulovShadrin09} for more details.
\end{definition}

We know from \cite{MarklMerkulovShadrin09} that wheeled PROPs
are encoded by a set-theoretical operad that
we denote $\catoperad{WP}$.
This operad is and coloured by
$\mathbb N\times \mathbb N$. The set
\[
	\catoperad{WP}\left((n_1,m_1), \ldots, (n_k,m_k) ; (n,m)\right)
\]
is spanned by the elements $(G,\phi, \psi)$ where
\begin{itemize}
 \itemt $G$ is an equivalence class of nonempty wheeled oriented graphs with $n$ input edges and $m$ output edges, with vertices counted from $1$ to $k$
 and with a counting of the inputs (resp. outputs) of the $i^{\mathrm{th}}$-vertex from $1$ to $n_i$ (resp. from $1$ to $m_i$);
 \itemt $\phi$ is a counting of the inputs of $G$ from $1$ to $n$;
  \itemt $\psi$ is a counting of the outputs of $G$ from $1$ to $m$.
\end{itemize}
The group $\categ S_k$ acts by changing the order of the vertices. Moreover, $\gamma_i((G,\phi, \psi); (G',\phi',\psi'))$ is the element
$(G'',\phi'',\psi'')$ defined as follows.
\begin{itemize}
 \itemt First $G''$ is the graph obtained by replacing the $i^{\mathrm{th}}$ vertex of $G$ by $G'$ and identifying the input (resp. output) $j$ of this vertex to the input (resp. output) $j$ of $G'$. The vertices of $G''$ are counted by inserting the vertices of $G'$ in the ordered set of vertices of $G$ at place $i$. The counting of the inputs and the outputs of each vertex of $G''$ follows from that of $G$ and $G'$.
 \itemt The way we have built the graph $G''$ induces an isomorphisms relating respectively the inputs of $G$ and $G''$ and the outputs of $G$ and $G''$.Then the maps $\phi''$ and $\psi''$ are defined respectively as the compositions of these isomorphisms with $\phi$ and $\psi$.
\end{itemize}

\begin{definition}
The wheeled commutative operad $\catoperad{WCom}$ is the monochromatic
set-theoretical operad generated
by $m \in \catoperad{WCom}(2)$ and $l\in \catoperad{WCom}(1)$
that satisfy the following relations
 \[
\begin{cases}
 	m\triangleleft (m ,1) = m\triangleleft (1,m);
	\\
	m^{(01)}=m;
	\\
	m\triangleleft (l,1)= l\triangleleft m =m.
\end{cases}	
 \]
\end{definition}

We have a morphism of set theoretical operads
from $\catoperad{WP}$ to $\catoperad{WCom}$
that sends a wheeled graph with a single vertex with
$k$ wheels to $l^k$ and a graph with $n\geq 2$ vertices to $m^{n-1}$.

\begin{corollary}
 The category of wheeled props is tensored-cotensored-enriched over
 $\catoperad{WCom}$-coalgebras.
\end{corollary}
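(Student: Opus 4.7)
The plan is to realize wheeled PROPs as algebras over a small enriched operad which is equipped with a right comodule structure over the cocommutative Hopf operad $\catoperad{WCom}$, and then to invoke the general tensor--cotensor--enrichment principle established earlier in Section \ref{section:hopfoperadsinaction}. First I would use the essentially unique cocontinuous symmetric monoidal functor $\Set \to \categ E$ sending $\ast$ to $\II_{\categ E}$ (which exists because $\categ E$ is bilinear and cocomplete) to transport the set-theoretical operads $\catoperad{WP}$ and $\catoperad{WCom}$, together with the morphism $\catoperad{WP} \to \catoperad{WCom}$ exhibited just above, into the setting of enriched operads in $\categ E$. By the example immediately following the corollary of Proposition \ref{propositionchange}, this functor sends every set-theoretical operad to an enriched cocommutative Hopf operad and every morphism of set-theoretical operads to a morphism of cocommutative Hopf enriched operads.

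Second, I would use the resulting morphism $f: \catoperad{WP} \to \catoperad{WCom}$ of cocommutative Hopf enriched operads to equip $\catoperad{WP}$ with the canonical structure of a right $\catoperad{WCom}$-comodule, namely the composition
\[
\catoperad{WP} \xrightarrow{\Delta} \catoperad{WP} \otimeshadamard \catoperad{WP} \xrightarrow{\id \otimes f} \catoperad{WP} \otimeshadamard \catoperad{WCom},
\]
where $\Delta$ is the comultiplication of $\catoperad{WP}$. Under the standing hypotheses on $\categ E$ imposed at the start of Section \ref{section:hopfoperadsinaction} (closedness, completeness, cocompleteness, and the monadicity/comonadicity assumptions for every small enriched operad), the general result recalled at the top of the section tells us that for any small enriched right $\operad Q$-comodule $\operad P$ over a small Hopf operad $\operad Q$, the category $\catalg{\ecateg E}{\operad P}$ is tensored, cotensored, and enriched over the monoidal category $\catcog{\ecateg E}{\operad Q}$. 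Applied to $\operad P = \catoperad{WP}$ and $\operad Q = \catoperad{WCom}$, and identifying $\catalg{\ecateg E}{\catoperad{WP}}$ with the category of wheeled PROPs in $\categ E$ as in \cite{MarklMerkulovShadrin09}, this yields the claim.

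The essentially only step to verify is that the functor $\Set \to \categ E$ carries the morphism $\catoperad{WP} \to \catoperad{WCom}$ to a morphism of (cocommutative) Hopf operads and that the resulting comodule structure fits into the framework of the general theorem. This is transparent once one recalls that in the cartesian category $\Set$ every operad is canonically a cocommutative Hopf operad via the diagonal, and that being symmetric monoidal, $\Set \to \categ E$ preserves all of this extra structure functorially; the smallness requirement is automatic since $\catoperad{WP}$ is the image of a small set-theoretical operad. No further analytic work is required beyond reusing the framework, exactly as in the preceding corollaries for $\uCom$-, $\uAs$-, and $\Perm$-coalgebras.
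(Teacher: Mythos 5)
Your proposal is correct and follows essentially the same route the paper intends (and leaves implicit, as for the preceding $\Perm$ and $\uAs$ corollaries): transport $\catoperad{WP} \to \catoperad{WCom}$ along the cocontinuous symmetric monoidal functor $\Set \to \categ E$, use the diagonal comultiplication to make $\catoperad{WP}$ a $\catoperad{WCom}$-comodule, and invoke the standing hypotheses of Section \ref{section:hopfoperadsinaction} together with the cotensor/tensor/enrichment theorems. No gaps to report.
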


\bibliographystyle{amsalpha}
\bibliography{bib-mapping-coalgebra}

\end{document}